\newcolumntype{C}{>{$}c<{$}}
\newcolumntype{L}{>{$}l<{$}}
\setlist[enumerate]{topsep=0pt,itemsep=-1ex,partopsep=1ex,parsep=1ex}
\setlist[itemize]{topsep=0pt,itemsep=-1ex,partopsep=1ex,parsep=1ex}
\theoremstyle{plain}
\newtheorem{theo}{Theorem}[section]
\newtheorem{lemma}[theo]{Lemma}
\theoremstyle{definition}
\newtheorem{defn}[theo]{Definition}
\newcommand{\mc}[1]{\mathcal{#1}}
\newcommand{\mb}[1]{\mathbb{#1}}
\newcommand{\nib}[1]{\noindent {\bf #1}}
\newcommand{\bsize}[1]{\left| #1 \right|}
\newcommand{\bfl}[1]{\left\lfloor #1 \right\rfloor}
\newcommand{\bcl}[1]{\left\lceil #1 \right\rceil}
\newcommand{\sub}{\subseteq}
\newcommand{\Lra}{\Leftrightarrow}
\newcommand{\sm}{\setminus}
\newcommand{\ov}{\overline}
\newcommand{\wt}{\widetilde}
\newcommand{\eps}{\varepsilon}
\newcommand{\es}{\emptyset}
\newcommand{\ova}{\overrightarrow}
\newcommand{\lova}{\overleftarrow}
\newcommand{\aA}{\alpha}
\newcommand{\bB}{\beta}
\newcommand{\dD}{\delta}
\newcommand{\oO}{\omega}
\newcommand{\sd}{\bigtriangleup}
\newcommand{\GG}{\Gamma}
\newcommand{\DD}{\Delta}
\newcommand{\Ss}{\Sigma}
\newcommand{\LL}{\Lambda}
\newcommand{\wK}{\ova{W}^K_{\! 8}}
\newcommand{\pmin}{p_{\min}}
\newcommand{\pmax}{p_{\rm{max}}}
\newcommand{\geL}{{\ge}\LL}
\newcommand{\leL}{{<}\LL}
\newcommand{\ovXw}{\ov{X}_{\!\! w}}
\newcommand{\edge}[3]{``\phi_#1(#2){=}#3"}
\DeclareMathOperator{\ex}{ex}
\DeclareMathOperator{\hi}{hi}
\DeclareMathOperator{\iv}{int}
\DeclareMathOperator{\im}{Im}
\DeclareMathOperator{\bad}{bad}
\DeclareMathOperator{\lo}{lo}
\DeclareMathOperator{\no}{no}
\DeclareMathOperator{\free}{free}
\title{Ringel's tree packing conjecture in quasirandom graphs}
\author{Peter Keevash\thanks{Mathematical Institute,
University of Oxford, Oxford, UK. Email: keevash@maths.ox.ac.uk.}
\and Katherine Staden\thanks{Mathematical Institute,
University of Oxford, Oxford, UK. Email: staden@maths.ox.ac.uk.
\newline \hspace*{1.8em}Research supported
in part by ERC Consolidator Grant 647678.}}
\begin{document}

\maketitle

\begin{abstract}
We prove that any quasirandom graph 
with $n$ vertices and $rn$ edges
can be decomposed into $n$ copies 
of any fixed tree with $r$ edges.
The case of decomposing a complete graph
establishes a conjecture of Ringel from 1963.
\end{abstract}

\section{Introduction}

This paper concerns the following conjecture 
posed by Ringel~\cite{ringel} in 1963.

\medskip

\nib{Ringel's Conjecture.}
For any tree $T$ with $n$ edges, the complete graph
$K_{2n+1}$ has a decomposition into $2n+1$ copies of $T$.

\medskip

We prove this conjecture for large $n$, via the following theorem
which is a generalisation to decompositions of quasirandom graphs
into trees of the appropriate size. For the statement and
throughout we use the following quasirandomness definition:
we say that a graph $G$ on $n$ vertices is
\emph{$(\xi,s )$-typical} if every set $S$ of at most $s$ vertices 
has $((1  \pm \xi)d(G))^{|S|} n$ common neighbours, where 
$d(G) = e(G) \tbinom{n}{2}^{-1}$ is the density of $G$.

\begin{theo}\label{main}
There is $s \in \mb{N}$ such that
for all $p>0$ there exist $\xi,n_0$ such that 
for any $n \ge n_0$ such that $p(n-1)/2 \in \mb{Z}$
and any tree $T$ of size $p(n-1)/2$, any $(\xi,s)$-typical 
graph $G$ on $n$ vertices of density $p$
can be decomposed into $n$ copies of $T$.
\end{theo}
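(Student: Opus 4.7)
The natural approach is absorption, standard for such decomposition problems. The outline is: (i) reserve a small random subgraph $A \sub G$ as an absorber, so that $G' := G \sm E(A)$ retains $(\xi',s)$-typicality with slightly weaker parameters; (ii) find a near-decomposition of $G'$ into $n - m$ copies of $T$ for some $m = o(n)$, leaving a small remainder $L$; (iii) use robust properties of $A$ to decompose $A \cup L$ into the remaining $m$ copies of $T$.

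\textbf{Near-decomposition.} For step (ii), I would decompose each target copy of $T$ into a ``skeleton'' (the rigid structural part) and a set of pendant leaves or bare paths (the flexible part). Then build the near-decomposition iteratively: for each copy in turn, embed its skeleton using a randomised procedure that places each new vertex uniformly at random in the common neighbourhood, within the current remainder, of its already-embedded neighbours, then attach the flexible parts greedily. The $(\xi,s)$-typicality ensures that the relevant common neighbourhoods are large and well-behaved, provided $s$ exceeds the maximum number of already-placed neighbours we ever need to condition on (which is a universal constant if we embed in a BFS-like order). To maintain an approximate quasirandom invariant on the remainder across $\Theta(n)$ iterations, track the codegree statistics $|N(S) \cap H|$ for small sets $S$ and relevant host sets $H$ via a martingale concentration argument, e.g.\ a Freedman-type inequality applied to an edge-exposure martingale. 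This yields a near-decomposition with leftover $L$ of density $o(p)$.

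\textbf{Absorption and main obstacle.} In step (iii), $A$ must be ``robust'': for every leftover $L$ compatible with the obvious parity/degree constraints, $A \cup L$ must decompose into $m$ copies of $T$. This is the heart of the argument and the main obstacle. Unlike for matchings, Hamilton cycles or factors, where absorbers are well understood, here the leftover can carry varied structure and each absorbed piece must itself be a tree of linear size. The natural route is to build $A$ as a union of many small ``switcher'' gadgets, each able to reroute a localised piece of $L$ into a tree-structured block, together with a global mechanism balancing degree sequences and edge-counts on both sides. Extra care is needed because we are working in an abstract $(\xi,s)$-typical host, losing the symmetries of $K_n$ that simplify the original Ringel setting, and because $T$ itself is arbitrary, so the absorber's local gadgets must be designed so that they can realise copies of every possible $T$ on the appropriate local vertex sets.
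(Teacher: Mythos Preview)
Your outline differs substantially from the paper's and has a genuine gap at the high-degree vertices. You propose to embed copies of $T$ one at a time by a randomised BFS-like procedure, maintaining quasirandomness via Freedman-type concentration. But $T$ may contain vertices of degree $\Theta(n)$ (in the extreme, $T$ is a star). Placing such a vertex in a single copy changes $\Theta(n)$ degrees at once at one host vertex; over $\Theta(n)$ iterations this produces fluctuations of order $\Theta(n)$ in the degree statistics, so no martingale argument can keep the leftover quasirandom. Even for a single huge star, your scheme gives no mechanism ensuring that every host vertex serves as the centre of the right number of copies. The paper identifies exactly this as ``the most significant technical challenge not addressed by previous attempts'' and devotes an elaborate subroutine (HIGH DEGREES together with the digraph construction in DIGRAPH) to it: the centres of all $n$ copies are placed \emph{simultaneously} via perfect matchings $M_a$ in auxiliary bipartite graphs $B_a \subseteq V \times W$, so that each host vertex absorbs each high-degree tree vertex exactly once, and the incident edges are pre-allocated to an oriented subgraph $H$ using an edge-colouring/cyclic-shift argument.

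More broadly, the paper does not use an edge-absorber $A\subseteq G$ at all. Instead the flexibility lives in $T$: one removes from $T$ a piece $P_{\ex}$ (many small leaf stars in Case~S, many long bare paths in Case~P, or almost everything in Case~L), embeds all $n$ copies of $F=T\setminus P_{\ex}$ simultaneously by a sequence of hypergraph matchings (the APPROXIMATE DECOMPOSITION), and then completes each copy exactly using $P_{\ex}$. Your step~(iii) as stated is not just ``the main obstacle'' but is essentially impossible in the form you describe: with $m=o(n)$ you would need $m$ copies of $T$, each of size $\Theta(pn)$, inside a graph $A\cup L$ of density $o(p)$; a single copy of $T$ cannot fit there, let alone $m$ of them. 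The paper's exact step works precisely because the remainder after the approximate step is not small in density but rather has a specific structure (close to $G_{\ex}$, a quasirandom piece of density $\approx 2p_{\ex}$) tailored to the flexible piece $P_{\ex}$ that was held back.
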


The case $p=1$ of Theorem \ref{main}
establishes Ringel's conjecture for large $n$, 
a result also recently obtained
independently by Montgomery, Pokrovskiy and Sudakov~\cite{MPS3}
by different methods, along the lines of their proof 
of an asymptotic version in~\cite{MPS2}. They show that certain 
edge-colourings of $K_{2n+1}$ contain a rainbow copy of $T$,
such that the required $T$-decomposition 
can be obtained by cyclically shifting this rainbow copy.
This approach is specific to the complete graph,
and does not apply to the more general setting 
of quasirandom graphs as in Theorem~\ref{main}.

Ringel's conjecture was well-known as one of the major open
problems in the area of \emph{graph packing},
whose history we will now briefly discuss.
In a graph packing problem, one is given a host graph $G$
and another graph $F$ and the task is to fit 
as many edge-disjoint copies of $F$ into $G$ as possible. 
If the size (number of edges) of $F$ divides that of $G$, 
it may be possible to find a perfect packing,
or \emph{$F$-decomposition} of $G$.
More generally, given a family $\mc{F}$ of graphs
of total size equal to the size of $G$,
we seek a partition of (the edge set of) 
$G$ into copies of the graphs in $\mc{F}$.

These problems have a long history, 
going back to Euler in the eighteenth century.
The flavour of the problem depends very much 
on the size of $F$. The earliest results concern $F$ 
of fixed size, in which case $F$-decompositions can 
be naturally interpreted as combinatorial designs.
For example, Kirkman~\cite{kirkman} showed that
$K_n$ has a triangle decomposition whenever $n$
satisfies the necessary divisibility conditions
$n \equiv 1$ or $3$ mod $6$; for historical reasons,
such decompositions are now known as Steiner Triple Systems.
Wilson~\cite{wilson1,wilson2,wilson3,wilson4}
generalised this to any fixed-sized graph in the 70's,
and Keevash~\cite{Kexist} to decompositions into complete
hypergraphs, thus estalishing the Existence Conjecture 
for designs. A different proof and a generalisation 
to $F$-decompositions for hypergraphs $F$ were given
by Glock, K\"uhn, Lo and Osthus \cite{GKLO,GKLO2}.
A further generalisation that captures many other
design-like problems, such as resolvable hypergraph designs
(the general form of Kirkman's celebrated 
`Schoolgirl Problem') was given by Keevash \cite{K2}.

There is also a large literature on $F$-decompositions
where the number of vertices of $F$ is comparable with,
or even equal to, that of $G$. Classical results of
this type are Walecki's 1882 decompositions 
of $K_{2n}$ into Hamilton paths,
and of $K_{2n+1}$ into Hamilton cycles.
There are many further results on Hamilton decompositions
of more general host graphs, notably the solution 
in~\cite{CKLOT} of the Hamilton Decomposition Conjecture,
namely the existence of a decomposition by Hamilton cycles
in any $2r$-regular graph on $n$ vertices,
for large $n$ and $2r \geq \lfloor n/2\rfloor$.

Much of the literature on $F$-decompositions for large $F$
concerns decompositions into trees. Besides Ringel's conjecture,
the other major open problem of this type 
is a conjecture of Gy\'arf\'as~\cite{GL},
saying that $K_n$ should have a decomposition
into any family of trees $T_1,\ldots,T_n$ 
where each $T_i$ has $i$ vertices.
Both conjectures have a large literature of partial results;
we will briefly summarise the most significant of these
(but see also~\cite{BHPT,FLM,KKOT,MRS}).
Joos, Kim, K\"uhn and Osthus~\cite{JKKO} 
proved both conjectures for bounded degree trees.
Ferber and Samotij~\cite{FS} and
Adamaszek, Allen, Grosu and Hladk\'y~\cite{AAGH}
obtained almost-perfect packings of 
almost-spanning trees with maximum degree $O(n/\log n)$.
These results were generalised 
by Allen, B\"ottcher, Hladk\'y and Piguet~\cite{ABHP} 
to almost-perfect packing of spanning graphs 
with bounded degeneracy and maximum degree $O(n/\log n)$.
Allen, B\"ottcher, Clemens and Taraz~\cite{ABCT}
extended~\cite{ABHP} to perfect packings provided
linearly many of the graphs are slightly smaller than 
spanning and have linearly many leaves.
The above results mainly use randomised embeddings,
for which a maximum degree bound $O(n/\log n)$ 
is necessary for concentration of probability.
While the results of Montgomery, Pokrovskiy 
and Sudakov~\cite{MPS1,MPS2} mentioned above
also use probabilistic methods, they are able
to circumvent the maximum degree barrier by methods
such as the cyclic shifts mentioned above.

Our proof proceeds via a rather involved embedding algorithm, 
discussed and formally presented in the next section,
in which the various subroutines are analysed by 
a wide range of methods, some of which 
are adaptations of existing methods 
(particularly from \cite{MPS1} and \cite{ABCT},
and also our own recent methods in \cite{factors} 
for the `generalised Oberwolfach problem',
which are in turn based on \cite{K2}),
but most of which are new, including 
a method for allocating high degree vertices
via partitioning and edge-colouring arguments
and a method for approximate decompositions based
on a series of matchings in auxiliary hypergraphs.

\subsection{Notation}

Given a graph $G = (V,E)$, 
when the underlying vertex set $V$ is clear, 
we will also write $G$ for the set of edges. 
So $|G|$ is the number of edges of $G$. Usually $|V|=n$.
The \emph{edge density} $d(G)$ of $G$ is $|G|/\tbinom{n}{2}$.
We write $N_G(x)$ for the neighbourhood of a vertex $x$ in $G$.
The degree of $x$ in $G$ is $d_G(x)=|N_G(x)|$.
For $A \subseteq V(G)$, we write 
$N_G(A) := \bigcap_{x \in A}N_G(x)$;
note that this is the common neighbourhood 
of all vertices in $A$, not the neighbourhood of $A$.

We often write $G(x)=N_G(x)$ to simplify notation.
In particular, if $M$ is a matching 
then $M(x)$ denotes the unique vertex $y$
(if it exists) such that $xy \in M$.
We also write $M(S) = \bigcup_{x \in S} M(x)$,
which is \emph{not} consistent with our notation
$N_G(S)$ for common neighbourhoods, but we hope
that no confusion will arise, as we only use
this notation if $M$ is a matching, 
when all common neighbourhoods are empty.

We say $G$ is \emph{$(\xi,s )$-typical} 
if $|N_G(S)| = ((1  \pm \xi)d(G))^{|S|} n$ 
for all $S \subseteq V(G)$ with $|S| \le s$. 

In a directed graph $J$ with $x \in V(J)$, 
we write $N_J^+(x)$ for the set of out-neighbours of $x$ in $G$ 
and $N_G^-(x)$ for the set of in-neighbours.
We let $d^\pm_G(x) := |N^\pm_G(x)|$.
We define common out/in-neighbourhoods 
$N_J^\pm(A) = \bigcap_{x \in A} N_J^\pm (x).$

The vertex set $V(G)$ will often come with a cyclic order,
identified with the natural cyclic order
on $[n]=\{1,\dots,n\}$. For any $x \in V$ we write $x^+$
for the successor of $x$, so if $x \in [n]$ then 
$x^+$ is $x+1$ if $x \ne n$ or $1$ if $x=n$.
Write $S^+=\{x^+: x \in S\}$ for $S \subseteq V(G)$.
We define the predecessor $x^-$ similarly. Given $x,y$ in $[n]$
we write $d(x,y)$ for their cyclic distance,
i.e.\ $d(x,y) = \min \{ |x-y|, n-|x-y| \}$.

We say that an event $E$ holds with high probability (whp) 
if $\mb{P}(E) > 1 - \exp(-n^c)$ for some $c>0$ and $n>n_0(c)$.
We note that by a union bound for any fixed collection $\mc{E}$
of such events with $|\mc{E}|$ of polynomial growth  
whp all $E \in \mc{E}$ hold simultaneously. 

We omit floor and ceiling signs for clarity of exposition.

We write $a \ll b$ to mean $\forall\ b>0 \
\exists\ a_0>0 \ \forall\ 0<a<a_0$.

We write $a \pm b$ for an unspecified number in $[a-b,a+b]$.

\section{Proof overview and algorithm} \label{sec:alg}

Suppose we are in the setting of Theorem \ref{main}:
we are given an $(\xi,2^{50\cdot 8^3})$-typical graph $G$ on $n$ vertices
of density $p$, where $n^{-1} \ll \xi \ll p$,
and we need to decompose $G$ into $n$ copies 
of some given tree $T$ with $p(n-1)/2$ edges.
In this section we present the algorithm
by which this will be achieved.
After describing and motivating the algorithm,
we present the formal statement in the next subsection,
then various lemmas analysing certain subroutines
over the following few subsections. 
We defer the analyses of
the approximate decomposition to Section \ref{sec:approx}
and the exact decomposition to Section \ref{sec:exact}.

As discussed in the introduction,
the most significant technical challenge not addressed 
by previous attempts on Ringel's Conjecture 
is the presence of high degree vertices,
so naturally these will receive special treatment.
Our algorithm will consider three separate cases 
for the tree $T$ (similarly to \cite{MPS1}),
one of which (Case L) handles trees in which
almost all (i.e.\ all but $o(n)$) vertices 
belong to large stars (i.e.\ of size $>n^{1-o(1)}$).
Case L is handled by the subroutine LARGE STARS,
which will be discussed later in this overview.
The other two cases for $T$ are Case S, when $T$ has 
linearly many leaves in small stars, and Case P,
when $T$ has linearly many vertices 
in vertex-disjoint long bare paths.
In both Case S and P, we apply essentially 
the same `approximate step'
algorithm to embed edge-disjoint copies 
of $F = T \sm P_{\ex}$, obtained from $T$ 
by removing the part that will be embedded 
in the `exact step', so $P_{\ex}$ consists
of stars in Case S and of bare paths in Case P.
The overview of the proof according to these cases
is illustrated by Figure \ref{fig:overview}.

\begin{figure}
\centering
\includegraphics[scale=0.85]{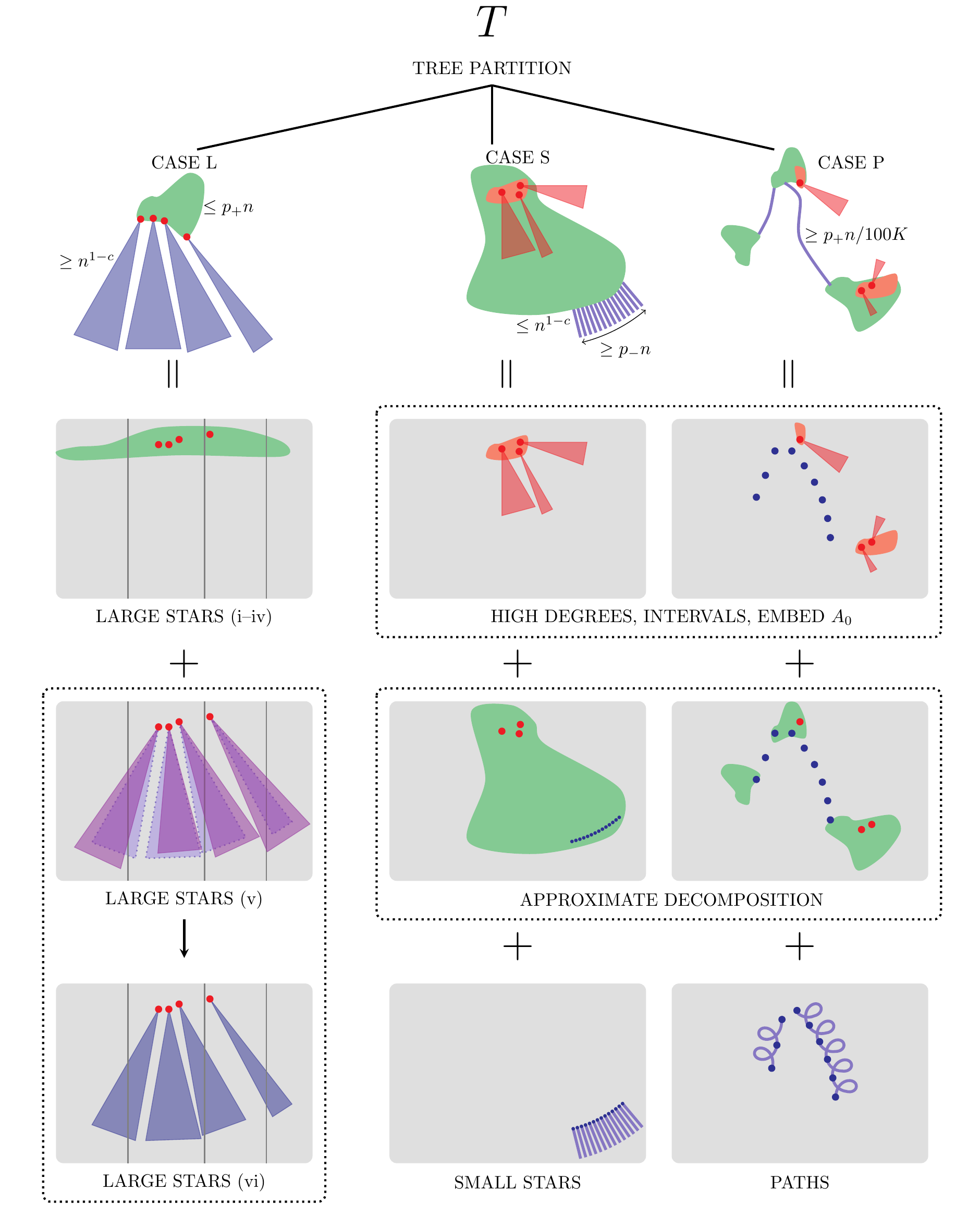}
\caption{The three cases
of the proof and the subroutines of the algorithm
which embed each part of $T$. From left to right, 
Case L: almost all vertices lie in large stars; 
Case S: linearly many vertices lie in small stars;
Case P: linearly many vertices lie in long bare paths.
Red denotes high degree vertices and their neighbours.
Blue denotes the part embedded in the exact step.}
\label{fig:overview}
\end{figure}

The heart of the approximate step algorithm is the
subroutine APPROXIMATE DECOMPOSITION, where in each step 
we extend our partial embeddings $(\phi_w: w \in W)$ of $F$
by defining them on some set $A_i$ which is suitably nice:
$A_i$ is independent, has linear size, has no vertices 
of degree $>n^{o(1)}$,
and every vertex of $A_i$ 
has at most four previously embedded neighbours.
We find these extensions simultaneously via a matching
in an auxiliary hypergraph ${\cal H}_i$
(see Figure \ref{fig:hyp}), which has an edge 
denoted $\edge{w}{u}{x}$ whenever it is possible to define
$\edge{w}{u}{x}$ for some $w \in W$, $u \in A_i$, $x \in V=V(G)$.
We encode the various constraints that must be satisfied
by the embeddings in the definition of these edges.
Thus $\edge{w}{u}{x}$ includes 
(as an auxiliary vertex in $V({\cal H}_i)$)
all arcs $\ova{yx}$ where $y=\phi_w(b)$ is a previously
defined embedding of some neighbour $b$ of $a$;
this ensures that we maintain edge-disjointness
of the embeddings of $F$. We also include 
in $\edge{w}{u}{x}$ auxiliary vertices $uw$ and $xw$,
to ensure that every $\phi_w(u)$ is defined at most once
and $\phi_w$ is injective.

\begin{figure} \label{fig:hyp}
\includegraphics[scale=0.9]{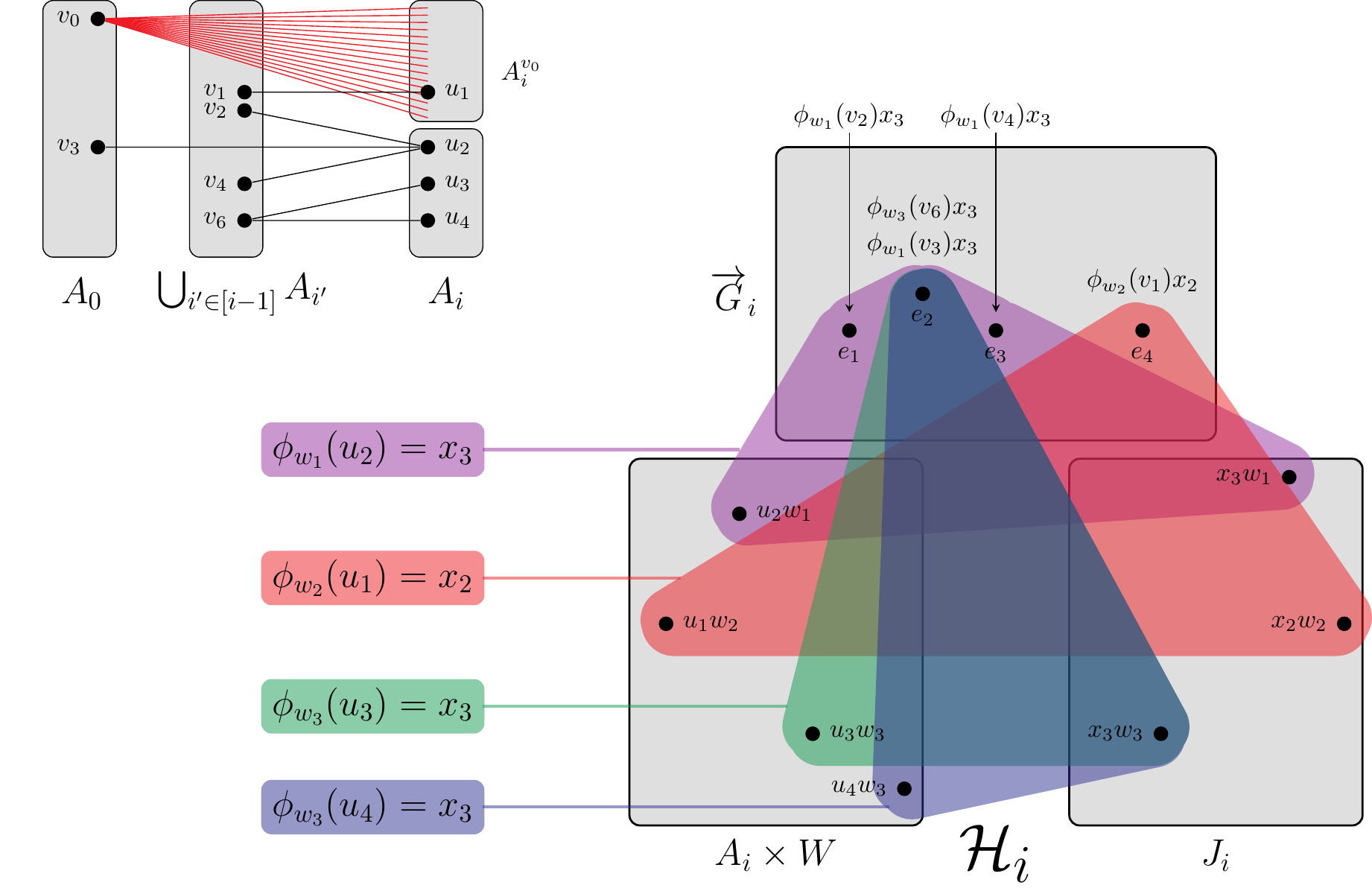}
\caption{Part of the hypergraph $\mc{H}_i$,
where a section of $F[A_i,A_{<i}]$ and some of
the corresponding edges of $\mc{H}_i$
are illustrated.
Here, $u_1 \in A^{\hi}_i$,
$u_2 \in A^{\lo}_i$ and
$u_3,u_4 \in A^{\no}_i$.
In a previous embedding, 
we set $\phi_{w_1}(v_3)=\phi_{w_3}(v_6)=x_1$, 
and now the
arc $e_2=x_3 x_1$ would be used by
the potential embeddings $``\phi_{w_1}(u_2)=x_3"$
(purple edge),
$``\phi_{w_3}(u_3)=x_3"$
(green edge) and
$``\phi_{w_3}(u_4)=x_3"$
(blue edge).
In particular, at most one of these embeddings is allowed.}
\end{figure}

We ensure that ${\cal H}_i$ is suitably nice (its edges 
can be weighted so that every vertex has weighted degree 
$1+o(1)$ and all weighted codegrees are $n^{-o(1)}$),
in which case it is well-known from the large literature
developing R\"odl's semi-random `nibble' \cite{R},
in particular \cite{KaLP}, that one can find 
an almost perfect matching 
that is (in a certain sense) quasirandom
(we use a convenient refined formulation of this
statement recently presented in \cite{EGJ}).
The quasirandomness of this matching is important for
several reasons, including quasirandomness of the
extensions of the embeddings to $A_i$, which in turn
implies that later hypergraphs ${\cal H}_j$ with $j>i$
are suitably nice (with weaker specific parameters),
and so the process can be continued.

The above sketch yields an alternative method
for approximate decomposition results along
the lines of those mentioned in the introduction,
but has not yet dealt with high degree vertices.
We will partition $V(F)$ into $A_0,A_1,\dots,A_{i^*}$,
where $A_i$ for $i \ge 1$ are the nice sets described above,
and $A_0$ is not nice -- in particular, there is no bound
on the degree of vertices in $A_0$. We start the embedding
of $F$ in the subroutine HIGH DEGREES by embedding vertices
sequentially in a suitable order, where when we consider
some $a \in A_0$ we define $\phi_w(a)$ for all $w \in W$
simultaneously via a random matching 
$M_a = \{ \phi_w(a)w: w \in W\}$ in an auxiliary 
bipartite graph $B_a \sub V \times W$,
where the definition of $B_a$ encodes constraints
that must be satisfied by the embedding:
we only allow an edge $vw$ if $v \notin \im \phi_w$
and $v$ is adjacent via unused edges to all $\phi_w(b)$
where $b$ is a previously embedded neighbour of $a$.
(For simplicity we have suppressed several further details
in the above description which will be discussed below.)
The important point about this construction is that
each $v \in V$ has to accommodate the vertex $a$ for
a unique embedding $\phi_w$, so however large 
the degrees in $T$ may be, the total demand 
for `high degree edges' is the same at every vertex,
and can be allocated to a digraph $H$ which
is an orientation of a quasirandom subgraph of $G$.

This digraph $H$ is one of many
oriented quasirandom subgraphs into which $G$
is partitioned by the subroutine DIGRAPH, where each 
piece is reserved for embedding certain subgraphs of $F$,
with arcs directed from earlier to later vertices.
Besides $H$, these include graphs $G^{gg'}_{ii'}$
for embedding subgraphs $F'[A^g_i,A^{g'}_{i'}]$,
according to a partition of each $A_i$ into
$A^{\hi}_i$, $A^{\lo}_i$, $A^{\no}_i$.
Here $A^{\hi}_i$ consists of vertices adjacent to 
some vertex with many neighbours in $A_i$
(which will lie in $A_0$ and be unique),
$A^{\lo}_i$ consists of vertices
adjacent to some vertex in $A_0$
(which will be unique) that does not 
have many neighbours in $A_i$,
and $A^{\no}_i$ consists of vertices
with no neighbours in $A_0$. 
To ensure concentration of probability
the above sets are not defined if they would
have size $o(n)$, in which case the corresponding
vertices are instead added to $A_0$.
By partitioning $G$ in this manner we can ensure
edge-disjointness when embedding different parts 
of $F$ separately. To ensure injectivity of 
the embeddings, we also randomly partition $V \times W$ 
into various subgraphs in which $w$-neighbourhoods
prescribe the allowed images in $\phi_w$
of the various parts of the decomposition of $V(T)$.
In particular, while constructing the high degree digraph $H$,
we also construct $J^{\hi}_i \sub V \times W$ so that each 
$\phi_w(A^{\hi}_i)$ will be approximately 
equal to $J^{\hi}_i(w)$.

\begin{figure} \label{fig:partition}
\includegraphics[scale=0.9]{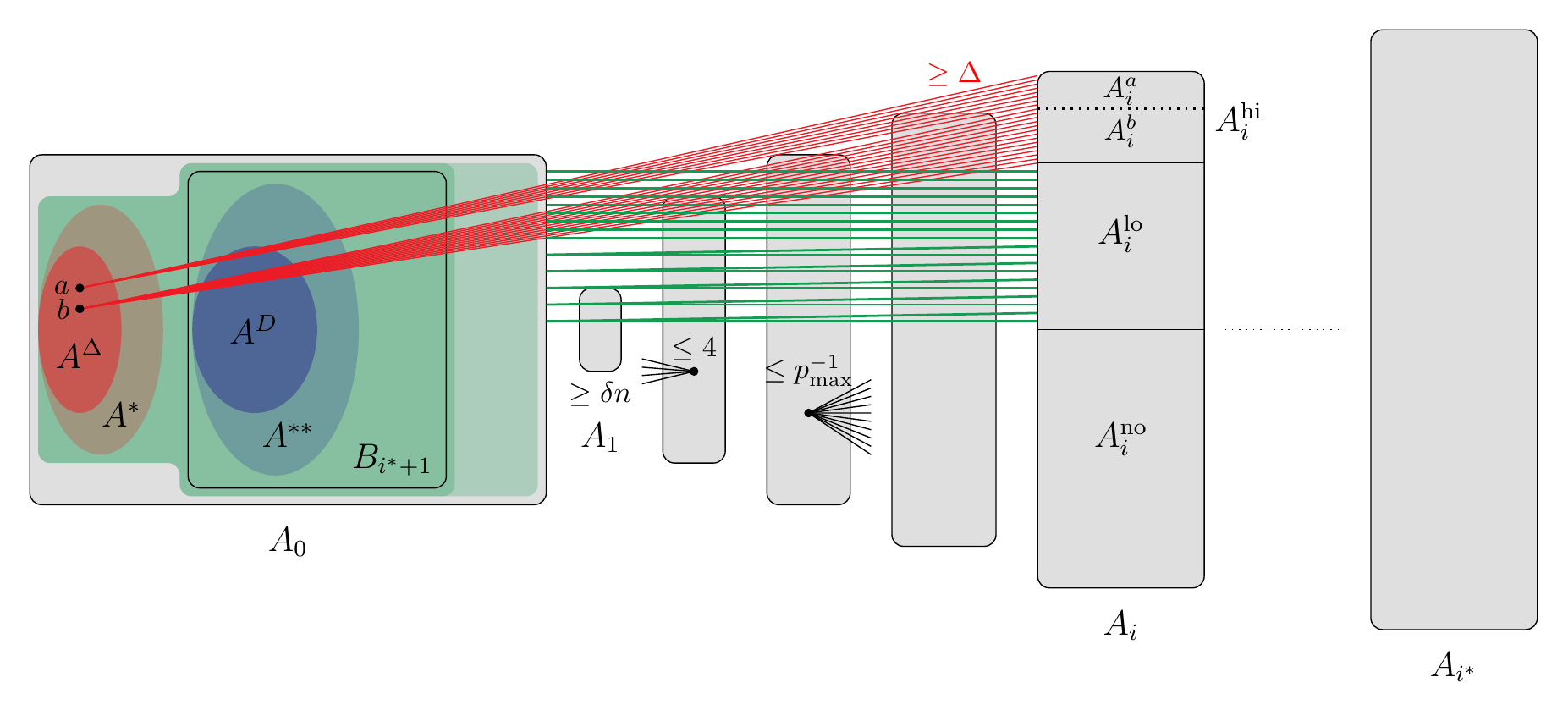}
\caption{Partition of $F$ obtained in TREE PARTITION.
Red edges are from $T\sm F$ and green edges
are $F$-edges from $A_0$ to $A_i$ (so $A^{\lo}_i$).}
\end{figure}

The separate treatment of these parts of $A_i$
and careful construction of $A_0$ to ensure the
uniqueness properties mentioned above is designed
to handle a considerable technical difficulty that 
we glossed over above when describing the embedding of $A_0$.
Our approach to the approximate decomposition discussed
above depends on maintaining quasirandomness,
but we cannot ensure that $|A_0|/n$
is negligible compared with $1/i^*$, 
where $i^*$ is the number of steps 
in the approximate decomposition, 
so a naive analysis will fail due
to blow-up of the error terms.
We therefore partition $A_0$ 
into $A^*$, $A^{**}$ and $A'_0$,
which are embedded sequentially,
where $|A^*|/n$ and $|A^{**}|/n$ are
negligible compared with $1/i^*$, 
and so do not contribute much to the error terms.
For $A'_0$, we cannot entirely avoid large error terms,
but we can confine them to a set of $o(n)$ bad vertices,
via arguments based on Szemer\'edi regularity;
these arguments require degrees in $A'_0$ 
to be bounded independently of $n$, 
so $A^{**}$ is introduced to handle degrees
that are $\oO(1)$ but $<n^{o(1)}$.
The careful choice of partition ensures that these
bad error terms are only incurred by vertices in $A^{\lo}$.

At this point, we return to consider various details
glossed over in the above description of HIGH DEGREES.
While the embedding via random matchings ensures that
every vertex of $G$ has the same demand of high degree edges,
we also need to plan ahead when embedding $A^* \sub A_0$
(which contains the very high degree vertices)
so that it will be possible to
allocate the other ends of these edges to distinct vertices
for each $w$, i.e.\ so that $\phi_w(u) \ne \phi_w(u')$
whenever $u \ne u'$. To achieve this in DIGRAPH, 
we randomly partition $V$ into $(U_h: h \in [m])$, 
with $m=n^{1-o(1)}$,
where each $U_h$ will accommodate those ends of 
high degree edges
corresponding to colour $h$ in a certain properly
$m$-edge-coloured bipartite multigraph in $V \times W$,
i.e.\ $\ova{yx}$ is available for $H$ 
if $yw$ has colour $h$ and $x \in U_h$.
Thus $x=\phi_w(u) \in U_h$ and $x'=\phi_w(u') \in U_{h'}$ 
are distinct automatically if $h \ne h'$,
and due to properness of the colouring if $h=h'$,
as $c$ determines a unique $y \in V(G)$,
so a unique $a=\phi_w^{-1}(y) \in A^*$.

The above multigraph $M$ in $V \times W$ consists of
copies of $M^*_a \approx M_a$ for each $a \in A^*$,
with the copies distinguished by labels $\ell_{aij}$,
where for each $a \in A^*$ and part $A_i$ in which 
$a$ has many neighbours the number of labels $\ell_{aij}$ 
is proportional to the degree of $a$ in $A_i$.
An edge $yw$ of label $\ell_{aij}$ in $M^h$ means that
$H$ arcs $\ova{yx}$ with $x \in U_h$ will be allocated
to edges $au$ of $F$ with $a=\phi_w^{-1}(y)$
and $u \in N_F(a) \cap A_i$. For typicality we require
for any $a$ and $i$ that the number of edges 
in each $M^h$ with some label $\ell_{aij}$ 
is approximately independent of $h$.

This is achieved by a construction based on cyclic shifts,
which we will now sketch, suppressing some details.
We partition $V$ into $V_0$ and $(V_{v^*}: v^* \in V^*)$
and $W$ into $W_0$ and $(W_{w^*}: w^* \in W^*)$, 
where $V_0$ and $W_0$ are small,
$V^*$ and $W^*$ are copies of $[m]$,
and all $V_{v^*},W_{w^*}$ have the same size.
The matchings $M_a$ are chosen as $M^0_a \cup M^*_a$,
where $V(M^0_a) = V_0 \cup W_0$ and if $vw \in M^*_a$
then $v \in V_{v^*}$, $w \in W_{w^*}$ with $v^*=x_a+w^*$,
according to some cyclic shifts $(x_a: a \in A^*)$,
carefully chosen to ensure edge-disjointness.
We construct a labelled multigraph in $V^* \times W^*$
analogously to that in $V \times W$, and
obtain label-balanced matchings $M^h$
for all $h \in [m]$ as cyclic shifts of some fixed 
label-balanced matching $M'$ in $V^* \times W^*$,
where for each $v^*w^* \in M'$ with some label $\ell_{aij}$
we include in $M^h$ all edges of $M^*_a$ of the
same label between $V_{v^*+h}$ and $W_{w^*+h}$.

The above description of $M^h$ is over-simplified,
as in fact we construct two such matchings,
one handling vertices of huge degree
(almost linear) and the other handling vertices
with degree that is high but not huge.
The version of $M'$ for non-huge degrees is constructed
by the same hypergraph matching methods as in the above
description of the approximate step embeddings,
but these do not apply to huge degrees 
(the codegree bound fails) so we instead 
apply a result of 
Bar\'at, Gy\'arf\'as and S\'ark\"ozy \cite{BGS} 
on rainbow matchings 
in properly coloured bipartite multigraphs.
The construction is illustrated in Figure~\ref{fig:Hai}.

\begin{figure}
\centering
\includegraphics[scale=.6]{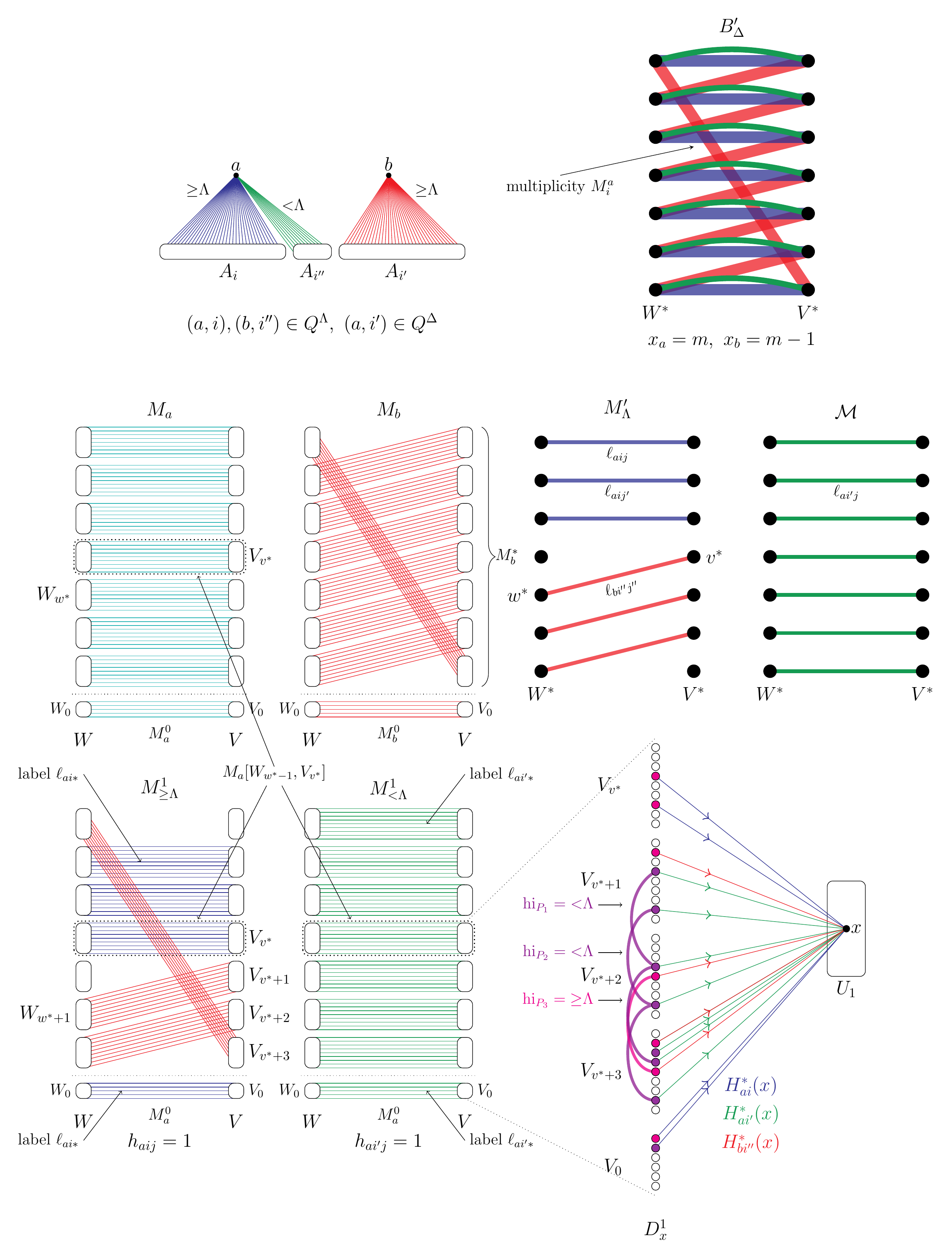}
\caption{From left to right, top to bottom:
two high degree vertices $a,b$; the multigraph $B'_{\DD}$
where line thickness represents multiplicity;
the matchings $M_a,M_b$ between $W$ and $V$;
the matchings $M'_{\LL},\mc{M}$ on $W^*,V^*$;
the matchings $M^1_{\geL}, M^1_{\leL}$;
the graph $D^1_x$ for $x \in U_1$ with components $P$ coloured 
to represent the random choice ${\hi}_P \in \{\geL,\leL\}$;
the resulting edges of $H^*_{ai}$ at $x$.}
\label{fig:Hai}
\end{figure}

The exact steps in Cases S and P are handled
by adapting existing methods in the literature.
In Case P, the subroutines INTERVALS and PATHS
are adaptations of the methods we used in \cite{factors}
for the `generalised Oberwolfach Problem'
of decomposing any quasirandom even regular 
oriented graph into prescribed cycle factors;
we refer the reader to this paper 
for a detailed discussion of these methods.
In Case S, we find the required stars by
adapting an algorithm of \cite{ABCT}:
we find an orientation of the unused graph
so that the outdegree of each vertex is precisely
the total size of stars it requires in all copies of $T$,
and then process each vertex in turn, using random
matchings to partition its outneighbourhood into stars
of the correct sizes, while maintaining injectivity
of the embeddings.

It remains to consider the exact step in Case L,
when almost every vertex of $T$ is a leaf adjacent
to a vertex of very large degree; this is more challenging 
and requires new methods (the arguments used in Case S 
fail due to lack of concentration of probability).
The most difficult constraint to satisfy is 
injectivity of the embeddings, so we build this
into the construction explicitly: we randomly partition 
$V(G)$ into sets $U^a$ for each star centre $a$
and require each embedding to choose most of its
leaves for its copy of $a$ within $U^a$.
Each edge $xy$ of $G$,
say with $x \in U^a$, $y \in U^b$, 
will be randomly allocated one of two options:
(i) $x$ is a leaf of a star in some embedding
$\phi_w$ with $\phi_w(a)=y$, or
(ii) $y$ is a leaf of a star in some embedding
$\phi_{w'}$ with $\phi_{w'}(b)=x$.
A final balancing step will swap edges between stars
(thus slightly bending the rules on leaf allocation)
so that all stars are exactly as required;
see Figure~\ref{fig:modify}.
The above sketch can be implemented
for decomposing a quasirandom graph 
into star forests, but there is a considerable
extra difficulty caused by the constraints imposed
by the initial embedding of the small part of $T$
not contained in the large stars. 

\begin{figure}
\includegraphics[scale=.85]{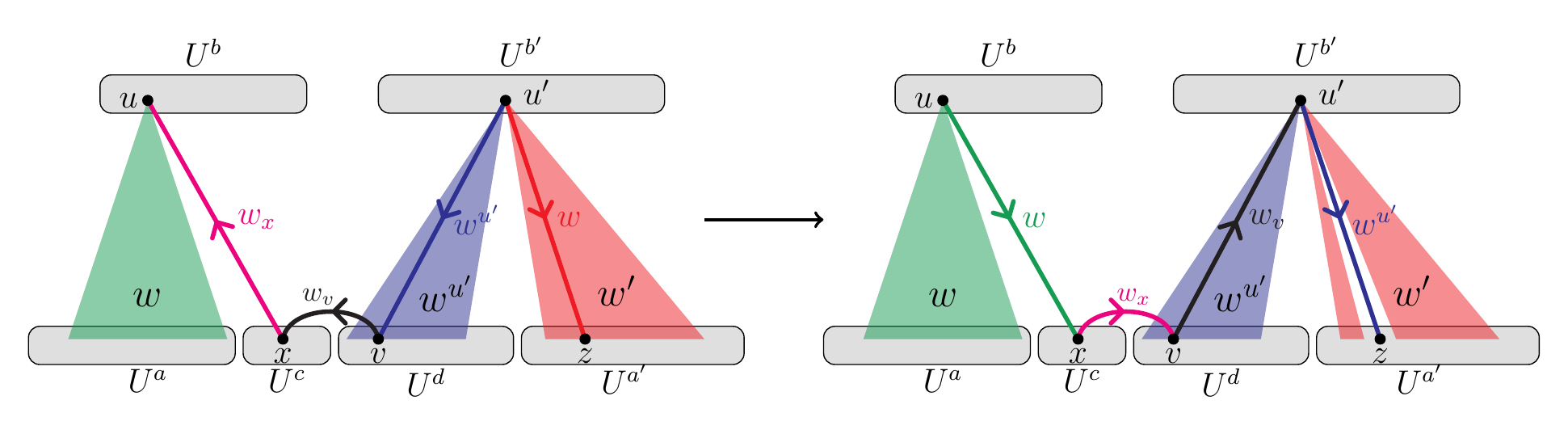}
\caption{A single step of the algorithm
to modify the green star which is too small
and the red star which is too large.}
\label{fig:modify}
\end{figure}

A naive approach
to this embedding can easily cause many edges of $G$
to be unusable according to the rules for $U^a$
as described above. Indeed, for each edge $xy$ of $G$,
the two options as described above will both become 
unavailable during the initial embedding
if we choose both $\phi_w(a')=x$ for some $a'$
and $\phi_{w'}(b')=y$ for some $b'$.
We therefore keep track of a digraph $J$
that records these constraints and choose the
initial embedding so that each edge of $G$
always has at least one of its two options available.
To control these constraints, we also introduce
partitions of each $U^a$ into three parts,
and also of the set $W$ indexing the embeddings
into three parts, and impose two different patterns
for matching parts of $U^a$ with parts of $W$
according to whether or not a vertex has large degree.
The digraph $J$ and its use in defining available sets
for the embedding are illustrated in Figure \ref{fig:Ab}.

\begin{figure}
\centering
\includegraphics[scale=0.8]{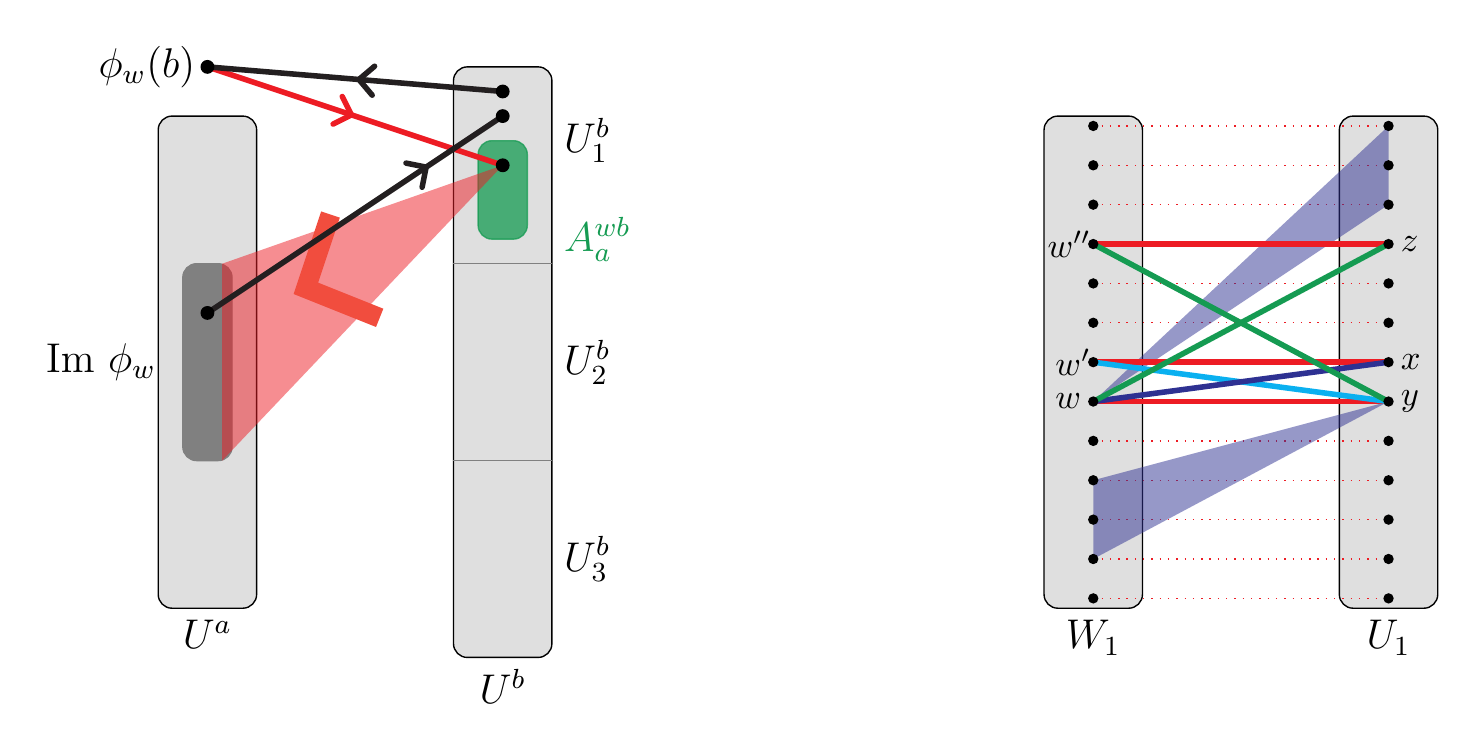}
\caption{(Left) the available set $A^{wb}_a$ 
for $w \in W_1$ and $a \in S$.
The black arcs are some arcs in $J$; 
they forbid their $U^b$-endvertices from $A^{wb}_a$.
The red arcs would be added to $J$ if the labelled
vertex is chosen for $\phi_w(a)$.
(Right) A pair of edges $wy$, $w'x$ that must be avoided 
by the matching defining the embeddings of $a$,
and a swap that may be implemented 
by Lemma \ref{lem:match} to remove $wy$.
Red edges define images of $a$
and blue edges define images of some other vertices.}
\label{fig:Ab}
\end{figure}

\subsection{Formal statement of the algorithm}

The input to the algorithm consists of
a $(\xi,s)$-typical graph $G$ on $n$ vertices
of density $p$, where $s=2^{50\cdot 8^3}$,
$n^{-1} \ll \xi \ll p$,
and a tree $T$ with $p(n-1)/2$ edges. 
We fix $0 < c' \ll c \ll 1$ and parameters 
\[ n^{-1} \ll \xi
\ll \eta_- \ll p_-
\ll \eta_+ \ll p_+ \ll p,
\quad \text{ and } \DD = n^c, \ \LL = n^{1-c}. \]
Recall that a \emph{leaf} in $T$ is a vertex of degree $1$ in $T$.
We call an edge a \emph{leaf edge} if it contains a leaf.
We call a star a \emph{leaf star} if it consists of leaf edges.
We call a path in $T$ a \emph{$k$-path} if it has length $k$
(that is, $k$ edges),
and call it \emph{bare} if its internal vertices
all have degree $2$ in $T$. By Lemma \ref{lem:T} below
we can choose a case for $T$ in \{L,S,P\} satisfying
\begin{itemize}
\item Case L: all but at most $p_+ n$ vertices
of $T$ belong to leaf stars of size $\geL$,
\item Case S: at least $p_- n$ vertices
of $T$ belong to leaf stars of size $\le \LL$,
\item Case P: $T$ contains $p_+ n/100K$ vertex-disjoint
bare $8K$-paths.
\end{itemize}
In Case L go to LARGE STARS, otherwise continue.
We let $\eta=\eta_-$ in Case S or $\eta=\eta_+$ in Case P,
and define further parameters
\begin{gather*}
 \xi \ll \xi' \ll D^{-1} \ll  \dD   \ll \pmin
\ll \eps_1 \ll \ldots \ll \eps_{i^+} \ll \pmax
\ll \eps \ll p_0  \ll \eta \ll s^{-1},p,
\end{gather*}
with
$i^+ = 7 \log \eps^{-1}$ and
$\xi' \ll K^{-1} \ll d^{-1} \ll D^{-1}$ in Case P.
Given $k \in \mb{N}$, a tree $T$ and $S \sub V(T)$,
the \emph{$k$-span} $\text{span}^k_T(S)$ of $S$ in $T$
is obtained by starting with $S^*=S$
and iteratively adding any $S' \sub V(T) \sm S^*$
with $|S'| \in [k]$ such that $T[S^* \cup S']$
has fewer components than $T[S^*]$,
until there is no such $S'$.
Clearly there are at most $|S|$ iterations,
so $|\text{span}^k_T(S)| \le (k+1)|S|$.
Note also that 
$|\text{span}^k_T(\text{span}^k_T(A) \cup B)
\sm \text{span}^k_T(A)|
\le (k+1)|B|$.
For $k \in \mb{N}$ let
$A^k = \{u: d_T(u) \ge k \}$.

\vspace{-0.3cm} \begin{center} 
TREE PARTITION \end{center} \vspace{-0.3cm}

\begin{enumerate}
\item Let $A^* = \text{span}^4_T(A^\DD)$.
In Case S let $P_{\ex}$ 
be a union of leaf stars in $T \sm T[A^*]$,
each of size $\le \LL$,
with $|P_{\ex}| = p_- n/2 \pm \LL$.
In Case P let $P_{\ex}$ be the
vertex-disjoint union 
of two leaf edges in $T \sm T[A^*]$ and 
$p_+ n/101K$ bare $8K$-paths in $T \sm T[A^*]$.\\
Obtain $F$ from $T$ 
by deleting all edges of $P_{\ex}$
and $F^*$ from $F$ 
by deleting all vertices of $A^*$.
\item Define disjoint independent sets
$C_1,\dots,C_{i^*}$ in $F^*$ as follows. 
At step $i \ge 1$,
let $B_i = V(F^*) \sm \bigcup_{j<i} C_j$,
let $C'_i$ be the set of $v \in B_i$
with $d_{F^*[B_i]}(v) \le 3$ and 
$d_{F^*[\bigcup_{j<i} C_j]}(v) \le \pmax^{-1}$,
and let $C_i$ be a maximum independent set in $F^*[C'_i]$.
If $|C_i| < \eps n$ let $i^*=i-1$ and stop,
otherwise go to the next step.
\item 
Let $A_0 = \text{span}^4_T[A^* \cup B_{i^*+1}]$.
Let $A^{**} = \text{span}^4_T(A^D) \sm A^*$
and $A'_0=A_0 \sm (A^* \cup A^{**})$.\\
For $i \in [i^*]$ let $A_i = C_{i^*+1-i} \sm A_0$ and 
for $k \in \mb{N}$ let
$A^k_i = \{a \in A^k: |N_F(a) \cap A_i| \ge \DD\}$.\\
For $a \in A^\DD_i$ let $A^a_i = N_F(a) \cap A_i$.
Let $A^{\hi}_i = \bigcup_{a \in A^\DD_i} A^a_i$ and
$A^{\leL}_i = \bigcup_{a \in A^\DD_i \sm A^\LL_i} A^a_i$ 
and $A^{\geL}_i = \bigcup_{a \in A^\LL_i} A^a_i$.
Obtain $F'$ from $F$ by deleting all edges $ab$
with $a \in A^\DD_i$ and $b \in A^a_i$ for some $i$.
Let $A^{\lo}_i = \{u \in A_i: |N_{F'}(u) \cap A_0| = 1\}$.
Let $A^{\no}_i = \{u \in A_i: N_{F}(u) \cap A_0 = \es\}$.
\item
For $j \in [4]$, let $\dD_j=\dD^{.1j+.6}$ and
let $(\circ_1,\ldots,\circ_4)=
(\no, \geL, \leL, \lo)$.
For each $j \in [4]$,
while any $|A^{\circ_j}_i|<\dD_{j} n$
move $A^{\circ_j}_i$ to $A_0$, let
$A_0 = \text{span}^4_T[A_0 \cup A^{\circ_j}_i]$,
and update $A^{\leL}_i,A^{\geL}_i,
A^{\lo}_i,A^{\no}_i$.
\item If $\bigcup_i A^{\hi}_i=\es$ move $A^*$ to $A^{**}$, i.e.\ 
redefine $A^{**}$ as $A^{**} \cup A^*$ and $A^*$ as $\es$.
\end{enumerate}

Let $A^{\hi}=\bigcup_i A^{\hi}_i$ 
and define $A^{\no}, A^{\lo}$ similarly.

For $k \in \mb{N}$, 
let $Q^{\DD} = \{(a,i) : \DD \le |N_{F}(a) \cap A_i| < \LL\}
\sub A^{\DD} \times [i^*]$
and $Q^{\LL} = \{(a,i) : |N_{F}(a) \cap A_i| \ge \LL\}
\sub A^{\LL} \times [i^*]$.
We introduce parameters 
\begin{gather*}
m^a_i = \bcl{\DD^{-.2} |A^a_i|} 1_{a \in A^\DD_i}, \quad
m_a = \sum_i m^a_i, \quad
m =  \sum_{a \in A^\DD} m_a.
\end{gather*}
Let $\prec$ be an order on $V(T)$ with 
$A^* \prec A^{**} \prec A'_0 \prec A_1 
\prec \dots \prec A_{i^*} \prec V(P_{\ex}) \sm V(T)$
and $|N_<(v) \cap X| \le 1$ whenever 
$v \in X \in \{A^*,A^{**},A'_0\}$.
For $v \in V(T)$ we let ${<}v = \{u: u \prec v\}$,
$N_<(v) = N_{F'}(v) \cap {<}v$,
$N_\le(v) = N_<(v) \cup \{v\}$ and
$N_>(v) = N_{F'}(v) \sm {<}v$.

We stress the use of $F'$ in this notation,
which ensures that $N_>(a) \cap A^{\hi}_i = \es$ 
for all $a \in A_0$: otherwise we would have
a vertex not in $A_0$ adjacent to two vertices of $A_0$,
but this contradicts the definition of $A_0$ as a span.
We list here some other immediate consequences
of the definition of $A_0$
that will often be used without comment.
\begin{itemize}
\item $|A^*| \le 5n/\DD$ and $|A^{**}| \le 5n/D$.
\item Any $u \in A_{\ge 1}$ has $|N_<(u) \cap A_0| \le 1$.
\item Any $uv \in F[A_{\ge 1}]$ 
has $|(N_<(u) \cup N_<(v)) \cap A_0| \le 1$.
\item There is no ${\le}3$-path in $T \sm A_0$
with both ends in $A^{\hi} \cup A^{\lo}$.
\end{itemize}
We also note that $|N_>(v)| \le \pmax^{-1}$
for all $v \in A_{\ge 1}$,
and $|N_<(v)| \le 4$ for all $v \in V(T)$.
To see the latter, note that if $v \in A_{\ge 1}$
then $v$ has at most $3$ earlier neighbours 
in $A_{\ge 1}$ and at most one in $A_0$,
whereas if $v \in A_0$ then $v$ has at most one
earlier neighbour in each of $A^*$, $A^{**}$ and $A'_0$.

Write $n = mn_* + n_0$ with $|n_0 - n\DD^{-.1}| < m$.
Recall that we adopt the
natural cyclic orders on $[m]$ and $[n]$, 
addition wraps, and $d(\cdot,\cdot)$ is cyclic distance.
Whenever an algorithm is required to make a choice,
it aborts if it is unable to do so
(we will show whp it does not abort).

Given bipartite graphs
$B,Z \sub X \times Y$ with $|X|=|Y|$
we write $M{=}$MATCH$(B,Z)$
to mean that $M$ is a random perfect matching
from Lemma \ref{lem:match}.
(The choice of $Z$ will ensure 
edge-disjointness of the embeddings.)

\vspace{-0.1cm} \begin{center} 
HIGH DEGREES \end{center} \vspace{-0.3cm}

\begin{enumerate}
\item
Choose $x_a \in [m]$ for $a \in A^*$ 
in $\prec$ order, arbitrarily subject to
$d(x_a,x_{a'}) > 3d$ for all $a' \prec a$, and
$d(x_a,x_{a'}) \ne d(x_b,x_{b'})$ 
for all $a' \in N_<(a)$ and $bb' \in F[{<}a]$.
\item 
Choose independent uniformly random partitions
of $V(G)$ into $V_0$ of size $n_0$ and
$V_{v^*}$, $v^* \in V^*$ of size $n_*$,
and $W$ into $W_0$ of size $n_0$ and 
$W_{w^*}$, $w^* \in W^*$ of size $n_*$,
where $V^*=W^*=[m]$.
\item
For each $a \in A^*$ in $\prec$ order 
we will define all $\phi_w(a)$ by choosing
a perfect matching $M_a = \{ \phi_w(a) w: w \in W \}$.
Let $B_a \sub V \times W$ consist of all $vw$
where $v \notin \im \phi_w$ and each $\phi_w(b) v$ 
with $b \in N_<(a)$ is an unused edge of $G$.
Let $Z_a \sub V \times W$ consist 
of all $\phi_w(b)w$ with $b \in N_<(a)$.\\
Let $B^0_a=B_a[V_0,W_0]$ and
$B^{w^*}_a=B_a[V_{x_a+w^*},W_{w^*}]$ for $w^* \in W^*$.
Define $Z^0_a$ and $Z^{w^*}_a$ similarly. \\
Let $M_a = M^0_a \cup M^*_a$
with $M^0_a{=}$MATCH$(B^0_a,Z^0_a)$
and $M^*_a{=}\bigcup_{w^*}$MATCH$(B^{w^*}_a,Z^{w^*}_a)$.
\end{enumerate}

We randomly identify $V(G)$ with $[n]$,
cyclically ordered as above. Recall that
each $x \in [n]$ has successor $x^+=x+1$ 
(where $n+1$ means $1$) and predecessor $x^-=x-1$ 
(where $0$ means $n$).
Let $d_i=d/(2s)^{i-1}$ for $i \in [2s+1]$.
We write $n=r_id_i+s_i$ with $r_i \in \mb{N}$
and $0 \le s_i < d_i$, and let
\[ P^i_j = \left\{ \begin{array}{ll}
\{ kd_i+j: 0 \le k \le r_i \} 
& \text{if } j \in [s_i], \\
\{ kd_i+j: 0 \le k \le r_i-1 \} 
& \text{if } j \in [d_i] \sm [s_i].
\end{array} \right. \] 
For each $i \in [s+1]$ and $j \in [d_i]$
we define a partition of $[n]$ into a family
of cyclic intervals $\mc{I}^i_j$ defined 
as all $[x,y^-]$ where $x \in P^i_j$ and $y$ is
the next element of $P^i_j$ in the cyclic order.
(So $|\mc{I}^i_j|=n/d_i \pm 1$,
each $I \in \mc{I}^i_j$ has $|I| \le d_i$, and 
$\mc{I}^i_j \cap \mc{I}^i_{j'} = \es$ for $j \ne j'$.)
We let $\mc{I}^i = \bigcup_{j \in [d_i]} \mc{I}^i_j$.
(So for every $z \in [n]$, 
exactly one $[x,y^-] \in \mc{I}^i$ has $x=z$,
and exactly one $[x,y^-] \in \mc{I}^i$ has $y=z$.)

\vspace{-0.1cm} \begin{center} 
INTERVALS \end{center} \vspace{-0.3cm}

\begin{enumerate}
\item In Case S let $\ovXw = V \sm \phi_w(A^*)$,
 $\ov{p}_w := n^{-1}|\ovXw|$
for all $w \in W$ and go to DIGRAPH;
otherwise (in Case P) continue.
For each $w \in W$ independently choose
$i(w) \in [2s+1]$ and $j(w) \in [d_{i(w)}]$ 
uniformly at random. Let $W_i = \{ w: i(w)=i \}$.
\item For each $w \in W$, let $\mc{A}_w$ include each interval
of $\mc{I}^{i(w)}_{j(w)}$ independently with probability $1/2$. \\
Let $\mc{S}_w$ consist of all $I \in \mc{A}_w$ such that
both neighbouring intervals $I^\pm$ of $I$ are not in $\mc{A}_w$.
\item For each $w \in W$,
let $\mc{X}_w$ include each $I \in \mc{S}_w$
with probability 
$(1-\eta)n^{-1}|P_{\ex}|$
independently, let $X_w = \bigcup \mc{X}_w$,
$\ovXw = V \sm ( \phi_w(A^*) \cup X_w \cup (X_w)^+)$
and $\ov{p}_w = n^{-1}|\ovXw|$.
\item Obtain $\mc{Y}_w \sub \mc{X}_w$ as follows.
Remove any $I$ from $\mc{X}_w$ that intersects $\phi_w(A^*)$,
let $t_i = \min \{ |\mc{X}(I)|: I \in \mc{I}^i \}$,
where $\mc{X}(I) := \{w \in W_i: I \in \mc{X}_w\}$, 
then delete each $I \in \mc{I}^i$, $i \in [2s+1]$ from
$|\mc{X}(I)|-t_i$ sets $\mc{X}_w$ with $w \in \mc{X}(I)$, 
independently uniformly at random. \\
Let $Y_w = \bigcup \mc{Y}_w$ and
$\mc{Y}(I) = \{w \in W_i: I \in \mc{Y}_w\}$.
\end{enumerate}

\vspace{-0.1cm} \begin{center} 
EMBED $A_0$ \end{center} \vspace{-0.3cm}

\begin{enumerate}
\item
For each $xy \in G^*:=G \sm \bigcup_w \phi_w(T[A^*])$
independently let $\mb{P}(xy \in G_0)=p_0/p$.\\
For each $w \in W$ and $x \in \ovXw$ independently
let $\mb{P}(\ova{xw} \in J_0) = p_0/\ov{p}_w$.
\item
Extend the embeddings $\phi_w$ of $T[A^*]$
to $T[A_0]$ in $\prec$ order, where for each
$a \in A_0 \sm A^*$ we choose a perfect matching
$M_a = \{ \phi_w(a) w: w \in W \}
{=}$MATCH$(B_a,Z_a)$,
where $Z_a = \{ \phi_w(b)w: b \in N_<(a) \}$
and $B_a \sub V \times W$ 
consists of all $vw$ with 
$v \in N_{J_0}(w) \sm \im \phi_w$
where each $\phi_w(b) v$ 
with $b \in N_<(a)$ is an unused edge of $G_0$.
\end{enumerate}

For $i,i' \in [i^*]$ and $g,g' \in \{ \hi,\lo,\no\}$,
let $p^{gg'}_{ii'} = n^{-1}|F'[A^g_i,A^{g'}_{i'}]| + \pmin$
and $p^g_{i0} = n^{-1}|F'[A^g_i,A_0]| + \pmin$.
We also write $p^{gg'}_{i0}=p^g_{i0}$
for all $g'$ for uniform notation later.

For $i \in [i^*]$, $g \in A^\DD_i \cup \{\lo,\no\}$
let $\aA^g_i = |A^g_i|n^{-1}$, $\aA_{\lo} = |A^{\lo}|n^{-1}$,
$\aA_{\no} = |A^{\no}|n^{-1}$ and $\aA_0=|A_0|n^{-1}$. 

Let $\aA^{\hi}_i = \DD^{.2}m_i/n$ and
 $\aA_{\hi} = \sum_i \aA^{\hi}_i = \DD^{.2}m/n 
= |A^{\hi}|n^{-1} \pm \DD^{-.9}$.

Let $p_{\ex}=n^{-1}|P_{\ex}|$.
Let $p'_{\ex}=(\tfrac78 -\eta)p_{\ex}$ in Case P
or $p_{\ex} \ll p'_{\ex} \ll 1$ in Case S.

We note some identities and estimates for our parameters: 
\begin{gather*}
p(n-1)/2=|T|=|T[A_0]|+|F'|+|A^{\hi}|+|P_{\ex}|, \\
1+p(n-1)/2=|V(T)|=|V(F)|+|V(P_{\ex}) \sm V(F)|, \\
\sum_{i,i',g,g'} p^{gg'}_{ii'} - n^{-1}|F'| \in [0,{\pmin}^{.9}], 
\quad \sum_i \aA^g_i = n^{-1}|A^g|, \\
p/2 - \sum p^{gg'}_{ii'} - \aA_{\hi} - p_{\ex}
\in [0,{\pmin}^{.9}], \quad
p/2 - (\aA_{\hi}+\aA_{\lo} + \aA_{\no}+p_{\ex})
\in [0,\eps^{.9}], \\
\ov{p}_w = \begin{cases}
1-|A_0|n^{-1}\text{ in Case S, or } \\
\ov{p} \pm d^{-.9} \text{ with }
\ov{p} = (1-\aA_0)(1-(1-\eta)p_{\ex}/8) 
\text{ in Case P}. \end{cases}
\end{gather*}
These estimates imply that the assignment
of probabilities to mutually exclusive events
in DIGRAPH.vii below are valid
(i.e.\ have sum $\le 1$). 
For $\circ \in \{\leL,\geL\}$, let
\begin{gather*}
m_{\circ} = 
 \sum_{(a,i) \in Q^{\circ}} m^a_i, \quad
p_{\circ} = m_{\circ}/m, \quad \text{and define}\\
\text{ labels }
L_{\circ} = \{ \ell_{aij}: (a,i)\in Q^{\circ}, 
j \in [M^a_i] \},
\text{ where }
M^a_i \in \left\{ \bfl{m^a_i/p_{\circ}},
\bcl{m^a_i/p_{\circ}}\right\}
\quad \text{and } |L_{\circ}|=m.
\end{gather*}

\vspace{-0.1cm} \begin{center} 
DIGRAPH \end{center} \vspace{-0.3cm}

\begin{enumerate}
\item
For each $a \in A^\DD$ let $M'_a$ 
denote the perfect matching between $V^*$ 
and $W^*$ consisting of all $v^*w^*$ 
with $w^* \in W^*$ and $v^* = x_a + w^* \in V^*$.
Let $B'_{ai}$ be the bipartite multigraph 
formed by $M^a_i$ copies of $M'_a$ 
labelled by $\ell_{aij}$, $j \in [M^a_i]$.
For $k \in \mb{N}$ let
$B'_k = \bigcup_{(a,i) \in Q^k} B'_{ai}$
and $B_k$ be the bipartite multigraph formed by 
$M^a_i$ copies of $M^*_a$ for each $(a,i) \in Q^k$
labelled by $\ell_{aij}$, $j \in [M^a_i]$.
\item
Let $M'_\LL$ be a largest matching in $B'_\LL$ 
with at most one edge of each label.
Define a partial $m$-edge-colouring
$(M^h_{\geL}: h \in [m])$ of $B_\LL$, where 
for each $h \in [m]$ and edge $v^*w^*$ of $M'_\LL$ 
with some label $\ell_{aij}$ we include in $M^h_{\geL}$
all edges of $M^*_a$ with label $\ell_{aij}$  
between $V_{v^*+h}$ and $W_{w^*+h}$.
\item 
Let $({\cal H},\oO)$ be the weighted $3$-graph 
where for each $v^*w^*$ labelled $\ell_{aij}$ 
with $(a,i) \in Q^\DD$ we include
$v^* w^* \ell_{aij}$ with weight $m^{-1}$.
Let ${\cal M}$ be a random matching obtained from 
Lemma \ref{lem:wEGJ} applied to $({\cal H},\oO)$.
Define matchings $M^h_{\leL} \sub B_\DD$
for $h \in [m]$, where for each edge $v^*w^*\ell_{aij}$ 
of ${\cal M}$ we include in $M^h_{\leL}$
all edges of $M^*_a$ with label $\ell_{aij}$  
between $V_{v^*+h}$ and $W_{w^*+h}$.
\item
Partition $V$ as $(U_h: h \in [m])$ 
uniformly at random.
Fix distinct $h_{aij} \in [m]$ for each $\ell_{aij} \in L_{\circ}$
and $\circ \in\{\leL,\geL\}$ (recalling $|L_{\circ}|=m$).
For all $\ell_{aij} \in L_{\circ}$,
add a copy of $M^0_a$ with every edge labelled 
$\ell_{aij}$ to $M^{h_{aij}}_{\circ}$.
Let $p_1=p-p_0$ and
$\ova{G}_1$ be a uniformly random orientation 
of $G_1 := G^* \sm (G_0 \cup \{xy: d(x,y) \ge 3d\})$. 
\item
For $h \in [m]$ and $x \in U_h$ let $D^h_x$ be the
graph on $N^-_{\ova{G}_1}(x)$ consisting of 
all $yy'$ with $y \neq y'$ such that $yw \in M^h_{\geL}$
and $y'w \in M^h_{\leL}$ 
for some $w \in W$ with $x \in \ovXw$.
For each connected component $P$ of $D^h_x$
independently choose one of $\mb{P}(\hi_P = \circ) = p_\circ$ 
for $\circ \in \{\geL,\leL\}$. For each $y \in P$
with some $yw \in M^h_{\hi_P}$ with label $\ell_{aij}$
include $\ova{yx}$ in $H^*_{ai}$ and let $w(\ova{yx}) = w$.
\item
For each $\ova{yx} \in \ova{G}_1$ 
independently choose at most one of 
$\mb{P}(\ova{yx} \in \ova{G}_{\ex})=2p_{\ex}/p_1$ 
or $\mb{P}(\ova{yx} \in \ova{G}^{gg'}_{ii'})=2p^{gg'}_{ii'}/p_1$
for $1\le i' < i \le i^*$ and $g,g'\in\{\hi,\lo,\no\}$,
or $\mb{P}(\ova{yx} \in \ova{G}^{g}_{i0})=2p^{g}_{i0}/p_1$
for $i \in [i^*]$ and $g\in\{\hi,\lo,\no\}$,
or $\mb{P}(\ova{yx} \in \ova{G}'_i)=2\pmax/p_1$
for $i \in [i^*]$,
or $\mb{P}(\ova{yx} \in H) = 2\aA_{\hi}/p_1\ov{p}_w$
if $x \in \ovXw$, where $w=w(\ova{yx})$,
and if $\ova{yx} \in H^a_i := H \cap H^*_{ai}$
include $\ova{xw} \in J^a_i$.
Let $J^{\hi}_i = \bigcup_{a \in A^\DD_i} J^a_i$ and 
$J^{\hi} = \bigcup_i J^{\hi}_i$
and $\ova{G}_i=\bigcup_{g,g',j}\ova{G}^{gg'}_{ij}$.
\item
Let $J'$ be the set of $\ova{xw} \notin J_0 \cup J^{\hi}$
with $x \in \ovXw$. For $\ova{xw} \in J'$
let $p_{xw}=\ov{p}_w p_1 - 2\aA_{\hi} \hi_{xw}$,
where $\hi_{xw}$ is $1$ if
$w=w(\ova{yx})$ for some $y$ or $0$ otherwise.
For each $\ova{xw} \in J'$
independently choose at most one of 
$\mb{P}(\ova{xw} \in J_{\ex}) = p'_{\ex}/p_{xw}$ 
or $\mb{P}(\ova{xw} \in J^{\lo}_i) = \aA^{\lo}_i/p_{xw}$ 
or $\mb{P}(\ova{xw} \in J^{\no}_i) = \aA^{\no}_i/p_{xw}$ 
or $\mb{P}(\ova{xw} \in J'_i) = \pmax/p_{xw}$.
Let $J^{\lo}=\bigcup_i J^{\lo}_i$
and $J^{\no}=\bigcup_i J^{\no}_i$.
\item In Case P, for each $\ova{yx} \in \ova{G}_{\ex}$ 
independently let
$\mb{P}(\ova{yx} \in J^0_{\ex}) = \tfrac78$ or
$\mb{P}(\ova{yx}^- \in J^K_{\ex}) = \tfrac18$.
\end{enumerate}

Some edges of $G$ may not be allocated by this process.
Note that arcs in $J[V,W]$ are all directed from
$V$ to $W$, so we will often suppress the direction
and think of $J[V,W]$ as a graph.

For $uu' \in F'[A^g_i,A^{g'}_{i'}]$ with
$i,i' \in [0,i^*]$ and $g,g' \in \{ \hi,\lo,\no\}$ 
let every $\ova{G}^{gg'}_{i0}=\ova{G}^g_{i0}$
and let $\ova{G}_{uu'} = \ova{G}^{gg'}_{ii'}$
and $p_{uu'}=p^{gg'}_{ii'}$,
recalling that $p^{gg'}_{i0}=p^g_{i0}$ for all $g'$.

For $g \in A^\DD \cup \{\lo,\no\}$, $u \in A^g_i$
let $A_u = A^g_i$, $J_u=J^g_i$, $\aA_u=\aA^g_i$.

For $\ova{xw} \in J_u$ we also write 
$A_{\ova{xw}}=A_u$, $J_{\ova{xw}}=J_u$, 
$\aA_{\ova{xw}}=\aA_u$.

\vspace{-0.1cm} \begin{center} 
APPROXIMATE DECOMPOSITION
\end{center} \vspace{-0.1cm}

For $i=1,\dots,i^*$ apply the following steps.
\begin{enumerate}
\item Let $({\cal H}_i,\oO)$
be the weighted hypergraph ${\cal H}_i$
with vertex parts $\ova{G}_i$, $J_i$ and 
$A_i \times W$, where 
for each $u \in A_i$ and $\ova{xw} \in J_u$ such that 
$\lova{xy} \in \ova{G}_{uv}$ for all $y=\phi_w(v)$ 
with $v \in N_<(u)$
we include an edge labelled $\edge{w}{u}{x}$
consisting of $uw$, $\ova{xw}$ and all such $\lova{xy}$,
with weight \[ \oO(\edge{w}{u}{x}) := 
 |A_u|^{-1} \prod_{v \in N_<(u)} p_{uv}^{-1}.\]
\item Define $\oO'$ on ${\cal H}_i$ 
by $\oO'(\bm{e}) = (1-.5\eps_i)\oO(\bm{e})/Q(\bm{e})$
where $Q(\bm{e})$ is the maximum of $1$ and all 
$\oO({\cal H}_i[\bm{v}]) :=
\sum \{ \oO(\bm{e}) : \bm{v} \in \bm{e} \}$ 
with $\bm{v} \in \bm{e}$.
Let ${\cal M}_i$ be a random matching obtained
by applying Lemma \ref{lem:wEGJ} to $({\cal H}_i,\oO')$.
For each $\edge{w}{u}{x}$ in ${\cal M}_i$ 
extend $\phi_w$ by setting $\phi_w(u)=x$.
\item
For each $a \in A_i$ in any order,
let $W_a = \{w \in W: \phi_w(a)$ undefined$\}$,
let $V_a \in \tbinom{V}{|W_a|}$
be uniformly random,
and define $\{ \phi_w(a) w: w \in W_a\}
{=}$MATCH$(B_a,Z_a)$,
where $Z_a = \{ \phi_w(b)w: b \in N_<(a) \}$
and $B_a \sub V_a \times W_a$ 
consists of all $vw$ with 
$v \in N_{J'_i}(w) \sm \im \phi_w$
and each $\phi_w(b) v$ for $b \in N_<(a)$ 
an unused edge of $G'_i$.
\end{enumerate}

\medskip

To avoid confusion, we emphasise that $H_i$ 
is a digraph and ${\cal H}_i$ is a hypergraph.
We sometimes use bold font as above
to avoid confusion between 
$\bm{v} \in V({\cal H}_i)$
and $v \in V(H_i) = V(G)$.
We define `time' during the algorithm by a parameter $t$
taking values in a set $\mc{T}$ with the following elements:
$0$ is the start, $t_a$ for $a \in V(T)$
is the time (if it exists) at which some $\phi_w(a)$ 
are defined by choosing a matching $M_a$,
$t_{\hi}$ is the end of HIGH DEGREES,
$t_{\iv}$ is the end of INTERVALS,
$t_{G_0}$ is after choosing $G_0$ and $J_0$,
$t_{**}$ is the end of embedding $A^{**}$,
$t_0$ is the end of EMBED $A_0$,
times $t_i$ and $t^+_i$ for $i \in [i^*]$ 
are just before and just after we extend the
embeddings according to the matching ${\cal M}_i$
(so $t_1$ is the end of DIGRAPH).
For any time $t \ne 0$ we let $t^-$ 
be the time just before $t$.

We write $\mb{P}^t$ and $\mb{E}^t$ for
conditional probability and expectation 
given the history of the algorithm up to time $t$.
For $t \in \mc{T}$ and $w \in W$ let $A_{t,w}$ 
be the set of $w$-embedded vertices at time $t$. 
We write $A_t$ if it is independent of $w$.

We denote the graph remaining after
the approximate decomposition by
$G'_{\ex} = G \sm \bigcup_{w \in W} \phi_w(F)$.

We complete the $T$-decomposition of $G$ 
by the `exact step' algorithms below:
we apply SMALL STARS in Case S, PATHS in Case P,
or LARGE STARS in Case L.

\vspace{-0.1cm} \begin{center} 
SMALL STARS \end{center} \vspace{-0.3cm}

\begin{enumerate}
\item 
For $x \in V(G)$ let $L_x$ 
be the set of all $uw$ where $u$ 
is a leaf of a star in  $P_{\ex}$
with centre $\phi_w^{-1}(x)$.
\item
Let $D$ be a uniformly random 
orientation of $G'_{\ex}$.
While not all $d^+_D(x)=|L_x|$,
choose uniformly random $x,y,z$ 
with $|L_x|>d^+_D(x)$, $|L_y|<d^+_D(y)$,
$z \in N^+_D(y) \cap N^-_D(x)$ 
and reverse $\ova{yz}$, $\ova{zx}$.
\item
For each $x \in V(G)$ in arbitrary order,
define $\phi_w(u)$ for all $uw \in L_x$ by  
$M_x = \{ \{uw,\phi_w(u)\} : uw \in L_x \}
{=}$MATCH$(F_x,\es)$, where
$F_x \sub L_x \times N^+_D(x)$
consists of all $\{uw,y\}$ with $uw \in L_x$, 
$y \in N^+_D(x) \cap N_{J_{\ex}}(w) \sm \im \phi_w$.
\end{enumerate}

\vspace{-0.3cm} \begin{center} 
PATHS \end{center} \vspace{-0.3cm}

\begin{enumerate}
\item 
Call $x \in V(G)$ odd if the parity 
of $d_{G'_{\ex}}(x)$ differs from that
of the number of $w$ such that $x=\phi_w(a)$
where $a$ is the end of a bare path in $P_{\ex}$.
Let $X$ be the set of odd vertices.
Let $a_1 \ell_1$, $a_2 \ell_2$ be the leaf edges
in $P_{\ex}$, with leaves $\ell_1$, $\ell_2$.
Throughout, let $G_{\free} = \{$unused edges$\}$.
\item
Define all $\phi_w(\ell_1)$ by
$M_1 = \{ \phi_w(\ell_1) w: w \in W \}
{=}$MATCH$(B_1,Z_1)$, where 
$Z_1 = \{ \phi_w(a_1)w \}_{w \in W}$
and $B_1 = \{ vw: v \in N_{J_{\ex}}(w),
 v\phi_w(a_1) \in G_{\free} \}$.
\item
Fix $X' \sub X$, $W' \sub W$ 
with $|X'|=|W'|=|X|/2$.
Define $\phi_w(\ell_2)$ for $w \in W'$ by
$M'_2 = \{ \phi_w(\ell_2) w: w \in W' \}
{=}$MATCH$(B'_2,Z'_2)$, where 
$Z'_2 = \{ \phi_w(a_2)w \}_{w \in W'}$
and $B'_2 = \{ vw: w \in W',
 v \in N_{J_{\ex}}(w) \cap X',
 v\phi_w(a_2) \in G_{\free} \}$.
\item
Let $V' = (V \sm X) \cup X'$.
Define $\phi_w(\ell_2)$ for $w \in W \sm W'$ 
by $M_2 = \{ \phi_w(\ell_2) w: w \in W \sm W' \}
{=}$MATCH$(B_2,Z_2)$, where 
$Z_2 = \{ \phi_w(a_2)w \}_{w \in W \sm W'}$
and $B_2 = \{ vw: w \in W \sm W',
 v \in N_{J_{\ex}}(w) \cap V',
 v\phi_w(a_2) \in G_{\free} \}$.
\item
For each $w \in W$ fix 
$8d(x,y)$-paths $P^{xy}_w$
for each $[x,y] \in {\cal Y}_w$
centred in vertex-disjoint
bare $(8d(x,y)+2)$-paths in $P_{\ex}$.
Extend each $\phi_w$ to an embedding
of $P_{\ex} \sm \bigcup_{xy} P^{xy}_w$
so that $\phi_w^{-1}(x)$, $\phi_w^{-1}(y^+)$
are the ends of $P^{xy}_w$,
according to a random greedy algorithm,
where in each step, in any order, 
we define some $\phi_w(a)=z$, 
uniformly at random with 
$z \in J_{\ex}(w) \sm \im \phi_w$
and $zz' \in G_{\free}$ whenever
$z'=\phi_w(b)$ with $b \in N_T(a)$.
\item
Apply Theorem \ref{decompK} to decompose $G_{\free}$
into $(G_w: w \in W)$ such that each $G_w$
is a vertex-disjoint union of $8d(x,y)$-paths
$\phi_w(P^{xy}_w)$, $[x,y] \in {\cal Y}_w$
internally disjoint from $\im \phi_w$.
\end{enumerate}

\vspace{-0.3cm} \begin{center} 
LARGE STARS \end{center} \vspace{-0.3cm}

\begin{enumerate}
\item Let ${\cal S}$ be the union of all maximal
leaf stars in $T$ that have size $\geL$.
Let $F = T \sm {\cal S}$. \\
Let $S$ be the set of star centres of ${\cal S}$
and $S^+ = \{v \in V(T): d_T(v) \geL\}$. \\
Partition $W$ as $W_1 \cup W_2 \cup W_3$
with each $||W_i|-n/3|<1$. For each $v \in V(G)$ 
independently choose exactly one of 
$\mb{P}(v \in U^a_i) = d_{\cal S}(a)/3|{\cal S}|$ 
with $a \in S$, $i \in [3]$. Let $U_i = \bigcup_a U^a_i$.\\
While $\sum_{i=1}^3 ||W_i|-|U_i||>0$ 
relocate a vertex between the $U^a_i$
so as to decrease this sum.
\item Fix an order $\prec$ on $V(F)$ 
starting with some $u_0 \in S^+$ such that
$N_<(u) = \{v \prec u: vu \in F\} = \{u^-\}$
has size $1$ for all $u \ne u_0 \in V(F)$.
Fix distinct $\phi_w(u_0)$, $w \in W$
with $\phi_w(u_0) \in U_i$ whenever $w \in W_i$.
\item Throughout, update $G_{\free} = \{$unused edges$\}$,
the image $\im \phi_w$ of $\phi_w$,
and a digraph $J$ on $V(G)$ consisting 
of all $\ova{yx}$ with $y=\phi_w(a)$ 
and $x \in U^a \cap \im \phi_w$ 
for some $w \in W$, $a \in S$.
\item For each $a \in V(F) \sm \{u_0\}$ 
in $\prec$ order let
$M^a_i = \{ \phi_w(a)w: w \in W_i\}
{=}$MATCH$(B^a_i,Z^a_i)$, $i \in [3]$,
thus defining all $\phi_w(a)$,
with $B^a_i,Z^a_i$ as follows.
\begin{itemize}
\item If $a \notin S^+$ let 
$Z^a_i = \{ \phi_w(a^-)w \}_{w \in W_i}$
and define
$B^a_i \sub U_{i-1} \times W_i$ (with $U_0:=U_3$) by
\[ N_{B^a_i}(w) = A^w_a := 
U_{i-1} \cap N_{G_{\free}}(\phi_w(a^-)) 
\sm \im \phi_w. \]
If $|U_{i-1}|<|W_i|$,
choose $\phi_w(a)w \in B^a_i$
uniformly at random, update $B^a_i$
and remove $w$ from its vertex set.
If $|U_{i-1}|>|W_i|$, remove some 
randomly chosen $u \in U_{i-1}$
from $B^a_i$.
\item If $a \in S^+$ let
$Z^a_i = \big\{ vw: v \in \{ \phi_w(a^-)\} 
\cup (U^a \cap \im \phi_w) \big\}_{w \in W_i}$ 
and define
$B^a_i \sub U_i \times W_i$ by 
$N_{B^a_i}(w) = A^w_a = \bigcup_{b \in S} A^{wb}_a$
where \[  A^{wb}_a =  U^b_i \cap N_{G_{\free}}(\phi_w(a^-))
 \sm \big( \im \phi_w \cup N^+_J(\im \phi_w \cap U^a)
  \cup N^-_J(\phi_w(b)) \big). \]
\end{itemize}
\item Orient $G_{\free}$ as
$D = \bigcup_{w \in W} D_w$,
where for each $xy \in G_{\free}$
with $x \in U^a$ and $y \in U^b$,
if $\ova{xy} \in J$ we have
$\ova{yx} \in D_w$ where $\phi_w(a)=y$,
if $\ova{yx} \in J$ we have
$\ova{xy} \in D_w$ where $\phi_w(b)=x$,
or otherwise we make one of these choices
independently with probability $1/2$.
\item While $\Ss := \sum_{w \in W} \sum_{a \in S}
|d^+_{D_w}(\phi_w(a))-d_{\cal S}(a)|>0$,
we fix $u=\phi_w(a)$ with
$d^+_{D_w}(u) < d_{\cal S}(a)$
and $u'=\phi_{w'}(a')$ with
$d^+_{D_{w'}}(u') < d_{\cal S}(a')$,
and apply a uniformly random 
\emph{$xvz$-move} for $uwu'w'$, defined as follows.
Choose $xvz$ with $\{ \lova{vu}', \lova{zu}',
\ova{vx}, \ova{xu} \} \sub D$ unmoved,
with $x \notin \im \phi_w$,
with $v \notin \im \phi_{w'} \cup \im \phi_{w^x}$
where $\phi_{w^x}(b)=x$, $u \in U^b$,
with $u' \notin \im \phi_{w^v}$
where $\phi_{w^v}(c)=v$, $x \in U^c$,
and with $z \in N^+_{D_{w'}}(u') \sm \im \phi_{w^{u'}}$
where $\phi_{w^{u'}}(d)=u'$, $v \in U^d$.
The $xvz$-move for $uwu'w'$
reverses the path $u'vxu$ in $D$, assigning
$\ova{ux} \in D_w$, $\ova{xv} \in D_{w^x}$,
$\ova{vu}' \in D_{w^v}$ and $\lova{zu}' \in D_{w^{u'}}$. 
\end{enumerate}

\subsection{Preliminaries}

Here we gather some well-known results concerning
concentration of probability and Szemer\'edi regularity,
and also a result on random perfect matchings 
in quasirandom bipartite graphs, which is perhaps new
(although the proof technique via switchings
is somewhat standard).

We start with the following classical 
inequality of Bernstein (see e.g.\ \cite[(2.10)]{BLM})
on sums of bounded independent random variables.
(In the special case of a sum of independent indicator
variables we will simply refer to the `Chernoff bound'.)

\begin{lemma} \label{bernstein}
Let $X = \sum_{i=1}^n X_i$ be a sum of
independent random variables with each $|X_i|<b$.
Let $v = \sum_{i=1}^n \mb{E}(X_i^2)$.
Then $\mb{P}(|X-\mb{E}X|>t) 
< 2e^{-t^2/2(v+bt/3)}$.
\end{lemma}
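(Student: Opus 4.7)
The plan is to prove this by the classical exponential moment (Chernoff) method. The first step is to apply Markov's inequality to the nonnegative random variable $e^{\lambda(X - \mb{E}X)}$ for a parameter $\lambda>0$ to be optimised later, giving
\[ \mb{P}(X - \mb{E}X > t) \le e^{-\lambda t}\, \mb{E} e^{\lambda(X - \mb{E}X)}. \]
By independence the right-hand expectation factors as $\prod_i \mb{E}e^{\lambda Y_i}$, where $Y_i := X_i - \mb{E}X_i$, reducing the problem to bounding each one-variable MGF under the constraints $\mb{E}Y_i = 0$, $|Y_i| \le 2b$, $\mb{E}Y_i^2 \le \mb{E}X_i^2$.

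Next I would Taylor expand $e^{\lambda Y_i} = 1 + \lambda Y_i + \sum_{k\ge 2}(\lambda Y_i)^k/k!$; taking expectations kills the linear term, and for $k \ge 2$ one has $|\mb{E}Y_i^k| \le (2b)^{k-2}\,\mb{E}Y_i^2$. Combining this with the elementary inequality $k! \ge 2\cdot 3^{k-2}$ reduces the tail of the series to a geometric one, yielding (for $\lambda b$ sufficiently small)
\[ \mb{E}e^{\lambda Y_i} \le 1 + \frac{\lambda^2\, \mb{E}Y_i^2}{2(1 - \lambda b/3)}
\le \exp\!\brac{\frac{\lambda^2\, \mb{E}Y_i^2}{2(1 - \lambda b/3)}}. \]
Multiplying over $i$ replaces $\mb{E}Y_i^2$ by $v = \sum_i \mb{E}X_i^2$ (since variances are bounded by second moments), and the inequality propagates because the exponential sum telescopes into a single exponential.

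Finally I would optimise the exponent $-\lambda t + \lambda^2 v/(2(1-\lambda b/3))$ over $\lambda$; the choice $\lambda = t/(v + bt/3)$ is standard and yields exactly the upper-tail bound $\exp(-t^2/2(v+bt/3))$. Applying the same argument to $-X$ (the hypotheses are symmetric under negation) and taking a union bound over the two tails contributes the factor $2$ in the stated inequality. I do not expect a real obstacle here since the whole argument is routine; the only point requiring any care is the constant-chasing in the Taylor bound (absorbing the factor $2$ from $|Y_i|\le 2b$ into the geometric series denominator via the factorial inequality), which is standard in textbook proofs such as~\cite{BLM}.
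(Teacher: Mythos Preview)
The paper does not prove this lemma at all; it is stated as a classical result and simply referred to \cite[(2.10)]{BLM}. Your exponential-moment argument is exactly the standard textbook proof found there, so in spirit you are doing precisely what the paper defers to the reference for.

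There is, however, one genuine slip in your constant-chasing. After centering you have $|Y_i|\le 2b$, and with the factorial bound $k!\ge 2\cdot 3^{k-2}$ the series becomes
\[
\sum_{k\ge 2}\frac{\lambda^k(2b)^{k-2}\,\mb{E}Y_i^2}{k!}
\le \frac{\lambda^2\,\mb{E}Y_i^2}{2}\sum_{k\ge 2}\Big(\frac{2\lambda b}{3}\Big)^{k-2}
= \frac{\lambda^2\,\mb{E}Y_i^2}{2(1-2\lambda b/3)},
\]
not $\lambda^2\mb{E}Y_i^2/\bigl(2(1-\lambda b/3)\bigr)$; the factor $2$ from $|Y_i|\le 2b$ is \emph{not} absorbed by the factorial inequality, contrary to what you assert in your last sentence. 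Optimising then yields $\exp\!\bigl(-t^2/2(v+2bt/3)\bigr)$ rather than the stated bound. The version in \cite{BLM} avoids this by assuming the $X_i$ are already centred (so $|Y_i|=|X_i|\le b$), which is how one gets $bt/3$ in the denominator. For the lemma exactly as the paper states it (no centring hypothesis, $v$ the sum of raw second moments), your argument only delivers the bound with $2bt/3$; this weaker constant is of course entirely adequate for every application in the paper.
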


We also use McDiarmid's bounded differences inequality,
which follows from Azuma's martingale inequality
(see \cite[Theorem 6.2]{BLM}). 

\begin{defn} \label{def:vary}
Suppose $f:S \to \mb{R}$ where $S = \prod_{i=1}^n S_i$
and $b = (b_1,\dots,b_n) \in \mb{R}^n$.
We say that $f$ is \emph{$b$-Lipschitz} if for any 
$s,s' \in S$ that differ only in the $i$th coordinate
we have $|f(s)-f(s')| \le b_i$. 
We also say that $f$ is \emph{$v$-varying} 
where $v=\sum_{i=1}^n b_i^2/4$.
\end{defn}

\begin{lemma} \label{azuma}
Suppose $Z = (Z_1,\dots,Z_n)$ is a sequence 
of independent random variables,
and $X=f(Z)$, where $f$ is $v$-varying.
Then $\mb{P}(|X-\mb{E}X|>t) \le 2e^{-t^2/2v}$.
\end{lemma}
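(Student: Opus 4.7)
The plan is to prove this classical bound via a Doob martingale argument combined with Hoeffding's lemma, using the refinement (due to McDiarmid) that replaces the naive absolute bound on martingale differences by a diameter (range) bound.

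First I would define the Doob martingale $X_i := \mb{E}(X \mid Z_1,\ldots,Z_i)$, so that $X_0 = \mb{E}X$, $X_n = X$, and the martingale differences $D_i = X_i - X_{i-1}$ telescope to $X - \mb{E}X$. Fix $Z_1,\ldots,Z_{i-1}$; by independence of the $Z_j$ and the $b$-Lipschitz property of $f$, for any two values $z,z'$ of $Z_i$,
\[ \mb{E}\bigl[f(Z) \mid Z_1,\ldots,Z_{i-1},Z_i=z\bigr] - \mb{E}\bigl[f(Z) \mid Z_1,\ldots,Z_{i-1},Z_i=z'\bigr] \le b_i, \]
obtained by integrating the pointwise Lipschitz bound against the joint law of $(Z_{i+1},\ldots,Z_n)$. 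Consequently, conditional on $Z_1,\ldots,Z_{i-1}$, the variable $D_i$ has zero mean and is supported in an interval of length at most $b_i$.

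Next I would apply Hoeffding's lemma, which says that any zero-mean random variable supported in an interval of length $b_i$ has moment generating function at most $\exp(\lambda^2 b_i^2/8)$. Iteratively conditioning on $Z_1,\ldots,Z_{i-1}$ for $i=n,n-1,\ldots,1$ and using the tower property gives
\[ \mb{E}\exp\bigl(\lambda(X-\mb{E}X)\bigr) \le \exp\Bigl(\tfrac{\lambda^2}{8}\sum_i b_i^2\Bigr) = \exp(\lambda^2 v/2). \]
A Chernoff/Markov bound $\mb{P}(X-\mb{E}X > t) \le e^{-\lambda t}\,\mb{E}e^{\lambda(X-\mb{E}X)}$, optimised at $\lambda = t/v$, yields $e^{-t^2/(2v)}$; applying the same argument to $-f$ and taking a union bound produces the stated factor of $2$.

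The main obstacle, such as it is for this classical statement, lies in extracting the range bound for $D_i$ rather than settling for $|D_i| \le b_i$: the latter, fed into Azuma's inequality, would give only $2e^{-t^2/(8v)}$, a factor of $4$ worse in the exponent. Avoiding this loss is exactly what the factor $1/4$ in the definition of $v$-varying encodes. Once the range observation is in hand, the remainder is a routine Doob--Hoeffding calculation, essentially as in Theorem~6.2 of \cite{BLM}.
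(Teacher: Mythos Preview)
Your proof is correct and is precisely the standard Doob martingale plus Hoeffding's lemma argument; the paper does not give its own proof but simply cites \cite[Theorem~6.2]{BLM}, which is exactly this argument.
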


We say that a random variable is \emph{$(\mu,C)$-dominated}
if we can write $Y=\sum_{i \in [m]}Y_i$
such that $|Y_i| \leq C$ for all $i$
and $\sum_{i \in [m]}\mb{E}'|Y_i|<\mu$,
where $\mb{E}'|Y_i|$ denotes the expectation
conditional on any given values of $Y_j$ for $j<i$.
The following lemma follows easily from Freedman's inequality~\cite{freedman}.

\begin{lemma}\label{freedman}
If $Y$ is $(\mu,C)$-dominated, then 
$\mb{P}(|Y|>2\mu)<2e^{-\mu/6C}$.
\end{lemma}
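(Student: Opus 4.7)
The plan is to reduce to Freedman's martingale inequality by centring the increments. Given the decomposition $Y=\sum_{i\in[m]}Y_i$ provided by the domination hypothesis, let $\mc{F}_i$ denote the $\sigma$-algebra generated by $Y_1,\dots,Y_i$ and write $Y_i = M_i + P_i$ with $M_i := Y_i - \mb{E}'Y_i$ and $P_i := \mb{E}'Y_i$. Then $(M_i)$ is a martingale difference sequence with respect to $(\mc{F}_i)$, while $(P_i)$ is predictable. Since $|Y_i|\le C$ pointwise, Jensen gives $|P_i|\le \mb{E}'|Y_i|$, and so the hypothesis yields $\bigl|\sum_i P_i\bigr|\le \sum_i \mb{E}'|Y_i| < \mu$ almost surely. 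Consequently, on the event $\{|Y|>2\mu\}$ the martingale $S_m := \sum_{i\le m} M_i$ must satisfy $|S_m|>\mu$.

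To apply Freedman I would next control the step size and predictable quadratic variation of $(S_n)$. Clearly $|M_i|\le |Y_i|+|P_i|\le 2C$, and $\mb{E}'M_i^2 \le \mb{E}'Y_i^2 \le C\cdot \mb{E}'|Y_i|$, so the predictable quadratic variation $V_m := \sum_i \mb{E}'M_i^2$ is bounded above by $C\mu$ almost surely. Freedman's inequality in the form
\[ \mb{P}\bigl(S_n \ge t,\ V_n \le v\bigr)\le \exp\!\bigl(-t^2/2(v+Kt)\bigr) \]
for martingales with differences bounded by $K$, applied with $K=2C$, $t=\mu$, $v=C\mu$, then gives $\mb{P}(S_m\ge \mu)\le \exp(-\mu^2/(6C\mu)) = e^{-\mu/6C}$; combining with the symmetric bound for $-S_m$ yields $\mb{P}(|Y|>2\mu)\le 2e^{-\mu/6C}$.

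There is no real obstacle here beyond arithmetic bookkeeping: the target exponent $\mu/(6C)$ forces the use of the "$v+Kt$" version of Freedman rather than the sharper "$v+Kt/3$" form, and forces the crude bound $|M_i|\le 2C$ that arises from centring. The one point genuinely worth verifying is that the hypothesis $\sum_i \mb{E}'|Y_i|<\mu$ is interpreted as an almost-sure bound — which is how it should read, since $\mb{E}'|Y_i|$ is a random variable depending on the prefix $Y_1,\dots,Y_{i-1}$ — as this is precisely what is needed to convert the probabilistic domination hypothesis into a deterministic bound on $V_m$ before invoking Freedman.
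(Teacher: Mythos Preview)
Your proposal is correct and is precisely the standard derivation the paper has in mind: the paper does not write out a proof but simply states that the lemma ``follows easily from Freedman's inequality,'' and your centring-plus-Freedman argument is exactly that easy derivation. Your remark that the domination hypothesis must be read as an almost-sure bound on $\sum_i \mb{E}'|Y_i|$ is the right reading and the one the paper uses throughout.
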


Next we recall some definitions
(not quite in standard form)
pertaining to Szemer\'edi regularity.
A bipartite graph $B \sub X \times Y$
with $|B|=d|X||Y|$ is \emph{$\eps$-regular}
if $|B[X',Y']|=d|X'||Y'| \pm \eps |X||Y|$
for every $X' \subseteq X$, $Y' \subseteq Y$.
If also $|B(x) \cap Y|=(1 \pm \eps)d|Y|$
and $|B(y) \cap X|=(1 \pm \eps)d|X|$
for all $x \in X$, $y \in Y$ 
then $B$ is \emph{$\eps$-super-regular}.
We will need the well-known `pair condition' 
discovered independently by several pioneers
in the theory of Szemer\'edi regularity
(we refer to \cite{KR} for the history and a
version of the following statement).

\begin{lemma} \label{lem:DLR}
Let $\eps<2^{-200}$ and  
$B \sub X \times Y$ with $|X|=|Y|=m$, where
$|N_B(x) \cap Y|>(d-\eps) m$ for all $x \in X$ and
$|N_B(xx') \cap Y)<(d+\eps)^2 m$ for all but 
$\le 2\eps m^2$ pairs $xx'$ in $X$.
Then $B$ is $\eps^{1/6}$-regular.
\end{lemma}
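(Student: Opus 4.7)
The plan is a standard variance (second-moment) argument driven by the codegree hypothesis. Fix $X' \sub X$ and $Y' \sub Y$; the goal is to show $|B[X',Y']| = d|X'||Y'| \pm \eps^{1/6}m^2$. If $\min(|X'|,|Y'|) < \eps^{1/6}m$ then both quantities in the difference are at most $\eps^{1/6}m^2$, so assume otherwise. Define $f \colon Y \to \mb{R}$ by $f(y) = |N_B(y) \cap X'|$; then $\sum_y f(y) = |B[X',Y]|$ and $\sum_y f(y)^2 = \sum_{x,x' \in X'} |N_B(\{x,x'\}) \cap Y|$ (counted with diagonal, which contributes at most $|X'|m$).

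First I would bound $\sum_y f(y)^2$ via the codegree hypothesis: splitting the off-diagonal pairs in $X' \times X'$ into the at most $2\eps m^2$ bad pairs (each with codegree at most $m$) and the rest (each with codegree less than $(d+\eps)^2 m$), one gets $\sum_y f(y)^2 \le |X'|^2(d+\eps)^2 m + 2\eps m^3 + |X'|m$. Next, using the degree condition $|N_B(x) \cap Y| > (d-\eps)m$ together with $|B| = dm^2$ (so that $\sum_{x \notin X'} |N_B(x) \cap Y| \ge (d-\eps)(m-|X'|)m$), I would deduce the two-sided estimate $|B[X',Y]| = d|X'|m \pm \eps m^2$. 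In particular the mean $\bar f := m^{-1}|B[X',Y]|$ satisfies $\bar f = d|X'| \pm \eps m$.

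The final step is the standard variance bound: for any $Y' \sub Y$, Cauchy-Schwarz gives
\[ \Bigl|\sum_{y \in Y'} f(y) - \tfrac{|Y'|}{m}\sum_{y \in Y} f(y)\Bigr|^2 \le |Y'| \cdot \Bigl(\sum_{y \in Y} f(y)^2 - m\bar f^2\Bigr). \]
Using $\bar f \ge (d-\eps)|X'|$ to cancel the leading $(d+\eps)^2|X'|^2 m$ against $m\bar f^2 \ge (d-\eps)^2|X'|^2 m$, the parenthesised variance is at most $\bigl(4d\eps\,|X'|^2 + 2\eps m^2 + |X'|\bigr)m = O(\eps\, m^3)$. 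Hence the left-hand side is $O(\sqrt{\eps})\,m^2$, and combining with $|B[X',Y]| = d|X'|m \pm \eps m^2$ yields $|B[X',Y']| = d|X'||Y'| \pm O(\sqrt{\eps})\,m^2$, which is substantially stronger than the claimed $\eps^{1/6}m^2$ bound since $\eps < 2^{-200}$.

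The main subtlety, and the reason the one-sided degree condition is essential, is the cancellation in the variance computation: without a lower bound on $|N_B(x) \cap Y|$ one could only say $\bar f \ge 0$, in which case the $(d+\eps)^2 |X'|^2 m$ term would survive and destroy the bound. With the degree condition in place, the leading terms cancel up to an $O(d\eps)$ slack, leaving the desired square-root decay. Everything else is routine arithmetic and the choice of $\eps^{1/6}$ (versus $\sqrt{\eps}$) is a comfortable loss to absorb constants.
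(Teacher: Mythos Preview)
Your argument is correct and is exactly the standard second-moment (defect Cauchy--Schwarz) proof of the pair condition. The paper does not actually give a proof of this lemma: it quotes it as well known and refers the reader to \cite{KR}, so your write-up matches the intended argument behind the citation. One tiny quibble: the diagonal term $|X'|m \le m^2$ is not literally $O(\eps m^3)$ without a mild lower bound on $m$, but after the square root it contributes only $m^{3/2}$, which is absorbed by $\eps^{1/6}m^2$ once $m \ge \eps^{-1/3}$ (and for smaller $m$ the lemma is trivial); this is easily handled and does not affect your conclusion.
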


We also require the following lemma;
the proof is standard, so we omit it.

\begin{lemma} \label{lem:SRLtoH}
Let $n^{-1} \ll \aA \ll \bB
\ll d, r^{-1}, D^{-1}$ and 
$G$ be an $\aA$-super-regular bipartite graph
with parts $X$ and $Y$ of size $\ge n$
and density  $d(G) \ge d$.
Suppose $H$ is a ${\le}r$-multigraph on $Y$
of maximum degree $D$. Then for all but 
at most $\bB |X|$ vertices $x$ we have
$\sum_{e \in H[N_G(x)]} d(G)^{-|e|}
= |H| \pm \bB |Y|$.
\end{lemma}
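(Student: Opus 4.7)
The plan is to prove the lemma via a second-moment / Chebyshev argument applied to the random variable $Z_x := \sum_{e \in H[N_G(x)]} d^{-|e|}$ as $x$ ranges uniformly over $X$, where $d := d(G)$. It suffices to establish $\mb{E}_x Z_x = |H| \pm \beta|Y|/2$ and $\mathrm{Var}_x(Z_x) = o(\beta^3|Y|^2)$, so that Chebyshev's inequality yields at most $\beta|X|$ exceptional $x$ with $|Z_x - |H|| > \beta|Y|$.

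For the first moment, one has $\mb{E}_x Z_x = |X|^{-1}\sum_{e \in H} d^{-|e|}|N_G(e)|$. The key input is the counting lemma / iterated defect Cauchy--Schwarz for $\alpha$-super-regular bipartite graphs: for each $k \le r$, there is $c(r) > 0$ such that all but an $\alpha^{c(r)}$-fraction of $k$-subsets $e \subseteq Y$ satisfy $|N_G(e)| = (d \pm \alpha^{c(r)})^k |X|$. Since $H$ has maximum degree $D$, any atypical $k$-set can appear in $H$ with multiplicity at most $D$, and the deviation contribution from atypical edges to $\sum_e d^{-|e|}|N_G(e)|$ is bounded via an $L^2$ estimate on the co-degree deviation sequence $\sum_{e \subseteq Y, |e|=k}(|N_G(e)|-d^k|X|)^2 = O(\alpha^{c})d^{2k}|X|^2|Y|^k$, itself obtained by counting the appropriate $K_{2,k}$-configurations using the regularity of $G$. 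Combining the typical and atypical contributions yields the first-moment estimate.

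For the variance, $\mb{E}_x Z_x^2 = |X|^{-1}\sum_{e,e' \in H}d^{-|e|-|e'|}|N_G(e\cup e')|$ is split according to $|e \cap e'|$: the disjoint part $(e \cap e' = \emptyset)$ contributes $\approx |H|^2$ by the same counting argument and nearly cancels $(\mb{E}_x Z_x)^2$, while the intersecting pairs -- totalling at most $rD|H| \le rD^2|Y|$ by the max-degree bound, each contributing $O(d^{-2r})$ -- yield $\mathrm{Var}(Z_x) = O(d^{-2r}rD^2|Y|)$, which is much less than $\beta^3|Y|^2$ by the hierarchy $\alpha \ll \beta \ll d, r^{-1}, D^{-1}$. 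The main technical subtlety lies in the first-moment step: a careless bound on the atypical contribution can lose a factor of $1/d$, so one must use the counting-lemma estimate quantitatively together with a Cauchy--Schwarz-type argument balancing the number of atypical edges of $H$ against the $L^2$-control on their co-degree deviations, which is where the interaction between the max-degree hypothesis on $H$ and the super-regularity of $G$ is most delicate.
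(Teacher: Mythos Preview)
The paper omits the proof of this lemma, asserting it is standard. Your variance bound is correct: the contribution from intersecting pairs $e,e' \in H$ is $O_{d,r,D}(|Y|) \ll \beta^3|Y|^2$ since $n^{-1} \ll \beta$. However, the first-moment step does not close. Combining your $L^2$ co-degree estimate $\sum_{e \in \binom{Y}{k}}(|N_G(e)|-d^k|X|)^2 = O(\alpha^c)d^{2k}|X|^2|Y|^k$ with Cauchy--Schwarz and the multiplicity bound $\le D$ only yields
\[
\Bigl|\sum_{e \in H_k}\bigl(d^{-k}|N_G(e)|/|X|-1\bigr)\Bigr| \le \bigl(|H_k|\cdot D\cdot O(\alpha^c)|Y|^k\bigr)^{1/2} = O(\alpha^{c/2})D|Y|^{(k+1)/2},
\]
which for $k \ge 2$ is of order $|Y|^{3/2}$, not $\beta|Y|$. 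The underlying difficulty is that atypical $k$-sets, while a negligible fraction of $\binom{|Y|}{k}$, can still vastly outnumber the $\le D|Y|$ edges of $H$, so an adversarial $H$ may sit entirely on them.

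This gap cannot be repaired under the stated hypothesis: the lemma is false for $r \ge 2$. Pair up $Y$ and let $G$ be the bipartite graph in which both members of each pair receive the same independent random density-$d$ neighbourhood in $X$; this $G$ is whp $o(1)$-super-regular, being the $Y$-side vertex-doubling of a random bipartite graph. With $H$ the pairing matching (so $|H|=|Y|/2$, $D=1$, $r=2$), every $x$ has
\[
Z_x = d^{-2}\cdot \tfrac12|N_G(x)| = (1\pm\alpha)\,d^{-1}|Y|/2,
\]
so $|Z_x - |H|| \approx (d^{-1}-1)|Y|/2 \gg \beta|Y|$ for all $x$. Looking at the paper's two applications, one has $r=1$ (where your argument does close, since all $1$-set degrees are controlled by super-regularity), and the other applies the lemma to a restriction of the ambient $(\xi,s)$-typical graph, for which \emph{every} ${\le}r$-set $e$ satisfies $|N_G(e)| = ((1\pm O(\xi))d)^{|e|}|X|$; under that stronger hypothesis the first moment is trivially $|H|(1\pm O(r\xi))$ and your Chebyshev argument then finishes the proof.
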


Next we present a result
on random perfect matchings 
in super-regular bipartite graphs.
Given $M,Z \sub X \times Y$, 
an $MZMZ$ is a $4$-cycle that
alternates between $M$ and $Z$.
We also write $MZMZ$ 
for the number of $MZMZ$'s.

\begin{lemma} \label{lem:match}
Let $n^{-1} \ll \aA \ll d$ and 
$B,Z \sub X \times Y$ with $|X|=|Y|=n$.
Suppose $Z$ has maximum degree $<n^{.4}$
and $B$ is $\aA$-super-regular 
with density $d(B) \ge d$.
Then there is a distribution on perfect matchings 
$M$ of $B$ with $MZMZ=0$ such that
$\mb{P}(xy \in M) = (1 \pm \aA^{.98})(d(B)n)^{-1}$
for any edge $xy$,
and for any $X' \sub X$, $Y' \sub Y$ whp 
$|M[X',Y']| = |B[X',Y']|(d(B)n)^{-1} \pm n^{.8}$. 
\end{lemma}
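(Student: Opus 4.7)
The plan is to define $M$ as the uniform distribution on $\mc{M} := \{M \sub B : M$ is a perfect matching of $B$ with $MZMZ(M)=0\}$, after first verifying $\mc{M} \ne \emptyset$, and then to extract the marginal and concentration estimates by switching arguments internal to $\mc{M}$. Existence of at least one $M_0 \in \mc{M}$ would come from a random greedy construction on the edges of $B$: at each step, add a uniformly random edge $xy$ such that $x,y$ are unmatched and no already-chosen matching edge $x'y'$ satisfies $\{xy',x'y\} \sub Z$. Because $Z$ has max degree $<n^{.4}$, each accepted edge forbids at most $|Z(x)||Z(y)| \le n^{.8}$ further edges of $B$, which is a lower-order rate relative to the live edge density $\Theta(d n)$; a standard nibble analysis in the flavour of the later Lemma~\ref{lem:wEGJ} should then show the process produces a near-perfect matching w.h.p., with a small alteration step absorbing any residual unmatched vertices.

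For the edge marginals, I would fix $xy \in B$ and double-count \emph{switches} $(M, M')$ with $M \in \mc{M}$, $xy \in M$, where $M'$ is obtained from some $x'y' \in M$ (with $x' \ne x$, $y' \ne y$ and $\{xy',x'y\} \sub B$) by setting $M' = M - \{xy,x'y'\} + \{xy',x'y\}$, conditioned on $M' \in \mc{M}$. Super-regularity of $B$ provides $(1 \pm O(\alpha)) d(B)^2 n$ choices of $x'y'$ with $\{xy',x'y\} \sub B$, and the degree bound on $Z$ caps the number of these for which $M'$ picks up a new $MZMZ$ involving $xy'$ or $x'y$ by $|Z(x)||Z(y')| + |Z(x')||Z(y)| \le 2n^{.8}$. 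Running the count backwards, a reverse switch is available from each $M' \in \mc{M}$ with $xy \notin M'$ with probability $(1 \pm O(\alpha)) d(B)$, with an analogous $O(n^{-.6})$ correction. Equating forward and backward totals yields $\mb{P}(xy \in M) = (1 \pm \alpha^{.98})/(d(B)n)$, the exponent $.98$ absorbing both the super-regularity slack and the $Z$-correction.

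For the concentration of $|M[X',Y']|$, I would extend the above to pairs: an analogous two-edge switching argument gives $\mb{P}(e,f \in M) = (1+O(\alpha^{.98}))/(d(B)n)^2$ for disjoint edges $e,f \in B$, a quasi-independence that bounds the variance of $|M[X',Y']| = \sum_{xy \in B[X',Y']} \mathbf{1}_{xy \in M}$ by $O(n)$; a Chernoff/Chebyshev-type bound then gives a deviation of order $O(\sqrt{n \log n}) \ll n^{.8}$. An alternative is to apply Azuma's inequality (Lemma~\ref{azuma}) to a sequential-exposure martingale along a rapidly mixing switching Markov chain on $\mc{M}$ whose stationary distribution is uniform, using that a single swap changes $|M[X',Y']|$ by at most a constant.

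The main obstacle will be the existence/nonemptiness step: under the unconstrained uniform distribution on perfect matchings of $B$, the expected number of $MZMZ$ configurations is $\Theta(|Z|^2/(d(B)n)^2) = O(n^{.8}/d^2)$, which can be much larger than $1$, so one cannot simply condition on $MZMZ=0$. The degree bound $|Z(x)|<n^{.4}$ is what makes the greedy/nibble construction feasible (the forbidden-edge rate per accepted edge is $n^{-.2}$ relative to live density) and also what keeps the $MZMZ$-correction in the switching ratios at $O(n^{-.2})$, which is precisely the slack required to fit inside the $\alpha^{.98}$ error term in the marginal conclusion.
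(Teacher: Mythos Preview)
Your switching argument via $4$-cycles has a genuine gap, and it is exactly the gap the paper's proof is designed to avoid. For a forward switch removing $xy$ from $M$, you need $x'y' \in M$ with $xy',x'y \in B$; the number of such $x'y'$ is $|M[N_B(y),N_B(x)]|$. This is not determined by super-regularity of $B$: it is a count of matching edges between two prescribed sets, which is precisely the quantity the lemma is trying to control. Your sentence ``super-regularity provides $(1\pm O(\alpha))d(B)^2 n$ choices'' conflates a count over $B$ with a count over $M$. In the reverse direction the situation is even worse: from $M' \not\ni xy$ there is at most one $4$-cycle switch inserting $xy$ (determined by $x_0 = M'^{-1}(y)$, $y_0 = M'(x)$), and it exists iff $x_0y_0 \in B$; there is no ``probability $(1\pm O(\alpha))d(B)$'' here without already knowing the distribution of $M'$. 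So both sides of your double count depend on the matching in an uncontrolled way, and the argument is circular.

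The paper fixes this by switching along $M$-alternating $6$-cycles rather than $4$-cycles. The point is that the count of alternating $6$-cycles (through a given edge, or in total) is a triangle count in an auxiliary tripartite graph each of whose bipartite pieces is a copy of $B\setminus M$; since $B\setminus M$ is $2\alpha$-super-regular regardless of which perfect matching $M$ one removes, the triangle counting lemma makes all the relevant switch counts $(1\pm\alpha^{.99})$-constant across $M$. The paper then works with a stationary distribution of the resulting Markov chain (not the uniform measure on $\mc{M}_0$), so existence of $M$ with $MZMZ=0$ comes for free from an absorbing-class argument: $MZMZ$ is nonincreasing along the chain and strictly decreases with positive probability when positive, so any stationary $\mu$ is supported on $\{MZMZ=0\}$. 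No nibble or greedy construction is needed.

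Your concentration step also falls short of ``whp'' in the paper's sense ($\mb{P} > 1 - e^{-n^c}$): a second-moment bound from pair correlations gives only polynomial tails, and your Azuma alternative would require rapid mixing of the chain, which you have not established. The paper instead partitions $X,Y$ randomly into $\sqrt{n}$ blocks of size $\sqrt{n}$, runs the chain independently in each block, and applies Bernstein's inequality to the sum $\sum_i |M_i[X'_i,Y'_i]|$; this manufactures genuine independence and yields the required sub-exponential tail.
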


\begin{proof}
Let ${\cal M}$ be the set of perfect matchings of $B$.
It is well-known (and easy to see by Hall's theorem)
that ${\cal M} \ne \es$. We consider a Markov chain
on ${\cal M}$ where the transition from any $M \in {\cal M}$
is a uniformly random swap, defined by choosing 
a $6$-cycle $C$ in $B$ that is $M$-alternating
(every other edge is in $M$) and swapping
$C \cap M$ with $C \sm M$, subject to the
new edges $C \sm M$ not forming any new $MZMZ$'s.
It is well-known that every Markov chain
on a finite state space has a stationary distribution
(which is not necessarily unique).
Fix some stationary distribution $\mu$
and let $M \sim \mu$.

To analyse the chain, we start with an estimate
for the number of swaps for any given $M$.
Let $G_M$ be the auxiliary tripartite graph
with parts $X_1,X_2,X_3$ each a copy of $X$,
where for $i \in [3]$, $x_i \in X_i$, 
$x'_{i+1} \in X_{i+1}$ 
we have $x_i x'_{i+1} \in G_M$
if $M(x_i)x'_{i+1} \in B \sm M$ (and $X_4:=X_1$).
Note that $M$-alternating $6$-cycles in $G$
correspond to triangles in $G_M$.
Each $G_M[X_i,X_{i+1}]$ is a copy of $B \sm M$,
so is $2\aA$-super-regular, and so
by the triangle counting lemma $G_M$ 
has $(1 \pm \aA^{.99}) (d(B)n)^3$ triangles.
Each edge in $M$ forms an $MZMZ$
with $\le n^{.8}$ other edges, each forbidding
$\le n$ possible swaps, so the number of swaps
is $(1 \pm \aA^{.99}) (d(B)n)^3 \pm n^{2.8}
= (1 \pm 1.1\aA^{.99}) (d(B)n)^3$.

Next we claim that $\mu$ is supported 
on ${\cal M}_0 := \{M : MZMZ=0\}$.
To see this, first note that in any step 
of the chain $MZMZ$ is non-increasing. 
Also, the $M$-alternating 
$6$-cycles that remove any given $e$ from $M$
correspond to triangles in $G_M$ containing
some given vertex. There are
$(1 \pm \aA^{.99}) d(B)^3 n^2$ such triangles,
of which $\le n^{1.8}$ are forbidden.
Letting $p^{-e}_M$ denote the probability
that $e$ is removed by a transition 
from $M$ we have 
$p^{-e}_M = (1 \pm 2.2\aA^{.99}) n^{-1}$.
In particular, if $MZMZ>0$ then 
it decreases with positive probability.
Thus ${\cal M}_0$ is an absorbing class,
so the claim holds.

Next we estimate $\mb{P}(e \in M)$ 
for any given $e \in B$.
Let ${\cal M}[e] = \{M \in {\cal M}: e \in M\}$.
For $M \in {\cal M} \sm {\cal M}[e]$
let $p^{+e}_M$ denote the probability 
that $e$ is added by a transition from $M$.
The $M$-alternating $6$-cycles 
for adding $e$ correspond to a choice 
in some common neighbourhood
$N_{G_M}(x_1) \cap N_{G_M}(x_2)$.
Thus there are $(1 \pm \aA^{.99})d(B)^2 n$
such $6$-cycles, of which $\le 4n^{.4}$ are forbidden, 
so $p^{+e}_M = (1 \pm 2.2\aA^{.99}) d(B)^{-1} n^{-2}$.
Now \[ \mb{P}(e \in M) 
= \sum_{M \in {\cal M}[e]} \mu_M
= \sum_{M \in {\cal M}[e]} \mu_M (1-p^{-e}_M)
+ \sum_{M \in {\cal M} \sm {\cal M}[e]}
 \mu_M p^{+e}_M,\]
so $\sum_{M\in{\cal M}[e]}\mu_M p^{-e}_M
=\sum_{M\in{\cal M}\sm {\cal M}[e]}\mu_M p^{+e}_M$
and so $(1 \pm 4.5\aA^{.99}) n^{-1}\mb{P}(e \in M)
= d(B)^{-1} n^{-2}\mb{P}(e \notin M)$,
giving $\mb{P}(e \in M)
= (1 \pm 5\aA^{.99})(d(B)n)^{-1}$.

To obtain the final property,
we consider uniformly random partitions
$(X_i: i \in I)$ of $X$
and $(Y_i:i \in I)$ of $Y$
with each $|X_i|=|Y_i|=\sqrt{n} \pm 1$.
We let $M = \bigcup_{i \in I} M_i$
where each $M_i \sim \mu_i$ independently
with $\mu_i$ a stationary distribution
of the above chain for 
$B_i=B[X_i,Y_i]$ and $Z_i=Z[X_i,Y_i]$.
By Chernoff bounds whp each $B_i$
is $1.1\aA$-super-regular with
$d(B_i) = d(B) \pm n^{-.1}$.
By the above analysis, each
$\mb{P}(e \in M) = \sum_i \mb{P}(e \in M_i)
= \sum_i (n^{-1}|X_i|)^2
(1 \pm 5(1.1\aA)^{.99})(d(B_i)|X_i|)^{-1}
= (1 \pm 6\aA^{.99}) (d(B)n)^{-1}$.
 
It remains to estimate 
$|M[X',Y']| = \sum_i M[X'_i,Y'_i]$, where
$X'_i = X' \cap X_i$, $Y'_i = Y' \cap Y_i$.
By Chernoff bounds, whp each 
$B[X'_i,Y'_i] = n^{-1}|B[X',Y']| \pm n^{.76}$,
so \[ \mb{E} |M[X'_i,Y'_i]| 
= (1 \pm 6\aA^{.99}) (d(B)\sqrt{n})^{-1}
(n^{-1}|B[X',Y']| \pm n^{.76}).\]
Also, $\mb{E} |M[X'_i,Y'_i]|^2 
= \mb{E} |M[X'_i,Y'_i]| 
+ \sum_{e \ne e' \in B[X'_i,Y'_i]}
\mb{P}(\{e,e'\} \sub M_i) < 2n$,
as each $\mb{P}(\{e,e'\} \sub M_i)
= (1 \pm 6\aA^{.99})(d(B_i)|X_i|)^{-2}$ 
by similar arguments to those above.
The required estimate for $|M[X',Y']|$
now follows from Lemma \ref{bernstein}.
\end{proof}

We conclude this subsection with a result
on matchings in weighted hypergraphs,
along the lines of the literature stemming
from the R\"odl nibble mentioned in 
the overview above. The following lemma
is a slight adaptation of a convenient
general setting of the nibble recently
provided by Ehard, Glock and Joos \cite{EGJ}.
Given a weighted hypergraph $(H,\oO)$, we call 
a function $f:\tbinom{H}{{\le}r} \to \mb{R}$
clean if $f(I)=0$ whenever $I$ is not a matching.
For $H' \sub H$ let $f(H') = 
 \sum \{ f(E): E \in \tbinom{H'}{{\le}r} \}$,
and $f(H',\oO) = 
 \sum \{ \oO(E) f(E): E \in \tbinom{H'}{{\le}r} \}$,
where $\oO(E) = \prod_{e \in E} \oO(e)$.
For $S,T \in \binom{H}{\le r}$
we also let $f_S(T) = f(S \cup T)$
if $T \cap S = \es$, or $f_S(T)=0$ otherwise.

\begin{lemma} \label{lem:wEGJ}
Let $C^{-1} \ll \aA \ll \bB \ll r^{-1},\ell^{-1}$ and 
$(H,\oO)$ be a weighted ${\le}r$-graph 
with $\oO(e) \ge C^{-1}$ for all $e \in H$,
and $\oO(H[v]) \le 1$, $\oO(H[uv]) < C^{-\bB}$ 
for all $u \ne v \in V(H)$.
Suppose $f$ is a clean function
on $\tbinom{H}{{\le}\ell}$ with
$f_S(H,\oO) \le C^{-\bB} f(H,\oO)$
whenever $S \ne \es$.
Then there is a distribution 
on matchings $M$ in $H$ such that
$f(M) = (1 \pm C^{-\bB}) f(H,\oO)$ 
with probability $\ge 1-e^{-C^\aA}$.
\end{lemma}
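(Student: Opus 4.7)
The plan is to deduce this lemma from the nibble theorem of Ehard, Glock and Joos~\cite{EGJ}, via a weights-to-multiplicities reduction and an identification of the clean function $f$ with a trackable test statistic in their sense. Recall that EGJ produces a random near-perfect matching in a hypergraph of maximum degree $d$ and codegree $o(d/\log^{O(1)} d)$, and gives concentration for appropriate linear statistics (and, in their strengthened form, for statistics over matched $\le \ell$-tuples of hyperedges). Our job is to translate the weighted hypothesis into their unweighted setting.

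First I would build an auxiliary multi-hypergraph $H^\star$ on $V(H)$ by replacing each $e \in H$ with $\lfloor N\oO(e) \rfloor$ parallel copies, where $N$ is an integer chosen with $C \ll N \ll C^{1+\bB/4}$. The lower bound $\oO(e) \ge C^{-1}$ ensures every edge has at least one copy in $H^\star$; the hypotheses $\oO(H[v]) \le 1$ and $\oO(H[uv]) < C^{-\bB}$ translate to maximum degree at most $(1+o(1))N$ and maximum codegree at most $NC^{-\bB}$, which places $H^\star$ safely inside EGJ's regime. Applying their theorem and then projecting the resulting matching back to $H$ yields a random matching $M$ satisfying $\mb{P}(E \sub M) = (1 \pm C^{-\bB'})\prod_{e \in E}\oO(e)$ for each matching $E \in \tbinom{H}{\le \ell}$, with $\bB' > 0$ depending only on $\bB$ and $\ell$. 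In particular, since $f$ is clean, $\mb{E}\, f(M) = (1 \pm C^{-\bB'}) f(H,\oO)$.

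For concentration, I would invoke the EGJ tracking conclusion on the weighted sum $f(M) = \sum_{E \sub M,\, |E|\le \ell} f(E)$. Their trackability hypothesis requires precisely that no fixed partial choice of hyperedges carries too large a share of the total weight, and this is exactly the hypothesis $f_S(H,\oO) \le C^{-\bB} f(H,\oO)$ for $S \ne \es$ supplied in the lemma. The conclusion $f(M) = (1 \pm C^{-\bB}) f(H,\oO)$ with failure probability at most $e^{-C^\aA}$ then follows directly. The main obstacle, and essentially the only substantive bookkeeping, is to verify that the multiplicity-replacement $H \mapsto H^\star$ preserves the $f_S$-bound when $f$ is pulled back to $\tbinom{H^\star}{\le \ell}$: an $\ell$-tuple $E \sub H$ lifts to $\prod_{e \in E}\lfloor N\oO(e)\rfloor$ distinct tuples in $H^\star$, and one must check that the spread condition survives this inflation and continues to sit inside the window allowed by EGJ. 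Once this dictionary is in place, the lemma follows without further combinatorial input.
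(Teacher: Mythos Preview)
Your proposal is correct and follows essentially the same approach as the paper: a weights-to-multiplicities reduction (the paper uses multiplicity $\lfloor \DD^2 \oO(e) \rfloor$, you use $\lfloor N\oO(e)\rfloor$ with $C \ll N$, which amounts to the same thing) followed by a direct appeal to the EGJ nibble theorem, noting that EGJ's proof produces a distribution on matchings before any union bound and that the $\le \ell$ case reduces to the $=\ell$ case. The paper's own justification is in fact just a three-bullet sketch of precisely these points, so your write-up is if anything more detailed than what appears there.
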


The proof of Lemma \ref{lem:wEGJ} 
is essentially the same as that of
\cite[Theorem 1.3]{EGJ},
with a few modifications as follows.
The statement in \cite{EGJ}: 
\begin{itemize}
\item applies to unweighted hypergraphs
of maximum degree $\DD$ 
and maximum codegree $<\DD^{1-\bB}$;
our version can be reduced to this version
by considering a multihypergraph
where the multiplicity of an edge $e$
is $\bfl{\DD^2 \oO(e)}$, say.
\item gives
a (deterministic) matching $M$ satisfying
the required conclusion for a suitably small
set of functions $f$; this is obtained
by proving the existence of a distribution
on matchings as in our statement 
and taking a union bound,
\item applies to functions on $\tbinom{H}{\ell}$, 
from which a version for 
functions on $\tbinom{H}{{\le}\ell}$
is easily deduced.
\end{itemize}

\subsection{Tree partition}

We start our analysis of the algorithm
by considering the subroutine TREE PARTITION.

\begin{lemma} \label{lem:T}
We can choose a case in \{L,S,P\}
for $T$ and we have $i^*<i^+$.
Also, $|A_0| \le 6\eps n$, each $|A^a_i| \ge \DD$ 
and $|A^{\circ_j}_i| \ge \dD n$ if non-empty,
with $A^a_i \cap A^{a'}_{i'} = \es$ for $ai \ne a'i'$.
\end{lemma}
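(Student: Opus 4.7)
The lemma packages several assertions, which I will address in order of difficulty. The main obstacle will be verifying that every tree $T$ satisfies at least one of the three cases; this requires a careful combinatorial dichotomy. The remaining items follow from a more routine analysis of the algorithm and the span construction.

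\emph{Case selection.} Suppose neither Case L nor Case S applies. Then fewer than $p_-n$ vertices of $T$ lie in leaf stars of size $\le\LL$, while more than $p_+n$ vertices lie outside leaf stars of size $\geL$. Writing $L_{<\LL}, L_{\geL}$ for the leaves in small/large stars and $S_{\geL}$ for the centres of large stars, I have $|L_{<\LL}|<p_-n$ and $|L_{\geL}|+|S_{\geL}|<|V(T)|-p_+n$. Since each large star has $\geL$ leaves, $|S_{\geL}|\le |L_{\geL}|/\LL$ is negligible compared to $n$. The classical handshake identity $2|L|+|V_2|\ge |V(T)|+2$ (with $L$ the leaves and $V_2$ the degree-$2$ vertices of $T$) gives $|V_{\ge 3}|\le |L|-2$, so the number of maximal bare paths in $T$ is $B=|V(T)|-1-|V_2|=|L|+|V_{\ge 3}|-1\le 2|L|-3$, with total length $|V(T)|-1$. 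Extracting $\bfl{\ell_i/(8K)}\ge\ell_i/(16K)$ disjoint bare $8K$-subpaths from each maximal bare path of length $\ell_i\ge 8K$, the case $|L|$ small follows directly. When $|L|$ is large, most of $L$ lies in $L_{\geL}$; the reduced subforest $T\sm L_{\geL}$ then has more than $p_+n$ vertices, fewer than $p_-n+O(|L_{\geL}|/\LL)$ leaves, and inherits all maximal bare $T$-paths not passing through a large-star centre. The same extraction yields $\ge p_+n/(100K)$ disjoint bare $8K$-paths in $T$, i.e., Case P.

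\emph{Bound on $i^*$.} At each step $i$, $F^*[B_i]$ is a subforest, so by handshaking at most $|B_i|/2$ of its vertices have degree $>3$ there; at most $\pmax\cdot|E(F^*)|\le\pmax n$ vertices of $B_i$ violate condition (b). Hence $|C'_i|\ge|B_i|/2-\pmax n$. Since $F^*[C'_i]$ is bipartite, its maximum independent set has size $\ge|C'_i|/2$, giving $|C_i|\ge|B_i|/4-\pmax n/2$ and thus the recursion $|B_{i+1}|\le(3/4)|B_i|+\pmax n/2$. This reaches the stopping value $|C_i|<\eps n$ within $1+\log_{4/3}\eps^{-1}<4\log\eps^{-1}<i^+=7\log\eps^{-1}$ iterations.

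\emph{Size of $A_0$, lower bounds, and disjointness.} Since $|A^\DD|\le 2|T|/\DD=p(n-1)/\DD$ with $\DD=n^c$, I have $|A^*|\le 5|A^\DD|\ll\eps n$. The stopping analysis above gives $|B_{i^*+1}|\le 4\eps n+2\pmax n$, so after step 3 of TREE PARTITION, $|A_0|\le 5(|A^*|+|B_{i^*+1}|)=O(\eps n)$, with the constant tunable to $\le 6$ given the parameter hierarchy. Step 4 adds at most $\sum_{j}O(i^+\dD_j n)\ll\eps n$ more vertices. The bound $|A^a_i|\ge\DD$ is the defining condition of $A^\DD_i$. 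For $|A^{\circ_j}_i|\ge\dD n$ when non-empty: the sub-loop for $j$ in step 4 terminates with each non-empty $A^{\circ_j}_i$ of size $\ge\dD_j n$; subsequent sub-loops $(j'>j)$ shrink these sets by only $O(i^+\dD_{j'}n)$ vertices, much less than the gap $\dD_j n-\dD n$, since $\dD_j=\dD^{.1j+.6}$ is strictly decreasing in $j$ with $\dD_4=\dD$. Finally, for $A^a_i\cap A^{a'}_{i'}=\es$ when $(a,i)\ne(a',i')$: if $i\ne i'$ this follows from the disjointness of the $A_i$; if $i=i'$ and $a\ne a'$, any common $u$ would be $F$- and hence $T$-adjacent to both $a,a'\in A_0$, and since $T$ is a tree the unique $a$-$a'$ path passes through $u\notin A_0$, placing $a,a'$ in distinct components of $T[A_0]$; then adding $u$ to $A_0$ via span (with $|S'|=1\in[4]$) would strictly reduce this component count, contradicting the span-closure defining $A_0$.
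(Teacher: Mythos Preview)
Your proposal is correct and follows essentially the same route as the paper: reduce to $T'=T\sm L_{\geL}$ for case selection and apply a bare-path count (the paper cites \cite[Lemma~4.1]{MPS2} rather than sketching the extraction), establish geometric decay of $|B_i|$ for $i^*<i^+$, and use span-closure of $A_0$ for disjointness and the $|A^{\circ_j}_i|$ lower bounds. Two minor remarks: the case split on $|L|$ is superfluous since the reduction to $T'$ always applies, and the transfer from bare $T'$-paths to bare $T$-paths should be phrased forwards as in the paper (discard the at most $|S_{\geL}|\le n/\LL$ extracted paths containing a large-star centre; your ``inherits'' clause states the reverse implication).
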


\begin{proof}
To see that we can choose a case for $T$,
we suppose that $T$ does not satisfy
Case L or Case S, and show
that it must satisfy Case P.
Here we rely on the well-known fact that
any tree with few leaves must have many vertices
in long bare paths (we will use the precise statement
given by \cite[Lemma 4.1]{MPS2}).
Let $T'$ be the tree obtained from $T$ by removing
all leaf stars of size $\geL$.
Then $|V(T')| \ge p_+ n$, 
as $T$ does not satisfy Case L.

We claim that $T'$ has $<2p_- n$ leaves.
To see this, let ${\cal S}$ be the set 
of maximal leaf stars of $T'$.
For each $S \in {\cal S}$ obtain $S'$
from $S$ by deleting all leaves of $T'$
that are not leaves of $T$.
Note that $|S'| \le \LL$, or we would have
removed $S$ when defining $T'$. 
Then $\sum_{S \in {\cal S}} |S'| < p_- n$,
as $T$ does not satisfy Case S.
Also, $\sum_{S \in {\cal S}} |S \sm S'| < n/\LL$
as each leaf in any $S \sm S'$ is the centre 
of a leaf star in $T$ of size $\geL$.
The claim follows.

Now \cite[Lemma 4.1]{MPS2} implies that $T'$ has 
$>p_+ n/50K$ vertex-disjoint bare $8K$-paths.
At most $n/\LL$ of these contain the centre of some
star removed when obtaining $T'$ from $T$,
so $>p_+ n/100K$ are bare paths of $T$, as required.

Next we bound $i^*$. Recall that at step $i \ge 1$
we let $B_i = V(F^*) \sm \bigcup_{j<i} C_j$
and $C'_i$ be the set of $v \in B_i$
with $d_{F^*[B_i]}(v) \le 3$ and 
$d_{F^*[\bigcup_{j<i} C_j]}(v) \le \pmax^{-1}$.
We have $|C'_i| > |B_i|/3 - 2\pmax n$,
as $<2\pmax n$ vertices fail the second condition,
and the set $X$ of vertices failing the first condition
satisfies $3|X| \le \sum_{x \in X}d_{F^*[B_i]}(v) < 2|B_i|$.
Next we let $C_i$ be a maximum independent set in $F^*[C'_i]$;
we have $|C_i| \ge |C'_i|/2$ as trees are bipartite.
If $|C_i|<\eps n$ we let $i^*=i-1$ and stop,
otherwise we proceed to the next step,
noting that $|C_i| > |B_i|/7$.
There can be at most $i^+ = 7\log \eps^{-1}$ steps,
otherwise we would continue past a step $i$ 
with $|B_i| < (6/7)^{i^+} n < \eps n$.

For the remaining statements,
we first note that the bounds for
and disjointness of the sets $A^a_i$
are immediate from the algorithm
and the definition of $A_0$ as a span.
Finally we consider step (iv) of TREE PARTITION.
For each $j \in [4]$, there are at most $i^+$
steps where we move some $A^{\circ_j}_i$ 
to $A_0$ if it has size $< \dD_{j} n$,
thus adding $< 5\dD_{j} n$ vertices to $A_0$
after including any forced by the definition as a span.
Note that by choice of the order $\circ_1,\dots,\circ_4$
it is not possible for some $A^{\circ_j}_i$
to be moved to $A_0$ and then 
to reappear at a later step.
At the end of the process, 
any surviving $A^{\circ_j}_{i}$ has size
$|A^{\circ_j}_i| > \dD_{j}n-\sum_{j'>j}5i^+\dD_{j'} n
\ge \dD_j n (1-5(4-j)\dD^{-.1}i_+) \ge \dD n$.
This completes the proof.
\end{proof}

\subsection{High degrees}

Continuing through the algorithm, the following
lemma shows that the subroutine HIGH DEGREES
is whp successful, and the image of each embedding
is well-distributed with respect 
to common neighbourhoods in $G$.

\begin{lemma} \label{lem:hi}
whp HIGH DEGREES does not abort, and
$\mb{P}^{t_a^-}(\phi_w(a) \in N_G(S) ) 
= (1 \pm \dD)p^{|S|}$
for any $a \in A^*$, $w \in W$ 
and $S \sub V(G)$, $|S| \le s$.
\end{lemma}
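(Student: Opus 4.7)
The plan is to induct on $a \in A^*$ in $\prec$ order, showing at each step simultaneously that MATCH does not abort and that the desired conditional-probability estimate holds for $\phi_w(a)$. Throughout the induction I maintain the auxiliary invariant that, whp at time $t_a^-$, every vertex $v \in V(G)$ has degree at most $n^{1-c/2}$ in the \emph{used graph} $H_{<a} := \bigcup_{w \in W}\phi_w(T[A^* \cap {<}a])$. Since $n_* \gtrsim n^{.2c}$, this bound is much smaller than any $p^{|N_<(a)|} n_*$, so the restrictions imposed at each step by past embeddings are locally negligible.

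First I would dispose of the preliminaries. A Chernoff bound with a union bound over the $n^{O(s)}$ choices of $S$ with $|S| \le s$ shows whp that the random partition of $V$ (and of $W$) in step (ii) preserves typicality, i.e.\ $|N_G(S) \cap V_{v^*}| = (1\pm n^{-.1})p^{|S|} n_*$ for every part (similarly for $V_0$). The cyclic-shift step (i) does not abort because its constraints forbid only $O(d|A^*|)$ positions for the distance condition and $O(|N_<(a)|\cdot|F[A^*]|) = O(n^{1-c})$ for the distinct-distance condition, whereas $m \gtrsim n^{1-.2c}$ since $|T[A^*]| \le |A^*| \le 5n/\DD$.

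For the inductive step at $a$ with $S_w := \{\phi_w(b):b \in N_<(a)\}$, I would verify that each $B^{w^*}_a$ (and similarly $B^0_a$) is $\aA$-super-regular with density $\approx p^{|N_<(a)|}$: for $w \in W_{w^*}$,
\[
N_{B^{w^*}_a}(w) = \bigl( N_G(S_w) \cap V_{x_a+w^*}\bigr) \setminus \bigl(\im\phi_w \cup \{v : \phi_w(b)v \text{ used for some } b \in N_<(a)\}\bigr),
\]
whose main term has size $(1\pm\xi^{.9})p^{|N_<(a)|}n_*$ by the preserved typicality, and whose corrections are bounded by $|A^*| + |N_<(a)|\cdot n^{1-c/2}\cdot n_*/n$, much smaller than $p^{|N_<(a)|}n_*$. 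A symmetric calculation handles the $V$-side. The pair condition for Lemma \ref{lem:DLR} follows from a conditional second-moment argument: the joint spread of $(\phi_w(b): b \in N_<(a))$ across $w$, inherited from the $|M[X',Y']|$ concentration in Lemma \ref{lem:match} applied to prior matchings, bounds the number of $w$ whose images of $N_<(a)$ meet any fixed pair $vv'$. Super-regularity then allows Lemma \ref{lem:match} to produce $M_a$ without aborting, with $\mb{P}(vw \in M_a) = (1\pm\aA^{.98})(d(B^{w^*}_a)n_*)^{-1}$. Summing over $v \in N_G(S)\cap N_{B^{w^*}_a}(w)$ and using $d(B^{w^*}_a) \approx p^{|N_<(a)|}$ yields
\[
\mb{P}^{t_a^-}(\phi_w(a) \in N_G(S)) = (1\pm\dD)p^{|S|},
\]
after absorbing the $O(n^{-1})$ error from $S\cap S_w \neq \es$; the case $w \in W_0$ is analogous on $B^0_a$.

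To close the induction I maintain the used-degree invariant. The increment at a fixed $v$ from embedding $a$ equals $|N_<(a)|\cdot|\{w:\phi_w(a)=v\}| + \sum_{b\in N_<(a)}|\{w:\phi_w(b)=v\}|$, which has expectation $\le 8$ by the marginal quasirandomness just proved. Summing over $a' \prec a$ in $A^*$ gives expected used degree $O(|A^*|) = O(n^{1-c})$, and Freedman's inequality (Lemma \ref{freedman}) keeps this concentrated at scale $n^{1-c/2}$ whp. The principal obstacle is exactly this invariant: since $A^*$ contains vertices of very large $T$-degree, a sloppy embedding could concentrate many used edges at specific images and spoil the super-regularity of some later $B_a$. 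What saves us is the final concentration property of Lemma \ref{lem:match}, which forces each $M_{a'}$ to distribute its edges approximately uniformly across $V\times W$, so that Freedman delivers the uniform control on maximum used degree that the induction requires.
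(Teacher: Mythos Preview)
Your inductive framework is right and your treatment of $B^0_a$ would go through: since $n_0 \gtrsim n^{1-.1c}$, the global used-degree bound is negligible against $p^{|N_<(a)|}n_0$. The genuine gap is in your treatment of the pieces $B^{w^*}_a$ on the tiny parts $V_{x_a+w^*}$, which can have size $n_*$ as small as $\Theta(n^{.2c})$. Your correction term ``$|N_<(a)|\cdot n^{1-c/2}\cdot n_*/n$'' multiplies the used degree at $\phi_w(b)$ by $n_*/n$, but this scaling is unjustified: the partition $(V_{v^*})$ is fixed \emph{before} the matchings $M_{a'}$ are chosen, and those matchings depend on the partition, so you cannot argue by Chernoff over the partition that used edges spread proportionally. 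Without the scaling, the raw used-degree bound $O(n^{1-c})$ dominates $p^{|N_<(a)|}n_* = O(n^{.2c})$ and super-regularity fails outright. The paper closes this gap structurally, not probabilistically: the condition $d(x_a,x_{a'}) \ne d(x_b,x_{b'})$ from step~(i) forces every previously used edge $\phi_{w'}(b)\phi_{w'}(b')$ to join two parts whose index-difference is $x_b-x_{b'} \ne \pm(x_a-x_{a'})$, so no used edge can run from $\phi_w(a') \in V_{x_{a'}+w^*}$ into $V_{x_a+w^*}$. Thus the ``unused'' constraint on $B^{w^*}_a$ is vacuous, and similarly $\im\phi_w \cap V_{x_a+w^*} = \emptyset$ because distinct shifts $x_{a''}$ place all earlier images in other parts. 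This is the point of the otherwise mysterious distinct-distance requirement in step~(i).

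Two smaller remarks. First, your used-degree invariant needs no Freedman: each $M_{a'}$ is a perfect matching on $V\times W$, so every vertex is the image of $a'$ for exactly one $w$, and the used degree at any $v$ after embedding all of $A^*$ is deterministically $\le 2|T[A^*]| \le 10n/\DD$. Second, your ``conditional second-moment'' sketch for the pair condition is vague where the paper is concrete: the distinct shifts also imply that the image sets $\phi_w(N_<(a))$ for $w \in W_{w^*}$ are pairwise disjoint, so the auxiliary hypergraph $H^t_{w^*} = \{\phi_w(N_<(a)\cap A_t): w \in W_{w^*}\}$ is a matching. This is exactly what allows Lemma~\ref{lem:SRLtoH} to estimate $|B^{w^*}_a[V',W']| = \sum_{v \in V'}|H'[N_G(v)]|$ directly, giving full regularity rather than going through Lemma~\ref{lem:DLR}. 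The inductive control of $|H^t_{w^*}[N_G(v)]|$ (the bad event $\mc{B}_t$ in the paper) is where the $|M[X',Y']|$ concentration from Lemma~\ref{lem:match} is actually used.
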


We make some preliminary observations
before giving the proof. 
First, we write the proof assuming $|N_{<}(a)| \le 4$
for all $a \in A^*$ rather than using our real bound
$|N_{<}(a)| \le 1$, so that it is more obvious
how to apply the same proof to obtain 
Lemma~\ref{lem:rga}.
We note that
the choices of $x_a$ for $a \in A^*$ are possible.
Indeed, at each step, we forbid $\le 6d|A^*| \le 30d n/\DD$  
choices of $x_a$ with $d(x_a,x_{a'}) \le 3d$ 
for some $a' \prec a$, and $< 5 n/\DD$ 
choices with $d(x_a,x_{a'}) = d(x_b,x_{b'})$ 
for some $a' \in N_<(a)$ and $bb' \in F[{<}a]$.
We also note the following estimate for common
neighbourhoods, which is immediate from
a (hypergeometric) Chernoff bound:
whp for any $S \sub V(G)$ with $|S| \le s$ 
and $X=V_0$ or $X=V_{v^*}$ with $v^* \in V^*$ 
we have $|N_G(S) \cap X|
= ((1  \pm 1.1\xi)p)^{|S|} |X|$.
 
\begin{proof}
We can condition on partitions of $V$ and $W$ 
satisfying the above estimates for $|N_G(S) \cap X|$.
First we consider the choices of $(M^*_a: a \in A^*)$,
which are independent of $(M^0_a: a \in A^*)$.
For each $a \in A^*$, $w^* \in W^*$, $v^* = x_a + w^*$
we will show that Lemma \ref{lem:match} applies
to choose $M^*_a[V_{v^*},W_{w^*}]
{=}$MATCH$(B^{w^*}_a,Z^{w^*}_a)$,
where $B^{w^*}_a \sub V_{v^*} \times W_{w^*}$  
satisfies $B^{w^*}_a(w) = 
N_{G_{t_a^-}}(\phi_w(N_<(a))) \cap V_{v^*} \sm \phi_w({<}a)$,
where $G_t \subseteq G$ is the graph of unused edges at time $t$.
For each $v \in V$ there is a unique edge
$w \phi_w(a) \in M_a$ with $\phi_w(a)=v$,
which uses $|N_<(a)|$ edges at $v$,
so $G \sm G_t$ has maximum degree 
$\le |T[A^*]| \le 5n/\DD$.
Note that the constraint that
$\phi_w(a)\phi_w(a')$ is unused for all $a' \in N_<(a)$
is automatically satisfied as
$d(x_a,x_{a'}) \ne d(x_b,x_{b'})$ for all $bb' \in F[{<}a]$.

We also note that
$Z_a = \{\phi_w(b)w:b \in N_<(a)\}$
has maximum degree $\le 4$.

At time $t$, let 
$H^t_{w^*}$ be the hypergraph on $V(G)$ with edges 
$e^t_w = \phi_w(N_<(a) \cap A_t)$ for $w \in W_{w^*}$.
Note that $H^t_{w^*}$ is a matching, 
as $M^*_b$ is a matching for each $b \in N_<(a)$ 
and $\phi_w(b) \in V_{w^*+x_b}$ with distinct $x_b$.
We let ${\cal B}_t$ be the `bad' event that 
$|H^t_{w^*}[N_G(v)]| \neq
((1 \pm 7\xi) p)^{|N_<(a) \cap A_t|} n_*$
for some $v \in V(G)$, $a \in A^*$, $w^* \in W^*$.
We let $\tau$ be the smallest $t$ such that
${\cal B}_t$ occurs, or $\infty$ if there is no such $t$.
We fix $a \in A^*$ and bound $\mb{P}(\tau=t_a)$.

We claim that $B^{w^*}_a$ is $\xi'$-super-regular
of density $(1 \pm 5\xi) p^{|N_<(a)|}$.
To show this, we first lighten our notation,
writing $t=t_a^-$ and $H=H^t_{w^*}$, which has 
edges $e_w = \phi_w(N_<(a))$ for $w \in W_{w^*}$.
Any $v \in V_{v^*}$ has degree 
$|H[N_{G_t}(v)]|\pm |\{w \in W_{w^*}: v \in \phi_w({<}a)\}|
= |H[N_G(v)]|$
as $v\phi_w(a')$ is unused for all $a' \in N_<(a)$
and all $w \in W_{w^*}$, and $v= \phi_w(a'')$
for some $a'' \in {<}a$ and $w \in W_{w^*}$ iff
$x_a=v^*-w^*=x_{a''}$.
As ${\cal B}_t$ does not hold for $t<t_a$,
$v$ has degree
$((1 \pm 7\xi)p)^{|N_<(a)|} n_*$.
Any $w \in W_{w^*}$ has degree
$|N_G(\phi_w(N_<(a))) \cap V_{v^*}|
= ((1  \pm 1.1\xi)p)^{|N_<(a)|} n_*
= (1 \pm 5\xi) p^{|N_<(a)|} n_*$.
For any $V' \sub V_{v^*}$, $W' \sub W_{w^*}$ 
we have $|B^{w^*}_a[V',W']| = \sum_{w \in W'}
|N_G(\phi_w(N_<(a)) \cap V'|
= \sum_{v \in V'} |H'[N_G(v)]|$,
where $H' = \{ e_w : w \in W'\}$, so
$|B^{w^*}_a[V',W']|
= p^{|N_<(a)|} |V'||W'| \pm \xi' n_*^2 $
by Lemma \ref{lem:SRLtoH}. This proves the claim.

Thus Lemma \ref{lem:match} applies, giving
$\mb{P}^{t_a^-}(vw \in M^*_a) 
= (1 \pm \dD/2) (p^{|N_<(a)|} n_*)^{-1}$
for any $vw \in B^{w^*}_a$,
so $\mb{P}^{t_a^-}(\phi_w(a) \in N_G(S) )  
= (1 \pm \dD)p^{|S|}$
for any $w \in W_{w^*}$ and $S \subseteq V(G)$,
$|S| \le s$.

To bound $\mb{P}({\cal B}_{t^a})$, note that
$H^t_{w^*}$ only changes when we choose
$M^*_b$ for $b \in N_<(a)$. 
Fix $v$ and write
$W^t = \{w: e^t_w \in H^t_{w^*}[N_G(v)] \}$,
where $t = t^-_b$.
For any $w \in W^t$ we have 
$e^{t_b}_w \in H^{t_b}_{w^*}$ 
iff $\phi_w(b) \in N_G(v)$,
so $|H^{t_b}_{w^*}|=|M^*_b[W^t,N_G(v)]|$,
which by Lemma \ref{lem:match} is whp 
$|B^{w^*}_a[N_G(v),W^t]|
((1 \pm 5\xi) p^{|N_<(a)|}n)^{-1} \pm n^{.8}
= (1 \pm 7\xi)p|W^t|$.
Thus whp ${\cal B}_{t^a}$ does not hold
for any $a$, so $\tau=\infty$.

Now we consider the choice of 
$M^0_a{=}$MATCH$(B_a^0,Z_a^0)$,
where $Z_a = Z_a[V_0,W_0]$
and $B_a \sub V_0 \times W_0$ is defined by
$B_a^0(w) = N_{G_t}(\phi_w(N_<(a))) \cap V_0
 \sm \phi_w({<}a)$.
Let $H^t_a$ be the hypergraph on $V_0$ with edges 
$e^t_w = \phi_w(N_<(a) \cap A_t) \cap V_0$ for $w \in W_0$.
Let ${\cal B}'_t$ be the `bad' event 
that for some $a \in A^*$ 
we have some $|H^t_a[N_{G_t}(v)]| \neq 
((1 \pm \xi') p)^{|N_<(a) \cap A_t|} n_0$.
We let $\tau'$ be the smallest $t$ such that
${\cal B}'_t$ occurs, or $\infty$ if there is no such $t$.
We fix $a \in A^*$ and bound $\mb{P}(\tau'=t_a)$.

Similarly to the arguments for $M^*_a$,
as ${\cal B}'_t$ does not hold,
$B_a$ is $\xi'$-super-regular
of density $(1 \pm 5\xi) p^{|N_<(a)|}$,
using the maximum degree bound on $G \sm G_t$
to estimate common neighbourhoods 
$|N_{G_t}(S) \cap V_0|$ for degrees of $w \in W_0$,
recalling that $n_0 > n\DD^{-.1}/2$,
and estimating $|B_a[V',W']| 
= \sum_{v \in V'} |H'[N_{G_t}(v)]|$
by Lemma \ref{lem:SRLtoH} applied to $G_t$
and $H' = \{ e_w^t: w \in W' \}$,
which has maximum degree at most $16$,
as for each $b,b' \in N_<(a)$
and $w \in W_0$ there is a unique $w' \in W_0$
with $\phi_w(b)=\phi_{w'}(b')$.

Again we similarly deduce
$\mb{P}^{t_a^-}(vw \in B_a) 
= (1 \pm \dD/2) (p^{|N_{<}(a)|}n_0)^{-1}$
for any $vw \in B_a$,
so $\mb{P}^{t_a^-}(\phi_w(a) \in N_G(S) )  
= (1 \pm \dD)p^{|S|}$
for any $w \in W_0$ and $S \subseteq V(G)$,
$|S| \le s$.
To bound $\mb{P}({\cal B}'_{t^a})$, note that 
the same argument as for $M^*_a$ gives 
whp $|H^t_a[N_G(v)]| =
((1 \pm \xi'/2) p)^{|N_<(a) \cap A_t|} n_0$,
and by the maximum degree bound on $G \sm G_t$
we can replace $G$ by $G_t$ in this estimate,
changing $\xi'/2$ to $\xi'$.
Thus whp ${\cal B}'_{t^a}$ does not hold
for any $a$, so $\tau'=\infty$, as required.
\end{proof}

Note that $G \sm G^*$ has maximum degree 
$\le |T[A^*]| < 5n/\DD$, as for each $v \in V(G)$
there is a unique edge
$w \phi_w(a) \in M_a$ with $\phi_w(a)=v$,
which uses $|N_<(a)|$ edges at $v$.
Thus $G^*$ is $(1.1\xi,s)$-typical,
so whp the graphs $G_0$ and $G_1$ defined in 
EMBED $A_0$ are $(1.2\xi,s)$-typical.

We omit the proof of the following lemma,
as it is similar to and simpler than the previous.

\begin{lemma} \label{lem:rga}
For any $a \in A_0 \sm A^*$, $w \in W$, 
$x,y \in V(G)$, writing $A^w_a$ for the set
of $y$ such that $\phi_w(a)=y$ is possible
given the history at time $t_a^-$,
whp $\mb{P}^{t_a^-}(\phi_w(a)=y)  
= (1 \pm D^{-.9} \pm \aA_0^{.9}1_{a \in A'_0})
|A^w_a|^{-1}$, so whp every
$\mb{P}^{t_a^-}(\phi_w(a) \in N_{G_1}(x) )  
= (1 \pm D^{-.9} \pm \aA_0^{.9}1_{a \in A'_0}) p_1$.
\end{lemma}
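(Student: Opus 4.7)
The plan is to follow the template of Lemma \ref{lem:hi}, proceeding by induction in the $\prec$-order on $a \in A_0 \sm A^*$ and maintaining the inductive good event that all previously chosen matchings $M_b$ satisfy the conclusion of the lemma and that the images $\im \phi_w$ are well-distributed with respect to common neighbourhoods in $G_0$. Recall that for $a \in A_0$ we have $|N_<(a)| \le 3$ (at most one earlier neighbour in each of $A^*, A^{**}, A'_0$), so the auxiliary graph $Z_a$ has bounded maximum degree, and the total number of edges of $G$ removed up to time $t_a^-$ is at most $|T[A_0]| \le 6\eps n$, i.e.\ $G \sm G_0$ has maximum degree bounded by $n^{o(1)}$.

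First I would verify that, on the inductive good event, $B_a \sub V \times W$ (defined by $B_a(w) = N_{J_0}(w) \cap \bigcap_{b \in N_<(a)} N_{G_0}(\phi_w(b)) \sm \im \phi_w$) is $\xi'$-super-regular with density $(1 \pm D^{-1}) p_0^{|N_<(a)|+1}$. For each $w$, the degree $|B_a(w)|$ is controlled by typicality of $G_0$, quasirandomness of $J_0$ (Chernoff applied to its random construction), and the fact that $|\im \phi_w| \le |A_0| \le 6\eps n$ is small relative to $|B_a(w)|$. For each $v$, the codegree $|\{w : v \in B_a(w)\}|$ reduces, via the inductive hypothesis applied to each $\phi_w(b)$, to a count of common images of $N_<(a)$ that can be handled by Lemma \ref{lem:SRLtoH}. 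Once super-regularity is established, Lemma \ref{lem:match} applies, giving $\mb{P}^{t_a^-}(vw \in M_a) = (1 \pm \xi'^{.98})(d(B_a) n)^{-1}$, and since $A^w_a = B_a(w)$ has size $(1 \pm \xi') d(B_a) n$, this yields the required estimate $\mb{P}^{t_a^-}(\phi_w(a) = y) = (1 \pm D^{-.9}) |A^w_a|^{-1}$ for $a \in A^{**}$ (where only vertices of $A^* \cup A^{**}$ have been embedded).

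The second conclusion, about $\phi_w(a) \in N_{G_1}(x)$, then follows by summing $\mb{P}^{t_a^-}(\phi_w(a)=y)$ over $y \in N_{G_1}(x) \cap A^w_a$. The count $|N_{G_1}(x) \cap A^w_a| = (1 \pm D^{-.9} \pm \aA_0^{.9} 1_{a \in A'_0}) p_1 |A^w_a|$ is obtained from the $(1.2\xi, s)$-typicality of $G_1$ (noted after Lemma \ref{lem:hi}) together with the description of $A^w_a$ as a bounded-order common neighbourhood in $G_0$ intersected with $N_{J_0}(w) \sm \im \phi_w$, using again Lemma \ref{lem:SRLtoH} to handle the joint typicality.

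The main obstacle is propagating the inductive good event through the embedding of $A'_0$. Here the bounded but potentially $\Omega(1)$ degrees in $F[A'_0]$ mean that the Lipschitz/Azuma concentration of $|H^t_a[N_{G_t}(v)]|$ used in the proof of Lemma \ref{lem:hi} weakens: concentration only holds for all but an $\aA_0$-fraction of vertices $v$, with error $\aA_0^{.9}$ on the remaining ``bad'' set, which is exactly what the additional $\aA_0^{.9} 1_{a \in A'_0}$ term in the statement records. I would localise this weaker control using the regularity-based Lemmas \ref{lem:DLR} and \ref{lem:SRLtoH} to still conclude super-regularity of $B_a$ with multiplicative error $(1 \pm \aA_0^{.9})$, after which Lemma \ref{lem:match} goes through as before with the enlarged error term, and the inductive step closes via a union bound over the $|A'_0| \le 6\eps n$ steps.
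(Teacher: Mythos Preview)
Your approach is essentially what the paper intends: it omits the proof, saying only that it is ``similar to and simpler than'' Lemma~\ref{lem:hi}, so following that template---tracking super-regularity of $B_a$ inductively in $\prec$-order and then applying Lemma~\ref{lem:match}---is exactly right.

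One correction worth making: the extra $\aA_0^{.9}$ term for $a \in A'_0$ does not come from concentration failing on a bad fraction of vertices. The $\pm n^{.8}$ concentration in Lemma~\ref{lem:match} holds uniformly, and the hypergraph $H_a$ of images $\phi_w(N_<(a))$ has maximum degree $\le |N_<(a)| \le 3$ in all cases, so the ``$|H^t_a[N_{G_0}(v)]|$'' count is well-controlled for every $v$. The larger error instead comes directly from the perturbation: by the time we process $a\in A'_0$ we have $|\im\phi_w|$ and the maximum degree of used $G_0$-edges of order $\aA_0 n$, as opposed to order $n/D$ while processing $A^{**}$ (note $D$ is a large constant, not $n^{o(1)}$ as you wrote). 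This shifts the super-regularity parameter of $B_a$ from $O(D^{-1})$ to $O(\aA_0)$, and Lemma~\ref{lem:match} then outputs the corresponding error $\aA_0^{.98}\le\aA_0^{.9}$. So you do not need to invoke a bad-vertex argument here; the uniform error bound already gives what the lemma claims.
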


\subsection{Intervals}

Next we record some properties of the subroutine
INTERVALS that are needed for the exact step in Case P
(handled by the subroutine PATHS). We omit the proof,
which is essentially the same as that of the corresponding
lemma in \cite{factors} (the only change is the deletion
of the negligible sets $\phi_w(A^*)$).
We say that $S \sub [n]$ is \emph{$d$-separated}
if $d(a,a') \ge d$ for all distinct $a,a'$ in $S$.
For disjoint $S,S' \sub [n]$ we say $(S,S')$
is \emph{$d$-separated} if $d(a,a') \ge d$ 
for all $a \in S$, $a' \in S'$.

\begin{lemma} \label{lem:INT} In Case P,
\begin{enumerate}
\item $\mb{P}^{t_{\hi}}(x \in \ovXw) = \ov{p}_w \pm \DD^{-.9}$
for all $w \in W$ and $x \in V(G)$, 
\item any subset of 
$\{ \{x \in \ovXw\} : w \in W, x \in V(G) \}$  
is independent if it does not include any pair
$\{x \in \ovXw\}$, $\{x' \in \ovXw\}$
with $d(x,x') \le 3d$,
\item whp $|\mc{Y}(I)| = t_i 
= \tfrac{(1-\eta)|P_{\ex}|}{8(2s+1)d_i} \pm n\DD^{-.9}$ 
for all $I \in \mc{I}^i$, $i \in [2s+1]$,
\item whp all $|Y_w| = (1-\eta)|P_{\ex}|/8 \pm n\DD^{-.9}$,
\item for any $U \subseteq V(G)$, 
whp for any disjoint $R,R' \sub W$ of sizes $\le s$ we have
\[ \bsize{U \cap N^-_{J_{\iv}}(R) \cap N^-_{\ov{J}}(R')}
= |U| (\tfrac{1}{8}(1-\eta)p_{\ex})^{|R|}
\prod_{w \in R'}\ov{p}_w \pm n\DD^{-.1} \]
where  $J_{\iv}=\{\ova{xw}: x \in Y_w\}$ and
$\ov{J}=\{\ova{xw}: x \in \ov{X}_w\}$,
\item whp for any disjoint $S,S' \sub V$ of sizes $\le s$,
\begin{align*}
& \text{If } S \cup S' \text{ is } 3d\text{-separated then } 
|\{w: S \sub Y_w, S' \sub \ovXw\}| 
= \sum_{w \in W} (\tfrac{1}{8}(1-\eta)p_{\ex})^{|S|} \ov{p}_w^{|S'|}
 \pm n\DD^{-.1}, \\
& \text{If } (S,S') \text{ is } 3d\text{-separated then } 
|\{w: S \sub Y_w, S' \sub \ovXw\}|   
 \ge 2^{-2s} n (\tfrac{1}{8}(1-\eta)p_{\ex})^{|S|}.
\end{align*}
\end{enumerate}
\end{lemma}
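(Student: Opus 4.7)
The plan is to analyse INTERVALS by conditioning on the state at time $t_{\hi}$ (which fixes $\phi_w(A^*)$) and working with the fresh INTERVALS randomness: the choices $i(w),j(w)$, the Bernoulli inclusions of each interval in $\mc{A}_w$ and $\mc{X}_w$, and the balancing deletions. I would first establish the marginals and independence (parts (i) and (ii)) by direct computation, then derive concentration of the individual counts $|\mc{X}(I)|$, $|\mc{Y}(I)|$ and $|Y_w|$ (parts (iii) and (iv)) via Chernoff and Bernstein, and finally combine these through sum concentration to obtain the joint counting statements (parts (v) and (vi)). The proof follows closely that of the corresponding lemma in \cite{factors}; the only modification is an additional deletion of $\phi_w(A^*)$ of size $\le 5n/\DD$, which is absorbed into the stated error terms.

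For (i), fix $w$ and $x$, and note that $x\in\ovXw$ iff $x\notin\phi_w(A^*)$, $I_x\notin\mc{X}_w$, and $I_{x^-}\notin\mc{X}_w$, where $I_y$ denotes the unique interval of $\mc{I}^{i(w)}_{j(w)}$ containing $y$. Each $\mb{P}(I\in\mc{X}_w)$ equals $\tfrac{1}{8}(1-\eta)p_{\ex}$ (the product of $\mb{P}(I\in\mc{A}_w)=1/2$, the probability $1/4$ that both neighbours of $I$ are absent from $\mc{A}_w$, and the final Bernoulli trial). Averaging over $i(w),j(w)$ and comparing to $n^{-1}|\ovXw|$ yields the stated estimate. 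For (ii), distinct $w$ carry independent randomness. For the same $w$ with $d(x,x')>3d$, the four intervals $I_x,I_{x^-},I_{x'},I_{(x')^-}$ are pairwise distinct since each has size $\le d_1 = d$; thus conditional on $i(w),j(w)$ the events $\{x\in\ovXw\}$ and $\{x'\in\ovXw\}$ depend on disjoint Bernoulli bits and are independent. Since the marginals do not depend on $i(w),j(w)$ by symmetry of the averaging, unconditional independence follows.

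Parts (iii) and (iv) reduce to standard concentration. For fixed $I\in\mc{I}^i$, $|\mc{X}(I)|$ is the sum over $w\in W_i$ of i.i.d.\ Bernoullis, and $|W_i|$ itself concentrates around $n/(2s{+}1)$. Two applications of Chernoff give the claimed concentration at scale $n\DD^{-.9}$, preserved after deleting the $\le 5n/\DD$ intervals meeting $\phi_w(A^*)$ and setting $|\mc{Y}(I)|=t_i$ in the balancing step. For (iv), $|Y_w|$ is a sum over intervals of $|I|\mb{1}[I\in\mc{Y}_w]$ with per-summand bound $d$; Bernstein's inequality gives concentration at scale $\sqrt{nd}\ll n\DD^{-.9}$ since $d=n^{c'}$ with $c'\ll c$.

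Parts (v) and (vi) are the main technical challenge. The count is $\sum_{x\in U}\mb{1}[E_x]$ for (v), where $E_x$ is the intersection of $\{x\in Y_w\}$ over $w\in R$ and $\{x\in\ovXw\}$ over $w\in R'$; for (vi) the outer sum is over $w\in W$. Each summand's probability factorises by (ii) into the marginals from (i), giving the claimed expectation. Concentration is the delicate point: for fixed $w$, indicators $\mb{1}[x\in Y_w]$ for different $x$ are correlated through the shared $i(w),j(w)$ and through the Bernoulli attached to each common interval, so a naive independence bound fails. My plan is to apply McDiarmid's bounded differences inequality, tracking that each single random bit influences at most $d$ vertices (the maximum interval length) and that only $|R\cup R'|\le 2s$ of the $w$'s contribute coordinates to $E_x$. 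The total Lipschitz variance is then $O(nds)$, yielding concentration at scale $\sqrt{nds}\ll n\DD^{-.1}$. For the lower bound in the second part of (vi) when only $(S,S')$ is $3d$-separated, I would observe that on the event that $i(w)$ is sufficiently large, all of $S$ lies in a single interval, so $\{S\sub Y_w\}$ collapses to a single Bernoulli while $\{S'\sub\ovXw\}$ is forced through disjoint randomness, together contributing a factor of at least $2^{-2s}$ to the joint probability.
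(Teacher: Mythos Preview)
Your overall strategy matches the paper's (which defers the proof to \cite{factors}), but there are genuine gaps in the execution. First, a parameter slip: $d$ sits in the constant hierarchy $K^{-1}\ll d^{-1}\ll D^{-1}$ and is independent of $n$, not $n^{c'}$. More seriously, your McDiarmid argument for (v)--(vi) ignores the coordinates $i(w),j(w)$: flipping one of these replaces the entire interval partition for that $w$, giving Lipschitz constant $|U|$ and destroying the bound. The ingredient you omit is the random identification of $V(G)$ with $[n]$ (declared in the algorithm just before the interval families $\mc{I}^i_j$ are defined). Under this identification any fixed $U\sub V(G)$ becomes a uniformly random subset of $[n]$, so whp $|U\cap P^i_j|=|U|/d_i\pm n^{.6}$ for every $i,j$; only then is the conditional expectation given $(i(w),j(w))_{w\in R\cup R'}$ within $n\DD^{-.1}$ of the target, after which McDiarmid on the Bernoulli bits works. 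Without this step the claim is simply false for $U$ concentrated on a single residue class modulo some $d_i$.

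Your argument for the lower bound in (vi) is also backwards: since $d_i=d/(2s)^{i-1}$, larger $i(w)$ gives \emph{smaller} intervals, making it harder for $S$ to lie in one interval. The actual argument exploits the $2s{+}1$ geometrically separated scales to find, for any $|S|\le s$, a scale at which no two points of $S$ fall into adjacent-but-distinct intervals; this is precisely why $2s{+}1$ scales are used. Finally, your justification for (ii) is incomplete: distinctness of $I_x,I_{x^-},I_{x'},I_{(x')^-}$ is not enough, because membership in $\mc{S}_w$ also reads the $\mc{A}_w$-bit of each interval's two neighbours, so you must check that these enlarged dependence sets are disjoint (they are, under $d(x,x')>3d$, but your sentence does not establish it). And note that the conditional marginal $\mb{P}(x\in\ovXw\mid i(w),j(w))$ \emph{does} depend on $j(w)$, via whether $x$ is an interval left endpoint, so your ``symmetry'' step to unconditional independence is not valid as written.
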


\subsection{Digraph}

Our next lemma summarises various properties of
the decompositions of $G$ and $W \times V(G)$
constructed in the subroutine DIGRAPH.
Many of these properties are straightforward
consequences of the definition and Chernoff bounds.
The most significant conclusion is part (viii),
showing that the high degree digraph $H$ 
allocates roughly the correct number of edges 
to each vertex $x$ for each role $ai$
where $i \in [i^*]$, $a \in A^\DD_i$.
For each such $ai$ we let $M'_{ai}$ 
consist of all $v^*w^*$ with some label $\ell_{aij}$,
where $v^*w^* \in M'_\LL$ if $(a,i) \in Q^\LL$ or 
$v^*w^*\ell_{aij} \in {\cal M}$ if $(a,i) \in Q^\DD$.

We write $G_{\ex}$ for the underlying graph of 
$\ova{G}_{\ex}$ and define other underlying graphs similarly.
We define $J^{K'}_{\ex}$ by $J^{K'}_{\ex}[V,W]=J_{\iv}
=\{ \ova{xw}: x \in Y_w\}$ and
$\ova{xy} \in J^{K'}_{\ex}[V] \Lra \ova{xy}^- \in J^K_{\ex}[V]$,
thus removing the `twist': if for some edge $xy$ of $G_{\ex}$
we add $\ova{xy}^-$ to $J^K_{\ex}$
then we add $\ova{xy}$ to $J^{K'}_{\ex}$.

\begin{lemma} \label{lem:DI} $ $
\begin{enumerate}
\item $\mb{P}^{t_{\hi}}(xy \in \GG) = d^*(\GG)/p$
independently for each $xy \in G^*$,
where $\GG \in \{G_{\ex},G^{gg'}_{ii'},G'_i\}$,
and $d^*(G_{\ex})=2p_{\ex}$, 
$d^*(G^{gg'}_i)=p^{gg'}_{ii'}$
and $d^*(G'_i)=p_{\max}$,
\item
each $\mb{P}^{t_{\hi}}(\ova{xw} \in \Psi)=d^*(\Psi)$ for 
$\Psi \in \{J^{\hi},J^{\lo}_i,J^{\no}_i,J_{\ex},J'_i\}$,
where $d^*(J^g_i)=\aA^g_i$ for $g \in \{\lo,\no\}$,
$d^*(J^{\hi})=\aA_{\hi}$, $d^*(J_{\ex})=p'_{\ex}$, $d^*(J'_i)=\pmax$, 
\item any subset ${\cal E}$ of the events in (i) and (ii)
is conditionally independent given any history
of the algorithm at time $t_0$ if it has no pairs 
that are equivalent or mutually exclusive,
\item whp each $\GG$ as in (i) is $(1.2\xi,s)$-typical 
of density $d(\GG) = d^*(\GG) \pm \DD^{-.9}$,
\item for any $w \in W$, $u \in V(F)$,
distinct $v_1,\dots,v_{s'} \in N_F(u)$ with $s' \le s$
and $x_1,\dots,x_{s'} \in V(G)$ whp 
$|N^-_{J_u}(w) \cap \bigcap_{i=1}^{s'} 
 N^+_{\ova{G}_{uv_i}}(x_i)|
= |A_u| \prod_{i=1}^{s'} (1 \pm 1.2\xi)p_{uv_i}$,
\item in Case P, for all disjoint $S_-,S_+ \subseteq V$
and $R \subseteq W$ each of size $\le s$,
for any $k,k_-,k_+ \in \{0,K'\}$,
writing $\ell_0=7/8$ and $\ell_{K'}=1/8$, we have
\[  \Big| N^-_{J^k_{\ex}}(R) \cap 
N^+_{J^{k_+}_{\ex}}(S_+) \cap 
N^-_{J^{k_-}_{\ex}}(S_-) \Big|
= (\ell_{k_-}p_{\ex})^{|S_-|}(\ell_{k_+}p_{\ex})^{|S_+|}
(\ell_k p_{\ex})^{|R|}n \pm \eta^{.9}n. \]
Also $\big| W \cap N^+_{J_{\iv}}(S_-) \cap 
N^+_{J_{\ex}^0}(S_+) \big|$ is 
$(p_{\ex}/8)^{|S_-|} (7p_{\ex}/8)^{|S_+|}n \pm \eta^{.9}n$
if $S_- \cup S_+$ is $3d$-separated, or 
is $\ge 2^{-3s} (p_{\ex}/8)^{|S_-|} 
(7p_{\ex}/8)^{|S_+|}n$ if $(S_-,S_+)$ is $3d$-separated,
\item whp each $|M'_{ai}|/M^a_i \in (1-n^{-c'},1]$,
\item whp all $d^\pm_{H^a_i}(x)$, $d^+_{J^a_i}(x)$
and $d^-_{J^a_i}(w)$
are $(1 \pm \dD)|A^a_i|$ and
$N^-_{J^a_i}(w) \cap N^-_{J^{a'}_{i'}}(w) =
H^a_i \cap H^{a'}_{i'} = \es$ whenever $ai \ne a'i'$.
\end{enumerate} 
\end{lemma}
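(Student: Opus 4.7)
The plan is to verify the eight parts in a common framework: first handle (i)--(iii) directly from the construction, then derive (iv)--(vi) by concentration given (iii), and finally address (vii)--(viii) using the structural matching results and careful counting.

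Parts (i)--(ii) are essentially by definition: DIGRAPH.vi--vii specifies the probability of each event, and the displayed estimates just before DIGRAPH verify that probabilities of mutually exclusive options sum to at most $1$, so the construction is well-defined. Part (iii) holds because, conditional on the history up to $t_0$ (which fixes $G_0$, $G_1$, the partitions, and the matchings $M'_{\LL},{\cal M}$ underlying $M^h_{\leL},M^h_{\geL}$), the remaining coin flips in DIGRAPH.vi--vii act independently per edge $xy$ or arc $\ova{xw}$: events on distinct edges/arcs are independent, while events on the same edge/arc are mutually exclusive.

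Given (i)--(iii), (iv)--(vi) reduce to Bernstein/Chernoff concentration. For (iv), write $|N_\GG(S)| = \sum_{v \notin S}\mathbf{1}[\{vx : x \in S\} \subseteq \GG]$; by (iii) these $|S|$-fold products of inclusion indicators are independent across $v$, with expectation $(d^*(\GG)/p)^{|S|}$. Since $G^*$ inherits $(1.1\xi,s)$-typicality from $G$ (because $G \sm G^*$ has maximum degree at most $5n/\DD$), Lemma~\ref{bernstein} yields polynomial-error concentration, and a union bound over the polynomially many $S$ of size $\le s$ gives the stated typicality of each $\GG$, including the density estimate. Part (v) is entirely analogous, combining typicality of $J_u$ in the $W$-direction with that of each $\ova{G}_{uv_i}$ in the $V$-direction. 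For (vi), Lemma~\ref{lem:INT}(v)--(vi) supplies the $J_{\iv}$-counts, and the independent $\tfrac78$--$\tfrac18$ split on $\ova{G}_{\ex}$ in DIGRAPH.viii combines with the identifications $J^{K'}_{\ex}[V,W]=J_{\iv}$ and $\ova{xy}\in J^{K'}_{\ex}[V] \Lra \ova{xy}^- \in J^K_{\ex}[V]$ to produce the stated product formula.

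For (vii), I use two different matching results. When $(a,i) \in Q^\LL$, the labelled multigraph $B'_{\LL}$ is a properly edge-coloured bipartite multigraph (each $M'_a$ is a perfect matching, and distinct copies carry distinct labels $\ell_{aij}$), so the rainbow matching theorem of Bar\'at--Gy\'arf\'as--S\'ark\"ozy~\cite{BGS} yields $M'_{\LL}$ of near-maximum size, giving $|M'_{ai}|/M^a_i \ge 1 - n^{-c'}$. When $(a,i) \in Q^\DD$, the weighted $3$-graph $({\cal H},\oO)$ has each edge weight $m^{-1}$, weighted vertex degrees $1 \pm o(1)$, and codegrees of order $m^{-1+o(1)}$, so Lemma~\ref{lem:wEGJ} applies; choosing clean functions that count matching edges at each label $\ell_{aij}$ delivers $|M'_{ai}|/M^a_i \in (1-n^{-c'},1]$.

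The main obstacle is (viii), the degree bounds and disjointness for $H^a_i$ and $J^a_i$. Fix $x \in U_h$; an in-arc $\ova{yx}$ lies in $H^a_i$ iff $y \in N^-_{\ova{G}_1}(x)$ lies in a component $P$ of $D^h_x$ whose label $\hi_P$ matches the class of the unique edge $yw \in M^h_{\hi_P}$ with $x \in \ovXw$, that edge carries some label $\ell_{aij}$, and the later thinning of probability $2\aA_{\hi}/p_1\ov{p}_w$ keeps $\ova{yx}$. The cyclic-shift construction of $M^h_{\geL},M^h_{\leL}$ distributes each label $\ell_{aij}$ across $h \in [m]$ uniformly up to $n^{-c'}$ error (by (vii) and the near-equality of $M^a_i$ with $m^a_i/p_{\circ}$), and together with the independent labels $\hi_P$ and the random orientation $\ova{G}_1$, the expected in-degree of $H^a_i$ at $x$ comes out to $|A^a_i|$ up to $\dD$-small error. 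Each component of $D^h_x$ has bounded size (by the codegree cap on $B'_\DD$ coming from our parameter hierarchy), so Freedman's inequality (Lemma~\ref{freedman}) applied to the independent contributions from components and thinning choices gives the $(1\pm\dD)|A^a_i|$ bound whp. The bounds on $d^+_{J^a_i}(x)$ and $d^-_{J^a_i}(w)$ follow by the same analysis applied on the $W$-side, and disjointness is free from the bookkeeping: each arc of $H$ inherits its unique label $\ell_{aij}$ from the witnessing edge in $M^h_{\hi_P}$, placing it in a single $H^a_i$, and the same label tag on its $J$-counterpart yields $N^-_{J^a_i}(w) \cap N^-_{J^{a'}_{i'}}(w) = \es$ whenever $ai \ne a'i'$.
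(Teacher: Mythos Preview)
Your treatment of (i)--(vii) is correct and tracks the paper's proof closely: (i)--(iv) are unwound from the construction plus Chernoff on the $(1.1\xi,s)$-typical graph $G^*$, (v)--(vi) use the two-step concentration via Lemma~\ref{lem:INT} and then DIGRAPH, and (vii) uses \cite{BGS} for $Q^\LL$ and Lemma~\ref{lem:wEGJ} for $Q^\DD$ exactly as in the paper. (One small slip: at time $t_0$ the matchings $M'_\LL,{\cal M}$ are not yet fixed --- they are chosen in DIGRAPH --- but this does not affect (iii), which only needs INTERVALS and the $J_0$/$G_0$ split to be done.)

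There are two genuine gaps in your argument for (viii). First, the bound on component sizes in $D^h_x$ is not a consequence of any ``codegree cap on $B'_\DD$''. The graph $D^h_x$ has maximum degree $\le 2$ (since $M^h_{\geL}$ and $M^h_{\leL}$ are matchings), so it is a union of paths and cycles, but nothing in the $B'_\DD$ structure prevents these from being long. The paper's argument is different: $D^h_x$ is the restriction of a fixed max-degree-$2$ graph to the random vertex set $N^-_{\ova{G}_1}(x)$, and since each $y \in N_{G_1}(x)$ lies in this set independently with probability $1/2$ under the random orientation, the induced components have size $<\log^2 n$ whp. This is what gives the Lipschitz constant for Lemma~\ref{azuma}.

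Second, you do not actually address $d^+_{H^a_i}(y)$ (equivalently $d^-_{J^a_i}(w)=d^+_{H^a_i}(\phi_w(a))$), and ``the same analysis on the $W$-side'' does not work. The in-degree argument fixes $x\in U_h$ and sums over the $|M'_{ai}|$ label-carrying edges in $M^h_\circ$; the out-degree argument is structurally different. For fixed $y$ one must count the number of $h\in[m]$ for which $M^h_\circ$ contains an edge $yw$ with some label $\ell_{aij}$, and the crucial point --- which is precisely what the cyclic-shift design buys you --- is that this count is \emph{deterministically} $|M'_{ai}|$ (or $M^a_i$ if $y\in V_0$): each $v^*w^*\in M'_{ai}$ determines a unique shift $h$ placing $y$ in $V_{v^*+h}$. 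Only after this deterministic count does one apply Chernoff over $x\in U_h\cap N^+_{\ova{G}_1}(y)$ and the independent choices $\hi_x$. Without this observation the out-degree estimate is unproved.
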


\begin{proof}
We start by briefly justifying statements (i--iv),
which are fairly straightforward from the definition
of the algorithm. The outcome of HIGH DEGREES 
determines $G^*$ at time $t_{\hi}$,
where $G \sm G^*$ has maximum degree $<|A^*| \le 5n/\DD$.
For each $xy$ independently, 
we include it in $G_0$ with probability $p_0/p$.
Excluding $<6dn$ such $xy$ with $d(x,y) \le 3d$,
the remainder have $\mb{P}(xy \in G_1)=1-p_0/p=p_1/p$.
In DIGRAPH.iv each is then directed as
$\ova{xy}$ or $\ova{yx}$ each with probability $1/2$,
and then in (vi) independently included 
in at most one $\GG$ as in (i) with probability
$d^*(\GG)/p_1$, so with overall probability $d^*(\GG)/p_1$.
We note that $xy$ may instead be included in $H$,
again independently for all edges.
This justifies statement (i), and then (iv) is
immediate by typicality and Chernoff bounds.

For (ii), we start the calculation for each
$\mb{P}^{t_{\hi}}(\ova{xw} \in \Psi)$
by multiplying $\ov{p}_w \pm \DD^{-.9}$ 
for the event $\{x \in \ovXw \}$ and
then $p_1=1-p_0$ for $\{\ova{xw} \notin J_0 \}$.
This gives $\ov{p}_w p_1$, which equals $p_{xw}$
if $w$ is not some $w(\ova{yx})$,
and then we put $\ova{xw} \in \Psi$ with
probability $d^*(\Psi)/p_{xw}$,
giving an overall probability $d^*(\Psi)$.
On the other hand, if $w$ is some $w(\ova{yx})$
then we include $\ova{xw}$ in $J^{\hi}$ with probability
$2\aA_{\hi}/p_1\ov{p}_w$, so with 
overall probability $2\aA_{\hi}$,
or otherwise $\ova{xw}$ is available for other $\Psi$
with probability $\ov{p}_w p_1 - 2\aA_{\hi}$,
which we define to be $p_{xw}$ in this case,
giving the same overall probabilities
for $\mb{P}^{t_{\hi}}(\ova{xw} \in \Psi)$.

For (iii), we emphasise that we only have
conditional independence given the history
at time $t_0$, rather than independence,
due to the dependence between
$\{x \in \ovXw\}$ and $\{y \in \ovXw\}$ 
when $d(x,y) \le 3d$. This still suffices
to prove concentration statements in two steps:
first showing concentration of the conditional
expectation under the random choices in INTERVALS,
and then concentration under the random
choices in DIGRAPHS.
We illustrate this for (v),
omitting the similar proof 
via Lemma \ref{lem:INT} of (vi).
For any $3d$-separated $Y \sub V$,
for each $y \in Y$ independently we have
$\mb{P}(y \in \ovXw) = \ov{p}_w \pm \DD^{-.9}$,
so by Chernoff bounds whp
$|Y \cap \ovXw| = \ov{p}_w |Y| \pm 2\DD^{-.9}n$.
Then for each $y \in Y \cap \ovXw$ we have
$\mb{P}^{t_{\iv}}(y \in N^-_{J_u}(w) 
 \cap \bigcap_{i=1}^{s'} 
 N^+_{\ova{G}_{uv_i}}(x_i))
= \aA_u \ov{p}_w^{-1} \prod_{i=1}^{s'} p_{uv_i}$, 
By partitioning $\bigcap_{i=1}^{s'}  N_G(x_i)$
into $3d$-separated sets we deduce
(v) by a Chernoff bound.

For (vii), first note that for $\circ \in \{\leL,\geL\}$, 
if $m_{\circ} \neq 0$, then 
$m_{\circ} \ge \dD m$, so $p_{\circ} \ge \dD$.
We also recall that
$\sum_{(a,i) \in Q^\LL}M_i^a = m$.
As $B'_{\LL}$ is a union of $m$ 
matchings each of size $m$,
by \cite[Theorem 2]{BGS} of
Bar\'at, Gy\'arf\'as and S\'ark\"ozy
we have $|M'_{\LL}| \ge m - m^{.51}$,
so $M_i^a-|M'_{ai}| \le m^{.51} < n^{-.4}M_i^a$
for each $(a,i) \in Q^\LL$.
For $(a,i) \in Q^\DD$ 
we apply Lemma~\ref{lem:wEGJ} to ${\cal H}$
with $f(v^*w^*\ell_{a'i'j'})=1_{a'=a,i'=i}$. 
To see that this is valid, we take $D = m$,
so each edge weight is $D^{-1}$,
and note that each
$\oO(\mc{H}[v^*])$ or $\oO(\mc{H}[w^*])$ is
$m^{-1}\sum_{(a,i) \in Q^\DD}
M^a_i = 1$. Also, for any $v^*w^*$ with some label $\ell_{aij}$ 
we have $\oO(\mc{H}[v^*w^*]) 
\le m^{-1}
\bcl{\DD^{-.2}\LL/\dD} < D^{1-c^2}$, say.
Thus Lemma~\ref{lem:wEGJ} gives
$|M'_{ai}|=(1\pm n^{-c'})M_i^a$,
recalling that $c' \ll c$, as required for (v).

For (viii), we analyse the construction of $H$,
which is illustrated in Figure~\ref{fig:Hai}.
The disjointness statements
and $d^-_{J^a_i}(w)=d^+_{H^a_i}(\phi_w(a))$
are clear from the definition of the algorithm,
so it remains to establish the degree estimates.
It suffices to show all 
$d^\pm_{H^*_{ai}}(x) =  
(1 \pm .1\dD) p_1 \ov{p} m^a_i n/m$;
indeed, for each $\ova{yx} \in H^*_{ai}$ we have
$\ova{yx} \in H^a_i \Lra \ova{yx} \in H$, where
$\mb{P}(\ova{yx} \in H) 
= \aA_{\hi}/p_1(\ov{p} \pm d^{-.9})$
independently, so the estimates on
$d^\pm_{H^a_i}(x)$ hold whp by Chernoff bounds.

Consider $d^-_{H^*_{ai}}(x)$ for $x \in U_h$, $h \in [m]$.
It suffices to estimate the contribution from $V \sm V_0$,
as $|V_0|=n_0$ is negligible by comparison
with the error term in the required estimate.
Let $\circ \in \{\leL,\geL\}$ be such that $(a,i) \in Q^{\circ}$.
For each $v^*w^* \in M'_{ai}$ we include in 
$M^h_{\circ}$ all edges of $M^*_a$ with label $\ell_{aij}$ 
between $V_{v^*+h}$ and $W_{w^*+h}$. There are
$(1 \pm d^{-.8}) \ov{p} p_1 n/m$ such edges $yw$ 
with $\ova{yx} \in \ova{G}_1$ and $x \in \ovXw$
whp under the choices of $(V_{v*}: v^* \in V^*)$,
intervals and orientation of $G_1$.
For each such $yw$, in some component $P$
of $D^h_x$, we have $\mb{P}(\hi_P=\circ)=p_{\circ}$,
independently for distinct $P$,
so $\mb{E} d^-_{H^*_{ai}}(x) = p_{\circ} |M'_{ai}| 
\cdot (1 \pm 2d^{-.8}) \ov{p} p_1 n/m$.
Under the orientation of $G_1$
whp each $|P| < \log^2 n$ by Chernoff bounds.
Then $d^-_{H^*_{ai}}(x)$ is a $\log^2 n$-Lipschitz
function of independent decisions of all $\hi_P$, 
so Lemma \ref{azuma}
gives the required estimate on $d^-_{H^*_{ai}}(x)$.

Finally, consider $d^+_{H^*_{ai}}(y)$ for $y \in V(G)$.
If $y \in V_0$, then there are exactly $M^a_i$
values of $h \in [m]$ for which there is an edge
$yw \in M^h_{\circ}$ with label $\ell_{aij}$, some $j$.
If $y \notin V_0$, then there are exactly $M'_{ai}$
values of $h \in [m]$ for which there is an edge
$yw \in M^h_{\circ}$ with label $\ell_{aij}$, some $j$,
as  each $v^*w^* \in M'_{ai}$ satisfies $v^* = x_a + w^*$,
determining some $h \in [m]$ such that $y \in V_{v^*+h}$,
and some edge $yw \in M^h_{\circ} \cap M^*_a$,
where $w \in W_{w^*+h}$. By typicality and Chernoff bounds
whp each $|U_h \cap N^+_{\ova{G}_1}(y)| 
= (1 \pm 1.1\xi) p_1 n/m$.
For each $x \in U_h \cap N^+_{\ova{G}_1}(y)$
independently $\mb{P}(\hi_x=\circ)=p_{\circ}$,
writing $\hi_x=\hi_P$ where $P$ is the component
 of $D^h_x$ containing $y$.
The events $\{x \in \ovXw\}$
are independent for distinct $x$ in any $3d$-separated set,
so by partitioning $U_h$ into $3d$ such sets,
applying a Chernoff bound to each,
we obtain the required estimate on $d^+_{H^*_{ai}}(y)$,
noting that it only depends on the number 
$M^a_i$ or $|M'_{ai}|$
of $h \in [m]$ such that $M^h_{\circ}$ 
includes $yw$ with some label $\ell_{aij}$,
and not the set of such $h$, which is yet to be 
determined when choosing the matchings $M_a$.
\end{proof}

\section{Approximate decomposition} \label{sec:approx}

In this section we analyse the subroutine
APPROXIMATE DECOMPOSITION, which applies
hypergraph matchings to embed
most of $F$ in Cases S and P.

\subsection{Hypergraph matchings}

The main goal of this section is the following lemma,
which will allow us to apply Lemma \ref{lem:wEGJ}
to the hypergraph matchings
chosen in APPROXIMATE DECOMPOSITION,
i.e.\ all auxiliary vertices 
have $\oO'$-weighted degree 
close to and not exceeding $1$,
and all $\oO'$-weighted codegrees are small;
statement (ii) concerns the degree that a pair $ux$ 
would have if it were introduced as an auxiliary vertex
(but we do not do this to avoid additional
complications in analysing the relationship
between $\oO'$ and $\oO$). 
The `bad' graphs and sets appearing in the lemma
will be defined and analysed in Lemma \ref{lem:B}.
  
\begin{lemma} \label{lem:Hwdeg}
whp for each $i \in [i^*]$,
\begin{enumerate}
\item
$\oO'({\cal H}_i[\bm{v}]) \in (1-2\eps^{.8},1]$
for all $\bm{v} \in V({\cal H}_i)$,
\item
$\sum_{w \in W} \oO'(\edge{w}{u}{x})
= 1 \pm 2\eps^{.8}$ for all $x \in V(G)$,
$u \in A_i$,
\item
$\oO'(\bm{e}) > (1+2\eps_i)^{-1} \oO(\bm{e})$
for all $\bm{e}=\edge{w}{u}{x}$ with
$u \in A_i \sm ( A^{\bad}_w \cap N_{F'}(A^{\lo}) )$,
$\ova{xw} \in J_i \sm J^{\bad}$,
\item
$\oO'({\cal H}_i[uw]) \in (1-2\eps_i,1-.5\eps_i]$
for all $u \in A_i \sm ( A^{\bad}_w \cap N_{F'}(A^{\lo}) )$,
\item
$\oO'({\cal H}_i[\bm{vv'}]) < \DD^{-.9}$ 
for all $\bm{v},\bm{v'} \sub V({\cal H}_i)$.
\end{enumerate}
\end{lemma}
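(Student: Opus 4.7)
My plan is to reduce each of (i)--(v) to an estimate of the unnormalised weighted degree $\oO({\cal H}_i[\bm{v}])$ or codegree $\oO({\cal H}_i[\bm{vv'}])$, and then to translate through the definition $\oO'(\bm{e})=(1-.5\eps_i)\oO(\bm{e})/Q(\bm{e})$. The weight $\oO(\edge{w}{u}{x})=|A_u|^{-1}\prod_{v\in N_<(u)}p_{uv}^{-1}$ has been calibrated so that, for each type of auxiliary vertex, the sum defining $\oO({\cal H}_i[\bm{v}])$ is a normalised count that equals $1$ up to a typicality error. Concretely, for $\bm{v}=uw$ the sum is $|A_u|^{-1}\prod p_{uv}^{-1}$ times the count of $x$ with $\ova{xw}\in J_u$ and $\lova{xy}\in\ova{G}_{uv}$ for every $y=\phi_w(v)$, $v\in N_<(u)$; by Lemma~\ref{lem:DI}(v) this count is $|A_u|\prod(1\pm 1.2\xi)p_{uv}$, so $\oO({\cal H}_i[uw])=1\pm O(\xi)$. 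For $\bm{v}=\ova{xw}$ one sums over $u\in A_{\ova{xw}}$ using the same typicality, and for $\bm{v}=\lova{xy}$ one sums over pairs $(w,u)$ with $y=\phi_w(v)$ for some $v\in N_<(u)$, where the count over $w$ is provided by the quasirandomness of the embeddings $\phi_w$ established in Lemmas~\ref{lem:hi} and~\ref{lem:rga}.

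These estimates hold outside a ``bad'' set to be delimited by Lemma~\ref{lem:B}, which captures exactly $u\in A^{\bad}_w\cap N_{F'}(A^{\lo})$ and $\ova{xw}\in J^{\bad}$. For $\bm{e}$ outside this set, every auxiliary vertex in $\bm{e}$ has weighted degree $1\pm \eps_i^{1.1}$, so $Q(\bm{e})\le 1+\eps_i$, immediately giving (iii). For (iv), upper and lower bounds follow by integrating: $\oO'({\cal H}_i[uw])=(1-.5\eps_i)\sum_{x}\oO(\edge{w}{u}{x})/Q(\edge{w}{u}{x})$ is bounded above by $(1-.5\eps_i)\oO({\cal H}_i[uw])/\oO({\cal H}_i[uw])=1-.5\eps_i$ (using $Q\ge\oO({\cal H}_i[uw])$) and below by $(1-.5\eps_i)(1+\eps_i)^{-1}\oO({\cal H}_i[uw])>1-2\eps_i$.

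The most delicate point is the lower bound in (i), which must hold for \emph{all} $\bm{v}$, including bad ones. The upper bound $\oO'({\cal H}_i[\bm{v}])\le 1$ is automatic from $Q(\bm{e})\ge\oO({\cal H}_i[\bm{v}])$. For the lower bound I would split by cases: if $\oO({\cal H}_i[\bm{v}])=1\pm\eps_i^{1.1}$, the (iv) argument applies verbatim; if $\oO({\cal H}_i[\bm{v}])>1+\eps_i$ (a ``super-heavy'' vertex), then in every edge $\bm{e}\ni\bm{v}$ whose other three auxiliary vertices have weighted degree at most $\oO({\cal H}_i[\bm{v}])$ one has $Q(\bm{e})=\oO({\cal H}_i[\bm{v}])$, so each such edge contributes exactly $(1-.5\eps_i)\oO(\bm{e})/\oO({\cal H}_i[\bm{v}])$ to the sum, and these edges dominate because by Lemma~\ref{lem:B} super-heavy vertices are rare. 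The main obstacle is ruling out $\bm{v}$ with $\oO({\cal H}_i[\bm{v}])$ significantly \emph{below} $1$; I would exclude this using the calibration of $|A_u|$ in TREE PARTITION together with the lower bounds on $d^-_{J^a_i}(w)$, $d^+_{J^a_i}(x)$ in Lemma~\ref{lem:DI}(viii) and on $N^-_{J_u}(w)\cap\bigcap_i N^+_{\ova{G}_{uv_i}}(x_i)$ in Lemma~\ref{lem:DI}(v), which together ensure that the normalising factor $|A_u|^{-1}\prod p_{uv}^{-1}$ is never smaller than the count it normalises beyond a $1-\eps^{.9}$ factor.

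For (ii), fixing $u$ and $x$, the sum over $w$ runs over those with $\ova{xw}\in J_u$ and $\lova{xy}\in\ova{G}_{uv}$ for each $y=\phi_w(v)$, $v\in N_<(u)$. A typicality argument combining Lemma~\ref{lem:DI}(v) with the embedding quasirandomness from Lemmas~\ref{lem:hi} and~\ref{lem:rga} (applied to each $v\in N_<(u)$ in turn) yields this count as $|A_u|\prod p_{uv}$ up to a factor $1\pm\eps^{.9}$, after which dividing by $Q\in[1,1+\eps_i]$ for good edges and handling bad edges via their negligible total weight gives the target $1\pm 2\eps^{.8}$. Finally, for (v), I would enumerate the possible type pairs for $(\bm{v},\bm{v'})$ -- both in $A_i\times W$, one in $J_i$ and one in $\ova{G}_i$, both in $\ova{G}_i$, and so on -- and in each case the pair codegree in ${\cal H}_i$ reduces to a codegree in one of $J^g_i$ or $\ova{G}^{gg'}_{ii'}$, for which $(1.2\xi,s)$-typicality from Lemma~\ref{lem:DI}(iv) gives a bound well below $\DD^{-.9}$.
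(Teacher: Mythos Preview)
Your reduction of (i)--(v) to estimates on the raw weighted degrees $\oO({\cal H}_i[\bm v])$ is the right first move, and your argument for $\bm v=uw$ via Lemma~\ref{lem:DI}(v) is essentially Lemma~\ref{lem:Px}. But there is a genuine structural gap in the rest: you treat the estimates for $\bm v=\ova{xw}$, $\bm v=\lova{xy}$ and for $\sum_w \oO(\edge{w}{u}{x})$ as one-shot consequences of typicality at time $t_1$, citing only Lemmas~\ref{lem:hi}, \ref{lem:rga} and \ref{lem:DI}. Those lemmas control only the embedding of $A_0$ and the randomness in DIGRAPH. The hypergraph ${\cal H}_i$ depends on \emph{all} of $\phi_w|_{A_{<i}}$, and the embeddings of $A_1,\dots,A_{i-1}$ are produced by the earlier hypergraph matchings ${\cal M}_1,\dots,{\cal M}_{i-1}$ (and the random greedy patch in step~(iii) of each round). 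To argue that these embeddings are quasirandom enough to preserve the weighted-degree estimates, one must apply Lemma~\ref{lem:wEGJ} to each $({\cal H}_{i'},\oO')$, and that in turn requires Lemma~\ref{lem:Hwdeg} for $i'$. In other words the proof is inherently inductive in $i$, and the paper makes this explicit: Lemmas~\ref{lem:Hwdeg} and \ref{lem:B} are assumed for $i'<i$, and the content of the section (Lemma~\ref{lem:bad} together with Lemmas~\ref{lem:x:t1}--\ref{lem:xy:ti:rga}) tracks how $\oO^*_t(P_w:A_{\ova{xw}}\to x)$, $\oO^*_t(P_W:u\to x)$ and $\oO^*_t(P_W:F'[A^g_i,A^{g'}_j]\to\lova{xy})$ evolve through each step $i'<i$, incurring a multiplicative error $1\pm\eps_{i'}^{.8}$ at each. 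This is why the final error is $\eps_i$ (not $O(\xi)$ or your $\eps_i^{1.1}$), and why the distinction between $1\pm\eps_i$ and $1\pm\eps^{.8}$ in Lemma~\ref{lem:bad} is exactly what drives the bad/good dichotomy in (iii).

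The same omission bites in (v). For most vertex pairs your typicality sketch is fine, but for two arcs $\lova{xy},\lova{xy'}\in\ova{G}_i$ the codegree $\oO'({\cal H}_i[\lova{xy},\lova{xy'}])$ counts $w$ with $\phi_w(v)=y$ and $\phi_w(v')=y'$ for some $v,v'\in N_<(u)$, and when $v,v'$ lie in $A_j$, $A_{j'}$ with $j,j'>0$ these events are controlled by ${\cal M}_j$, ${\cal M}_{j'}$, not by typicality of the $\ova{G}^{gg'}_{ii'}$. The paper handles this by defining suitable test functions $f$ on $\tbinom{{\cal H}_j}{\le 2}$ and invoking Lemma~\ref{lem:wEGJ} for the earlier matching, again using the inductive hypothesis. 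Your reduction ``the pair codegree reduces to a codegree in one of $J^g_i$ or $\ova{G}^{gg'}_{ii'}$'' does not capture this case.
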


To satisfy the hypotheses of Lemma \ref{lem:wEGJ}
for $({\cal H}_i,\oO')$ we let $C=n$ and $\bB = c/2$.
Then the edge weights satisfy
$\oO'(\edge{w}{u}{x}) \ge |A_u|^{-1} \ge C^{-1}$,
the codegree condition holds by
Lemma \ref{lem:Hwdeg}.v,
and the vertex weights satisfy
$\oO'({\cal H}_i[\bm{v}])
= \sum \{ \oO'(\bm{e}): \bm{v} \in \bm{e} \}
\le 1-.5\eps_i$ by definition of $\oO'$.

Next we define and bound the `bad' graphs and sets
appearing in Lemma \ref{lem:Hwdeg}.
For each $i$ write $A^{\lo}_i = A^{**}_i \cup A'_i$, where 
for $u \in A^{\lo}_i$ with $N_<(u) \cap A_0 = \{u'\}$
we include $u$ in $A^{**}_i$ if $u' \in A^* \cup A^{**}$
or in $A'_i$ if $u' \in A'_0$.
Let $S^w_i$ be the multiset on $A'_0$ 
where for each $u \in A'_i$, $N_<(u) \cap A_0 = \{u'\}$
we include $\phi_w(u')$, with multiplicity, 
so that $|S^w_i|=|A'_i|$.
Note that all multiplicities in $S^w_i$ 
are $\le D$ by definition of $A'_0$.
Let
\begin{gather*} J^{\bad}_i = \{ \ova{xw} \in J^{\lo}_i :
|S^w_i \cap N^-_{\ova{G}_1}(x) | 
\neq p_1 |S^w_i| \pm \xi' n \} 
\quad\text{and}\quad
\textstyle J^{\bad} = \bigcup_i J^{\bad}_i,\\
\textstyle
B = \{x \in V(G): d_{J^{\bad}}(x) > \dD^3 n\} \quad
\text{and}\quad
A^{\bad}_w := (A^{\lo} \cup A^{\no}) \cap
\bigcup_{x \in B} N_>(\phi_w^{-1}(x)).
\end{gather*}

\begin{lemma}\label{lem:B}
whp $|B|<\dD^4 n$ and
every $d_{J^{\bad}}(w),|A^{\bad}_w| < \dD^3 n$.
\end{lemma}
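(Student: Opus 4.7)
I would first control $|J^{\bad}|$ globally via a concentration argument on the random orientation of $\ova{G}_1$, deduce the bounds on $|B|$ and $d_{J^{\bad}}(w)$ by a degree-sum counting, and finally bound $|A^{\bad}_w|$ by splitting into contributions from offending neighbours in $A_{\ge 1}$ (handled trivially by $|N_>(v)| \le \pmax^{-1}$) and in $A_0$ (handled by Freedman).

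Fix $i \in [i^*]$ and $w \in W$. If $A^{\lo}_i = \es$ then trivially $J^{\bad}_i = \es$; otherwise Lemma~\ref{lem:T} forces $|A^{\lo}_i| \ge \dD n$ and hence $|A'_i| \ge \dD n/2$. At time $t_0$ the multiset $S^w_i$ on $V(G)$ is determined, of total size $|A'_i|$ and with multiplicities at most $D$, since $A'_0 \cap A^D = \es$ by construction of the partition in TREE PARTITION. I would apply Lemma~\ref{lem:SRLtoH} to the bipartite graph $G_1$ on $V(G) \times V(G)$ (super-regular of density $\approx p_1$ by its $(1.2\xi,s)$-typicality) with $H = S^w_i$ as a $1$-uniform multigraph of maximum multiplicity $D$, obtaining that all but $\bB n$ vertices $x$ satisfy $|S^w_i \cap N_{G_1}(x)| = p_1|S^w_i| \pm p_1 \bB n$, where I choose $\bB \le \min(D^{-2},\dD^7/i^+)$. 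For each such regularity-good $x$, conditional on the history at $t_0$ the orientation of $\ova{G}_1$ expresses $|S^w_i \cap N^-_{\ova{G}_1}(x)|$ as a sum of weighted independent Bernoulli$(1/2)$ variables of weights at most $D$, so Bernstein (Lemma~\ref{bernstein}) gives deviation at least $\xi' n/2$ from the mean with probability $e^{-\Omega(n\xi'^2/D)}$. A union bound over the $O(n^2 i^+)$ triples $(w,i,x)$ shows whp every regularity-good $x$ satisfies the concentration bound, so $|\{x : \ova{xw} \in J^{\bad}_i\}| \le \bB n$ for every $(w,i)$. Summing over $i$ yields $d_{J^{\bad}}(w) \le i^+ \bB n \le \dD^3 n$ for every $w$, and $|J^{\bad}| \le i^+ \bB n^2$; the defining inequality $|B|\cdot \dD^3 n \le \sum_{x \in B} d_{J^{\bad}}(x) \le |J^{\bad}|$ then gives $|B| \le i^+ \bB n/\dD^3 \le \dD^4 n$.

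For $|A^{\bad}_w|$, split according to whether $v := \phi_w^{-1}(x) \in N_<(u)$ (for $x \in B$) lies in $A_{\ge 1}$ or $A_0$. The $A_{\ge 1}$ contribution is at most $|B|\pmax^{-1} \le \dD^4 n/\pmax \le \dD^3 n/2$ using $\dD \ll \pmax$. For the $A_0$ contribution: $u \in A^{\no}$ has no $A_0$-neighbour, and each $u \in A^{\lo}$ has a unique $A_0$-neighbour $v_u \in N_<(u)$, so it equals $\sum_{v \in A_0} c_v \cdot 1_{\phi_w(v) \in B}$ where $c_v := |N_>(v) \cap A^{\lo}| \le i^+\DD$ (at most $\DD$ non-hub $F$-neighbours in each $A_j$) and crucially $\sum_v c_v \le |A^{\lo}| \le n$ by double counting. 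I would apply Freedman (Lemma~\ref{freedman}) to this sum over the sequential embedding of $A_0$ in HIGH DEGREES and EMBED $A_0$: invoking Lemmas~\ref{lem:hi} and~\ref{lem:rga} to bound $\mb{P}(\phi_w(v) \in B \mid t_v^-) \le 2\dD^4$ (valid once $|B| \le \dD^4 n$ is exposed), the sum is $(2\dD^4 n,\, i^+\DD)$-dominated with failure probability $e^{-\Omega(\dD^4 n^{1-c}/i^+)}$, which beats any inverse polynomial in $n$ since $c < 1$; a union bound over $w$ then gives $|A^{\bad,0}_w| \le 4\dD^4 n \le \dD^3 n/2$ for every $w$, completing the proof.

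The main obstacle I anticipate is the conditioning for Freedman: $B$ is measurable only at time $t_1$ (after the orientation of $\ova{G}_1$ and after all $\phi_{w'}|_{A'_0}$), whereas the domination bound references $t_v^- < t_0 < t_1$. I would resolve this by replacing $B$ with the surrogate $B^*_{-w} := \{y : |\{(w',i): w' \ne w,\, y \text{ regularity-bad for } (w',i)\}| > \dD^3 n - i^+\}$, which is independent of $\phi_w|_{A'_0}$, whp contains $B$, and satisfies $|B^*_{-w}| \le \dD^4 n$ by the same double counting. Freedman then applies cleanly to $\sum_v c_v \cdot 1_{\phi_w(v) \in B^*_{-w}}$, and the excess $B \sm B^*_{-w}$ is confined to the $\le i^+ \bB n$ vertices where the $w$-contribution to the bad-count just crossed the threshold, whose contribution is absorbed by an auxiliary Freedman estimate using $|A_0|\,|B \sm B^*_{-w}|/n \ll \dD^4 n$.
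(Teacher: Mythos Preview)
Your overall plan mirrors the paper's proof: bound $d_{J^{\bad}}(w)$ via regularity and Lemma~\ref{lem:SRLtoH}, double-count for $|B|$, then deduce $|A^{\bad}_w|$. For the first two steps you apply Lemma~\ref{lem:SRLtoH} to the unoriented $G_1$ and then invoke Bernstein for the random orientation, whereas the paper establishes regularity of the oriented bipartite graph $\wt G = \ova{G}_1[\phi_w(A'_0),\, \ov X_w \sm \phi_w(A'_0)]$ directly (inheritance from the typicality of $G$ plus a Chernoff bound for the orientation) and applies Lemma~\ref{lem:SRLtoH} to $\wt G$ once. These are equivalent reorganisations of the same idea. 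One caution: your appeal to Lemma~\ref{lem:SRLtoH} for ``$G_1$ on $V(G)\times V(G)$'' must specify a genuine bipartition with both sides of linear size---this is why the paper restricts to $(\phi_w(A'_0),\ov X_w\sm\phi_w(A'_0))$ and explicitly assumes $|A'_0|>\xi'n/D$ (else $J^{\bad}=\es$ trivially).

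For $|A^{\bad}_w|$ the paper gives a one-line deterministic bound $|A^{\bad}_w|\le \pmax^{-1}|\phi_w^{-1}(B)|<\dD^3 n$, using only $|N_>(v)|\le\pmax^{-1}$. Your Freedman route for the $A_0$ contribution has a genuine gap in the conditioning, and your surrogate $B^*_{-w}$ does not repair it. You assert that $B^*_{-w}$ is independent of $\phi_w|_{A'_0}$ because it is defined through $\{\phi_{w'}|_{A'_0}\}_{w'\ne w}$; but each $M_a$ for $a\in A'_0$ is a \emph{perfect} matching on $V\times W$, so the values $\{\phi_{w'}(a)\}_{w'\ne w}$ pin down $\phi_w(a)$ as the unique unmatched vertex of $V$. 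Hence $B^*_{-w}$ is in fact a deterministic function of $\phi_w|_{A'_0}$, and conditioning on it destroys the near-uniform marginals $\mb P^{t_v^-}(\phi_w(v)\in\cdot)$ from Lemmas~\ref{lem:hi} and~\ref{lem:rga} that you need for the $(\mu,C)$-domination hypothesis of Lemma~\ref{freedman}. The same obstruction blocks your auxiliary estimate for $B\sm B^*_{-w}$: there is no filtration under which $B^*_{-w}$ is measurable early while $\phi_w|_{A'_0}$ remains random.
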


\begin{proof}
We start by bounding $d_{J^{\bad}}(w)$ for each $w \in W$.
We may assume $|A'_0|>\xi'n/D$,
otherwise each $|S^w_i| \le \xi'n$,
and then $J^{\bad}=\es$.
As $G$ is $(\xi,s)$-typical, 
a well-known non-partite 
variant of Lemma~\ref{lem:DLR}
implies that $G$ is $\xi^{.1}$-regular.
Writing $X=\phi_w(A'_0)$,
as $|X|,|\ov{X}_w\sm X| \ge \xi'n/D$,
standard regularity properties imply that
$G[X,\ov{X}_w\sm X]$ is $\xi^{.01}$-regular
of density $p \pm \xi^{.01}$.
Then Chernoff bounds imply that whp
$\wt{G} = \{ uv: u \in X, v \in \ov{X}_w\sm X, 
\ova{uv} \in \ova{G}_1 \}$ is $\xi^{.001}$-regular
of density $p_1 \pm \xi^{.001}$.
By Lemma~\ref{lem:SRLtoH} applied with $G=\wt{G}$
and $H = \{\{ \phi_w(u') \}: u' \in A'_0 \}$
we deduce $d_{J^{\bad}_i}(w) < \xi'n$,
so $d_{J^{\bad}}(w) < \dD^7 n$, say.
As $\dD^3 n |B| \le \sum_{x \in B} d_{J^{\bad}}(x)
\le \sum_{w \in W} d_{J^{\bad}}(w)
< |W| \dD^7 n$ we have $|B| < \dD^4 n$.
Since $d_{>}(u) \le \pmax^{-1}$ for every $u \in A_{\ge 1}$, 
we conclude that 
$|A^{\bad}_w| < \pmax^{-1}|\phi_w^{-1}(B)| < \dD^3 n$.
\end{proof}

Henceforth, we assume Lemmas \ref{lem:Hwdeg} 
and \ref{lem:B} for all $i'<i$,
our aim being to show that they hold for $i$.
First we establish various properties
of the matchings ${\cal M}_{i'}$ for $i'<i$
that will be used in the proof.
We let $A_{i,w} =  A_{t_i^+,w} \sm A_{t_i,w}$,
which is the set of $u \in A_i$ such that
$\phi_w(u)$ is defined by the matching ${\cal M}_i$,
and let $A^0_{i,w} = A_i \sm A_{i,w}$.

\begin{lemma} \label{lem:Tleave}
For all $0<i'\le i$, $w \in W$, $W' \sub W$, $X \sub V(G)$, 
$U \sub A^{\lo}_{i'} \cup A^{\no}_{i'}$ whp
\begin{enumerate}
\item
$|A^0_{i',w}| < 2.1\eps_{i'} |A_{i'}|$,
\item
$|\{w \in W: \phi_w(u) \in X\}| < 1.1|X| + \DD$
for all $u \in A^{\lo}_{i'} \cup A^{\no}_{i'}$,
\item
$|\{w \in W: \phi^{-1}_w(x) \in U\}| < 1.1|U| + \DD$
for all $x \in V(G)$,
\item
$|\{ w \in W: u \in A^{\bad}_w\}| < 5\dD^4 n$ 
for all $u \in A^{\lo}_{i'} \cup A^{\no}_{i'}$,
\item 
$.4\eps_{i'}|W'|-\dD|A_u| < |\{w \in W': u \in A^0_{i',w}\}| 
< 2.1\eps_{i'}|W'| + \dD|A_u|$
for all $u \in A_{i'}$,
\item $\sum_{u \in A^g_i} |F'[N_>(u),A^0_{i',w}]|$
and $|F'[A^g_i,A^0_{i',w}]|$ 
are $< 9\pmax^{-1} \eps_{i'} |A^g_i|$
for all $g \in \{\hi,\lo,\no\}$.
\end{enumerate}
\end{lemma}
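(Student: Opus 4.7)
The plan is to apply Lemma~\ref{lem:wEGJ} to the matching ${\cal M}_{i'}$ in the weighted hypergraph $({\cal H}_{i'},\oO')$ with carefully chosen clean indicator functions, relying inductively on Lemmas~\ref{lem:Hwdeg} and~\ref{lem:B} (both established for $i'<i$). The hypotheses of Lemma~\ref{lem:wEGJ} are supplied by Lemma~\ref{lem:Hwdeg}(i) and (v), and the normalisation of $\oO'$ ensures the required vertex-degree condition. The additional embeddings produced in step (iii) of APPROXIMATE DECOMPOSITION are controlled via Lemma~\ref{lem:match}, exploiting the fact that $V_a$ is uniformly random of size $|W_a|$ and that $|W_a|=O(\eps_{i'}n)$ (by parts (i) or (v)).

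For (i), fix $w$ and let $f(\{\bm{e}\})=1$ on singleton edges $\bm{e}=\edge{w}{u}{x}$ with $u\in A_{i'}$; then $f({\cal M}_{i'})=|A_{i'}|-|A^0_{i',w}|$. Lemma~\ref{lem:Hwdeg}(iv) gives $\oO'({\cal H}_{i'}[uw])\ge 1-2\eps_{i'}$ for $u$ outside the exceptional set $A^{\bad}_w\cap N_{F'}(A^{\lo})$, which has size $<\dD^3 n$ by Lemma~\ref{lem:B}, negligible against $\eps_{i'}|A_{i'}|$ since $|A_{i'}|\ge \dD n$; Lemma~\ref{lem:wEGJ} then yields $|A^0_{i',w}|<2.1\eps_{i'}|A_{i'}|$. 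Statement (v) is analogous with $u$ fixed and $w\in W'$ varying. For $u\in A^{\lo}_{i'}\cup A^{\no}_{i'}$ the exceptional $w$'s are bounded by statement (iv), and $|A_u|\ge \dD n$ provides ample slack for $\dD|A_u|$ to absorb all error contributions; for $u\in A^{\hi}_{i'}$ no exception arises since $A^{\bad}_w\sub A^{\lo}\cup A^{\no}$, so Lemma~\ref{lem:wEGJ} applies cleanly. Statement (iv) itself follows by observing that $u\in A^{\lo}_{i'}\cup A^{\no}_{i'}$ has at most one $A_0$-neighbour $a=N_<(u)\cap A_0$, so $u\in A^{\bad}_w$ forces $\phi_w(a)\in B$; combining $|B|<\dD^4 n$ from Lemma~\ref{lem:B} with the distributional estimates for $\phi_w(a)$ from Lemmas~\ref{lem:hi} and~\ref{lem:rga} yields the claimed bound.

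For (ii) and (iii), I would apply Lemma~\ref{lem:wEGJ} with $f(\{\bm{e}\})=1$ on edges $\bm{e}=\edge{w}{u}{x}$ with $x\in X$ (respectively $u\in U$), using $\sum_w \oO'(\edge{w}{u}{x})=1\pm 2\eps^{.8}$ from Lemma~\ref{lem:Hwdeg}(ii) to get $f({\cal H}_{i'},\oO')=(1\pm 2\eps^{.8})|X|$ (resp.\ $|U|$); this yields the main $1.1|X|$ (resp.\ $1.1|U|$) contribution from ${\cal M}_{i'}$. The contribution of step (iii) of APPROXIMATE DECOMPOSITION is $O(\eps_{i'}|X|)$ (resp.\ $O(\eps_{i'}|U|)$) whp via Lemma~\ref{lem:match} applied to each $M_a$, using that $V_a$ is a uniformly random subset of $V$ of size $|W_a|=O(\eps_{i'}n)$. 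The additive $+\DD$ slack absorbs the low-parameter regime where Lemma~\ref{lem:wEGJ}'s multiplicative error is not useful, and can be justified via direct counting exploiting the matching property of ${\cal M}_{i'}$ at the auxiliary vertices $\ova{xw}$. Finally, (vi) is a double-counting consequence of (i), using $d_{F'}(v)\le\pmax^{-1}$ for $v\in A_{\ge 1}$ to convert the vertex bound $|A^0_{i',w}|<2.1\eps_{i'}|A_{i'}|$ into the corresponding edge count.

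The main obstacle is the tight slack in (v), particularly the term $\dD|A_u|$ for $u\in A^{\hi}_{i'}$ where $|A_u|$ can be as small as $\DD$. Here Lemma~\ref{lem:wEGJ} must be applied carefully, relying on the absence of bad $w$'s for hi-vertices so that the multiplicative error translates directly into the additive slack; one should also verify that the parameters in Lemma~\ref{lem:wEGJ} (notably $C$ and $\bB$) are calibrated using the codegree bound $\DD^{-.9}$ from Lemma~\ref{lem:Hwdeg}(v), rather than the weaker $n^{-1}$ bound, to extract the best possible concentration.
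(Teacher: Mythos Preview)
Your approach to (i), (ii), (iii) and (v) is essentially the paper's: apply Lemma~\ref{lem:wEGJ} to suitable indicator functions on ${\cal H}_{i'}$ and control exceptions via Lemmas~\ref{lem:Hwdeg} and~\ref{lem:B}. The ``low-parameter regime'' you flag is handled in the paper not by recalibrating $C,\bB$ but simply by adding an offset to the test function (e.g.\ $f(\es)=\DD^{.9}$ in (v)), which forces $f_S({\cal H}_{i'},\oO')\le C^{-\bB}f({\cal H}_{i'},\oO')$ even when the natural $f({\cal H}_{i'},\oO')$ is small.

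There are, however, two genuine gaps.

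For (iv), your reduction is wrong. You assert that $u\in A^{\bad}_w$ forces $\phi_w(a)\in B$ for the unique $A_0$-neighbour $a$ of $u$, but by definition $u\in A^{\bad}_w$ means $\phi_w(v)\in B$ for \emph{some} $v\in N_<(u)$, and such $v$ may lie in any $A_j$ with $j<i'$; indeed for $u\in A^{\no}_{i'}$ there is no $A_0$-neighbour at all. Lemmas~\ref{lem:hi} and~\ref{lem:rga} concern only $A_0$ and do not cover these $v$. The paper instead bounds $\sum_{v\in N_<(u)}|\{w:\phi_w(v)\in B\}|$ using (ii) at the earlier step in which each $v$ was embedded (together with the trivial bijection bound $|B|$ when $v\in A_0$), obtaining $\le 4(1.1|B|+\DD)<5\dD^4 n$.

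For (vi), double-counting from (i) does not work: it gives only $|F'[A^g_i,A^0_{i',w}]|\le \pmax^{-1}|A^0_{i',w}|<2.1\pmax^{-1}\eps_{i'}|A_{i'}|$, whereas the target is $9\pmax^{-1}\eps_{i'}|A^g_i|$, and the ratio $|A_{i'}|/|A^g_i|$ can be as large as $\dD^{-1}$. The paper instead applies Lemma~\ref{lem:wEGJ} with the tailored function $f(\edge{w}{u'}{x'})=1_{w=w_0}\,|A^g_i|^{-1}\sum_{u\in A^g_i}|N_{F'}(u')\cap N_>(u)|$ (plus offset $f(\es)=1$), which builds the normalisation by $|A^g_i|$ directly into $f$. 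Since each $u\in A^g_i$ contributes $|F'[A_{i'},N_>(u)]|\le 4\pmax^{-1}$, the $(1\pm C^{-\bB})$ conclusion of Lemma~\ref{lem:wEGJ} then yields the required bound.
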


\begin{proof}
We write $|A_{i',w}| = f({\cal M}_{i'})$, 
where $f$ is the function on ${\cal H}_{i'}$ 
defined by $f(\edge{w'}{u'}{x'})=1_{w'=w}$.
We have $f({\cal H}_{i'},\oO') 
= \sum_{u' \in A_{i'}} \oO'({\cal H}_{i'}[u'w])
\ge (1-2\eps_{i'}) |A_{i'}|-\dD^3 n$
by Lemmas \ref{lem:B} and \ref{lem:Hwdeg}.iv. 
For any $\bm{e} \in {\cal H}_{i'}$
we have $f_{\{\bm{e}\}}({\cal H}_{i'},\oO')
\le \oO'(\bm{e})
\le 1 < C^{-\bB} f({\cal H}_{i'},\oO')$.
By Lemma \ref{lem:wEGJ} whp $f({\cal M}_{i'})
=  (1 \pm C^{-\bB}) f({\cal H}_{i'},\oO')
\ge 1-2.1\eps_{i'}$,
so $|A^0_{i',w'}| < 2.1\eps_{i'} |A_{i'}|$.

Statements (ii) and (iii) are similar,
using Lemma \ref{lem:Hwdeg}.ii.
For (iv), we have 
$|\{w \in W: u \in A^{\bad}_w\}|
\leq \sum_{v \in N_<(u)}|\{w: \phi_w(v) \in B\}|
\le 4.4|B| + 4\DD < 5\dD^4 n$
by (ii) and Lemma~\ref{lem:B}.

For (v) we write
$|\{w \in W': u \in A^0_{i',w}\}| 
= |W'|+\DD^{.9} - f({\cal M}_{i'})$
redefining $f$ by setting
$f(\es)=\DD^{.9}$ and each
$f(\edge{w}{u'}{x}) = 1_{w \in W', u'=u}$.
Then $0
\le \DD^{.9} + \sum_{w \in W'}\oO'(\mc{H}_{i'}[uw])
-f(\mc{H}_{i'},\oO') <
|\{w \in W: u \in A^{\bad}_w\}|$.
By Lemma~\ref{lem:Hwdeg}.iv we see that
if $u \in A^{\hi}_{i'}$, then
$.5\eps_{i'}|W'| < |W'|+\DD^{.9}-f({\cal H}_{i'},\oO') 
< 2\eps_{i'}|W'|$ and
if $u \in A^{\lo}_{i'} \cup A^{\no}_{i'}$ then
(iv) implies
$.5\eps_{i'}|W'|-5\dD^4 n < |W'|+\DD^{.9}-f({\cal H}_{i'},\oO') 
< 2\eps_{i'}|W'| + 5\dD^4 n$,
and all $f_{\{\bm{e}\}}({\cal H}_{i'},\oO') \le 1$,
so by Lemma~\ref{lem:wEGJ} whp (ii) holds.

For (vi), we write
$1 + |A^g_i|^{-1}\sum_{u \in A^g_i} |F'[A_{i',w},N_>(u)]|
= f({\cal M}_{i'})$, where $f(\es)=1$
and each $f(\edge{w}{u'}{x'}) 
= 1_{w=w'} |A^g_i|^{-1} \sum_{u \in A^g_i}
 |N_{F'}(u') \cap N_>(u)|$.
Using Lemma~\ref{lem:Hwdeg}.iv we have
\[ f({\cal H}_{i'},\oO')-1 >
 (1-2\eps_{i'})|A^g_i|^{-1} \sum_{u \in A^g_i} 
 \sum_{u' \in A_{i'}}  |N_{F'}(u') \cap N_>(u)|
 = (1-2\eps_{i'})|A^g_i|^{-1} 
 \sum_{u \in A^g_i} |F'[A_{i'},N_>(u)]|\]
and all $f_{\{\bm{e}\}}({\cal H}_{i'},\oO')
\le |A^g_i|^{-1} \max_{u'} 
 \sum_{v \in N_>(u')} |N_<(v)|
< 4\pmax^{-1} |A^g_i|^{-1} < C^{-\bB}f(\mc{H}_{i'},\oO')$.
By Lemma \ref{lem:wEGJ} whp 
$|A^g_i|^{-1}\sum_{u \in A^g_i}
 |F'[A^0_{i',w},N_>(u)]| 
< 2.1 \eps_{i'}  |A^g_i|^{-1} 
 \sum_{u \in A^g_i} |F'[A_{i'},N_>(u)]|
\le 9\pmax^{-1} \eps_{i'}$.
The second bound is similar so we omit the proof.
\end{proof}

We conclude this subsection by 
deducing Lemma \ref{lem:Hwdeg} (for $i$)
from the following estimates 
on $\oO$-weighted degrees, which thus
become the main goal of this section
and which we will prove assuming
Lemmas~\ref{lem:B}--\ref{lem:bad} for all $i'<i$.
 
\begin{lemma} \label{lem:bad}
whp for each $i \in [i^*]$,
\begin{enumerate}
\item
$\oO({\cal H}_i[uw]) = 1 \pm \eps_i$ 
for all $uw \in A_i \times W$,
\item 
$\oO({\cal H}_i[\ova{xw}])$ is $1 \pm .5\eps^{.8}$
for all $xw$ in $J_i$ and is $1 \pm \eps_i$ 
if $\ova{xw} \notin J^{\bad}$,
\item 
$\oO({\cal H}_i[\lova{xy}])$ is $1 \pm \eps^{.8}$
for all $\lova{xy} \in \ova{G}_{uv}$,
and is $1 \pm \eps_i$ if
$v \notin A^{\lo}$ or $y \notin B$,
\item
$\sum_{w \in W} \oO(\edge{w}{u}{x})
= 1 \pm \eps_i$ for all $x \in V(G)$,
$u \in A_i$.
\end{enumerate}
\end{lemma}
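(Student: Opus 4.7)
The plan is to verify each estimate by directly counting the edges of $\mc{H}_i$ through the indicated vertex and comparing with the weight $\oO(\edge{w}{u}{x})=|A_u|^{-1}\prod_{v\in N_<(u)} p_{uv}^{-1}$. In all four cases the expected value is $1$, since this weight is exactly calibrated as the reciprocal of the typicality-expected number of $x$'s satisfying the edge constraints; the real work lies in proving concentration and in isolating the exceptional vertices where the tight $1\pm\eps_i$ must be relaxed to $1\pm\eps^{.8}$.

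For (i), fix $u\in A_i$ and $w\in W$. At time $t_i$ every $v\in N_<(u)\sub A_{<i}$ already has $\phi_w(v)$ defined (by HIGH DEGREES, EMBED $A_0$, and the previous iterations of APPROXIMATE DECOMPOSITION), so the valid $x$'s are exactly those in $N^-_{J_u}(w)\cap\bigcap_v N^+_{\ova{G}_{uv}}(\phi_w(v))$, which has size $|A_u|\prod_v(1\pm 1.2\xi)p_{uv}$ by Lemma~\ref{lem:DI}.v; multiplying by the weight yields $1\pm\eps_i$ with room since $\xi\ll\eps_i$. For (iv), I would fix $u$ and $x$ and count valid $w$'s, using $|\{w:\ova{xw}\in J_u\}|=(1\pm\dD)|A_u|$ from the typicality of $J_u$ (Lemma~\ref{lem:DI}.iv/viii) together with the near-uniform distribution of $\phi_w(v)$ across different $w$ (Lemmas~\ref{lem:hi} and~\ref{lem:rga}, with analogous estimates for the APPROXIMATE DECOMPOSITION matchings of earlier iterations). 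Since the embeddings $\phi_w$ are constructed essentially independently across $w$, Freedman's inequality (Lemma~\ref{freedman}) supplies the required concentration.

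The main obstacle is (ii) and (iii). For (ii), fix $\ova{xw}\in J_i$, which determines the part $A^g_i=A_u$, and write
\[ \oO(\mc{H}_i[\ova{xw}])=|A^g_i|^{-1}\sum_{u\in A^g_i}\Big(\prod_{v\in N_<(u)}p_{uv}^{-1}\Big)\prod_{v\in N_<(u)}1[\lova{x\phi_w(v)}\in\ova{G}_{uv}]. \]
Since $\phi_w$ is injective, the arcs $\lova{x\phi_w(v)}$ for distinct $v$ are distinct, so by the independence of the arc-allocations of DIGRAPH.vi (Lemma~\ref{lem:DI}.iii) the expected product of indicators is $\prod_v p_{uv}\pm\DD^{-.8}$, giving expectation $1\pm o(\eps_i)$ for the whole sum. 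For concentration I would view this sum as a function of the independent arc-choices of DIGRAPH.vi; the Lipschitz weight contributed by arc $\lova{xy}$ is bounded by $|N_{F'}(\phi_w^{-1}(y))\cap A^g_i|$, which is $O(\pmax^{-1})$ when $\phi_w^{-1}(y)\in A_{\ge 1}$ but can reach $\DD$ when $\phi_w^{-1}(y)\in A'_0$. The tree-partition design ensures such large contributions affect only $u\in A^{\lo}$ attached via $F'$ to their unique $A'_0$-neighbor, which is exactly why $J^{\bad}$ is defined via the multiset $S^w_i$. Lemma~\ref{lem:B} then yields $1\pm\eps_i$ outside $J^{\bad}$ via Lemma~\ref{azuma} or~\ref{freedman}, while a crude bound gives $1\pm.5\eps^{.8}$ inside $J^{\bad}$.

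For (iii) the argument parallels (ii) with an extra summation over $(w,u)$: after swapping sums one applies the same conditional-independence calculation for the indicators, and then uses Lemma~\ref{lem:Tleave}.vi together with Lemma~\ref{lem:B} to control the exceptional contributions where $v\in A^{\lo}$ and $y\in B$. Outside this case the concentration yields $1\pm\eps_i$; inside we settle for $1\pm\eps^{.8}$. The hardest conceptual step throughout is aligning the dependency structure of DIGRAPH's random allocations with the tree partition $A^{\hi}/A^{\lo}/A^{\no}$ and $A^*/A^{**}/A'_0$, so that $J^{\bad}$ and $B$ are simultaneously small enough (via Lemma~\ref{lem:B}) and large enough to absorb every problematic concentration failure.
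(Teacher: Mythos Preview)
Your argument for (i) is fine and matches the paper. The gap is in (ii)--(iv): you treat $\oO(\mc{H}_i[\cdot])$ as a function of the independent DIGRAPH arc-allocations while regarding all $\phi_w(v)$, $v\in N_<(u)$, as already fixed. This is only legitimate when every such $v$ lies in $A_0$, because those embeddings precede DIGRAPH. For $v\in A_{i'}$ with $1\le i'<i$, the value $\phi_w(v)$ is produced by the hypergraph matching $\mc{M}_{i'}$, which is chosen \emph{after} DIGRAPH and in a way that depends on DIGRAPH; you cannot condition on the outcome of $\mc{M}_{i'}$ and still treat the DIGRAPH arc-choices as free independent coins. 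Likewise, the sentence ``the embeddings $\phi_w$ are constructed essentially independently across $w$'' is not true in any sense that lets you apply Freedman: all $\phi_w(v)$ for $v\in A_{i'}$ come from a \emph{single} matching $\mc{M}_{i'}$ drawn from the distribution of Lemma~\ref{lem:wEGJ}, not from independent per-$w$ decisions.

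What the paper does instead is track the quantities $\oO^*_t(P_w:A_{\ova{xw}}\to x)$, $\oO^*_t(P_W:u\to x)$, $\oO^*_t(P_W:F'[A^g_i,A^{g'}_j]\to\lova{xy})$ through time. The base case $t=t_1$ is essentially your calculation (only $A_0$-neighbours matter then), but the inductive step from $\tau=t_{i'}$ to $\tau'=t_{i'}^+$ requires writing the target as $\psi(\mc{M}_{i'})$ for a clean function $\psi$ on $\tbinom{\mc{H}_{i'}}{{\le}4}$ (or ${\le}8$) and invoking Lemma~\ref{lem:wEGJ} to get $\psi(\mc{M}_{i'})=(1\pm C^{-\bB})\psi(\mc{H}_{i'},\oO')$; then Lemma~\ref{lem:x'} and the inductive hypothesis (Lemma~\ref{lem:Hwdeg} for $i'$) give $\psi(\mc{H}_{i'},\oO')=(1\pm O(\eps_{i'}))\oO^*_\tau(\cdot)$. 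This test-function machinery, together with the bookkeeping of the error sets $U_w$, $A^{\bad}_w$, $J^{\bad}$ to isolate where the tight $1\pm\eps_i$ fails, is the missing core of the proof. Your proposal captures the $t_1$ base case and the reason $J^{\bad}$ and $B$ exist, but not the mechanism by which the estimate survives each subsequent matching step.
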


\begin{proof}[Proof of Lemma \ref{lem:Hwdeg} for $i$]
We have already noted that all $\oO'$-weighted degrees
are $\le 1-.5\eps_i$, so it remains to prove the lower bounds.
Statements (i) and (ii) are immediate by applying
the definition of $\oO'$, using Lemma~\ref{lem:bad},
which gives $\oO'(\bm{e}) 
> (1+\eps^{.8})^{-1}(1-.5\eps_i) \oO(\bm{e})$
for any $\bm{e}  \in {\cal H}_i$.
 
For (iii), if $\bm{e}=\edge{w}{u}{x}$ with
$u \in A_i \sm ( A^{\bad}_w \cap N_{F'}(A^{\lo}) )$, then 
$\oO({\cal H}_i[\lova{xy}]) = 1 \pm \eps_i$ 
for all $y=\phi_w(v)$, $v \in N_<(u)$ and
if $\ova{xw} \in J_i \sm J^{\bad}$, then
$\oO({\cal H}_i[\ova{xw}]) = 1 \pm \eps_i$ 
by Lemma \ref{lem:bad},
so $\oO'(\bm{e})>(1+\eps_i)^{-1}(1-.5\eps_i)\oO(\bm{e})
>(1+1.6\eps_i)^{-1}\oO(\bm{e})$ 
by definition.

For (iv), note that 
for any $\bm{e} \in {\cal H}_i$ containing $uw$ 
with $u$ in $A^{\hi}_i$ or $A^{\no}_i \sm A^{\bad}_w$
we have $\oO'(\bm{e}) > (1+1.6\eps_i)^{-1} \oO(\bm{e})$,
so $\oO'({\cal H}_i[uw]) > (1+1.6\eps_i)^{-1}
 \oO({\cal H}_i[uw]) > 1-2\eps_i$.
If instead $u \in A^{\lo} \sm A^{\bad}_w$ 
then, by Lemma~\ref{lem:Hwdeg}.iii,
$\oO'({\cal H}_i[uw])$ 
is at least the sum of $(1+1.6\eps_i)^{-1}\oO(\bm{e})$ 
where $\bm{e}=\edge{w}{u}{x}$ 
over $x$ with $\ova{xw} \notin J^{\bad}$,
so $\oO'({\cal H}_i[uw]) > 
 (1+1.6\eps_i)^{-1}\oO({\cal H}_i[uw]) 
- \dD^3 n \cdot (1\pm .5\eps^{.8})^{-1}
\pmax^{-4}|A_u|^{-1} > 1-2\eps_i$.

For (v), we consider codegrees according to the
various types of vertices. First we note that 
each $\oO'(\edge{w}{u}{x}) \le 2\pmax^{-4} |A_u|^{-1}$.
Each $|A_u| \ge \DD$, so
this easily gives the codegree bound
for the pairs appearing in the following bounds:
$|{\cal H}_i[uw,vw]| = 0$,
$|{\cal H}_i[\ova{xw},\ova{yw}]| = 0$,
$|{\cal H}_i[uw,\ova{xw}]| \le 1$,
$|{\cal H}_i[uw,\lova{xy}]| \le 2$.
If $\edge{w}{u}{x}$ contains $\ova{xw}$ and $\lova{xy}$
then $u \in N_>(v)$ where $\phi_w(v)=y$, 
so $|{\cal H}_i[\ova{xw},\lova{xy}]|$
is at most $\DD$,
or at most $\pmax^{-1}$ if $v \in A_{\ge 1}$,
or $0$ if $v \in A_0$ and $\ova{xw} \in J^{\hi}$.
These weighted codegrees are therefore at most
$2\DD \pmax^{-4}  (\dD n)^{-1}$ or
$2\pmax^{-4} \DD^{-1} < \DD^{-.9}$.

It remains to bound
$\oO'({\cal H}_i[\lova{xy},\lova{xy}'])$.
Suppose $\lova{xy} \in \ova{G}_{u_0v_0}$ 
and $\lova{xy}' \in \ova{G}_{u_0v_0'}$,
with $u_0v_0 \in F'[A_i,A_j]$
and $u_0v_0' \in F'[A_i,A_{j'}]$,
say with $j' \le j$.
We have $j>0$ as $|N_<(u) \cap A_0| \le 1$ 
for all $u \in A_{\ge 1}$, and
as $F[A^{\hi}]=\es$ we have
$|A_{u_0}||A_{v_0}| \ge \dD n \DD$.

Suppose first that $j'<j$.
Let $f$ be the function 
on ${\cal H}_j \cup \{\es\}$,
where $f(\es) = \DD^{-.99}$
and each $f(\edge{w}{v}{z})$ 
is $2\pmax^{-4} |A_{u_0}|^{-1}$ 
if $z=y$, $v \in A_{v_0}$
and there are $u \in A_{u_0} \cap N_>(v)$
and $v' \in N_<(u) \cap A_{v'_0}$ 
with $\phi_w(v')=y'$, or $0$ otherwise.
If $j'>0$, then 
for each $w \in W$ there is at most one $v'$
with $\phi_w(v')=y'$, at most $\pmax^{-1}$
choices of $u \in A_i \cap N_>(v')$,
and at most $4$ choices of $v \in N_<(u)$,
so $f({\cal H}_j,\oO') \le \DD^{-.99} +
8n\pmax^{-9} |A_{v_0}|^{-1}|A_{u_0}|^{-1}
< 2\DD^{-.99}$.
If $j'=0$, then there are at most $\DD$
choices of $u \in A_i \cap N_>(v')$,
and $u \in A^{\lo}$, so every $v \in N_{<}(u)$
in $A_{v_0}$ lies in $A^{\no}$,
so $f({\cal H}_j,\oO') \le \DD^{-.99} +
8n\DD \pmax^{-8} (\delta n)^{-2}
< 2\DD^{-.99}$.
As each  $f_{\{\bm{e}\}}({\cal H}_j,\oO')
\le 2\pmax^{-4} \DD^{-1} < C^{-\bB} 
f({\cal H}_j,\oO')$, by Lemma \ref{lem:wEGJ} whp 
$\oO'({\cal H}_i[\lova{xy},\lova{xy}'])
< f({\cal M}_j)
=  (1 \pm C^{-\bB}) f({\cal H}_j,\oO')
< \DD^{-.9}$.

Now suppose $j'=j$. Then $j \ne 0$ and
we cannot have $v_0$, $v'_0$ both in $A^{\hi}$ 
by definition of $A_0$ as a span,
so $|A_{v_0}||A_{v_0'}| \ge \dD n \DD$.
Let $f$ be the function 
on $\tbinom{{\cal H}_j}{2} \cup \{\es\}$,
where $f(\es) = \DD^{-.99}$
and each $f(\edge{w}{v}{z},\edge{w}{v'}{z'})$ 
is $2\pmax^{-4} |A_{u_0}|^{-1}$ if $z=y$, $z'=y'$, 
$v \in A_{v_0}$, $v \in A_{v'_0}$ and there is 
$u \in A_{u_0} \cap N_>(v) \cap N_>(v')$, 
or $0$ otherwise.
For each $u \in A_{u_0}$ there are
$\le 12$ choices of $(v,v')$
so $f({\cal H}_j,\oO') \le \DD^{-.99} +
24n\pmax^{-12} |A_{v_0}|^{-1}|A_{v'_0}|^{-1}
< 2\DD^{-.99}$.
As above, by Lemma~\ref{lem:wEGJ}
whp $f({\cal M}_j) < \DD^{-.9}$.
\end{proof}

\subsection{Potential embeddings}

We define a hypergraph ${\cal H}$ with vertex parts 
$G$, $A_{\ge 1} \times W$ and $V(G) \times W$,
which contains all potential edges of all ${\cal H}_i$,
in the following sense. Given $w \in W$, $u \in V(T)$
and an injection $f:N_\le(u) \to V(G)$ such that
$f(u')f(u) \in G$ for all $u' \in N_<(u)$
we let $P_w(f)$ be the `potential edge'
containing $u \in V(T)$, $f(u) \in V(G)$
and $f(u')f(u) \in G$ for all $u' \in N_<(u)$.
For any $u \in S \sub N_\le(u)$ 
and injection $f':S \to V(G)$ we let 
$P_w(f')$ be the set of all $P_w(f) \in {\cal H}$
such that $f$ restricts to $f'$ on $S$.
We use the notation $P_w(u \to x)$
when $S=\{u\}$ with $f(u)=x$
and $P_w(uv \to \lova{xy})$ when 
$S=\{u,v\}$ with $f(u)=x$, $f(v)=y$.

For each time $t$ we introduce
a measure $\oO_t$ on ${\cal H}$
where each $\oO_t(P_w(f))$ estimates 
the probability given the history at time $t$ that 
the $w$-embedding will be consistent with $f$.
We define $\oO_t$ by the following formula 
involving other estimated probabilities that
will be discussed below: \[ \oO_t(P_w(f)) 
= \oO^*_t(P_w:u \to x) \prod_{v \in N_<(u)} 
p_t(uv)^{-1} \oO^*_t( P_w:v \to f(v) ).\]
The key parameter in this formula is  
$\oO^*_t( P_w:u \to x )$, which will estimate
the probability $\mb{P}^t(\phi_w(u)=x)$
given the history at time $t$ 
that we will have $\edge{w}{u}{x}$.
We also associate an edge probability 
$p_t(uv)$ to each $v \in N_<(u)$,
where $p_t(uv)=1$ if $t \ge t_v$,
otherwise $p_t(uv)$ is $p$ if $t<t_0$,
is $p_1$ if $t_0 \le t < t_1$,
or is $p_{uv}$ for $t \ge t_1$.
The intuition for the formula is that
conditional on $\edge{w}{u}{x}$, 
the events $\phi_w(v)=f(v)$ become
about $p_t(uv)^{-1}$ times more likely
and are roughly independent. 
In our calculations it will be sufficient
to work only with $\oO^*_t( P_w: u \to x )$,
so the formula for the measure $\oO_t$ above 
can be thought of as just an intuitive explanation
for why the calculations work (it is not logically
necessary for the proof).

Note that we have introduced similar notation
for two different quantities, 
namely $\oO^*_t( P_w: u \to x )$
and $\oO_t( P_w(u \to x) ) = \sum \{ 
  \oO_t(P_w(f)): P_w(f) \in  P_w(u \to x) \}$;
they will be approximately equal.
In general, for any $u \in S \sub N_\le(u)$ 
and injection $f':S \to V(G)$ we will have
\[ \oO_t(P_w(f')) \approx  \oO^*_t(P_w:f') :=
  \oO^*_t(P_w:u \to x) \prod_{u' \in S \sm \{u\}} 
  p_t(uu')^{-1} \oO^*_t( P_w:u' \to f(u') ).\]
Another important example of this will be
\[ \oO_t(P_w(uv \to \lova{xy})) 
\approx  \oO^*_t(P_w: uv \to \lova{xy}) 
= p_t(uv)^{-1} \oO^*_t(P_w:u \to x) \oO^*_t(P_w:u \to y) .\]
Initially, we let all 
$\oO^*_0(P_w:u \to x):=n^{-1}$.
(One can check that typicality of $G$ 
gives $\oO_0(P_w(u \to x)) = (1\pm\xi^{.9})
\oO^*_0(P_w:u \to x)$.)
For $t \ge t_{uw}$, i.e.\ times after $\phi_w(u)$
has been defined, we let
$\oO^*_t(P_w:u \to x):=1_{\edge{w}{u}{x}}$.
We thus have $\oO_t(P_w(f'))=\oO^*_t(P_w:f')$
at times $t$ after $\phi_w(u)$ has been defined
for all $u \in N_\le(u) \sm S$. 
In particular, if $t \ge t_{u'}$ 
for all $u' \in N_<(u)$ 
then there is at most one $f$ with $f(u)=x$
consistent with the history, and we have
$\oO_t(P_w(f)) = \oO^*_t(P_w:u \to x)$.
Furthermore, for $u \in A_i$, when we come 
to step $i$ of APPROXIMATE DECOMPOSITION
we will have $\oO^*_{t_i}(P_w:u \to x) 
= \oO(\edge{w}{u}{x})$. 

Now we define $\oO^*_t(P_w:u \to x)$ 
for general $t$
and $u \in V(F)$.
As mentioned above,
we let $\oO^*_0(P_w:u \to x)=n^{-1}$
and $\oO^*_t(P_w:u \to x)=1_{\edge{w}{u}{x}}$
for $t \ge t_{uw}$. At each time $t<t_{uw}$
where the possibility of $\edge{w}{u}{x}$ 
depends on an event in the algorithm,
if the event fails we let
$\oO^*_t(P_w:u \to x)=0$,
and if it succeeds we will divide by an estimate 
for its probability, thus approximately preserving 
the conditional expectation of the surviving weight.
We let $P^t_w(u \to \cdot)$ be the set
of $x$ such that $\oO_t^*(P_w:u \to x) \ne 0$
and define $P^t_w(\cdot \to x)$, $P^t_{\cdot}(u \to x)$
in analogy, and also define
$P^t_w(\cdot \to \lova{xy})$ to be the set of 
$uv \in F'$ such that $\oO_t^*(P_w:u \to x) \ne 0$ and 
$\oO_t^*(P_w:v \to y) \ne 0$.

During HIGH DEGREES, when we embed
any $u' \in N_<(u) \cap A^*$ we will have
$\mb{P}^{t_{u'}^-}(\phi_w(u')x \in G) \approx p$,
so if this occurs we let
$\oO^*_{t_{u'}}(P_w:u \to x) 
:= p^{-1} \oO^*_{t_{u'}^-}(P_w:u \to x)$.
So at the end of HIGH DEGREES,
$\oO^*_{t_{\hi}}(P_w:u \to x)$
is $p^{-|N_<(u) \cap A^*|} n^{-1}$
for $x \in P^{t_{\hi}}_w(u \to \cdot)$
or $0$ otherwise.
(Note that our estimate ignores the possibility
that $\edge{w}{u}{x}$ may be impossible due to
requiring an edge of $G \sm G^*$.)

In INTERVALS, we require $x \in \ov{X}_w$, 
which by Lemma~\ref{lem:INT}.i
occurs with probability $\approx \ov{p}_w$, 
and then we let $\oO^*_{t_{\iv}}(P_w:u \to x)$
be $\ov{p}_w^{-1} \oO^*_{t_{\hi}}(P_w:u \to x)$
for $x \in P^{t_{\iv}}_w(u \to \cdot)$
or $0$ otherwise. 

In EMBED $A_0$, after choosing $G_0$ and $J_0$,
there are two cases. 
If $u \in A_{\ge 1}$
we let $\oO^*_{t_{G_0}}(P_w: u\to x)=
p_1^{-|N_<(u) \cap A_0\sm A^*|}
\oO^*_{t_{\hi}}(P_w:u \to x)$
for $x \in P_w^{t_{G_0}}(u \to \cdot)$
and $0$ otherwise.
Indeed (recalling $u \in A_{\ge 1}$), 
for any $x \in P_w^{t_{\iv}}(u \to \cdot)$
we have $x \in P_w^{t_{G_0}}(u\to \cdot)$
iff for $u' \in N_<(u) \cap A_0\sm A^*$
we have $x\phi_w(u') \in G_1$,
which occurs with probability $p_1$.
If $u \in A_0 \sm A^*$
we require $\ova{xw} \in J_0$,
which for available $x$ 
has probability $p_0/\ov{p}_w$,
and then we let $\oO^*_{t_{G_0}}(P_w:u \to x)$
be $p_0^{-1} \oO^*_{t_{\hi}}(P_w:u \to x)$
for $x \in P^{t_{G_0}}_w(u \to \cdot)$
or $0$ otherwise. 

During the embedding of $A_0$, 
i.e. EMBED $A_0$ (ii),
after choosing some $\phi_w(a)$
at time $t=t_a$,
for $u \in A_0 \sm A^*$ we let
$\oO^*_t(P_w:u \to x)$ be 
$\oO^*_{t^-}(P_w:u \to x)$ 
if $au \notin T$, or if $au \in T$
we let $\oO^*_t(P_w:u \to x)$ be 
$p_0^{-1} \oO^*_{t^-}(P_w:u \to x)$
for $x \in P^t_w(u \to \cdot)$
or $0$ otherwise.
So $\oO^*_{t_0}(P_w:u \to x)$
is $p_0^{-|N_T(v) \cap A_0\sm A^*|} 
\oO^*_{t_{\hi}}(P_w:u \to x)$
for $x \in P^{t_0}_w(u \to \cdot)$
or $0$ otherwise. 
For $u \in A_{\ge 1}$,
if $au \in F'$
we let $\oO^*_t(P_w:u \to x)$ be 
$p_1^{-1} \oO^*_{t^-}(P_w:u \to x)$
for $x \in P^t_w(u \to \cdot)$
or $0$ otherwise.

For DIGRAPH, we let 
$\oO^*_{t_1}(P_w:u \to x)$ be
$|A_u|^{-1} \prod_{v \in N_<(u) \cap A_0} p_{uv}^{-1}$
for $x \in P^{t_1}_w(u \to \cdot)$
or $0$ otherwise. To justify this,
we first consider $u \in A^a_i$, $a \in A^\DD_i$,
when $N_<(u) \cap A_0 = \es$, and all
$d^\pm_{H^a_i}(y) = (1 \pm \dD)|A_u|$
by Lemma \ref{lem:DI}.viii. 
If $u \in A^{\no}_i$ we must choose
$\ova{xw} \in J^{\no}_i$
with probability $p_{xw}/\ov{p}_w \cdot \aA^{\no}_i/p_{xw}
=\aA^{\no}_i/\ov{p}_w$.
If instead 
$u \in A^{\lo}_i$, 
we must choose $\lova{x\phi_w(v)}$
as an arc of $\ova{G}_{uv}$ for all edges 
$x\phi_w(v)$ of $G_1$ with $v \in N_<(u) \cap A_0$,
each with probability 
$\tfrac12 \cdot 2p_{uv}/p_1 = p_{uv}/p_1$ independently,
and $\ova{wx} \in J_u$ with probability
$\aA^{\lo}_i/\ov{p}_w$.

During APPROXIMATE DECOMPOSITION, we let
$\oO^*_t(P_w:u \to x)$ be $|A_u|^{-1}
 \prod_{v \in N_<(u) \cap A_{t,w}} p_{uv}^{-1}$
for $x \in P^t_w(u \to \cdot)$, or $0$ otherwise;
we will see that whenever we embed $v \in N_<(u)$,
we have $\mb{P}(\lova{x\phi_w(v)} \in \ova{G}_u) 
\approx p_{uv}$.
We note that $\oO^*_{t_i}(P_w:u \to x) 
= \oO(\edge{w}{u}{x})$, as mentioned above.
We emphasise that the $\oO^*_t(P_w: u \to x)$
are definitions (with justifications provided 
only for intuition), and it is the sets $P_w^t(u \to x)$
which change during the algorithm.

We use the notation $\oO_t^*(P_w:u \to x)$
with a set of vertices in place of one or more of $w,u,x$
to denote a sum of $\oO_t^*(P_w:u \to x)$
over these sets, for example
$\oO_t^*(P_W: u \to x)$ or
$\oO_t^*(P_w: V(T) \to x)$ or
$\oO_t^*(P_W: F'[A_i,A_j] \to \lova{xy})$.
To see the connection to weighted degrees
in Lemma~\ref{lem:bad},
observe that $\edge{w}{u}{x} \in \mc{H}_i$
iff $x \in P_w^{t_i}(u \to \cdot)$, so
$$
\oO(\mc{H}_i[uw])=
\sum\{\oO(\edge{w}{u}{x}):x \in P_w^{t_i}(u \to \cdot)\}
=\oO_{t_i}^*(P_w: u \to N^-_{J_u}(w))
=\oO_{t_i}^*(P_w: u \to V(G))
$$
and similarly
$\oO(\mc{H}_i[\ova{xw}])=\oO_{t_i}^*(P_w: A_{\ova{xw}} \to x)$ and
$\oO(\mc{H}_i[\lova{xy}])=\oO_{t_i}^*(P_W: F'[A^g_i,A^{g'}_j]
\to \lova{xy})$ when $\lova{xy}\in G^{gg'}_{ij}$.

We conclude this subsection with the following lemma,
which implies the estimate on the weighted degrees
$\oO({\cal H}_i[uw])$ in Lemma \ref{lem:bad}.i.
The proof is immediate from Lemma \ref{lem:DI}.v,
as $x \in P^t_w(u \to \cdot)$ for $t \ge t_1$
iff $x \in N_{J_u}(w) \cap
 \bigcap_{v \in N_<(u) \cap A_{t,w}} 
   N^+_{\ova{G}_{uv}}(\phi_w(v))$.

\begin{lemma} \label{lem:Px}
For all $u \in A_{\ge 1}$, $w \in W$ 
and $t \ge t_1$ we have 
\[ |P^t_w(u \to \cdot)| 
= (1 \pm 5\xi)\oO^*_t( P_w:u \to x )^{-1}
= (1 \pm 5\xi) |A_u|
 \prod_{v \in N_<(u) \cap A_{t,w}} p_{uv}.\]
\end{lemma}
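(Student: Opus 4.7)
The plan is to reduce Lemma \ref{lem:Px} to a direct application of Lemma \ref{lem:DI}.v, by first unpacking the combinatorial description of $P^t_w(u \to \cdot)$ and then matching it against the recursive definition of $\oO^*_t(P_w: u \to x)$.

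First I would verify the characterization claimed in the hint: for $u \in A_{\ge 1}$ and $t \ge t_1$, an element $x$ lies in $P^t_w(u \to \cdot)$ (i.e.\ $\oO^*_t(P_w: u \to x) \ne 0$) if and only if
\[ x \in N_{J_u}(w) \cap \bigcap_{v \in N_<(u) \cap A_{t,w}} N^+_{\ova{G}_{uv}}(\phi_w(v)). \]
This is essentially unwinding the definition of $\oO^*$ through DIGRAPH and APPROXIMATE DECOMPOSITION: at time $t_1$ the requirement $\ova{xw} \in J_u$ (equivalently $x \in N_{J_u}(w)$) is precisely what makes $\oO^*_{t_1}(P_w: u \to x)$ nonzero for $u \in A_{\ge 1}$ with $N_<(u) \cap A_0 = \varnothing$; and whenever some $v \in N_<(u)$ is embedded at a later time $t_v \le t$, $x$ survives in $P^t_w(u \to \cdot)$ exactly when $\lova{x \phi_w(v)} \in \ova{G}_{uv}$. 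For edges to $v \in N_<(u) \cap A_0$ (handled in the $A_0$ step of DIGRAPH when $u \in A^{\lo}_i$), the same condition $\lova{x\phi_w(v)} \in \ova{G}_{uv}$ is absorbed into $\ova{xw} \in J_u$ by construction, so the characterization is uniform.

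Next I apply Lemma \ref{lem:DI}.v with $s' = |N_<(u) \cap A_{t,w}|$, taking the vertices $v_i$ to be the elements of $N_<(u) \cap A_{t,w}$ and $x_i = \phi_w(v_i)$. Note $s' \le |N_<(u)| \le 4 \le s$, so this application is valid, and it yields
\[ |P^t_w(u \to \cdot)|
= |A_u| \prod_{v \in N_<(u) \cap A_{t,w}} (1 \pm 1.2\xi) \, p_{uv}
= (1 \pm 5\xi)\, |A_u| \prod_{v \in N_<(u) \cap A_{t,w}} p_{uv}, \]
using $(1 \pm 1.2\xi)^4 = 1 \pm 5\xi$ since $\xi$ is tiny. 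The second equality in the lemma follows by comparing this with the definition of $\oO^*_t(P_w: u \to x)$ during and after DIGRAPH, namely
\[ \oO^*_t(P_w: u \to x) = |A_u|^{-1} \prod_{v \in N_<(u) \cap A_{t,w}} p_{uv}^{-1}, \]
which one reads off from the definitions at $t = t_1$ and the update rule $\oO^*_t(P_w: u \to x) = p_{uv}^{-1} \oO^*_{t^-}(P_w: u \to x)$ applied at each embedding time $t = t_v$ for $v \in N_<(u)$.

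The only real point requiring care is the bookkeeping in the first step: ensuring that the definitions of $\oO^*_t(P_w:u \to x)$ along the algorithm really do leave $P^t_w(u \to \cdot)$ equal to the intersection displayed above, with no extra constraints surviving from HIGH DEGREES or EMBED $A_0$. This is what ensures that the count in Lemma \ref{lem:DI}.v is exactly what is needed; the rest is arithmetic.
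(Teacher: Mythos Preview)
Your proposal is correct and follows exactly the paper's approach: the paper states that the proof is immediate from Lemma \ref{lem:DI}.v once one observes the characterization $P^t_w(u \to \cdot) = N_{J_u}(w) \cap \bigcap_{v \in N_<(u) \cap A_{t,w}} N^+_{\ova{G}_{uv}}(\phi_w(v))$, which is precisely what you unwind. One small imprecision: for $u \in A^{\lo}_i$ with $u' \in N_<(u) \cap A_0$, the condition $\lova{x\phi_w(u')} \in \ova{G}_{uu'}$ is \emph{not} absorbed into $\ova{xw} \in J_u$ but is a genuinely separate constraint; however, since $A_0 \subseteq A_{t,w}$ for $t \ge t_1$, it already appears in your displayed intersection, so your characterization and the subsequent application of Lemma \ref{lem:DI}.v remain correct.
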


\subsection{$J$ degrees}

In this subsection we prove the estimates
in Lemma \ref{lem:bad}.v concerning weighted
degrees $\oO({\cal H}_i[\ova{xw}]) =
\oO^*_{t_i}( P_w: A_{\ova{xw}} \to x )$ 
for $\ova{xw}$ in $J_i$.
We start with the following estimate at time $t_1$. 
 
\begin{lemma} \label{lem:x:t1}
For all $\ova{xw} \in J^{\hi}_i \cup J^{\no}_i$
we have $\oO^*_{t_1}( P_w:A_{\ova{xw}} \to x )=1$.
For all $\ova{xw} \in J^{\lo}_i$ whp
$\oO^*_{t_1}( P_w:A_{\ova{xw}} \to x ) 
= 1 \pm .3\eps^{.8}$.
\end{lemma}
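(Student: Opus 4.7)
I plan to prove the two parts of the lemma by distinct arguments: the first by a direct identity and the second by a concentration estimate combining randomness from HIGH DEGREES, EMBED $A_0$ and DIGRAPH.

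For $\ova{xw} \in J^{\hi}_i \cup J^{\no}_i$, I will show that every $u \in A_{\ova{xw}}$ satisfies $N_<(u) \cap A_0 = \es$: when $\ova{xw} \in J^a_i$ and $u \in A^a_i$ the unique $F$-neighbour of $u$ in $A_0$ is $a$, but the edge $au$ was removed in passing from $F$ to $F'$; when $u \in A^{\no}_i$ this is by definition. Hence the product defining $\oO^*_{t_1}(P_w: u \to x) = |A_u|^{-1} \prod_{v \in N_<(u) \cap A_0} p_{uv}^{-1}$ is empty, and the value is $|A_{\ova{xw}}|^{-1}$ whenever $x \in P^{t_1}_w(u \to \cdot)$. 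At time $t_1$ the only constraint remaining for this membership is $\ova{xw} \in J_u$ (as $u$ has no $A_0$-neighbour contributing an arc condition), which holds by hypothesis. Summing $|A_{\ova{xw}}|^{-1}$ over $u \in A_{\ova{xw}}$ gives exactly $1$.

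For $\ova{xw} \in J^{\lo}_i$, each $u \in A^{\lo}_i$ has a unique $u' = u'(u) \in A_0$ with $N_<(u) \cap A_0 = \{u'\}$, and $\oO^*_{t_1}(P_w: u \to x) = |A^{\lo}_i|^{-1}(p^{\lo}_{i0})^{-1}$ subject to the sole remaining DIGRAPH condition $\lova{x\phi_w(u')} \in \ova{G}^{\lo}_{i0}$. Writing $N = \sum_{u' \in A_0} d(u') X_{u'}$ with $d(u') = |\{u \in A^{\lo}_i : u'(u) = u'\}|$ and $X_{u'} = 1_{\lova{x\phi_w(u')} \in \ova{G}^{\lo}_{i0}}$, the target reduces to $N = (1 \pm .3\eps^{.8})|A^{\lo}_i| p^{\lo}_{i0}$. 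The key is that the $X_{u'}$ are determined in DIGRAPH step (vi), independently of the event $\ova{xw} \in J^{\lo}_i$ in step (vii) and mutually independent across distinct $u'$, with $\mb{P}(X_{u'} = 1) = p^{\lo}_{i0}/p$ on $\{\phi_w(u') \in N_G(x)\}$ by Lemma \ref{lem:DI}. I will then carry out two stages: (a) a typicality stage, using Lemmas \ref{lem:hi} and \ref{lem:rga} combined with the concentration bounds of Lemmas \ref{freedman} and \ref{bernstein} applied to the random matchings in HIGH DEGREES and EMBED $A_0$, to show whp $\sum_{u' : \phi_w(u') \in N_G(x)} d(u') = (1 \pm \eps^{.8}/3) p|A^{\lo}_i|$ uniformly over $(x,w)$; and (b) Bernstein's inequality (Lemma \ref{bernstein}) applied to $N$ itself, with bound $b = \max d(u') \le \DD$ and variance $v \le \DD\cdot |A^{\lo}_i| p^{\lo}_{i0}/p$, to show whp $N = (1 \pm \eps^{.8}/3) p^{\lo}_{i0} \sum_{u' : \phi_w(u') \in N_G(x)} d(u')$. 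A union bound over the polynomial family of pairs $(\ova{xw}, i)$ then completes Part 2.

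The main obstacle will be controlling the possibly heavy contributions of $u' \in A^*$, where $d(u')$ can be as large as $\DD$. Using $|A^*| \le 5n/\DD$ and $d(u') \le \DD$ for $u' \in A^*$, the aggregate $A^*$-contribution to $\sum d(u')^2$ is at most $\DD |A^{\lo}_i|$, which combined with $\mb{P}(X_{u'}=1) \le p^{\lo}_{i0}/p$ keeps the full variance $v$ of order $\DD |A^{\lo}_i| p^{\lo}_{i0}/p$. The parameter hierarchy (in particular $\DD \ll \dD^2 n p^{\lo}_{i0}$) then makes the Bernstein exponent $t^2/v$ at $t = \eps^{.8}|A^{\lo}_i| p^{\lo}_{i0}$ a positive power of $n$, which suffices for the union bound. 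The easier contributions from $A^{**}$ and $A'_0$, where $d(u')$ is bounded by $D \ll \DD$ and the random choices of $\phi_w$ are controlled by Lemma \ref{lem:rga}, are handled analogously.
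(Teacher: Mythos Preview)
Your approach is correct and essentially mirrors the paper's: both dispose of the $J^{\hi}_i \cup J^{\no}_i$ case by the trivial identity, and for $J^{\lo}_i$ separate the randomness of embedding $A_0$ (your stage~(a), handled via Lemmas~\ref{lem:hi}, \ref{lem:rga} and martingale concentration) from the independent DIGRAPH arc-assignments (your stage~(b), handled via Bernstein/Freedman). The paper organises stage~(a) as a step-by-step evolution of $\oO^*_t(P_w: A^{\lo}_i \to x)$ through HIGH DEGREES, INTERVALS and EMBED~$A_0$ rather than your direct sum $\sum_{u'} d(u')\,1_{\phi_w(u') \in N_G(x)}$, but this is only an organisational difference.
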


\begin{proof}
If $\ova{xw} \in J^{\hi}_i \cup J^{\no}_i$
then $P^{t_1}_w(\cdot \to x) =  A^g_i$,
where $g=\no$ if $\ova{xw} \in J^{\no}_i$
or $g=a$ if  $\ova{xw} \in J^a_i$
for some $a \in A^\DD_i$.
We have $\oO^*_{t_1}( P_w:u \to x ) = |A^g_i|^{-1}$
for all $u \in A^g_i$, so
$\oO^*_{t_1}( P_w:A^g_i \to x )=1$.

Now we consider the evolution
of $\oO^*_t( P_w:A^{\lo}_i\to x )$.
Initially $\oO^*_0( P_w:A^{\lo}_i\to x )
= |A^{\lo}_i| n^{-1} = \aA^{\lo}_i$.
During HIGH DEGREES,
for each $u \in A^{\lo}_i$, 
when we embed some $a \in A^*$,
if $a \notin N_<(u)$ we have
$\oO^*_{t_a}( P_w:u \to x )
 = \oO^*_{t_a^-}( P_w:u \to x )$,
whereas if $a \in N_<(u)$, 
as $\mb{P}^{t_a^-}(\phi_w(a) \in N_G(x) )  
= (1 \pm \dD)p$ by Lemma \ref{lem:hi} we have
\[\mb{E}^{t_a^-} \oO^*_{t_a}( P_w:u \to x )
 = \mb{P}^{t_a^-}(\phi_w(a) \in N_G(x) ) 
 p^{-1} \oO^*_{t_a^-}( P_w:u \to x )
 = (1 \pm \dD) \oO^*_{t_a^-}( P_w:u \to x ).\]
As each $|N_<(u)| \le 4$ we have
 $\mb{E}^0 \oO^*_{t_{\hi}}( P_w:A^{\lo}_i \to x )
 = (1 \pm \dD)^4 \aA^{\lo}_i$.
For concentration, we bound
each $|\oO^*_{t_a}( P_w:A^{\lo}_i \to x )
- \oO^*_{t_a^-}( P_w:A^{\lo}_i \to x )|$
by $\oO^*_{t_a^-}( P_w: A^{\lo}_i\cap N_>(a) \to x )
< \DD \pmax^{-4} (\delta n)^{-1}$,
so by Lemma \ref{freedman} whp 
$\oO^*_{t_{\hi}}( P_w:A^{\lo}_i \to x )
= (1 \pm 5\dD)\aA^{\lo}_i$.

After INTERVALS, we can assume $x \in \ovXw$,
and then $\oO^*_{t_{\iv}}( P_w:A^{\lo}_i \to x )
= \ov{p}_w^{-1} 
 \oO^*_{t_{\hi}}( P_w:A^{\lo}_i \to x )$.
After EMBED $A_0$, similarly to the above
analysis for HIGH DEGREES, using 
Lemma \ref{lem:rga} in place of Lemma \ref{lem:hi},
whp $\oO^*_{t_{**}}( P_w:A^{\lo}_i \to x ) 
= (1 \pm 5D^{-.9}) \oO^*_{t_{\iv}}( P_w:A^{\lo}_i \to x )$
and $\oO^*_{t_0}( P_w:A^{\lo}_i \to x ) 
= (1 \pm 5D^{-.9} \pm 5\aA_0^{.9})
\oO^*_{t_{\iv}}( P_w:A^{\lo}_i \to x )
= (1 \pm .1\eps^{.8})
\oO^*_{t_{\iv}}( P_w:A^{\lo}_i \to x )$.

After DIGRAPH, the part $J^g_i$ of $J$ containing
$\ova{xw}$ is determined; we can assume $g=\lo$,
as we have already considered the other cases.
Each $\oO^*_{t_1}( P_w:u \to x )$ is $0$ unless 
$u \in A^{\lo}_i$ and we have the event $E_u$ that
$\lova{x\phi_w(u')}$ in $\ova{G}_{uu'}$ for the unique 
$u' \in N_<(u) \cap A_0$, in which case 
$\oO^*_{t_1}( P_w:u \to x ) = |A^{\lo}_i|^{-1} 
p_{uu'}^{-1}$.
By Lemma \ref{lem:DI} we have 
\begin{align*}
& \quad \mb{E}^{t_0} \oO^*_{t_1}( P_w:A^{\lo}_i \to x ) 
= \sum_{u \in A^{\lo}_i} \mb{P}^{t_0}(E_u) |A^{\lo}_i|^{-1}
  p_{uu'}^{-1} \\
& = \sum_{u \in A^{\lo}_i} 
 (p_{uu'}/p_1)
 \cdot \oO^*_{t_0}( P_w:u \to x ) \ov{p}_w (\aA^{\lo}_i)^{-1}
  (p_1/p_{uu'}) \pm \DD^{-.9}
= 1 \pm .2\eps^{.8}. 
\end{align*}
For concentration, note that for each $x\phi_w(u') \in G^*$,
the assignment in DIGRAPH affects 
$\oO^*_{t_1}( P_w:A^{\lo}_i \to x )$ by
$\le |N_>(u') \cap A^{\lo}_i| \pmax^{-1} |A^{\lo}_i|^{-1}
\le \DD \pmax^{-1} (\dD n)^{-1}$.
Thus by Lemma \ref{freedman} whp
$\oO^*_{t_1}( P_w:A^{\lo}_i \to x ) 
= 1 \pm .3\eps^{.8}$.
\end{proof}

Next we give a significantly better estimate for
$\oO^*_{t_1}( P_w:A^g_i \to x )$ 
for $\ova{xw} \notin J^{\bad}_i$.

\begin{lemma} \label{lem:x:t1:good}
If $\ova{xw} \in J^{\lo}_i \sm J^{\bad}_i$ then whp
$\oO^*_{t_1}( P_w:A^{\lo}_i \to x ) = 1 \pm \eps_1$. 
\end{lemma}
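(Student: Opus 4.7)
The plan is to follow the structure of Lemma~\ref{lem:x:t1} but sharpen the DIGRAPH-step analysis on the $A'_i$ subpiece of $A^{\lo}_i$ by exploiting the defining inequality $|S^w_i\cap N^-_{\ova G_1}(x)|=p_1|S^w_i|\pm\xi'n$ of $\ova{xw}\notin J^{\bad}_i$. The $O(\eps^{.8})$ error of Lemma~\ref{lem:x:t1} is dominated by the $O(\aA_0^{.9})$ loss from the edge-by-edge invocation of Lemma~\ref{lem:rga} during EMBED $A'_0$; the key insight is that $|S^w_i\cap N^-_{\ova G_1}(x)|$ is exactly the global count that repeated applications of Lemma~\ref{lem:rga} were approximating, so we can short-circuit the evolution of $\oO^*$ and plug in the pinned-down value directly.

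I would split $A^{\lo}_i=A^{**}_i\cup A'_i$ as in the definition of $J^{\bad}$. For $A^{**}_i$, the unique parent $u'$ of each $u\in A^{**}_i$ lies in $A^*\cup A^{**}$ rather than $A'_0$, so the Lemma~\ref{lem:x:t1} evolution argument, applied to this subpiece only, never incurs the $\aA_0^{.9}$ error and yields $\oO^*_{t_1}(P_w:A^{**}_i\to x)=|A^{**}_i|/|A^{\lo}_i|\pm O(\dD+D^{-.9})$, within $\eps_1/3$. For $u\in A'_i$ we have $N_<(u)\cap A^*=\es$, so on the conditioning event the only DIGRAPH-level condition for $x\in P^{t_1}_w(u\to\cdot)$ beyond what is given is $\lova{x\phi_w(u')}\in\ova G_{uu'}$. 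Grouping by parent I would write
\[\oO^*_{t_1}(P_w:A'_i\to x)=\sum_{u'\in A'_0}Y_{u'},\quad Y_{u'}:=\sum\bracc{|A^{\lo}_i|^{-1}p_{uu'}^{-1}\mb 1_{\lova{x\phi_w(u')}\in\ova G_{uu'}}:u\in A'_i,\,N_<(u)\cap A_0=\{u'\}}.\]
Using $\mb P^{t_0}(\lova{x\phi_w(u')}\in\ova G_{uu'})=(p_{uu'}/p_1)\mb 1_{x\phi_w(u')\in G_1}$ together with the $J^{\bad}_i$ bound gives $\mb E[\sum Y_{u'}\mid\cdots]=(p_1|A^{\lo}_i|)^{-1}|S^w_i\cap N^-_{\ova G_1}(x)|=|A'_i|/|A^{\lo}_i|\pm O(\xi'/(p_1\dD))$, so the two pieces combine to a conditional expectation of $1\pm\eps_1/2$. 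The $Y_{u'}$ are independent across $u'$ (disjoint edges) with $|Y_{u'}|\le(\dD n\pmin)^{-1}$ (mutually exclusive $\ova G_{uu'}$ for fixed $u'$), so Lemma~\ref{bernstein} gives deviation $\eps_1/2$ with failure probability $\exp(-\Omega(\eps_1^2\dD n\pmin))$, closing under a union bound over $(x,w)$.

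The main obstacle is rigorously handling the conditioning on $\ova{xw}\in J^{\lo}_i$ against the DIGRAPH.vi assignments entering the $Y_{u'}$. These couple only via $J^{\hi}$: requiring $\ova{xw}\notin J^{\hi}$ forbids $\ova{yx}\in H$ for each $y$ with $w(\ova{yx})=w$, which would skew $Y_{u'}$ if $y=\phi_w(u')$. I would resolve this by further conditioning on the coloured matchings $M^h_{\geL},M^h_{\leL}$ constructed in DIGRAPH.ii--iv; after this conditioning $\{y:w(\ova{yx})=w\}$ is a fixed bounded-size set, its intersection with $\phi_w(A'_0)$ is typically empty by properness of the colouring and the quasirandomness of $\phi_w$, and the induced bias on each $Y_{u'}$ is $O(1/n)$, negligible in the aggregate.
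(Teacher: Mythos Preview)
Your approach is essentially the same as the paper's: split $A^{\lo}_i=A^{**}_i\cup A'_i$, handle $A^{**}_i$ by the Lemma~\ref{lem:x:t1} argument without the $\aA_0^{.9}$ loss, and for $A'_i$ plug the defining bound on $|S^w_i\cap N^-_{\ova G_1}(x)|$ directly into the $A'_i$-sum. The paper phrases the $A'_i$ step at time $t_0$ rather than $t_1$ (so the needed concentration for the DIGRAPH.vi step is simply quoted from Lemma~\ref{lem:x:t1}), but the content is the same as your Bernstein argument.

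Two small slips to fix. First, the indicators in $Y_{u'}$ are not ``mutually exclusive'': for every $u$ with parent $u'$ one has $\ova G_{uu'}=\ova G^{\lo}_{i0}$, the same digraph, so all indicators in $Y_{u'}$ fire together. The correct bound is $|Y_{u'}|\le D/(\pmin\dD n)$ (using $d_T(u')\le D$ for $u'\in A'_0$), which is still amply good enough for Bernstein. Second, there is an inconsistency in what you condition on: you compute $\mb P^{t_0}(\lova{x\phi_w(u')}\in\ova G_{uu'})$ with the undirected indicator $1_{x\phi_w(u')\in G_1}$ (thus averaging over the orientation) but then equate the expectation with a quantity involving $N^-_{\ova G_1}(x)$ (which requires the orientation to be fixed). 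Condition on $\ova G_1$ throughout; then $\mb P(\lova{x\phi_w(u')}\in\ova G^{\lo}_{i0}\mid\ova G_1)=(2p^{\lo}_{i0}/p_1)1_{\phi_w(u')\in N^-_{\ova G_1}(x)}$ and the arithmetic is consistent with the $J^{\bad}$ definition.

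Your ``main obstacle'' is a non-issue once you stop trying to condition on $\ova{xw}\in J^{\lo}_i$. Define $Q(x,w):=\sum_{u'}Y_{u'}$ for \emph{every} pair $(x,w)$ with $x\in\ov X_w$; this quantity depends only on the DIGRAPH.vi assignments of the arcs $\ova{\phi_w(u')x}$ and coincides with $\oO^*_{t_1}(P_w:A'_i\to x)$ on the event $\ova{xw}\in J^{\lo}_i$. Prove by Bernstein and a union bound that whp $Q(x,w)=|A'_i|/|A^{\lo}_i|\pm\eps_1/2$ simultaneously for all $(x,w)$ satisfying the $J^{\bad}$ inequality, then simply intersect with $\{\ova{xw}\in J^{\lo}_i\}$. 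No further conditioning on the matchings $M^h_\circ$ is needed.
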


\begin{proof}
By the proof of Lemma \ref{lem:x:t1}, whp 
$\oO^*_{t_0}( P_w:A^{**}_i \to x )
= \oO^*_{t_{**}}( P_w:A^{**}_i \to x )
= (1 \pm 6D^{-.9}) (\ov{p}_w n)^{-1} |A^{**}_i| \pm \dD$,
and it suffices to show
$\oO^*_{t_0}( P_w:A_i' \to x )
= (\ov{p}_w n)^{-1} |A_i'| \pm \dD$.
For any $u \in A_i'$ we have
$\oO^*_{t_0}( P_w:u \to x ) 
= p_{uu'}/p_1 \cdot \oO^*_{t_{\iv}}(P_w: u \to x)
\cdot 1_{u' \in S_i^w \cap N^-_{\ova{G}_1}(x)}$, so
by definition of $J_i^{\bad}$ we have
$\oO^*_{t_0}( P_w:A_i' \to x ) 
=  (p_1 \ov{p}_w n)^{-1} | S^w_i \cap N^-_{\ova{G}_1}(x)|
= (\ov{p}_w n)^{-1} |A'_i| \pm \dD$.
The lemma follows. 
\end{proof}

Next we give an estimate that will be used 
in several further lemmas below.
For any $U \sub V(T) \sm A_0$ we let 
$\GG^2(U) = \{ v: \text{dist}_{T \sm A_0}(v,U) \le 2\}$,
where dist denotes graph distance.
For $w \in W$ we let
\[ U_w = \{u \in P^\tau_w(\cdot \to x) 
\cap \GG^2(A^{\lo}): N_<(u) \cap A^{\bad}_w \ne \es \}.\]

\begin{lemma} \label{lem:x'}
If $u \notin U_w$ and $u' \in N_<(u) \cap A_{i'}$ then
$\sum_{x' \in N^-_{\ova{G}_{uu'}}(x)}
    \oO'( \edge{w}{u'}{x'} ) 
 = (1 \pm 2.2\eps_{i'})p_{uu'}$.
\end{lemma}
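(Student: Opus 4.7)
The plan is to first compute $\sum_{x'} \oO(\edge{w}{u'}{x'})$ by a direct application of typicality, and then convert this to a statement about $\oO'$ using Lemma~\ref{lem:Hwdeg}.iii on all but negligibly many terms. Since $\oO(\edge{w}{u'}{x'}) = |A_{u'}|^{-1} \prod_{v \in N_<(u')} p_{u'v}^{-1}$ for each $x'$ in the support $P^{t_{i'}}_w(u' \to \cdot)$, and is $0$ otherwise, the sum factors as this common weight times the size of
\[ N^-_{J_{u'}}(w) \cap N^-_{\ova{G}_{uu'}}(x) \cap
  \bigcap_{v \in N_<(u')} N^+_{\ova{G}_{u'v}}(\phi_w(v)). \]
Applying Lemma~\ref{lem:DI}.v with $u'$ as the center and $\{u\} \cup N_<(u')$ as the tuple of neighbors (all of size $\le s$), this intersection has size $(1 \pm O(\xi)) |A_{u'}| p_{uu'} \prod_{v \in N_<(u')} p_{u'v}$, so $\sum_{x'} \oO(\edge{w}{u'}{x'}) = (1 \pm O(\xi)) p_{uu'}$.

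Next, I would use the hypothesis $u \notin U_w$ to check that the $A_{i'}$-vertex hypothesis of Lemma~\ref{lem:Hwdeg}.iii holds for $u'$, i.e.\ $u' \notin A^{\bad}_w \cap N_{F'}(A^{\lo})$. If $N_<(u) \cap A^{\bad}_w = \es$ then, since $u' \in N_<(u)$, we have $u' \notin A^{\bad}_w$. Otherwise, $u \notin \GG^2(A^{\lo})$, so the $F'$-neighbor $u'$ of $u$ lies outside $\GG^1(A^{\lo}) = A^{\lo} \cup N_{F'}(A^{\lo})$, and in particular $u' \notin N_{F'}(A^{\lo})$. In both sub-cases $u'$ satisfies the hypothesis, uniformly in $x'$.

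Now split the sum over $x' \in N^-_{\ova{G}_{uu'}}(x)$ into good terms, where $\ova{x'w} \in J_{i'} \sm J^{\bad}$, and bad terms, where $\ova{x'w} \in J^{\bad}$. For good terms, Lemma~\ref{lem:Hwdeg}.iii gives $\oO'(\edge{w}{u'}{x'}) > (1+2\eps_{i'})^{-1}\oO(\edge{w}{u'}{x'})$, while the definition of $\oO'$ always yields $\oO' \le (1-.5\eps_{i'})\oO$; these together give $\oO' = (1 \pm 2.1\eps_{i'})\oO$ on good terms, so the good contribution is $(1 \pm 2.1\eps_{i'})(1 \pm O(\xi)) p_{uu'}$ minus the (bad-weight) contribution of bad terms, which is bounded exactly as in the next step. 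For bad terms, Lemma~\ref{lem:B} gives $d_{J^{\bad}}(w) < \dD^3 n$, and each $\oO'(\edge{w}{u'}{x'}) \le \oO(\edge{w}{u'}{x'}) \le |A_{u'}|^{-1} \pmax^{-4} \le (\dD n)^{-1} \pmax^{-4}$, so their total contribution is $O(\dD^2 \pmax^{-4}) \ll \eps_{i'} \pmin \le \eps_{i'} p_{uu'}$ by the parameter hierarchy. Combining, $\sum_{x'} \oO'(\edge{w}{u'}{x'}) = (1 \pm 2.2\eps_{i'}) p_{uu'}$.

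The main obstacle is the logical step linking the shape of $U_w$ (which mixes a distance condition in $T \sm A_0$ with a condition on $A^{\bad}_w$) to the precise form of the hypothesis on $u'$ appearing in Lemma~\ref{lem:Hwdeg}.iii; once this implication is in place, both the typicality count and the absorption of the $J^{\bad}$ terms are routine, relying only on the standing parameter inequalities $\xi \ll \dD \ll \pmin \ll \eps_{i'}$ and the $(\xi,s)$-typicality of the oriented pieces $\ova{G}_{uv}$ provided by Lemma~\ref{lem:DI}.iv.
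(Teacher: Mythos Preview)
Your proof is correct and follows essentially the same approach as the paper's: compute the $\oO$-sum via Lemma~\ref{lem:DI}.v (equivalently Lemma~\ref{lem:Px}), verify from $u \notin U_w$ that $u' \notin A^{\bad}_w \cap N_{F'}(A^{\lo})$ so that Lemma~\ref{lem:Hwdeg}.iii applies, and absorb the $J^{\bad}$ terms using Lemma~\ref{lem:B}. One small point: your inequality $|A_{u'}|^{-1} \le (\dD n)^{-1}$ fails when $u' \in A^{\hi}$, but in that case $J_{u'} \sub J^{\hi}$ is disjoint from $J^{\bad} \sub J^{\lo}$ so there are no bad terms to bound --- the paper makes this caveat explicit.
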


\begin{proof}
We note that $\oO'( \edge{w}{u'}{x'} )
  = (1 \pm 2\eps_{i'}) \oO( \edge{w}{u'}{x'} )$ for any
$x' \in P^{t_{i'}}_w( u' \to \cdot) \sm N_{J^{\bad}}(w)$.
Indeed, this holds by Lemma~\ref{lem:Hwdeg}.iii for $i'$,
as $u' \notin A^{\bad}_w \cap N_{F'}(A^{\lo})$ 
by definition of $U_w$.
We deduce $ \oO( \edge{w}{u'}{x'})
= \oO^*_{t_{i'}}(P_w:u' \to x') = |A_{u'}|^{-1} 
 \prod_{v \in N_<(u') \cap A_{<i'}} p_{u'v}^{-1}
= ((1 \pm 5\xi) |P^{t_{i'}}_w(u' \to \cdot)|)^{-1}$
by Lemma \ref{lem:Px}, so
\[ \sum_{x' \in N^-_{\ova{G}_{uu'}}(x)}
    \oO'( \edge{w}{u'}{x'} )
=(1 \pm 2.1\eps_{i'} ) |P^{t_{i'}}_w(u' \to \cdot)|^{-1}
 \cdot |P^{t_{i'}}_w(u' \to \cdot) 
  \cap N^-_{\ova{G}_{uu'}}(x)|
 \pm \pmax^{-4}\dD^2,\]
with $\pm \pmax^{-4}\dD^2$ accounting 
for $x' \in N_{J_{\bad}}(w)$ when $u' \notin A^{\hi}$,
and so $\oO'( \edge{w}{u'}{x'} ) \le \pmax^{-4} (\dD n)^{-1}$.
The lemma now follows by Lemma \ref{lem:DI},
similarly to the discussion before Lemma~\ref{lem:Px}.
\end{proof}

Now we consider the evolution of
$\oO^*_t( P_w:A_{\ova{xw}} \to x )$
during some step $i'<i$
of APPROXIMATE DECOMPOSITION.
For lighter notation we write 
$\tau=t_{i'}$ and $\tau' = t^+_{i'}$.

\begin{lemma} \label{lem:x:ti:hyp}
whp $\oO^*_{\tau'}( P_w:A_{\ova{xw}} \to x ) = 
(1 \pm \eps_{i'}^{.9}) \oO^*_{\tau}( P_w:A_{\ova{xw}} \to x )$.
\end{lemma}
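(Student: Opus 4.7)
The plan is to apply Lemma~\ref{lem:wEGJ} to $(\mc{H}_{i'},\oO')$ with a clean function $f$ on $\tbinom{\mc{H}_{i'}}{\le 4}$ chosen so that $f(\mc{M}_{i'}) = \oO^*_{\tau'}(P_w: A_{\ova{xw}} \to x)$, and then to estimate $f(\mc{H}_{i'},\oO')$ separately. The heuristic is that embedding a neighbor $u'$ of $u$ via the matching to some $\phi_w(u')=y$ with $\lova{xy} \in \ova{G}_{uu'}$ happens with probability approximately $p_{uu'}$, and the factor $p_{uu'}^{-1}$ in the definition of $\oO^*$ compensates exactly, so $\oO^*_t(P_w:u \to x)$ is preserved on average.

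For each $u \in A_{\ova{xw}}$ define $\psi_u(E) = \prod_{\bm{e} \in E}(p_{uu'}^{-1} 1_{\lova{xy} \in \ova{G}_{uu'}} - 1)$ when every $\bm{e} = \edge{w}{u'}{y}$ in $E$ has $u' \in N_<(u) \cap A_{i'}$ and the $u'$'s are distinct, and $\psi_u(E) = 0$ otherwise (in particular $\psi_u(\es)=1$). Set
\[ f(E) = \sum_{u \in A_{\ova{xw}}} \oO^*_\tau(P_w: u \to x)\, \psi_u(E). \]
Writing $G_{u'}(M) = p_{uu'}^{-1} 1_{\lova{x\phi_w(u')} \in \ova{G}_{uu'}}$ if $u' \in A_{i',w}$ and $G_{u'}(M)=1$ otherwise, the update rule gives $\oO^*_{\tau'}(P_w:u\to x) = \oO^*_\tau(P_w:u\to x)\prod_{u' \in N_<(u) \cap A_{i'}} G_{u'}(\mc{M}_{i'})$; expanding $\prod(1+(G_{u'}-1))$ and summing over $u$ yields $f(\mc{M}_{i'}) = \oO^*_{\tau'}(P_w: A_{\ova{xw}} \to x)$. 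Since $|N_<(u)|\le 4$, the function $f$ is supported on $\tbinom{\mc{H}_{i'}}{\le 4}$ and clean.

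To estimate $f(\mc{H}_{i'},\oO') = \sum_E \oO'(E) f(E)$, exploit $\oO'(E) = \prod_{\bm{e} \in E} \oO'(\bm{e})$ for matchings $E$ to factorise:
\[ f(\mc{H}_{i'},\oO') = \sum_u \oO^*_\tau(P_w:u\to x) \sum_{S \sub N_<(u)\cap A_{i'}} \prod_{u' \in S} \Big(\sum_y \oO'(\edge{w}{u'}{y})(p_{uu'}^{-1} 1_{\lova{xy} \in \ova{G}_{uu'}}-1)\Big). \]
For $u \notin U_w$, Lemma~\ref{lem:x'} gives $p_{uu'}^{-1}\sum_y \oO'(\edge{w}{u'}{y}) 1_{\lova{xy} \in \ova{G}_{uu'}} = 1\pm 2.2\eps_{i'}$, and Lemma~\ref{lem:Hwdeg}.iv gives $\sum_y \oO'(\edge{w}{u'}{y}) = \oO'(\mc{H}_{i'}[u'w]) = 1 - O(\eps_{i'})$, so each bracketed factor is $O(\eps_{i'})$ and the $S\neq\es$ contributions total $O(\eps_{i'})\oO^*_\tau(P_w:A_{\ova{xw}} \to x)$; the $S=\es$ term contributes exactly $\oO^*_\tau(P_w:A_{\ova{xw}} \to x)$. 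For $u \in U_w$, using $\oO^*_\tau(P_w:u \to x) \le \pmax^{-4}\DD^{-1}$ and $|U_w| \le \pmax^{-1}|A^{\bad}_w| \le \pmax^{-1}\dD^3 n$ from Lemma~\ref{lem:B}, the absolute contribution is $O(\pmax^{-9}\dD^3\DD^{-1})$, which is negligible. Hence $f(\mc{H}_{i'},\oO') = (1+O(\eps_{i'}))\oO^*_\tau(P_w:A_{\ova{xw}} \to x)$.

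Finally I would verify the hypotheses of Lemma~\ref{lem:wEGJ} with $C=n$ and $\bB = c/2$: the vertex and codegree conditions on $\mc{H}_{i'}$ hold by Lemma~\ref{lem:Hwdeg}, and for the codegree condition on $f$, note that for any $S\ne\es$, fixing one edge of $S$ restricts the sum over $u$ to at most $\pmax^{-1}$ values, each contributing in absolute value at most $\pmax^{-4}\DD^{-1}$, giving $|f_S(\mc{H}_{i'},\oO')| = O(\pmax^{-5}\DD^{-1}) = O(n^{-c})$, well within $C^{-\bB} f(\mc{H}_{i'},\oO') \approx n^{-c/2}$ provided $\oO^*_\tau(P_w:A_{\ova{xw}} \to x)$ is bounded below. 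Lemma~\ref{lem:wEGJ} then gives $f(\mc{M}_{i'}) = (1\pm C^{-\bB}) f(\mc{H}_{i'},\oO') = (1\pm\eps_{i'}^{.9})\oO^*_\tau(P_w:A_{\ova{xw}} \to x)$ whp, using $\eps_{i'}\ll\eps_{i'}^{.9}$. The main obstacle is ensuring that $\oO^*_\tau(P_w:A_{\ova{xw}} \to x)$ is bounded away from zero so that the relative codegree condition of Lemma~\ref{lem:wEGJ} is usable; this follows inductively, either from Lemma~\ref{lem:x:t1} at the base case $\tau=t_1$, or from iterating the present lemma's conclusion across the prior steps.
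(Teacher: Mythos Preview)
Your approach is essentially the paper's: encode $\oO^*_{\tau'}(P_w:A_{\ova{xw}}\to x)$ as $f(\mc{M}_{i'})$ for a clean function on $\tbinom{\mc{H}_{i'}}{\le 4}$, estimate $f(\mc{H}_{i'},\oO')$ via Lemma~\ref{lem:x'} and Lemma~\ref{lem:Hwdeg}, and conclude by Lemma~\ref{lem:wEGJ}. The difference is in the choice of test function. The paper uses a non-negative $\psi$ supported only on tuples $E_u$ containing exactly one edge $\edge{w}{u'}{x'}$ with $\lova{xx}'\in\ova G_{uu'}$ for \emph{every} $u'\in N_<(u)\cap A_{i'}$; consequently $\psi(\mc{M}_{i'})$ misses those $u$ with some $u'\in N_<(u)\cap A^0_{i',w}$, and this discrepancy is controlled by an explicit error $\DD_\psi$ bounded through Lemma~\ref{lem:Tleave}.vi (plus the $U_w$ error $\DD'_\psi$ via Lemma~\ref{lem:B}). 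Your centred inclusion--exclusion function absorbs the unmatched $u'$ automatically (those factors contribute only the empty-subset term $1$), so you avoid $\DD_\psi$ entirely; this is a genuine streamlining.

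The one gap to close is that your $f$ is signed (each factor $p_{uu'}^{-1}1_{\lova{xy}\in\ova G_{uu'}}-1$ can equal $-1$), whereas Lemma~\ref{lem:wEGJ} is derived from \cite{EGJ}, whose weight functions are non-negative, and the stated hypothesis $f_S(H,\oO)\le C^{-\bB}f(H,\oO)$ is one-sided. A clean fix within the paper's framework: let $g$ be $f$ with each factor replaced by its absolute value, so $g\ge 0$ and $|f|\le g$ pointwise; apply Lemma~\ref{lem:wEGJ} to the non-negative functions $g$ and $f+g$ separately and subtract, obtaining $f(\mc{M}_{i'})=f(\mc{H}_{i'},\oO')\pm 3C^{-\bB}g(\mc{H}_{i'},\oO')$. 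Since each bracketed factor in $g(\mc{H}_{i'},\oO')$ is at most $2\pmax^{-1}$, one has $g(\mc{H}_{i'},\oO')\le 2^4\pmax^{-4}\,\oO^*_\tau(P_w:A_{\ova{xw}}\to x)$, and $C^{-\bB}\pmax^{-4}\ll\eps_{i'}^{.9}$, so the conclusion follows. Alternatively you may simply revert to the paper's non-negative $\psi$ and reinstate the $\DD_\psi$ error term.
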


\begin{proof}
We consider the function $\psi$ 
on $\tbinom{{\cal H}_{i'}}{{\le}4}$,
where $\psi(E_u)=0$ except if there is 
$u \in P^\tau_w(\cdot \to x) \cap A_{>i'} \sm U_w$ 
such that  $E_u$ consists of disjoint edges $\edge{w}{u'}{x'}$ 
with $\lova{xx}' \in \ova{G}_{uu'}$ 
for each $u' \in N_<(u) \cap A_{i'}$,
and then $\psi(E_u) = |A_u|^{-1}
 \prod_{u' \in N_<(u) \cap A_{\le i'}} p_{uu'}^{-1}$.
Now, for $u \notin U_w$ with $N_{<}(u) \cap A^0_{i',w} =\es$, 
given $x \in P^{\tau}_w(u \to \cdot)$,
we have $x \in P^{\tau'}_w(u \to \cdot)$
iff $E_u \subseteq \mc{M}_{i'}$, and for such $x$
we have $\psi(E_u)=\oO(\edge{w}{u}{x})$.
Thus $\oO^*_{\tau'}( P_w:A_{\ova{xw}} \to x )
= \sum\{\oO(\edge{w}{u}{x}):x \in P_w^{\tau'}(u \to \cdot)\}
= \psi({\cal M}_{i'}) \pm \DD_\psi \pm \DD'_\psi$,
where 
\begin{align*} 
\DD_\psi &= \sum \{ \oO^*_{\tau'}( P_w:u \to x ) 
 : N_<(u) \cap A^0_{i',w} \ne \es  \}
\le \pmax^{-4} |A_{\ova{xw}}|^{-1} 
 |F'[A^0_{i',w},A_{\ova{xw}}]| 
< 9\eps_{i'} \pmax^{-5},\\
\DD'_\psi 
&= \sum_{u \in U_w} \oO^*_{\tau'}( P_w:u \to x )
\le \pmax^{-5} |A^{\bad}_w| |A_{\ova{xw}}|^{-1}
< \pmax^{-5} \dD^2.
\end{align*} 
Here we bounded $\DD_\psi$ by Lemma~\ref{lem:Tleave}.vi,
and $\DD'_\psi$ by Lemma~\ref{lem:B}, also assuming
$|A_{\ova{xw}}| \geq \dD n$, as we may,
as if $\ova{xw} \in J^{\hi}$ then $U_w=\es$,
using $A^{\hi} \cap \GG^2(A^{\lo}) = \es$
by the definition of $A_0$ as a $4$-span.

Next we estimate
\[ \psi({\cal H}_{i'},\oO') = 
\sum_{u \in P^\tau_w(\cdot \to x) \sm U_w} |A_u|^{-1}
   \prod_{u'' \in N_<(u) \cap A_{\le i'} } p_{uu''}^{-1} 
   \prod_{u' \in N_<(u) \cap A_{i'} } 
   \sum_{x' \in N^-_{\ova{G}_{uu'}}(x)}
    \oO'( \edge{w}{u'}{x'} ) .\]
By Lemma \ref{lem:x'} we have    
\begin{align*}
 \psi({\cal H}_{i'},\oO') 
 & = \sum_{u \in P^\tau_w(\cdot \to x) \sm U_w} |A_u|^{-1}
 \prod_{u'' \in N_<(u) \cap A_{< i'} } p_{uu''}^{-1} 
 \prod_{u' \in N_<(u) \cap A_{i'} } 
  p_{uu'}^{-1} (1 \pm 2.2\eps_{i'}) p_{uu'} \\
 & = (1 \pm 8.9\eps_{i'})  (\oO^*_{\tau}( P_w:A_{\ova{xw}} \to x )
\pm \pmax^{-5} \dD^2),
\end{align*}
with the error term as in the estimate for $\DD'_{\psi}$.
The lemma now follows from Lemma \ref{lem:wEGJ},
noting that each $|E_u \cap E_{u'}| \le 1$
(otherwise $u,u'$ would have two common neighbours),
and for any $\bm{e}=\edge{w}{u'}{x'}$ we have
$f_{\{\bm{e}\}}(\mc{H}_{i'},\oO')
\le \sum_{u \in N_>(u')}2|A_u|^{-1}\pmax^{-4}
\le 2\pmax^{-5}\DD^{-1} 
< C^{-\beta}\psi(\mc{H}_{i'},\oO')$.
\end{proof}

Similarly to Lemma \ref{lem:rga},
we have the following estimates during the
embedding of $A^0_{i',w}$
(which has size $< 2.1\eps_{i'} n$
by Lemma \ref{lem:Tleave}).

\begin{lemma} \label{lem:rga2}
For any $w \in W$, $W' \sub W$,
$a \in A^0_{i',w} \cap A^{g'}_{i'}$, 
$x,y \in V(G)$, $i',i \in [i^*]$, $g,g' \in \{\hi,\lo,\no\}$,
writing $A^w_a$ for the set
of $y$ such that $\phi_w(a)=y$ is possible
given the history at time $t_a^-$, whp 
\begin{enumerate}
\item
$\mb{P}^{t_a^-}(\phi_w(a)=y)  
= (1 \pm \eps_{i'}^{.9})|A^w_a|^{-1}$, 
\item
$\mb{P}^{t_a^-}(\phi_w(a) \in N^-_{\ova{G}^{gg'}_{ii'}}(x) )  
= (1 \pm \eps_{i'}^{.9})p^{gg'}_{ii'}$,
\item
$|\{w \in W': \phi_w(a) \in N^-_{\ova{G}^{gg'}_{ii'}}(x) \}|
= (1 \pm \eps_{i'}^{.9})p^{gg'}_{ii'} |W'| \pm n^{.8}$.
\end{enumerate}
\end{lemma}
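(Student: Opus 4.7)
The approach parallels that of Lemma~\ref{lem:rga}, now applying Lemma~\ref{lem:match} to the bipartite graph $B_a \subseteq V_a \times W_a$ used to define $M_a{=}$MATCH$(B_a,Z_a)$ in step (iii) of APPROXIMATE DECOMPOSITION. Recall $vw \in B_a$ iff $v \in V_a \cap N_{J'_i}(w) \cap N_{G'_i}(\phi_w(N_<(a))) \sm \im\phi_w$; writing $\rho_a := \pmax \prod_{b \in N_<(a)} p_{ab}$, the aim is to establish that $B_a$ is $\xi^*$-super-regular of density $(1 \pm o(1))\rho_a$ for some $\xi^* \ll \eps_{i'}$.

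First I would pin down the degrees of $B_a$. For $w$-degrees, Lemma~\ref{lem:DI}.v (or its straightforward analogue using typicality of $J'_i$ from Lemma~\ref{lem:DI}.iv) gives $|N_{J'_i}(w) \cap \bigcap_{b \in N_<(a)} N^+_{\ova{G}_{ab}}(\phi_w(b))| = (1 \pm o(1))\rho_a n$. From this we subtract the $O(\eps_{i'} n)$ edges used at each vertex by the matchings $M_{a'}$ for $a' \prec a$ in $A^0_{i',w}$ (legitimate since Lemma~\ref{lem:Tleave}.i gives $|A^0_{i',w}| < 2.1\eps_{i'}|A_{i'}|$) together with $|\im \phi_w| = |A_{<i}| + O(\eps_{i'} n)$, then intersect with the uniformly random $V_a \in \tbinom{V}{|W_a|}$; a Chernoff bound delivers $d_{B_a}(w) = (1 \pm \xi^*/2)\rho_a |V_a|$. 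A symmetric Chernoff on $V_a$ combined with a typicality count over $W_a$ gives $d_{B_a}(v)$ likewise. Regularity of $B_a[V',W']$ for arbitrary $V' \subseteq V_a$, $W' \subseteq W_a$ is then obtained by applying Lemma~\ref{lem:SRLtoH} to the common-neighbourhood multihypergraph $\{\phi_w(N_<(a)) : w \in W_a\}$ (of max multiplicity $O(1)$) inside $G'_i$, followed by intersection with $J'_i$.

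With super-regularity established and $Z_a$ of maximum degree $\le 4 \ll n^{.4}$, Lemma~\ref{lem:match} supplies a distribution on perfect matchings of $B_a$ with $\mb{P}(vw \in M_a) = (1 \pm (\xi^*)^{.98})(d(B_a)|W_a|)^{-1}$. Part (i) then follows from $|A^w_a| = d_{B_a}(w) = (1 \pm \xi^*) d(B_a)|V_a|$ and $|V_a|=|W_a|$, the error $(\xi^*)^{.98}$ being absorbed into $\eps_{i'}^{.9}$. Part (ii) follows by summing (i) over $y \in V_a \cap N^-_{\ova{G}^{gg'}_{ii'}}(x)$ and estimating this intersection via a Chernoff bound on the random choice of $V_a$ plus typicality of $\ova{G}^{gg'}_{ii'}$ (Lemma~\ref{lem:DI}.iv). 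Part (iii) follows directly from the concentration clause of Lemma~\ref{lem:match} taking $X' = V_a \cap N^-_{\ova{G}^{gg'}_{ii'}}(x)$ and $Y' = W' \cap W_a$: by super-regularity and typicality, $|B_a[X',Y']| = (1 \pm o(1))\rho_a p^{gg'}_{ii'}|V_a||W' \cap W_a|$, and the $n^{.8}$ error term comes straight from the lemma.

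The main technical obstacle is controlling the cumulative damage to $G'_i$ and to $\im\phi_w$ from the matchings $M_{a'}$ with $a' \prec a$ in $A^0_{i',w}$; one must check this damage is $O(\eps_{i'} n)$ at every vertex, which is negligible compared to $\rho_a n \gg \eps_{i'} n$. Here the bound $|A^0_{i',w}| < 2.1\eps_{i'}|A_{i'}|$ from Lemma~\ref{lem:Tleave}.i does the heavy lifting. Everything else is a routine adaptation of the proof of Lemma~\ref{lem:rga}, and fits into the whp framework by a union bound over the polynomially many choices of $w, y, x$ and $W'$.
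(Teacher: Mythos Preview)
There are two genuine gaps.

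First, a minor one: the density of $B_a$ is $\pmax^{|N_<(a)|+1}$, not $\pmax\prod_{b\in N_<(a)}p_{ab}$. The edges $\phi_w(b)v$ are required to lie in $G'_{i'}$ (density $\pmax$), not in $\ova{G}_{ab}$ (density $p_{ab}$); your formula for $\rho_a$ mixes these up.

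Second, and more substantially: your regularity argument via Lemma~\ref{lem:SRLtoH} does not go through, and this is exactly the point where the proof of Lemma~\ref{lem:rga2} diverges from that of Lemma~\ref{lem:rga}. The multihypergraph $\{\phi_w(N_<(a)):w\in W_a\}$ does \emph{not} have bounded vertex degree. For $b\in N_<(a)\cap A_j$ with $j\ge 1$, most embeddings $\phi_w(b)$ are set by the hypergraph matching $\mc{M}_j$, and there is no structural reason why a fixed $y$ cannot equal $\phi_w(b)$ for many $w$ (the edges $\edge{w}{b}{y}$ and $\edge{w'}{b}{y}$ of $\mc{H}_j$ need not share a vertex). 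The only bound available is Lemma~\ref{lem:Tleave}.iii, which gives degree $\lesssim\DD=n^c$; this is far too large for Lemma~\ref{lem:SRLtoH}, whose hypothesis chain $\aA\ll\bB\ll D^{-1}$ would force the regularity parameter of $G'_{i'}$ below $n^{-c}$, whereas it is only a fixed small constant. The paper therefore abandons Lemma~\ref{lem:SRLtoH} and uses the pair condition Lemma~\ref{lem:DLR} instead, which only needs degree and codegree estimates from each side.

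The real work, which your ``typicality count over $W_a$'' elides, is obtaining the $V_a$-side codegrees $|N_{B_a}(R')|$ for $R'\in\tbinom{V_a}{\le 2}$. One tracks $W^t=\{w:\phi_w(b)\in N^-_{G'_{i'}}(R')\text{ for all }b\in N_<(a)\text{ embedded by time }t\}$ through each $t_b$. When $\phi_w(b)$ is set by a MATCH call the update is as in Lemma~\ref{lem:rga}. But when $\phi_w(b)$ is set by $\mc{M}_j$ one must apply Lemma~\ref{lem:wEGJ} to the test function $f(\edge{w}{u}{x})=1_{w\in W^\tau}1_{u=b}1_{x\in N^-_{G'_{i'}}(R')}$, using Lemma~\ref{lem:x'} (and the bound $|\{w:b\in A^{\bad}_w\}|<5\dD^4 n$ from Lemma~\ref{lem:Tleave}.iv) to estimate $f(\mc{H}_j,\oO')$. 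This yields $|W^{\tau'}|=(1\pm 3\eps_j)\pmax^{|R'|}|W^\tau|$. Iterating over $b\in N_<(a)$ and then applying Lemma~\ref{lem:DLR} gives $\eps_{i'}^{.7}$-super-regularity; your hoped-for $\xi^*\ll\eps_{i'}$ is not attainable, but $\eps_{i'}^{.7}$ suffices for the conclusions.
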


\begin{proof} 
Recall that for each $a \in A_{i'}$ in any order,
we let $W_a = \{w \in W: \phi_w(a)$ undefined$\}$,
let $V_a \in \tbinom{V}{|W_a|}$
be uniformly random,
and define $M_a = \{ \phi_w(a) w: w \in W_a\}
{=}$MATCH$(B_a,Z_a)$,
where $Z_a = \{ \phi_w(b)w: b \in N_<(a) \}$
and $B_a \sub V_a \times W_a$ 
consists of all $vw$ with 
$v \in N_{J'_i}(w) \sm \im \phi_w$
and each $\lova{v\phi_w(b)}$ for $b \in N_<(a)$ 
an unused edge of $G'_{i'}$. 

To justify the application of Lemma \ref{lem:match}
in defining $M_a$, we first note that
by Lemma~\ref{lem:Tleave}.v,
$|W_a| > .3\eps_{i'}n$
for all $a \in A_{i'}$.
Also $Z_a$ has maximum degree $\le 4$.
We also claim whp $B_a$ is $\eps_{i'}^{.7}$-super-regular
of density $(1 \pm \eps_{i'}^{.7}) \pmax^{|N_<(a)|+1}$.
To see this, we argue similarly 
to Lemmas \ref{lem:hi} and \ref{lem:rga},
except that Lemma \ref{lem:SRLtoH} is not applicable,
so we instead apply Lemma \ref{lem:DLR}.
We have 
$|W_a|=|V_a| \ge .3\eps_{i'} n$.
We let $G_{\free} \sub G'_{i'}$ denote
the graph of unused edges, and let ${\cal B}$
be the bad event that $G'_{i'} \sm G_{\free}$
has any vertex of degree $>.1\eps_{i'}^{.9} n$.
We will establish the claim at any step 
before ${\cal B}$ occurs, 
assuming the claim for any $b \prec a$,
and deduce that whp ${\cal B}$ does not occur.

Consider any $R \in \tbinom{W_a}{\le 2}$.
We have $N_{B_a}(R)  =
 V_a \cap N^-_{J'_{i'}}(R)
 \cap N^+_{G_{\free}}\big(
 \bigcap_{w \in R} \phi_w(N_<(a)) \big)
\sm \bigcup_{w \in R} \im \phi_w$.
As ${\cal B}$ does not occur, 
by Lemma \ref{lem:DI} and a Chernoff bound whp 
$|N_{B_a}(R)| = (1 \pm \eps_{i'}^{.8})
 (\pmax^{|N_<(a)|+1})^{|R|} |V_a|$,
unless $R=\{w,w'\}$ with 
$\phi_w(N_<(a)) \cap \phi_{w'}(N_<(a)) \ne \es$;
by Lemma \ref{lem:wEGJ} there are
whp $<n^{1.5}$ such pairs $R$.

Now consider any $R' \in \tbinom{V_a}{\le 2}$.
Let $W^t$ be the set of $w$ such that
$\phi_w(b) \in N^-_{G'_{i'}}(R')$ for all $b \in N_<(a)$ 
with $\phi_w(b)$ defined at time $t$.
As $\mc{B}$ does not occur, 
$|N_{B_a}(R')| = |W_a \cap N^+_{J'_{i'}}(R')\cap W^{t^-_a}|
\pm .5\eps_{i'}^{.9}n$.
For any $b \in N_<(a)$ and $w \in N^+_{J'_{i'}}(R')$,
if $\phi_w(b)$ is defined by Lemma \ref{lem:match}
during HIGH DEGREES or APPROXIMATE DECOMPOSITION
then similarly to the proof of Lemma \ref{lem:rga},
writing $\tau=t_b^-$ and $\tau'=t_b$ whp
$|W^{\tau'}| = |M^b[N^-_{G'_{i'}}(R'),W^{\tau}]|
= (1 \pm \eps_j^{.8}) \pmax^{|R'|} |W^{\tau}|$ 
by Lemma \ref{lem:match},
assuming the claim for $b \in A_j$, $j<i'$.

Now suppose $\phi_w(b)$ is defined 
by the hypergraph matching in ${\cal H}_j$.
We consider the function $f$ on ${\cal H}_j$ 
with $f(\edge{w}{u}{x})
= 1_{w \in W^{\tau}} 1_{u=b}  1_{x \in N^-_{G'_{i'}}(R')}$,
so $f({\cal H}_j,\oO') = \sum_{x \in N^-_{G'_{i'}}(R')}
\sum_{w \in W^{\tau}} \oO'(\edge{w}{b}{x})$.
Similarly to the proof of Lemma \ref{lem:x'},
if $b \notin A^{\bad}_w$ then
$\sum_{x \in N^-_{G'_{i'}}(R')} \oO'(\edge{w}{b}{x})
= (1 \pm 2.2\eps_j)\pmax^{|R'|}$.
By Lemma \ref{lem:Tleave}.iv whp
$|\{w: b \in A^{\bad}_w\}| < 5\dD^4 n$,
so $f({\cal H}_j,\oO') = \sum_{x \in N^-_{G'_{i'}}(R')}
\sum_{w \in W^{\tau}} \oO'(\edge{w}{b}{x})
= (1 \pm 2.3\eps_j) |N^-_{G'_{i'}}(R')| |W^{\tau}|$,
and by Lemma \ref{lem:wEGJ} whp
$|W^{\tau'}| = (1 \pm 3\eps_j) \pmax^{|R'|} |W^{\tau}|$. 

Together with Lemma~\ref{lem:DI}.ii
this proves the claim, and so justifies the definition 
of $M_a$. Statements (i--iii) 
of the lemma now follow directly from Lemma \ref{lem:match},
considering $M_a[W',N^-_{\ova{G}^{gg'}_{ii'}}(x)]$ for (iii).
Also, from (i) whp every vertex degree 
in $G'_{i'} \sm G_{\free}$ 
is $(.1\eps_{i'}^{.9} n,4)$-dominated,
so whp ${\cal B}$ does not occur.
\end{proof}

We deduce the following estimate
similarly to the proof of Lemma \ref{lem:x:t1},
using Lemma \ref{lem:rga2} in place of Lemma \ref{lem:rga}.

\begin{lemma} \label{lem:x:ti:rga}
whp all $\oO^*_{t_{i'+1}}( P_w:A_{\ova{xw}} \to x ) 
= (1 \pm \eps_{i'}^{.8}) 
 \oO^*_{t^+_{i'}}( P_w:A_{\ova{xw}} \to x )$.
\end{lemma}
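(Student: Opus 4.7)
The plan is to track the evolution of $\oO^*_t(P_w: A_{\ova{xw}} \to x) = \sum_{u \in A_{\ova{xw}}} \oO^*_t(P_w: u \to x)$ during step (iii) of APPROXIMATE DECOMPOSITION, which embeds the leftover set $A^0_{i',w}$ sequentially via MATCH. The overall approach parallels the $A^{\lo}$ argument inside Lemma \ref{lem:x:t1}, with Lemma \ref{lem:rga2} taking the role played there by Lemma \ref{lem:hi}.

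First I would compute the expected one-step change. For $a \in A^0_{i',w}$ embedded at time $t_a$, only $u \in A_{\ova{xw}}$ with $a \in N_<(u)$ have their $\oO^*$ altered, and for such $u$ the definition of $\oO^*$ gives
\[ \oO^*_{t_a}(P_w: u \to x) = p_{ua}^{-1}\, \oO^*_{t_a^-}(P_w: u \to x)\, \mathbf{1}_{\lova{x\phi_w(a)} \in \ova{G}_{ua}}.\]
By Lemma \ref{lem:rga2}.ii, $\mb{P}^{t_a^-}(\phi_w(a) \in N^-_{\ova{G}_{ua}}(x)) = (1 \pm \eps_{i'}^{.9}) p_{ua}$, so the conditional expectation of the multiplicative factor is $1 \pm \eps_{i'}^{.9}$. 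Since $|N_<(u)| \le 4$, after running through all $a \in A^0_{i',w}$ we obtain $\mb{E}^{t^+_{i'}} \oO^*_{t_{i'+1}}(P_w: u \to x) = (1 \pm O(\eps_{i'}^{.9}))\, \oO^*_{t^+_{i'}}(P_w: u \to x)$, and summing over $u \in A_{\ova{xw}}$ yields the same multiplicative bound for the full sum.

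Second I would establish concentration via Freedman (Lemma \ref{freedman}). Write $Y = \oO^*_{t_{i'+1}}(P_w: A_{\ova{xw}} \to x) - \oO^*_{t^+_{i'}}(P_w: A_{\ova{xw}} \to x) = \sum_a Y_a$. Since $|A_{\ova{xw}} \cap N_>(a)| \le \pmax^{-1}$ and the definition of $\oO^*$ gives $\oO^*_{t_a^-}(P_w: u \to x) \le \pmax^{-4}|A_u|^{-1} \le \pmax^{-4}(\dD n)^{-1}$, the per-step bound is $|Y_a| \le O(\pmax^{-6}(\dD n)^{-1})$. Combined with the one-step expectation analysis and Lemma \ref{lem:Tleave}.vi, which bounds the total number of affected $u$ via $|F'[A^g_i, A^0_{i',w}]|$, we get $\sum_a \mb{E}^{t_a^-}|Y_a| = O(\eps_{i'}^{.9})$. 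Thus $Y$ is $(O(\eps_{i'}^{.9}), O(\pmax^{-6}(\dD n)^{-1}))$-dominated, so Lemma \ref{freedman} yields $|Y| = O(\eps_{i'}^{.9})$ whp. Since the previous lemmas (Lemma \ref{lem:x:t1}, \ref{lem:x:t1:good} and \ref{lem:x:ti:hyp} applied inductively) imply $\oO^*_{t^+_{i'}}(P_w: A_{\ova{xw}} \to x) = \Theta(1)$, this additive bound converts to the desired $(1 \pm \eps_{i'}^{.8})$-multiplicative bound.

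The main obstacle is not deep but requires care: one must verify that the per-step change truly is $O(\pmax^{-6}(\dD n)^{-1})$ even at $u$ whose weight has already been inflated by previous factors $p_{uu''}^{-1}$, and in particular that the concentration bound $\mu/6C$ in Freedman beats the polylogarithmic union bound over all $\ova{xw} \in J_i$; both follow from the uniform bound $\oO^*_{t_a^-}(P_w: u \to x) \le \pmax^{-4}|A_u|^{-1}$ built into the definition of $\oO^*$, together with the global size lower bound $|A_u| \ge \dD n$ and the bound $|N_<(u)| \le 4$.
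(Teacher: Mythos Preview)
Your approach is essentially the same as the paper's: the paper states that Lemma~\ref{lem:x:ti:rga} follows ``similarly to the proof of Lemma~\ref{lem:x:t1}, using Lemma~\ref{lem:rga2} in place of Lemma~\ref{lem:rga}'', and you have correctly unpacked what this means --- track $\oO^*_t(P_w:A_{\ova{xw}}\to x)$ through the sequential MATCH embeddings of $A^0_{i',w}$, use Lemma~\ref{lem:rga2}.ii for the one-step conditional expectation, and apply Lemma~\ref{freedman} for concentration with the per-step change controlled by $|N_>(a)\cap A_{\ova{xw}}|\le\pmax^{-1}$ and the total affected mass controlled via Lemma~\ref{lem:Tleave}.vi.

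One small inaccuracy: your ``global size lower bound $|A_u|\ge\dD n$'' is false when $A_{\ova{xw}}=A^a_i$ for some $a\in A^\DD_i$, where one only has $|A_u|=|A^a_i|\ge\DD=n^c$. This does not damage the argument: the per-step bound becomes $C=O(\pmax^{-1}\pmin^{-4}\DD^{-1})$ rather than $O(\pmax^{-1}\pmin^{-4}(\dD n)^{-1})$, and since $\DD=n^c$ the ratio $\mu/C$ in Freedman is still polynomial in $n$, so the whp conclusion survives the union bound. (The paper's own $\DD_\psi$ estimate in Lemma~\ref{lem:x:ti:hyp} handles this case via the analogue of Lemma~\ref{lem:Tleave}.vi for $A^a_i$, whose proof goes through verbatim since $|A^a_i|\ge\DD>n^{c/2}$.)
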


We conclude by deducing the estimates on
$\oO({\cal H}_i[\ova{xw}]) = \oO^*_{t_i}( P_w:A_{\ova{xw}} \to x )$
required for Lemma \ref{lem:bad}.
By Lemma \ref{lem:x:t1} whp
$\oO^*_{t_1}( P_w:A_{\ova{xw}} \to x ) = 1 \pm .3\eps^{.8}$,
and by Lemmas \ref{lem:x:ti:hyp} and \ref{lem:x:ti:rga} 
whp each $\oO^*_{t_{i'+1}}( P_w:A_{\ova{xw}} \to x ) 
 = (1 \pm 2\eps_{i'}^{.8}) 
 \oO^*_{t^+_{i'}}( P_w:A_{\ova{xw}} \to x )$,
so $\oO^*_{t_i}( P_w:A_{\ova{xw}} \to x ) 
= 1 \pm .4\eps^{.8}$.
Also, if $\ova{xw} \notin J^{\bad}_i$
by Lemma \ref{lem:x:t1:good} whp
$\oO^*_{t_1}( P_w:A_{\ova{xw}} \to x ) = 1 \pm \eps_1$,
and repeating the previous calculations gives
$\oO^*_{t_i}( P_w:A_{\ova{xw}} \to x ) 
= 1 \pm 3\eps_{i-1}^{.8} = 1 \pm \eps_i$.
This completes the proof of Lemma~\ref{lem:bad}.ii.

\subsection{$G$ degrees}

This subsection concerns $\oO({\cal H}_i[\lova{xy}])$
for $\lova{xy} \in \ova{G}_i$. 
We start by establishing Lemma \ref{lem:bad}.iii, 
which is Lemma~\ref{lem:ux} below,
as this is needed for the analysis
(and also for Lemma \ref{lem:Gleave} below).

\begin{lemma} \label{lem:ux}
whp $\oO^*_{t_i}( P_W: u \to x ) = 1 \pm \eps_i$
for each $x \in V(G)$ and $u \in A_i$.
\end{lemma}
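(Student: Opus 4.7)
The plan is to mirror the structure of the previous subsection, tracking the single quantity $\Phi_t := \oO^*_t(P_W: u \to x) = \sum_{w \in W} \oO^*_t(P_w: u \to x)$ throughout the algorithm, uniformly in $u \in A_i$ and $x \in V(G)$. Initially $\Phi_0 = |W|/n = 1$; I aim to show that each phase preserves $\Phi_t$ up to a multiplicative error that sums to at most $\eps_i$, yielding $\Phi_{t_i} = 1 \pm \eps_i$ whp. Note that at time $t_i$ we have $\Phi_{t_i} = \sum_w \oO(\edge{w}{u}{x})$, which is the statement of the lemma.

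Through HIGH DEGREES, each embedding of some $a \in A^* \cap N_<(u)$ restricts the surviving $w$ to those with $\phi_w(a) \in N_G(x)$ and scales each term by $p^{-1}$; the concentration property in Lemma \ref{lem:match} applied piecewise to $B^0_a$ and each $B^{w^*}_a$, together with the super-regularity established in the proof of Lemma \ref{lem:hi}, controls $|M_a[N_G(x), W]|$ within a $(1 \pm \dD)$-factor per step. Since $|N_<(u) \cap A^*| \le 4$, this gives $\Phi_{t_{\hi}} = 1 \pm O(\dD)$. The phases INTERVALS, EMBED $A_0$, and DIGRAPH are handled analogously using Lemmas \ref{lem:INT}.i, \ref{lem:rga}, and \ref{lem:DI}.ii, iii, v: the factor $\ov{p}_w^{-1}$ compensates the fraction of $w$ with $x \in \ov{X}_w$ (conditional independence under $3d$-separation suffices), and $p_0^{-1}$, $p_1^{-1}$ compensate the $J_0$-incidence and $G_1$-edge availability; Freedman/Azuma concentration, precisely as in Lemma \ref{lem:x:t1}, gives $\Phi_{t_1} = 1 \pm \eps_1$ whp.

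For each $i' < i$, I adapt the proof of Lemma \ref{lem:x:ti:hyp}: writing $\tau = t_{i'}$ and $\tau' = t^+_{i'}$, define a clean function $\psi$ on $\tbinom{{\cal H}_{i'}}{{\le}4} \cup \{\es\}$ so that $\psi(E_w) = \oO(\edge{w}{u}{x})$ whenever $E_w$ consists of disjoint edges $\edge{w}{u'}{x'}$, one for each $u' \in N_<(u) \cap A_{i'}$, with $x' \in N^-_{\ova{G}_{uu'}}(x)$, provided the `good' conditions $N_<(u) \cap A^0_{i',w} = \es$ and $u \notin A^{\bad}_w \cap N_{F'}(A^{\lo})$ both hold; otherwise $\psi(E_w) = 0$. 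Then $\psi({\cal M}_{i'})$ equals $\Phi_{\tau'}$ up to error terms controlled by Lemmas \ref{lem:Tleave}.v, vi and \ref{lem:B}. Applying Lemma \ref{lem:x'} coordinate-wise in $u'$ gives $\psi({\cal H}_{i'},\oO') = (1 \pm O(\eps_{i'})) \Phi_{\tau}$, and the codegree condition for Lemma \ref{lem:wEGJ} holds because any two distinct $E_w, E_{w'}$ share no edge (all edges in $E_w$ contain the vertex $\ova{xw}$). Lemma \ref{lem:wEGJ} then yields $\Phi_{\tau'} = (1 \pm \eps_{i'}^{.9}) \Phi_{\tau}$; a further $(1 \pm \eps_{i'}^{.8})$ factor for the random greedy embedding of $A^0_{i',w}$ follows exactly as in Lemma \ref{lem:x:ti:rga} via Lemma \ref{lem:rga2}. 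Telescoping over $i'$ gives the claim.

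The main obstacle will be the uniformity over all $x$, including the exceptional set $x \in B$ where $J^{\bad}$ arcs concentrate. This is controlled by the bounds $|B| < \dD^4 n$, $|A^{\bad}_w| < \dD^3 n$, $d_{J^{\bad}}(w) < \dD^3 n$ from Lemma \ref{lem:B}, together with $|\{w: u \in A^{\bad}_w\}| < 5\dD^4 n$ from Lemma \ref{lem:Tleave}.iv: the total contribution of bad $w$ to $\Phi_t$ at every time $t$ is at most $O(\pmax^{-4} \dD^3) \ll \eps_i$ (each bad term is $\le \pmax^{-4}|A_u|^{-1}$ and $|A_u| \ge \dD n$), so these $w$ may be safely excluded from $\psi$ without affecting the target estimate.
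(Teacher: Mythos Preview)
Your overall plan matches the paper's: establish $\Phi_{t_1} = 1 \pm \eps_1$, then track $\Phi_t$ through each step $i'<i$ via a clean function $\psi$ on $\tbinom{{\cal H}_{i'}}{\le 4}$, invoking Lemma~\ref{lem:x'} and Lemma~\ref{lem:wEGJ}, and finally absorb the random-greedy completion as in Lemma~\ref{lem:x:ti:rga}. The evolution analysis and the handling of bad $w$ via $A^{\bad}_w$ and Lemma~\ref{lem:Tleave}.iv are essentially the paper's Lemmas~\ref{lem:ux:ti:hyp} and~\ref{lem:ux:ti:rga}.

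Two points need correction. First, at time $t_1$ the paper does \emph{not} track through HIGH DEGREES, INTERVALS, EMBED~$A_0$, DIGRAPH; it computes $\Phi_{t_1}$ directly by a case split on the part of $A_i$ containing $u$ (Lemma~\ref{lem:t1:ux}). Your tracking route works for $u \in A^{\no}_i$ and $u \in A^{\lo}_i$ via Lemma~\ref{lem:DI}.ii,iii, but for $u \in A^a_i \sub A^{\hi}_i$ it does not: the event $\{\ova{xw} \in J^a_i\}$ for a specific $a$ is not governed by an independent coin with marginal $\aA^a_i$ --- Lemma~\ref{lem:DI}.ii only gives the aggregate $d^*(J^{\hi}) = \aA_{\hi}$, and the construction of each $J^a_i$ passes through the coloured-matching machinery of DIGRAPH.iv--vi. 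What you need is $d^+_{J^a_i}(x) = (1 \pm \dD)|A^a_i|$, i.e.\ Lemma~\ref{lem:DI}.viii; the paper writes $\Phi_{t_1} = |A^a_i|^{-1} d^-_{H^a_i}(x) = 1 \pm \eps_1$ in one line. Your route is salvageable by citing part~(viii), but the direct case split is shorter and is what the paper does.

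Second, your justification of the $f_S$ condition is garbled: the edges $\edge{w}{u'}{x'}$ in $E_w$ contain $u'w$ and $\ova{x'w}$, not $\ova{xw}$. The disjointness of $E_w$ and $E_{w'}$ for $w \ne w'$ is correct (each hyperedge determines its $w$), but that alone does not bound $f_{\{\bm{e}\}}$; you must still sum over the remaining $|N_<(u)\cap A_{i'}|-1$ coordinates, obtaining $f_{\{\bm{e}\}}({\cal H}_{i'},\oO') \lesssim |A_u|^{-1} p_{uu'}^{-1} \le \pmax^{-1}\DD^{-1} < C^{-\bB}$ via the same estimate as in Lemma~\ref{lem:x'}.
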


We start with the corresponding estimate at time $t_1$.

\begin{lemma} \label{lem:t1:ux}
whp $\oO^*_{t_1}( P_W: u \to x ) = 1 \pm \eps_1$
for each $x \in V(G)$ and $u \in A_i$.
\end{lemma}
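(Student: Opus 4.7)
The plan is to prove this dually to Lemma \ref{lem:x:t1}, now fixing $u \in A_i$ and $x \in V(G)$ and summing $\oO^*_{t_1}(P_w: u \to x)$ over $w$. Since $\oO^*_{t_1}(P_w: u \to x) = |A_u|^{-1} \prod_{v \in N_<(u) \cap A_0} p_{uv}^{-1}$ takes a single $w$-independent value on its support, the task reduces to showing that $N_{u,x} := |\{w \in W: x \in P^{t_1}_w(u \to \cdot)\}|$ is whp $(1 \pm \eps_1)|A_u|\prod_{v \in N_<(u) \cap A_0} p_{uv}$.

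I would split on the class of $u$. For $u \in A^{\hi}_i \cup A^{\no}_i$ we have $N_<(u) \cap A_0 = \es$: this is immediate for $u \in A^{\no}_i$, and for $u \in A^a_i$ with $a \in A^\DD_i$ because $ua$ is removed from $F'$, and any other $T$-neighbour of $u$ in $A_0$ would force $u \in A_0$ by the $4$-span property (the two $A_0$-neighbours of $u$ must lie in different components of $T[A_0]$, since otherwise $T$ contains a cycle). Hence $x \in P^{t_1}_w(u \to \cdot) \Leftrightarrow \ova{xw} \in J_u$, so $N_{u,x} = d^-_{J_u}(x)$, which is $(1 \pm \dD)|A_u|$ whp by Lemma \ref{lem:DI}.viii in the $\hi$ subcase, and by Lemma \ref{lem:DI}.ii,iii combined with a Chernoff bound on the (conditionally independent given $t_0$) events $\{\ova{xw} \in J^{\no}_i\}_{w \in W}$ in the $\no$ subcase.

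For $u \in A^{\lo}_i$ with $N_<(u) \cap A_0 = \{u'\}$, the event $x \in P^{t_1}_w(u \to \cdot)$ requires both $\ova{xw} \in J^{\lo}_i$ and $\lova{x\phi_w(u')} \in \ova{G}_{uu'}$. Condition on the history up to $t_0$, so that $\phi_w(u')$ is determined for each $w$; Lemma \ref{lem:DI}.iii gives conditional independence of the two events, and the same calculation as in the $A^{\lo}$ step of the proof of Lemma \ref{lem:x:t1} yields, up to $\DD^{-.9}$ error,
\[ \mb{P}^{t_0}(x \in P^{t_1}_w(u \to \cdot)) = (\aA^{\lo}_i/\ov{p}_w)(p_{uu'}/p_1) \cdot 1_{x \in \ov{X}_w,\,\{x,\phi_w(u')\} \in G_1}. \]
Summing over $w$ and using that $|\{w: x \in \ov{X}_w,\ \phi_w(u') \in N_{G_1}(x)\}|$ is whp $(1 \pm \dD)\ov{p}_w p_1 n$ — established by applying Lemma \ref{lem:match} separately to each piece $M^0_{u'}$ and $M^*_{u'}[V_{x_{u'}+w^*},W_{w^*}]$ of the matching $M_{u'}$ from HIGH DEGREES (or by Lemma \ref{lem:rga} if $u' \in A^{**} \cup A'_0$), combined with the typicality estimates on $|N_G(x) \cap V_0|$ and $|N_G(x) \cap V_{v^*}|$ — gives $\mb{E}^{t_0} N_{u,x} = (1 \pm \eps_1/2)|A^{\lo}_i| p_{uu'}$. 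Concentration around this conditional mean then follows from Lemma \ref{freedman}, since each indicator $1_{x \in P^{t_1}_w(u \to \cdot)}$ is bounded and, given $t_0$, the summands are independent across $w$.

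The main obstacle is the bookkeeping in the $A^{\lo}$ case, where errors from HIGH DEGREES, INTERVALS, EMBED $A_0$ and DIGRAPH must combine to within $\eps_1$; this relies on the parameter hierarchy $\xi \ll \DD^{-1} \ll \dD \ll \eps_1$ and on summing the $n^{.8}$ concentration errors from Lemma \ref{lem:match} over the $\Theta(m)$ pieces of the partitions used in HIGH DEGREES without exceeding $\eps_1 \cdot pn$.
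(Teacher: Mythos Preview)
Your case split and the arguments for $u \in A^{\hi}_i$ and $u \in A^{\no}_i$ match the paper's proof essentially verbatim (modulo a notational slip: since $J_u \sub V \times W$ with all arcs pointing to $W$, the quantity you want is $d^+_{J_u}(x)$, not $d^-_{J_u}(x)$).

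The difference is in the $A^{\lo}_i$ case. You compute $\mb{P}^{t_0}(x \in P^{t_1}_w(u \to \cdot))$ for each $w$, then separately estimate $|\{w: x \in \ov{X}_w,\ \phi_w(u') \in N_{G_1}(x)\}|$ by invoking Lemma~\ref{lem:match} on the pieces of $M_{u'}$, which is what creates your ``main obstacle'' of summing $n^{.8}$ errors over $\Theta(m)$ pieces. The paper sidesteps this entirely: since $M_{u'}$ is a perfect matching, the map $w \mapsto x' := \phi_w(u')$ is a bijection $W \to V$, so
\[
\oO^*_{t_1}(P_W:u \to x) \;=\; \sum_{x' \in N_{G^*}(x)} (p^{\lo}_{i0}|A^{\lo}_i|)^{-1}\, 1_{\lova{xx}' \in \ova{G}^{\lo}_{i0}}\, 1_{\ova{xw}' \in J^{\lo}_i},
\]
where $w'$ is the unique element with $\phi_{w'}(u')=x'$. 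The indicators are conditionally independent given $t_0$ (Lemma~\ref{lem:DI}.iii), each with probability $(p^{\lo}_{i0}/p)\cdot \aA^{\lo}_i$ up to negligible error, and $|N_{G^*}(x)| = (1\pm 2\xi)pn$; one Chernoff bound gives $1 \pm 3\xi$. No analysis of the matchings $M_{u'}$ is needed at all, and the final error is far inside $\eps_1$. Your route is valid but the bijection makes the bookkeeping you flag disappear.
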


\begin{proof}
We consider cases according to the location of $u$.

If $u \in A^a_i$ 
with $a \in A^\DD_i$
then $\oO^*_{t_1}( P_w: u \to x ) 
 = |A_u|^{-1} 1_{\phi_w(a)x \in H^a_i}$,
so $\oO^*_{t_1}( P_W: u \to x )
= |A_u|^{-1} d^-_{H_i^a}(x)
= 1 \pm \eps_1$ by Lemma \ref{lem:DI}.viii.

If $u \in A^{\no}_i$ then
$\oO^*_{t_1}( P_w: u \to x ) 
= |A_u|^{-1} 1_{\ova{xw} \in J_u}$,
so by Lemma \ref{lem:DI} and a Chernoff bound
whp $\oO^*_{t_1}( P_W: u \to x ) = 1 \pm \DD^{-.1}$.

If $u \in A^{\lo}_i$ 
then $\oO^*_{t_1}( P_w: u \to x ) 
=  p_u^{-1} |A_u|^{-1} 1_{\ova{xw} \in J_u}
 1_{\phi_w(u') \in N^-_{\ova{G}_{u0}}(x)}$,
where $N_<(u) \cap A_0 = \{u'\}$.
For each $x' \in N_{G^*}(x)$ there is a unique 
$w' \in W$ with $\phi_{w'}(u')=x'$, so
$\oO^*_{t_1}( P_W: u \to x ) 
= \sum_{x' \in N_{G^*}(x)} p_{u0}^{-1} |A_u|^{-1}
 1_{\lova{xx}' \in \ova{G}_{u0}} 1_{\ova{xw}' \in J_u}$.
As $|N_{G^*}(x)| = (1 \pm 2\xi)p n$,
by Lemma \ref{lem:DI} and a Chernoff bound
whp $\oO^*_{t_1}( P_W: u \to x ) = 1 \pm 3\xi$.
\end{proof}

Next we consider the evolution 
of $\oO^*_t( P_W:u \to x )$ at step $i'<i$
in the approximate decomposition,
again writing $\tau=t_{i'}$, $\tau' = t^+_{i'}$.
 
\begin{lemma} \label{lem:ux:ti:hyp} 
whp $\oO^*_{\tau'}( P_W:u \to x ) 
= (1 \pm \eps_{i'}^{.8}) 
\oO^*_{\tau}( P_W: u \to x )$.
\end{lemma}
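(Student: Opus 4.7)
The plan is to mirror the proof of Lemma~\ref{lem:x:ti:hyp}, but with the roles of $w$ and $u$ swapped: fix $u \in A_i$ and $x \in V(G)$, and analyse $\oO^*_{\tau'}(P_W: u \to x)$ as a function on matchings in $\mc{H}_{i'}$.

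First I define a clean function $\psi$ on $\tbinom{\mc{H}_{i'}}{{\le}4}$ as follows. For each $w \in W$ with $u \notin U_w$ and $x \in P^\tau_w(u \to \cdot)$, and each matching $E_w \sub \mc{H}_{i'}$ consisting of one edge $\edge{w}{v}{x_v}$ for each $v \in N_<(u) \cap A_{i'}$ with $x_v \in N^-_{\ova{G}_{uv}}(x)$, I set $\psi(E_w) = |A_u|^{-1} \prod_{v \in N_<(u) \cap A_{\le i'}} p_{uv}^{-1}$; set $\psi = 0$ elsewhere. For any \emph{good} $w$ (meaning $u \notin U_w$ and $N_<(u) \cap A_{i'} \cap A^0_{i',w} = \es$), the unique candidate $E_w$ built from the actual $\phi_w(v)$'s lies in $\mc{M}_{i'}$ precisely when $x \in P^{\tau'}_w(u \to \cdot)$, and in that case $\psi(E_w) = \oO^*_{\tau'}(P_w: u \to x)$. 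Hence $\oO^*_{\tau'}(P_W: u \to x) = \psi(\mc{M}_{i'}) \pm \Delta_1 \pm \Delta_2$, where $\Delta_1$ collects contributions from $w$ with $u \in U_w$ and $\Delta_2$ collects contributions from $w$ with some $v \in N_<(u) \cap A_{i'} \cap A^0_{i',w}$.

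Next I estimate $\psi(\mc{H}_{i'}, \oO')$ by expanding
\[ \psi(\mc{H}_{i'}, \oO') = \sum_{w \text{ good},\, x \in P^\tau_w(u \to \cdot)} |A_u|^{-1} \prod_{v \in N_<(u) \cap A_{\le i'}} p_{uv}^{-1} \prod_{v \in N_<(u) \cap A_{i'}} \sum_{x_v \in N^-_{\ova{G}_{uv}}(x)} \oO'(\edge{w}{v}{x_v}). \]
Applying Lemma~\ref{lem:x'} to each inner sum yields $(1 \pm 2.2\eps_{i'})p_{uv}$, and the $p_{uv}^{-1}$ factors cancel, leaving $\psi(\mc{H}_{i'}, \oO') = (1 \pm O(\eps_{i'})) \oO^*_\tau(P_W: u \to x)$ up to error terms analogous to $\Delta_1, \Delta_2$. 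I then apply Lemma~\ref{lem:wEGJ} to $\psi$ with $C = n$, $\beta = c/2$: the codegree bound holds because every $\bm{e} = \edge{w}{v}{x_v}$ determines $w$, giving $f_{\{\bm{e}\}}(\mc{H}_{i'}, \oO') \le |A_u|^{-1} \pmax^{-4} \le C^{-\beta} \psi(\mc{H}_{i'}, \oO')$. This yields $\psi(\mc{M}_{i'}) = (1 \pm n^{-\Omega(1)}) \psi(\mc{H}_{i'}, \oO')$ whp, which combined with the inductive estimate $\oO^*_\tau(P_W: u \to x) = 1 \pm \eps_{i'-1}$ (from Lemma~\ref{lem:ux} at step $i'$, which we assume) gives the desired approximation.

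The main obstacle is bounding $\Delta_2$ tightly enough. The naive bound $\Delta_2 \le \sum_v |\{w: v \in A^0_{i',w}\}| \cdot \pmax^{-4}|A_u|^{-1}$ via Lemma~\ref{lem:Tleave}.v is too weak when $u \in A^{\hi}$, where $|A_u|$ may be as small as $\DD$. The plan is to invoke Lemma~\ref{lem:wEGJ} a second time, to the auxiliary clean function $h_v(\edge{w}{v'}{x'}) = 1_{v'=v}\oO^*_\tau(P_w: u \to x)$ for each $v \in N_<(u) \cap A_{i'}$ (note that $\oO^*_\tau$ is determined before $\mc{M}_{i'}$): then $h_v(\mc{H}_{i'}, \oO') = \sum_w \oO'(\mc{H}_{i'}[vw])\oO^*_\tau(P_w: u \to x)$, and Lemma~\ref{lem:Hwdeg}.iv gives $h_v(\mc{H}_{i'}, \oO') = (1-O(\eps_{i'}))\oO^*_\tau(P_W: u \to x)$ modulo contributions from $w$ with $v \in A^{\bad}_w \cap N_{F'}(A^{\lo})$, which are negligible by Lemma~\ref{lem:Tleave}.iv (and vanish entirely when $u \in A^{\hi}$, since the $4$-span property of $A_0$ forces $N_<(u) \cap A_{i'} \sub A^{\no}$, so no $v$ lies in $N_{F'}(A^{\lo})$). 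Consequently $\sum_{w \in B_v} \oO^*_\tau(P_w: u \to x) = O(\eps_{i'})$ whp, and via the pointwise bound $\oO^*_{\tau'}(P_w: u \to x) \le \pmax^{-4}\oO^*_\tau(P_w: u \to x)$ we conclude $\Delta_2 \le 4\pmax^{-4} \cdot O(\eps_{i'}) \ll \eps_{i'}^{.8}$, using $\eps_{i'} \ll \pmax$. Finally $\Delta_1 = 0$ for $u \in A^{\hi}$ (again by $A^{\hi} \cap \GG^2(A^{\lo}) = \es$), and otherwise $|A_u| \ge \dD n$, so Lemma~\ref{lem:Tleave}.iv gives $\Delta_1 \le 20\dD^3\pmax^{-4} \ll \eps_{i'}^{.8}$.
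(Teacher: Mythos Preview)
Your proof is correct and its skeleton (define $\psi$, expand $\psi(\mc{H}_{i'},\oO')$ via Lemma~\ref{lem:x'}, apply Lemma~\ref{lem:wEGJ}) coincides with the paper's. The treatment of $\Delta_1$ and of the codegree condition is also the same.

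The one place you diverge is the bound on $\Delta_2$, and here the paper takes a much shorter route. You are right that Lemma~\ref{lem:Tleave}.v applied with $W'=W$ is useless when $u\in A^{\hi}$ and $|A_u|$ is only $\DD$. But the paper simply observes that any $w$ contributing to $\Delta_2$ must already satisfy $\ova{xw}\in J_u$ (otherwise $\oO^*_{\tau'}(P_w:u\to x)=0$), and therefore applies Lemma~\ref{lem:Tleave}.v with $W'=N_{J_u}(x)$. By Lemma~\ref{lem:DI}.ii,viii one has $|N_{J_u}(x)|=d^+_{J_u}(x)=(1\pm\dD)|A_u|$ in every case (including $u\in A^{\hi}$), so the main term $2.1\eps_{i'}|W'|$ divided by $|A_u|$ is $O(\eps_{i'})$ and $\Delta_2<\eps_{i'}^{.9}$ follows in one line. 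Your detour through a second invocation of Lemma~\ref{lem:wEGJ} with the auxiliary function $h_v$ does work---the hypotheses check out, and the subtraction $\oO^*_\tau(P_W:u\to x)-h_v(\mc{M}_{i'})$ is indeed $O(\eps_{i'})$---but it re-derives, with considerably more machinery, a bound that is already implicit in the constraint $\ova{xw}\in J_u$. In short: you found a genuine workaround, but the ``obstacle'' dissolves once you notice that only about $|A_u|$ values of $w$ are ever in play.
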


\begin{proof}
Let $W' = \{w \in P^{\tau}_{\cdot}(u \to x): u \notin U_w\}$.
Consider the function 
$\psi$ on $\tbinom{{\cal H}_{i'}}{{\le}4}$
where $\psi(I)=0$ except if there is $w \in W'$ 
such that $I$ consists of  disjoint edges 
$\edge{w}{u'}{x'}$ with $\lova{xx}' \in \ova{G}_{uu'}$ 
for each $u' \in N_<(u) \cap A_{i'}$,
and then $\psi(I) = |A_u|^{-1}
\prod_{u' \in N_<(u) \cap A_{\le i'}} p_{uu'}^{-1}$.
Note that $\oO^*_{\tau'}( P_W:u \to x )
= \psi({\cal M}_{i'}) \pm \DD_\psi \pm \DD'_\psi$,
where
\begin{align*}
\DD_{\psi} &= 
\pmax^{-4} |A_u|^{-1}\sum_{u' \in N_<(u) \cap A_{i'}} 
|\{w \in N_{J_u}(x): u' \in A^0_{i',w}\}|
< 4\pmax^{-4} |A_u|^{-1}
(2.1\eps_{i'}d_{J_u}(w) + \dD|A_u|)
< \eps_{i'}^{.9},\\
\DD'_{\psi} &= \pmax^{-4} |A_u|^{-1}
|\{w \in W: u \in U_w\}| \le
\pmax^{-4} (\dD n)^{-1} 
\sum_{u' \in N_<(u)} |\{w: u' \in A^{\bad}_w \}|
< \dD.
\end{align*}
Here we used Lemma~\ref{lem:Tleave}
and Lemma~\ref{lem:DI}.ii,viii
to estimate $\DD_{\psi}$,
and for $\DD'_\psi$ we used
Lemma~\ref{lem:B}.v and Lemma~\ref{lem:Tleave}.iv,
also noting that if $\DD'_\psi \ne 0$
then $u,u' \notin A^{\hi}$ by definition of $A_0$,
so $|A_u| \ge \dD n$. Finally
\begin{align*}
\psi({\cal H}_{i'},\oO') 
& = \sum_{w \in W'} |A_u|^{-1}
   \prod_{u' \in N_<(u) \cap A_{\le i'} } p_{uu'}^{-1} 
   \prod_{u' \in N_<(u) \cap A_{i'} } 
   \sum_{x' \in N^-_{\ova{G}_{uu'}}(x)}
    \oO'( \edge{w}{u'}{x'} ) \\
& = (1 \pm 9\eps_{i'}) \oO^*_{\tau}( P_W:u \to x ) 
\end{align*}
by Lemma \ref{lem:x'}, and 
the lemma now follows from Lemma \ref{lem:wEGJ}.
\end{proof}

We deduce Lemma \ref{lem:ux} 
(i.e.\ Lemma~\ref{lem:bad}.iv)
from the previous two lemmas
and the following estimate
which holds similarly to
Lemma \ref{lem:x:ti:rga},
using Lemma \ref{lem:rga2}.iii.

\begin{lemma} \label{lem:ux:ti:rga}
whp all $\oO^*_{t_{i'+1}}( P_W:u \to x ) 
= (1 \pm \eps_{i'}^{.8}) 
 \oO^*_{t^+_{i'}}( P_W:u \to x )$.
\end{lemma}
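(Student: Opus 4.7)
The plan is to mirror the structure of Lemma \ref{lem:x:ti:rga}, tracking the evolution of $\oO^*_t(P_W:u \to x)$ through the sub-steps of APPROXIMATE DECOMPOSITION (iii) at index $i'$. In each sub-step we fix some $a \in A_{i'}$ and define $\phi_w(a)$ for all $w \in W_a$ via the matching from Lemma \ref{lem:match}. Since $u \in A_i$ with $i > i'$, no $\phi_w(u)$ is defined during this phase, so $\oO^*_t(P_w:u \to x)$ is affected only when $a \in N_<(u) \cap A_{i'}$, a set of size $\le |N_<(u)| \le 4$. Thus it suffices to show that each such sub-step changes $\oO^*(P_W:u \to x)$ by a factor $1 \pm 2\eps_{i'}^{.9}$; iterating over the at most four relevant $a$ and taking a union bound over $u,x$ then gives the claimed $(1 \pm \eps_{i'}^{.8})$ estimate.

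Fix such an $a$ and write $\tau$, $\tau'$ for the times just before and just after defining $\phi_w(a)$ for $w \in W_a$. For $w \notin W_a$ the weight is unchanged. For $w \in W_a$ the update rule gives $\oO^*_{\tau'}(P_w:u \to x) = p_{ua}^{-1}\oO^*_\tau(P_w:u \to x) \cdot 1_{\phi_w(a) \in N^-_{\ova{G}_{ua}}(x)}$. The key simplifying observation is that $\oO^*_\tau(P_w:u \to x)$ takes a common value $\alpha$ on $W_a \cap \{w : x \in P^\tau_w(u \to \cdot)\}$: for any $b \in N_<(u) \cap A_{i'}$ with $b \prec a$, $b \in A_{\tau,w}$ regardless of whether $b \in A^0_{i',w}$ (in the latter case $\phi_w(b)$ has already been defined earlier in the sub-step), while for $b \succ a$ we have $b \notin A_{\tau,w}$; hence $N_<(u) \cap A_{\tau,w}$ is independent of $w \in W_a$.

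Applying Lemma \ref{lem:rga2}.iii to $W' = W_a \cap \{w: x \in P^\tau_w(u \to \cdot)\}$ and the digraph $\ova{G}_{ua}$ gives whp
\[ |\{w \in W': \phi_w(a) \in N^-_{\ova{G}_{ua}}(x)\}| = (1 \pm \eps_{i'}^{.9}) p_{ua}|W'| \pm n^{.8}. \]
Multiplying by $p_{ua}^{-1}\alpha$ yields $\oO^*_{\tau'}(P_{W_a}:u \to x) = (1 \pm \eps_{i'}^{.9})\oO^*_\tau(P_{W_a}:u \to x) \pm p_{ua}^{-1}\alpha n^{.8}$. The additive error is absorbed using $\alpha \le \pmax^{-4}|A_u|^{-1}$ together with $|A_u| \ge \dD n$ (for $u \in A^{\hi}_i$ the bound $\alpha = |A_u|^{-1}$ makes this even easier), while the $w \notin W_a$ contribution is carried through unchanged, giving the required per-step estimate.

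The main obstacle I would anticipate is that $\oO^*_\tau(P_w:u \to x)$ could a priori vary across $w \in W_a$ through the random history of the hypergraph matching $\mc{M}_{i'}$, in which case a single appeal to Lemma \ref{lem:rga2}.iii would not suffice and one would instead need to partition $W_a$ according to the relevant history, risking a loss in concentration. The structural observation about $N_<(u) \cap A_{\tau,w}$ above eliminates this difficulty, so the analysis reduces to at most four clean applications of Lemma \ref{lem:rga2}.iii.
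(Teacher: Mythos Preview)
Your overall strategy --- step through each $a \in N_<(u) \cap A_{i'}$ and invoke Lemma~\ref{lem:rga2}.iii at each step --- is exactly what the paper intends. The gap is the ``structural observation'' that $\alpha := \oO^*_\tau(P_w:u\to x)$ is constant on $W_a$. You assert that for $b \in N_<(u)\cap A_{i'}$ processed after $a$ one has $b\notin A_{\tau,w}$; but $w\in W_a$ only means that $\phi_w(a)$ was not set by the hypergraph matching $\mc{M}_{i'}$ --- it says nothing about $\phi_w(b)$. If $b\notin A^0_{i',w}$ then $\phi_w(b)$ was already defined at time $t_{i'}^+$, so $b\in A_{\tau,w}$, and whether this happens varies with $w$ independently of the event $w\in W_a$. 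Hence the values $\alpha_w$ on $W'_a$ take up to $2^{|N_<(u)\cap A_{i'}|-1}\le 8$ distinct values, which is precisely the difficulty you flagged in your final paragraph and then incorrectly dismissed. (Relatedly, the side claim that $\alpha=|A_u|^{-1}$ for $u\in A^{\hi}_i$ is also off: while $N_<(u)\cap A_0=\es$ for such $u$, the factors $p_{uv}^{-1}$ from $v\in N_<(u)\cap A_{\ge 1}$ are still present.)

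The natural repair --- partition $W'_a$ by the value of $\alpha_w$ and apply Lemma~\ref{lem:rga2}.iii to each of the $\le 8$ parts --- works cleanly when $u\in A^{\lo}_i\cup A^{\no}_i$, since then $|A_u|\ge\dD n$ and the accumulated additive error $O(\max_j\alpha_j\cdot p_{ua}^{-1}n^{.8}) = O(\pmin^{-5}(\dD n)^{-1}n^{.8})$ is negligible. For $u\in A^{\hi}_i$ with $|A_u|$ as small as $\DD=n^c$, however, this additive term is not absorbed (and your assertion that this case is ``even easier'' does not hold up: even granting $\alpha=|A_u|^{-1}$, the error $p_{ua}^{-1}|A_u|^{-1}n^{.8}$ still diverges). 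Some further care is needed here --- for instance restricting first to the $\approx|A_u|$ values of $w$ with $\ova{xw}\in J_u$ and controlling the small set with $N_<(u)\cap A^0_{i',w}\ne\es$ separately via a sharpened form of Lemma~\ref{lem:Tleave}.v. The paper omits its proof entirely, so you are not missing an explicit template; but as written your argument does not close, exactly at the point you yourself anticipated.
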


Now we turn to the degrees of $\lova{xy} \in \ova{G}_i$.
We consider the evolution of
$\oO^*_t( P_W:F'[A^g_i,A^{g'}_j] \to \lova{xy} )$,
where $\lova{xy} \in \ova{G}^{gg'}_{ij}$,
$0 \le j < i$, and for convenient notation 
we label arcs of $F'[A^g_i,A^{g'}_j]$
as $uv$ with $u \in A^g_i$, $v \in A^{g'}_j$.
Recall that $\ova{G}^{gg'}_{i0}=\ova{G}^g_{i0}$
for all $g'$.
We start with an estimate at time $t_1$.

\begin{lemma} \label{lem:t1:xy}
whp $\oO^*_{t_1}( P_W: F'[A^g_i,A^{g'}_j] \to \lova{xy} )
= 1 \pm \eps_1$ for all $\lova{xy} \in \ova{G}^{gg'}_{ij}$.
\end{lemma}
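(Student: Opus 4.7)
The plan is to expand the definition of $\oO^*_{t_1}$ on pairs and do a case analysis on $g, g' \in \{\hi,\lo,\no\}$, with $j = 0$ (so $v \in A_0$ is already embedded at time $t_1$) handled separately. For $\lova{xy} \in \ova{G}^{gg'}_{ij}$ we write
\[
\oO^*_{t_1}(P_W: F'[A^g_i, A^{g'}_j] \to \lova{xy})
= \sum_{uv \in F'[A^g_i, A^{g'}_j]} p_{uv}^{-1}
 \sum_{w \in W} \oO^*_{t_1}(P_w: u \to x)\, \oO^*_{t_1}(P_w: v \to y),
\]
and use the explicit formulas for $\oO^*_{t_1}(P_w: u \to x)$ recorded at the end of Section~2.2 and recalled in the proof of Lemma~\ref{lem:t1:ux}, namely the indicator formulas according to whether $u$ lies in $A^a_i$ (with $a \in A^\DD_i$), in $A^{\no}_i$, in $A^{\lo}_i$, or in $A_0$.

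In the \emph{easy} case $g, g' \in \{\hi, \no\}$ (so $j \ge 1$ and neither endpoint is in $A^{\lo}$), for each edge $uv$ the inner sum reduces to $|A_u|^{-1}|A_v|^{-1}|\{w: \ova{xw} \in J_u,\ \ova{yw} \in J_v\}|$. For $u, v \in A^{\no}$ the two events are conditionally independent given the history at time $t_0$ by Lemma~\ref{lem:DI}.iii, with probabilities $\aA_u$ and $\aA_v$, so Chernoff gives the count as $(1 \pm \eps_1/4)\aA_u \aA_v n$ whp. For $u \in A^a_i$, $v \in A^{a'}_j$ note that $a \ne a'$ (else $u\text{-}a\text{-}v\text{-}u$ would be a cycle in $T$); the bijections $\phi_{\cdot}(a), \phi_{\cdot}(a'): W \to V$ take values in disjoint cyclic-shift classes by the definition of $M^*_a, M^*_{a'}$, and a similar joint-uniformity argument using Lemma~\ref{lem:DI}.viii yields the same count. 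Substituting, the inner sum equals $(1 \pm \eps_1/4)/n$ per edge; summing and using $p_{uv} = n^{-1}|F'[A^g_i,A^{g'}_j]| + \pmin$ with $\pmin \ll \eps_1$ yields $1 \pm \eps_1$.

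When $u \in A^{\lo}_i$ with unique $A_0$-neighbour $u'$, the extra factor $1_{\lova{x\phi_w(u')} \in \ova{G}_{uu'}}$ ties $w$ to the embedding of $u'$. Since $\phi_{\cdot}(u'): W \to V$ is a bijection, I reindex $w$ by $x' = \phi_w(u')$, obtaining a sum over $x' \in V$ of a product of indicators involving $\lova{xx'} \in \ova{G}_{uu'}$ and $\ova{x w_{u'}(x')} \in J_u$ (where $w_{u'}(x')$ is the unique $w$ with $\phi_w(u') = x'$), plus analogous conditions for $v$ in the case $v \in A^{\lo}$. Typicality of $\ova{G}_{uu'}$ (Lemma~\ref{lem:DI}.iv), conditional independence of the DIGRAPH choices given the $A_0$-embeddings (Lemma~\ref{lem:DI}.iii), and the degree estimates of Lemma~\ref{lem:DI}.ii,viii concentrate the reindexed count around its expectation, and the normalising factor $p_{uu'}^{-1}$ cancels the density $p_{uu'}$ of $\ova{G}_{uu'}$, again giving $(1 \pm \eps_1/2)/n$ per edge. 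The case $j = 0$ collapses the sum over $w$ to the single $w = w_v(y)$ with $\phi_w(v) = y$, and is handled in parallel with the $A^{\lo}$ analysis in the proof of Lemma~\ref{lem:t1:ux}, using typicality of $G^*$.

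The main obstacle is the subcase where both $u, v \in A^{\lo}$, so two $A_0$-embeddings (of $u'$ and $v'$, possibly equal) enter the count simultaneously: the reindexing becomes a joint analysis of $(\phi_w(u'), \phi_w(v'))$ as $w$ varies. Here I would combine the pseudo-randomness of the EMBED~$A_0$ matchings (quantified by Lemma~\ref{lem:rga}) with Lemma~\ref{lem:DI}.iii,v to control the coupled behaviour of $J^{\lo}_i, J^{\lo}_j, \ova{G}_{uu'}$ and $\ova{G}_{vv'}$, so that the four factors decouple into a product of marginal probabilities up to error $\eps_1/4$. Once the inner sum is seen to equal $(1 \pm \eps_1/2)/(n p_{uv})$ for every edge, summing over $uv \in F'[A^g_i, A^{g'}_j]$ yields the required $1 \pm \eps_1$.
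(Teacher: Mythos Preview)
Your case analysis is essentially the same as the paper's, and your sketches for the cases $(\no,\no)$, $(\hi,\no)$, $(\no,\hi)$, $(\lo,\no)$, $(\no,\lo)$ and $j=0$ are along the right lines. However, you have missed a structural observation that eliminates what you call ``the main obstacle'': there is \emph{no} edge $uv \in F'[A_{\ge 1}]$ with both endpoints in $A^{\hi} \cup A^{\lo}$. This is one of the recorded consequences of $A_0$ being a $4$-span (``there is no ${\le}3$-path in $T \setminus A_0$ with both ends in $A^{\hi} \cup A^{\lo}$''), and an edge is a $1$-path. Hence the subcases $(g,g') \in \{(\lo,\lo),(\hi,\hi),(\hi,\lo),(\lo,\hi)\}$ simply do not arise, and your proposed coupling argument for the ``joint behaviour of $(\phi_w(u'),\phi_w(v'))$'' is unnecessary. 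The paper makes this explicit: after treating $(\lo,\no)$ it remarks that $(\no,\lo)$ is similar ``and this is the last case by the definition of $A_0$ as a $4$-span.'' Similarly, for $j=0$ only $g=\lo$ occurs, since $F'[A^{\hi}_i,A_0]=F'[A^{\no}_i,A_0]=\es$ by the definitions of $F'$ and $A^{\no}_i$.

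Two smaller points. First, in your displayed formula the prefactor should be $p_{t_1}(uv)^{-1}$, not $p_{uv}^{-1}$; these differ when $v \in A_0$, where $p_{t_1}(uv)=1$ (the $p_{uv}^{-1}$ is already built into $\oO^*_{t_1}(P_w:u\to x)$ through $N_<(u)\cap A_0$). Second, in the $(\hi,\no)$ case the key fact is that $\{\ova{xw}\in J_u\}$ and $\{\ova{yw}\in J_v\}$ are conditionally independent given HIGH DEGREES because $d(x,y)>3d$ (close pairs were removed from $G_1$); you gesture at this via Lemma~\ref{lem:DI}.iii, but the $3d$-separation is what makes it work.
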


\begin{proof} 
We consider cases according to $g,g',i,j$ 
where $0 \le j < i$.

We start with the case 
$\lova{xy} \in \ova{G}^{\lo}_{i0}$.
For each $uv \in F'[A^{\lo}_i,A_0]$, $w \in W$
we have $\oO^*_{t_1}( P_w:uv \to \lova{xy} )=0$, except 
for the unique $w^v \in W$ with $\phi_{w^v}(v)=y$,
for which  $\oO^*_{t_1}( P_{w^v}:uv \to \lova{xy} )
 = \oO^*_{t_1}( P_{w^v}:u \to x)
 = (p^{\lo}_{i0} |A^{\lo}_i|)^{-1} 
 1_{xw^v \in J^{\lo}_i}$,
so $\oO^*_{t_1}( P_W:uv \to \lova{xy} )
=  (p^{\lo}_{i0} |A^{\lo}_i|)^{-1} 
 1_{xw^v \in J^{\lo}_i}$.
The $w^v$ are distinct, so the events 
$\{xw^v \in J^{\lo}_i\}$ are independent.
Each affects $\oO^*_{t_1}( P_W: uv \to \lova{xy} )$
by $< p_{uv}^{-1}\pmax^{-1} (\dD n)^{-1}$,
so by Lemma \ref{freedman} whp 
$\oO^*_{t_1}( P_W:F'[A^{\lo}_i,A_0] \to \lova{xy} )
= (p_{uv} n)^{-1} |F'[A^{\lo}_i,A_0]| \pm n^{-.4}
= p_{uv}^{-1}(p_{uv} - \pmin) \pm n^{-.4}
= 1 \pm \eps_1$.

Next consider the case
$\lova{xy} \in \ova{G}^{\hi,\no}_{ij}$, $j \in [i-1]$.
For each $uv \in F'[A^{\hi}_i,A^{\no}_j]$, $w \in W$
we have $\oO^*_{t_1}( P_w:uv \to \lova{xy} )=0$, 
except if $\ova{xw} \in J_u$ and $\ova{yw} \in J_v$,
when $\oO^*_{t_1}( P_w:uv \to \lova{xy} )
 = (p_{uv} |A_u| |A_v|)^{-1}$.
We have $\ova{xw} \in J_u$ iff
$\phi_w(a) \in N^-_{H^a_i}(x)$, where $u \in A^a_i$,
and as $d(x,y)>3d$ since any close edges were removed
from $G_1$, the events $\lova{xy} \in \ova{G}_{uv}$, 
$\{\ova{xw} \in J_u\}$ and $\{\ova{yw} \in J_v\}$
are conditionally independent given HIGH DEGREES. 
By Lemma~\ref{lem:DI} and a Chernoff bound whp there are
$(1 \pm \DD^{-.6}) \aA_v d^-_{H_i^a}(x)$
choices of $w$ with 
$\oO^*_{t_1}( P_w:uv \to \lova{xy} ) \ne 0$,
so $\oO^*_{t_1}( P_W:uv \to \lova{xy} )
= (1 \pm \DD^{-.6}) d^-_{H_i^a}(x)
p_{uv}^{-1} |A_u|^{-1}n^{-1}
 = (1 \pm .2\eps_1) (p_{uv} n)^{-1}$ 
by Lemma \ref{lem:DI},
giving the required estimate.
 
The case $\lova{xy} \in \ova{G}^{\no,\hi}_{ij}$,
$j \in [i-1]$ is similar to the previous one.

Now consider the case
$\lova{xy} \in \ova{G}^{\no,\no}_{ij}$, $j \in [i-1]$.
For each $uv \in F'[A^{\no}_i,A^{\no}_j]$, $w \in W$
we have $\oO^*_{t_1}( P_w:uv \to \lova{xy} )=0$, 
except if $\ova{xw} \in J_u$ and $\ova{yw} \in J_v$,
when $\oO^*_{t_1}( P_w:uv \to \lova{xy} )
 = (p_{uv} |A_u||A_v|)^{-1}$.
By a Chernoff bound whp
$\oO^*_{t_1}( P_W:uv \to \lova{xy} )
= \sum_{w \in W} \oO^*_{t_1}( P_w:uv \to \lova{xy} )
= (p_{uv} n)^{-1} \pm n^{-1.4}$,
giving the required estimate.

Finally, consider the case
$\lova{xy} \in \ova{G}^{\lo,\no}_{ij}$,  $j \in [i-1]$.
(The case $\lova{xy} \in \ova{G}^{\no,\lo}_{ij}$
is similar, and this is the last case by the
definition of $A_0$ as a $4$-span.)
For each $uv \in F'[A^{\lo}_i,A^{\no}_j]$, $w \in W$
we have $\oO^*_{t_1}( P_w:uv \to \lova{xy} )=0$, 
except in the event $E_{uvw}$ 
that $\ova{xw} \in J_u$, $\ova{yw} \in J_v$ 
and $\lova{xx}' \in \ova{G}^{\lo}_{i0}$, 
where $x'=\phi(u')$, $\{u'\}=N_<(u) \cap A_0$,
when $\oO^*_{t_1}( P_w:uv \to \lova{xy} )
 = (p_{uv} p^{\lo}_{i0} |A_u||A_v|)^{-1}$.
For each $x' \in V(G)$ there is a unique 
$w' \in W$ with $\phi_{w'}(u')=x'$, 
so $\oO^*_{t_1}( P_W:uv \to \lova{xy} )
 = \sum_{w \in W} \oO^*_{t_1}( P_w:uv \to \lova{xy} )
= \sum_{x' \in N^-_{\ova{G}_1}(x)} 1_{E_{uvw}}
 (p_{uv} p^{\lo}_{i0} |A_u||A_v|)^{-1}$.
We have $\mb{E}^{t_0} \oO^*_{t_1}( P_W:uv \to \lova{xy} )
 = (p^{\lo}_{i0}/p_1) d^-_{\ova{G}_1}(x)
 (p_{uv} p^{\lo}_{i0} n^2)^{-1}$, where 
whp $d^-_{\ova{G}_1}(x) = (1 \pm 2\xi) p_1 n$.
The decisions on $\ova{xw}$ and $\ova{yw}$ affect
$\oO^*_{t_1}( P_W:uv \to \lova{xy} )$ by
$< (p_{uv}p^{\lo}_{i0})^{-1}(\pmax \dD n)^{-2}$.
For each $\lova{xx}'$,
note that there are $n$ choices of $w'$,
which determines $u' = \phi_{w'}^{-1}(x')$,
then  $<\DD$ choices for each of $u$ and $v$,
so the decision on $\{\lova{xx}' \in G^{\lo}_{i0}\}$
affects $\oO^*_{t_1}( P_W:uv \to \lova{xy} )$ by
$< n \DD (p_{uv}p^{\lo}_{i0})^{-1}(\pmax \dD n)^{-2}$.
The required estimate now follows
from Lemma \ref{bernstein}.
\end{proof}

Next we consider the evolution of 
$\oO^*_t( P_W:F'[A^g_i,A^{g'}_j] \to \lova{xy} )$
at step $i'<i$ in the approximate decomposition,
again writing $\tau=t_{i'}$, $\tau' = t^+_{i'}$.
  
\begin{lemma} \label{lem:xy:ti:hyp} 
At step $i'<i$ whp 
$\oO^*_{\tau'}( P_W:F'[A^g_i,A^{g'}_j] \to \lova{xy} )$ 
is $(1 \pm .7\eps^{.8}) 
\oO^*_{\tau}( P_W:F'[A^g_i,A^{g'}_j] \to \lova{xy} )$
for each $0 \le j < i$ 
and $\lova{xy} \in \ova{G}^{gg'}_{ij}$, 
and is $(1 \pm \eps_{i'}^8) 
\oO^*_{\tau}( P_W:F'[A^g_i,A^{g'}_j] \to \lova{xy} )$
if $j \ne i'$ or $y \notin B$ 
or $\lova{xy} \notin G^{\no,\lo}$.
\end{lemma}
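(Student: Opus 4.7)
Following Lemmas~\ref{lem:x:ti:hyp} and~\ref{lem:ux:ti:hyp}, we construct a clean function $\psi$ on $\binom{{\cal H}_{i'}}{{\le}\ell}$ with $\psi({\cal M}_{i'})$ equal, up to negligible discards of the $\DD_\psi$, $\DD'_\psi$ type, to $\oO^*_{\tau'}(P_W: F'[A^g_i,A^{g'}_j] \to \lova{xy})$; then compute $\psi({\cal H}_{i'},\oO')$ via Lemma~\ref{lem:x'} and apply Lemma~\ref{lem:wEGJ}. Write $E = F'[A^g_i,A^{g'}_j]$. The three cases $j<i'$, $j>i'$, $j=i'$ differ by whether $v \in A^{g'}_j$ is already embedded at time $\tau$, still unembedded at time $\tau'$, or embedded at step $i'$ itself.

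For $j<i'$, for each $uv\in E$ let $w^v\in W$ be the unique index with $\phi_{w^v}(v)=y$; take $\psi(I)$ to be nonzero exactly when $I$ consists of disjoint $\edge{w^v}{u'}{x'}$ with $\lova{xx'}\in \ova{G}_{uu'}$ for each $u'\in N_<(u)\cap A_{i'}$, some $uv\in E$ with $u\notin U_{w^v}$, and weight $p_{uv}^{-1}|A_u|^{-1}\prod_{u''\in N_<(u)\cap A_{\le i'}}p_{uu''}^{-1}$. Applying Lemma~\ref{lem:x'} per $u'$-factor and cancelling $p_{uu'}$ against $p_{uu'}^{-1}$ yields $\psi({\cal H}_{i'},\oO') = (1\pm 9\eps_{i'})\oO^*_\tau(P_W: E \to \lova{xy})$. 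The case $j>i'$ is analogous: $\psi(I)$ additionally requires $\edge{w}{v'}{y'}$ with $\lova{yy'}\in \ova{G}_{vv'}$ for each $v'\in N_<(v)\cap A_{i'}$, absorbed in the same way. Both $A^0_{i',w}$-type and $U_w$-type discards are handled by Lemmas~\ref{lem:B} and~\ref{lem:Tleave} as in Lemma~\ref{lem:ux:ti:hyp}.

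For $j=i'$ the edge $\edge{w}{v}{y}$ is forced into $I$ with $x'=y$ pinned, so Lemma~\ref{lem:x'} cannot be applied to it. Instead we invoke Lemma~\ref{lem:Hwdeg}.iii to get $\oO'(\edge{w}{v}{y}) = (1\pm 2\eps_{i'})\oO(\edge{w}{v}{y}) = (1\pm 2\eps_{i'})\oO^*_\tau(P_w: v \to y)$ provided $v\notin A^{\bad}_w\cap N_{F'}(A^{\lo})$ and $\ova{yw}\notin J^{\bad}$, and handle the remaining product over $u'\in N_<(u)\cap A_{i'}\sm\{v\}$ by Lemma~\ref{lem:x'}. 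Summing over $uv\in E$ and $w$ reconstitutes $(1\pm O(\eps_{i'}))\oO^*_\tau(P_W: E \to \lova{xy})$, and Lemma~\ref{lem:wEGJ} concludes (the codegree bounds on $\psi$ are essentially identical to those in Lemma~\ref{lem:ux:ti:hyp}).

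The main obstacle is precisely the exceptional regime $j=i'$ with $\ova{yw}\in J^{\bad}$, which forces $y\in B$: here Lemma~\ref{lem:Hwdeg}.iii is unavailable for $\oO'(\edge{w}{v}{y})$ and we have only $\oO({\cal H}_{i'}[\ova{yw}]) = 1\pm .5\eps^{.8}$ from Lemma~\ref{lem:bad}.ii, producing the weaker conclusion $(1\pm .7\eps^{.8})$. Since $J^{\bad}\sub J^{\lo}$ forces $v\in A^{\lo}$, and the $4$-span structure of $A_0$ then forces $u\in A^{\no}$ (because no $F'$-edge has both endpoints in $A^{\hi}\cup A^{\lo}$), this regime is precisely $\lova{xy}\in G^{\no,\lo}$; off this regime the sharper $(1\pm\eps_{i'}^{.8})$ bound holds, as claimed.
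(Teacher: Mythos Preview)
Your approach matches the paper's: split into $j<i'$, $j=i'$, $i'<j<i$, define the appropriate clean function $\psi$, and apply Lemma~\ref{lem:wEGJ} after evaluating $\psi({\cal H}_{i'},\oO')$ via Lemma~\ref{lem:x'} (plus Lemma~\ref{lem:Hwdeg}.iii for the pinned edge $\edge{w}{v}{y}$ when $j=i'$). Two minor points: for $j<i'$ the paper simply invokes Lemma~\ref{lem:ux:ti:hyp} directly rather than rebuilding $\psi$, and your weight in that case carries a spurious extra factor $p_{uv}^{-1}$ (at time $\tau$ the vertex $v$ is already embedded, so $p_\tau(uv)=1$ and the $v$-contribution is just the indicator $1_{\phi_w(v)=y}$).

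The real gap is in your final paragraph. The claim ``$\ova{yw}\in J^{\bad}$ forces $y\in B$'' is backwards: a single arc lying in $J^{\bad}$ does not place $y$ in $B$; rather, $y\in B$ means $d_{J^{\bad}}(y)>\dD^3 n$. When $\lova{xy}\in G^{\no,\lo}$ but $y\notin B$, there can still be up to $\dD^3 n$ values of $w$ with $\ova{yw}\in J^{\bad}$, and the sharp bound holds only because these bad triples $uvw$ have total weight $<\sqrt{\dD}$ (here $|A_u|,|A_v|\ge\dD n$ since $g=\no$, $g'=\lo$) and can be discarded. You also need a second discard, which you mention as a hypothesis but never bound: triples with $v\in A^{\bad}_w\cap N_{F'}(A^{\lo})$. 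For each $w$ there are $<\dD^3 n$ such $v$ by Lemma~\ref{lem:B}, again contributing $<\sqrt{\dD}$. Only after both discards do the surviving triples satisfy $\oO'(\edge{w}{v}{y})=(1\pm 2\eps_{i'})\oO(\edge{w}{v}{y})$, because any remaining failure of Lemma~\ref{lem:bad}.iii at some $\lova{yy'}$ with $y'=\phi_w(v')$ would force $v'\in A^{\lo}$ and $\phi_w(v')\in B$, i.e.\ $v\in A^{\bad}_w\cap N_{F'}(A^{\lo})$, already discarded.
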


\begin{proof} 
We start with the case $j<i'$. We note
for each $uv \in F'[A^g_i,A^{g'}_j]$, $w \in W$
that $\oO^*_{\tau}( P_w:uv \to \lova{xy}\ )$ and
 $\oO^*_{\tau'}( P_w:uv \to \lova{xy} )$ are $0$
unless $\phi_w(v)=y$, in which case
 $\oO^*_{\tau}( P_w:uv \to \lova{xy} )
 = \oO^*_{\tau}( P_w:u \to x)$,
and  $\oO^*_{\tau'}( P_w:uv \to \lova{xy} )
 = \oO^*_{\tau'}( P_w:u \to x)$.
By Lemma \ref{lem:ux:ti:hyp} we deduce
\begin{align*}
\oO^*_{\tau'}( P_W:F'[A^g_i,A^{g'}_j] \to xy )
& = \sum_{u \in A^g_i} |N_<(u) \cap A^{g'}_j|
(1 \pm \eps_{i'}^{.8})
\oO^*_{\tau}( P_W:u \to x ) \\
& = (1 \pm \eps_{i'}^{.8})
\oO^*_{\tau}( P_W:F'[A^g_i,A^{g'}_j] \to xy ).
\end{align*}

Now we may assume $j \ge i'$. Suppose $j=i'$.
We consider the function $\psi$ 
on $\tbinom{{\cal H}_{i'}}{{\le}4}$,
where $\psi(E)=0$ except if there are $w \in W$ and 
 $uv \in P^\tau_w(\cdot \to \lova{xy})
  \cap F'[A^g_i,A^{g'}_{i'}]$ 
 with $u \notin U_w$ such that 
$E$ consists of  disjoint edges $\edge{w}{u'}{x'}$ 
with $\lova{xx}' \in \ova{G}_{uu'}$ 
for each $u' \in N_<(u) \cap A_{i'}$
and $x'=y$ when $u'=v$, and then 
$\psi(E) = \oO^*_{\tau'}( P_w:u \to x ) = |A_u|^{-1}
\prod_{u' \in N_<(u) \cap A_{\le i'}} p_{uu'}^{-1} $.

Note that $\oO^*_{\tau'}( P_w:F'[A^g_i,A^{g'}_{i'}] \to \lova{xy} )
= \psi({\cal M}_{i'}) \pm \DD_\psi \pm \DD'_\psi$,
where
\begin{align*}
\DD_\psi  &= \sum_{uvw} \{
  \oO^*_{\tau'}( P_w:uv \to \lova{xy} ) 
 : N_<(u) \cap A^0_{i',w} \ne \es  \} \\
& \le  \sum_{w: \ova{xw} \in J^g_i}
 \sum_{v \in A_{\ova{yw}}}
 |F'[A^0_{i',w},N_>(v) \cap A^g_i]|  
 \pmax^{-8} |A^g_i|^{-1} |A_{\ova{yw}}|^{-1} 
< 9\eps_{i'} \pmax^{-9} 
 d^+_{J^g_i}(x) |A^g_i|^{-1}
< \eps_{i'}^{.9},\\
\DD'_{\psi} &= \sum_{vw}\{\oO^*_{\tau'}
(P_w: uv \to \lova{xy}): u \in U_w\}
\le \pmax^{-8} n (\dD n)^{-2} 
\sum_{u'\in N_<(u)}|\{w: u' \in A^{\bad}_w\}|
< \dD^{.9}.
\end{align*}
Here the bound on $\DD_\psi$ follows from 
Lemmas~\ref{lem:Tleave}.vi and~\ref{lem:DI},
and the bound on $\DD'_\psi$ by
Lemmas \ref{lem:Tleave}.iv and \ref{lem:B},
also noting that if $\DD'_\psi \ne 0$
then $u,v \notin A^{\hi}$
by definition of $A_0$,
so $|A^g_i|,|A^{g'}_j| \ge \dD n$.

Next we estimate
\begin{align*}
& \psi({\cal H}_{i'},\oO') = \sum_{uvw \in S, u \notin U_w}
 \left[  |A_u|^{-1}
  \prod_{u'' \in N_<(u) \cap A_{\le i'} } p_{uu''}^{-1} 
\ \oO'( \edge{w}{v}{y} )  
\prod_{u' \in N_<(u) \cap A_{i'} \sm \{v\} } 
   g_{uu'}(xw) \right] ,
\end{align*}
where $S := \{uvw: w \in W, uv \in P^\tau_w(\cdot \to \lova{xy}) 
  \cap F'[A^g_i,A^{g'}_j]\}$ and by Lemma \ref{lem:x'}
\[ g_{uu'}(xw) :=
\sum_{x' \in N^-_{\ova{G}_{uu'}}(x)}
    \oO'( \edge{w}{u'}{x'} )
 =  (1 \pm 2.2\eps_{i'})p_{uu'}  .\]
To obtain the required estimates in the case $j=i'$,
by Lemma~\ref{lem:wEGJ} it suffices to show that
$\oO'( \edge{w}{v}{y} )$ is
$(1 \pm .6\eps^{.8}) \oO( \edge{w}{v}{y} )$ 
when $v \in A^{\lo}$ 
(which is equivalent to $\lova{xy} \in G^{\no,\lo}$)
and that if $v \notin A^{\lo}$ or $y \notin B$
then the sum of
\[ f(uvw) : =(|A_u||A_v|)^{-1}
  \prod_{u' \in N_<(u) \cap A_{\le i'} } p_{uu'}^{-1} 
  \prod_{v' \in N_<(v) \cap A_{\le i'} } p_{vv'}^{-1}\]
over $uvw \in S$ 
for which $\ova{yw}\in J^{\bad}$ is at most $\sqrt{\dD}$,
the sum of $f(uvw)$ over $uvw \in S$ with 
$v \in A^{\bad}_w \cap N_<(A^{\lo})$
is at most $\sqrt{\dD}$, and for every other $uvw \in S$
we have $\oO'(\edge{w}{v}{y})=(1\pm 2\eps_{i'})\oO(\edge{w}{v}{y})$.

So first assume $v \in A^{\lo}$.
For each $y'=\phi_w(v')$, $v' \in N_<(v)$,
as $v \in A^{\lo}$ we have 
$v' \notin A^{\lo}$,
so $\oO({\cal H}_{i'}[\lova{yy}']) = 1 \pm \eps_{i'}$
by Lemma~\ref{lem:bad}.iii for $i'$.
Parts (i), (iii) imply that 
$\oO({\cal H}_{i'}[\bm{v}]) = 1 \pm .5\eps^{.8}$
for $\bm{v}=vw,\ova{yw}$, so
$\oO'( \edge{w}{v}{y} ) = (1- .5\eps_i)(1 \pm .5\eps^{.8})
\oO( \edge{w}{v}{y} )$, as required.

Next, suppose $y \notin B$ or $v \notin A^{\lo}$.
Consider those $uvw \in S$ with $\ova{yw} \in J^{\bad}$.
Then $y \notin B$ and $v \in A^{\lo}$, since otherwise 
$\ova{yw} \notin J^{\bad}$.
So $u \notin A^{\hi}$ by the definition of $A_0$, 
and therefore $|A_u|,|A_v| \ge \dD n$.
Since $y \notin B$, there are  $<\dD^3 n$ choices of $w$
with $\ova{yw} \in J^{\bad}$, 
so the sum of $f(uvw)$ with 
$\ova{yw} \in J^{\bad}$ is
$\le \dD^3 n \sum_{uv} 
\pmax^{-8} |A_u|^{-1}|A_v|^{-1}  < \sqrt{\dD}$.

All other terms $uvw$ have $\ova{yw} \notin J^{\bad}$.
Consider those $uvw \in S$ with
$v \in A^{\bad}_w \cap N_<(A^{\lo})$.
Then $v,u \in A^{\no}$
by the definition of $A_0$ as a $4$-span, so
$|A_u|,|A_v| \ge \dD n$.
Each remaining $w$ has $<\dD^3 n$ choices 
of $v \in A^{\bad}_w$ by Lemma~\ref{lem:B}, 
so the total sum of these terms $f(uvw)$ is
$\le \sum_w \dD^3 n   
 \pmax^{-9} |A_u|^{-1}|A_v|^{-1}  < \sqrt{\dD}$.

Since all other terms $uvw$ have $\ova{yw} \notin J^{\bad}$,
by Lemma~\ref{lem:bad}.iii for $i'$, we may assume that
there are (not necessarily distinct)
$v',v'' \in N_<(v)$ with
$\phi_w(v') \in B$ and $v'' \in A^{\lo}$,
or else we have
$\oO'(\edge{w}{v}{y})=
(1-.5\eps_{i'})(1\pm \eps_{i'})\oO(\edge{w}{v}{y})$.
But then $v \in A^{\bad}_w \cap N_<(A^{\lo})$,
proving the claim
and completing the case $j=i'$.
 
Finally, we suppose $i'<j<i$.
We consider the function $\psi$ 
on $\tbinom{{\cal H}_{i'}}{{\le}8}$,
where $\psi(E)=0$ except if there are $w \in W$ and 
 $uv \in P^\tau_w(\cdot \to xy)\cap F'[A^g_i,A^{g'}_j]$ 
 with $u \notin U_w$ such that 
$E$ consists of  disjoint edges $\edge{w}{u'}{x'}$ 
with $\lova{xx}' \in \ova{G}_{uu'}$ 
for each $u' \in N_<(u) \cap A_{i'}$
and $\lova{yx}' \in \ova{G}_{vv'}$ 
for each $v' \in N_<(v) \cap A_{i'}$, and then 
$\psi(E) = f(uvw)$.
Note that $\oO^*_{\tau'}( P_W:F'[A^g_i,A^{g'}_j] \to \lova{xy} )
= \psi({\cal M}_{i'}) \pm \DD_\psi \pm \DD'_{\psi}$,
with $\DD_\psi$ as in the case $j=i'$ and 
\begin{align*}
 \DD_{\psi'} & = \sum_{uvw} 
 \{ \oO^*_{\tau'}( P_w:uv \to \lova{xy} ) 
 :  N_<(v) \cap A^0_{i',w} \ne \es  \}
\le  \sum_{w: \ova{xw} \in J^g_i} \pmax^{-8} 
 |A^g_i|^{-1} |A_{\ova{yw}}|^{-1} 
  \pmax^{-1} |F'[A^0_{i',w},A_{\ova{yw}}]|  \\
& <  9\eps_{i'} \pmax^{-10} 
 \sum_{w: \ova{xw} \in J^g_i} |A^g_i|^{-1}
< \eps_{i'}^{.9}
\end{align*}
by Lemmas \ref{lem:Tleave}.vi and \ref{lem:DI}.
Now we estimate
\begin{align*}
& \psi'({\cal H}_{i'},\oO')  =   \sum_{uvw \in S, u \notin U_w}
 \left[ f(uvw)  \prod_{u' \in N_<(u) \cap A_{i'} } 
     g_{uu'}(xw)
   \prod_{v' \in N_<(v) \cap A_{i'} } 
g_{vv'}(yw) \right].
\end{align*}
By Lemma \ref{lem:x'}
each $g_{uu'}(xw)
 = (1 \pm 2.2\eps_{i'})p_{uu'}$
and $g_{vv'}(yw)
 = (1 \pm 2.2\eps_{i'})p_{vv'}$,
so $\psi'({\cal H}_{i'},\oO') 
= (1 \pm \eps_{i'}^{.9}) 
 \oO^*_{\tau}( P_W:F'[A^g_i,A^{g'}_j] \to \lova{xy} )$.
The lemma now follows from Lemma \ref{lem:wEGJ}.
\end{proof}

Similarly to Lemma \ref{lem:x:ti:rga},
we also have the following estimate.

\begin{lemma} \label{lem:xy:ti:rga}
whp all $\oO^*_{t_{i'+1}}(  P_W:F'[A^g_i,A^{g'}_j] \to \lova{xy} ) 
= (1 \pm \eps_{i'}^{.8}) 
 \oO^*_{t^+_{i'}}(  P_W:F'[A^g_i,A^{g'}_j] \to \lova{xy})$.
\end{lemma}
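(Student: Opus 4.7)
The plan is to mimic the proof of Lemma~\ref{lem:x:ti:rga} (which itself follows the template of Lemma~\ref{lem:x:t1}), replacing Lemma~\ref{lem:rga} by Lemma~\ref{lem:rga2}. Between times $t^+_{i'}$ and $t_{i'+1}$ the only events in the algorithm are the choices $\phi_w(a)$ for $a \in A^0_{i',w}$, $w \in W$, made via the random matchings of Lemma~\ref{lem:match}; these are the only events that can change the weight in question.

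First I would fix $uv \in F'[A^g_i, A^{g'}_j]$ and $w \in W$. If $j < i'$ then $v$ is already embedded at time $t^+_{i'}$, so $\oO^*_t(P_w: uv \to \lova{xy}) = \oO^*_t(P_w: u \to x) \cdot 1_{\phi_w(v)=y}$, and only embeddings of $b \in N_<(u) \cap A^0_{i',w}$ can change it. If $i' < j < i$ then similarly both $N_<(u) \cap A^0_{i',w}$ and $N_<(v) \cap A^0_{i',w}$ may affect it. In either case, when we embed such a $b \in N_<(u)$, the weight is multiplied by $p_{ub}^{-1}$ if $\phi_w(b) \in N^-_{\ova{G}_{ub}}(x)$ and annihilated otherwise, with the analogous statement for $b \in N_<(v)$. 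By Lemma~\ref{lem:rga2}.ii the favourable event has conditional probability $(1 \pm \eps_{i'}^{.9})p_{ub}$, so the conditional expected multiplicative change per embedding is $1 \pm \eps_{i'}^{.9}$. Since $|N_<(u)|, |N_<(v)| \le 4$, iterating gives $\mb{E}^{t^+_{i'}} \oO^*_{t_{i'+1}}(P_W: F'[A^g_i,A^{g'}_j] \to \lova{xy}) = (1 \pm O(\eps_{i'}^{.9}))\, \oO^*_{t^+_{i'}}(P_W: F'[A^g_i,A^{g'}_j] \to \lova{xy})$.

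The case $j = i'$ requires extra care, since $v$ itself may lie in $A^0_{i',w}$ and be embedded during the clean-up; before its embedding the weight is $0$, and after its embedding it equals $\oO^*_t(P_w: u \to x)$ precisely when $\phi_w(v)=y$. I would handle this by summing over $w$ with $\phi_w(v) = y$ after clean-up, using Lemma~\ref{lem:rga2}.iii to count such $w$ up to $n^{.8}$ error, which is negligible against the main term of order $n$.

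The step I expect to be the main obstacle is concentration. For this I would express the deviation $\oO^*_{t_{i'+1}}(\cdot) - \oO^*_{t^+_{i'}}(\cdot)$ as a sum over the individual embedding steps indexed by $(b,w)$ with $b \in A_{i'}$, bound the change at each step by at most $\pmax^{-8}(\dD n)^{-1}$ per pair $(uv,w)$ it affects, use $|N_>(b)| \le \pmax^{-1}$ together with Lemma~\ref{lem:Tleave} to bound the total number of affected pairs over the entire clean-up phase, and apply Lemma~\ref{freedman} to a $(\mu,C)$-dominated sum with $\mu = O(\eps_{i'}^{.9})$ and $C = O(\pmax^{-8}(\dD n)^{-1})$ to obtain the claimed $(1 \pm \eps_{i'}^{.8})$ ratio whp. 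The delicate point is that a single choice of $\phi_w(b)$ may simultaneously affect many pairs $uv$ (one for each later neighbour of $b$ playing the role of $u$ or $v$), but the bounds on $|N_>(b)|$ and $|A_u|,|A_v| \ge \dD n$ (which hold since $F'[A^{\hi}]=\es$ prevents two high-type vertices from being adjacent) keep the $(\mu,C)$-domination under control.
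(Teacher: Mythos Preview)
Your proposal is correct and follows essentially the same approach as the paper, which itself gives no detailed proof but only says ``Similarly to Lemma~\ref{lem:x:ti:rga}'' (and that in turn points to the template of Lemma~\ref{lem:x:t1} with Lemma~\ref{lem:rga2} replacing Lemma~\ref{lem:rga}). You have correctly reconstructed the intended argument: track the multiplicative changes caused by the clean-up embeddings via Lemma~\ref{lem:rga2}.ii, handle the $j=i'$ case via Lemma~\ref{lem:rga2}.iii, and obtain concentration via Lemma~\ref{freedman} using the bounds on $|N_>(b)|$ and the sizes $|A_u|,|A_v|$; this is exactly the pattern used in the proofs of Lemmas~\ref{lem:x:t1} and~\ref{lem:ux:ti:rga}.
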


We conclude the proof of Lemma \ref{lem:bad}
(and so of Lemma \ref{lem:Hwdeg}) 
by deducing the estimates on
$\oO({\cal H}_i[\lova{xy}]) 
= \oO^*_{t_i}( P_W:F' \to \lova{xy})$
required for Lemma \ref{lem:bad}.
For any $\lova{xy} \in \ova{G}^{gg'}_{ij}$, 
by Lemma \ref{lem:t1:xy}
whp $\oO^*_{t_1}( P_W: F'[A^g_i,A^{g'}_j] \to \lova{xy} )
= 1 \pm \eps_1$. At step $i'<i$,
by Lemmas \ref{lem:xy:ti:hyp} and \ref{lem:xy:ti:rga} 
whp $\oO^*_{\tau'}( P_W:F'[A^g_i,A^{g'}_j] \to \lova{xy} )$ 
is $(1 \pm .7\eps^{.8}) 
\oO^*_{\tau}( P_W:F'[A^g_i,A^{g'}_j] \to \lova{xy} )$,
and is $(1 \pm 3\eps_{i'}^{.8}) 
\oO^*_{\tau}( P_W:F'[A^g_i,A^{g'}_j] \to \lova{xy} )$
if $j \ne i'$ or $y \notin B$ 
or $\lova{xy} \notin G^{\no,\lo}$.
Thus $\oO({\cal H}_i[\lova{xy}]) 
= \oO^*_{t_i}( P_W:F'[A^g_i,A^{g'}_j]  \to \lova{xy})$
is $1 \pm \eps^{.8}$, and is 
$1 \pm 4\eps_{i-1}^{.8} = 1 \pm \eps_i$
if $j \ne i'$ or $y \notin B$ or $\lova{xy} \notin G^{\no,\lo}$.

\section{Exact decomposition} \label{sec:exact}

In this section we complete the proof of our main theorem,
in each of the cases S, P and L. We start in the first
subsection with some properties of the leftover graph
from the approximate decomposition 
required for cases S and P, then analyse each case
separately over the following subsections.

\subsection{Leftover graph}

In both cases S and P the approximate decomposition
constructs edge-disjoint copies $F_w$, $w \in W$
of $F = T \sm P_{\ex}$. The leftover graph 
$G'_{\ex} = G \sm \bigcup_{w \in W} \phi_w(F)$
is obtained from $G_{\ex}$ by adding all unused
edges of $G \sm G_{\ex}$ (and removing any orientations). 
We require the following
typicality properties.

\begin{lemma} \label{lem:leftover}
For any $w \in W$ and $S \in \tbinom{V(G)}{{\le}s}$ 
whp $|N^-_{J_{\ex}}(w) \cap G'_{\ex}(S)|
= (1 \pm p_0^{.9}) p'_{\ex} (2p_{\ex})^{|S|} n$.
\end{lemma}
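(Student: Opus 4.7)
The plan is to write $G'_{\ex} = G_{\ex} \cup L$ with $L := G'_{\ex} \sm G_{\ex}$, estimate the main contribution from $G_{\ex}$, and show that the excess contributed by $L$ is negligible. Write $N_L(y)$ and $d_L(y)$ for the neighbourhood and degree of $y$ in $L$.

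For the main term, Lemma~\ref{lem:DI}.iv gives that $G_{\ex}$ is $(1.2\xi,s)$-typical of density $2p_{\ex} \pm \DD^{-.9}$, so $|G_{\ex}(S)| = (1 \pm s\xi)(2p_{\ex})^{|S|}n$. Lemma~\ref{lem:DI}.ii gives each $\mb{P}(\ova{xw} \in J_{\ex}) = p'_{\ex}$, and Lemma~\ref{lem:DI}.iii supplies conditional independence of the events $\{\ova{xw} \in J_{\ex}\}$ for $3d$-separated $x$ given the history at time $t_0$. Partitioning $G_{\ex}(S)$ into $O(d^2)$ classes each $3d$-separated and applying a Chernoff bound to each yields, whp, $|N^-_{J_{\ex}}(w) \cap G_{\ex}(S)| = (1 \pm \xi^{.4})p'_{\ex}(2p_{\ex})^{|S|}n$.

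For the excess, any $x \in G'_{\ex}(S) \sm G_{\ex}(S)$ has some $y \in S$ with $xy \in L$, giving the trivial bound $|N^-_{J_{\ex}}(w) \cap (G'_{\ex}(S) \sm G_{\ex}(S))| \le \sum_{y \in S} d_L(y) \le s \cdot \max_y d_L(y)$. Since $p_{\ex}$ and $s$ are bounded in terms of $p$, it suffices to show $d_L(y) \ll p_0 n$ whp uniformly in $y$, which together with the hierarchy $p_0 \ll \eta \le p_{\ex}$ makes the excess $o(p_0^{.9} p'_{\ex}(2p_{\ex})^{|S|}n)$.

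The main obstacle is this uniform degree bound on $L$. Edges of $L$ arise from four sources: (i) edges of $G \sm G^*$ unused in HIGH DEGREES, bounded per vertex by $5n/\DD$; (ii) arcs of $\ova{G}_1$ left unallocated in DIGRAPH.vii, whose per-arc unallocation probability is $O(\pmin^{.9})$ via the identity $p/2 - \sum p^{gg'}_{ii'} - \aA_{\hi} - p_{\ex} \in [0,\pmin^{.9}]$, giving $O(\pmin^{.9} n)$ per vertex by Chernoff and Lemma~\ref{lem:DI}.iii; (iii) buffer parts $\ova{G}'_i$ and the $\pmin$-excess in each $G^{gg'}_{ii'}$ not absorbed by the embedding of $F'[A^g_i,A^{g'}_{i'}]$, contributing $O((i^+\pmax + (i^+)^2\pmin)n)$ per vertex by typicality of each part (Lemma~\ref{lem:DI}.iv) and a Chernoff bound; and (iv) edges of $G_0$, $H^a_i$, and the $\ova{G}^{gg'}_{ii'}$ unused at embedded vertices during APPROXIMATE DECOMPOSITION, controlled by combining the leftover bound of Lemma~\ref{lem:Tleave}.i with typicality and the concentration of random matchings from Lemmas~\ref{lem:match} and~\ref{lem:rga2}. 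The parameter hierarchy $\DD^{-1},\pmin,\pmax,\eps \ll p_0$ makes each source contribute $o(p_0 n)$ per vertex whp, yielding the required degree bound and hence the lemma.
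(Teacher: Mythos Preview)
Your overall structure matches the paper's: replace $G'_{\ex}$ by $G_{\ex}$ for the main term (via Lemma~\ref{lem:DI}) and then bound the maximum degree of $L = G'_{\ex} \sm G_{\ex}$. However, there is a genuine gap in your degree bound on $L$.

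You assert that each source, including $G_0$, contributes $o(p_0 n)$ per vertex. For $G_0$ this is false. The only place $G_0$ is ever used is EMBED $A_0$, and at any vertex $y$ the number of $G_0$-edges used there is at most $\sum_{a \in A_0 \sm A^*} |N_<(a)| + \sum_{b \in A_0} |N_>(b) \cap (A_0 \sm A^*)| \le 6|A_0| \le 36\eps n$, since for each $a \in A_0$ there is exactly one $w$ with $\phi_w(a)=y$. As $\eps \ll p_0$, the unused part of $G_0$ has maximum degree $(1-o(1))p_0 n$, not $o(p_0 n)$. The tool you cite, Lemma~\ref{lem:Tleave}.i, bounds the number of vertices of $A_{i'}$ missed by the hypergraph matching and says nothing about used $G_0$-edges.

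The fix is what the paper does: abandon the target $d_L(y) \ll p_0 n$ and accept the trivial bound $\Delta(G_0) < 1.1 p_0 n$. Together with $\Delta(G'_i) < 1.1\pmax n$ and the bound $<5\eps^{.8}n$ on the maximum degree of the unused part of each $G^{gg'}_{ii'}$ (this is the content of Lemma~\ref{lem:Gleave}, which is proved by applying Lemma~\ref{lem:wEGJ} to functions counting used in- and out-arcs at a fixed vertex $x$, rather than via Lemmas~\ref{lem:Tleave}, \ref{lem:match} or \ref{lem:rga2} as you suggest), one obtains $d_L(y) < 1.2 p_0 n$. This still suffices for the lemma: the excess $s \cdot 1.2 p_0 n$ is at most $p_0^{.9} p'_{\ex}(2p_{\ex})^{|S|} n$ precisely because $s\, p_0^{.1} \le p'_{\ex}(2p_{\ex})^s$, which holds by the hierarchy $p_0 \ll \eta \ll p_{\ex}, p'_{\ex}$ that you already identified.
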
 

As in Lemma \ref{lem:DI}, a stronger form of this estimate 
holds with $G_{\ex}$ in place of $G'_{\ex}$,
so it suffices to bound the maximum degree
in the unused subgraph of $G \sm G_{\ex}$.
Given the trivial bounds whp $\DD(G_0) < 1.1p_0 n$
and $\DD(G'_i) < 1.1\pmax n$,
the following estimate implies
Lemma \ref{lem:leftover}.

\begin{lemma} \label{lem:Gleave}
whp the unused subgraph of each $G^{gg'}_{ii'}$
has maximum degree $< 5\eps^{.8} n$.
\end{lemma}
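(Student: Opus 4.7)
Fix a quadruple $(g,g',i,i')$ with $1 \le i' < i \le i^*$ and a vertex $z \in V(G)$; the case $i'=0$ is analogous, and a union bound over the polynomially many choices then suffices. I want to show whp that the unused degree of $z$ in $G^{gg'}_{ii'}$ is at most $5\eps^{.8}n$. Observe first that an edge of $G^{gg'}_{ii'}$ is used \emph{exactly} when it arises as $\phi_w(u)\phi_w(v)$ via some hypergraph matching edge $\edge{w}{u}{y} \in \mc{M}_i$ with $u \in A^g_i$ and $v \in N_<(u) \cap A^{g'}_{i'}$, since the random MATCH step embedding $A^0_{i,w}$ draws edges only from $G'_i$. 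Hence the used degree of $z$ in $G^{gg'}_{ii'}$ equals $g(\mc{M}_i)$ for
\[ g(\bm{e}) \;=\; \bigl|\bigl\{\lova{yx} \in \bm{e} \cap \ova{G}^{gg'}_{ii'} : z \in \{y,x\}\bigr\}\bigr|. \]

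The plan is to apply Lemma \ref{lem:wEGJ} to $g$, viewing it as a function on singletons of $\mc{H}_i$ (i.e.\ $\ell=1$). Cleanness is automatic in this case, and the codegree requirement reduces to $g(\bm{e}) \le 4 \ll g(\mc{H}_i,\oO')$, which is valid since $g(\mc{H}_i,\oO') = \Theta(n)$ as shown below. The lemma then yields $g(\mc{M}_i) = (1 \pm C^{-\bB}) g(\mc{H}_i,\oO')$ whp. The main computation is
\[ g(\mc{H}_i,\oO') \;=\; \sum_{\substack{\lova{yx} \in \ova{G}^{gg'}_{ii'}\\ z \in \{y,x\}}} \oO'(\mc{H}_i[\lova{yx}]). \]
From the definition $\oO'(\bm{e}) = (1-.5\eps_i)\oO(\bm{e})/Q(\bm{e})$ and the bound $Q(\bm{e}) \le 1 + \eps^{.8}$, which is immediate from Lemma \ref{lem:bad}, each term satisfies $\oO'(\mc{H}_i[\lova{yx}]) \ge (1 - O(\eps^{.8}))\oO(\mc{H}_i[\lova{yx}])$. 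Lemma \ref{lem:bad}.iii then supplies $\oO(\mc{H}_i[\lova{yx}]) = 1 \pm \eps^{.8}$ uniformly over $\lova{yx} \in \ova{G}^{gg'}_{ii'}$. Summing over arcs incident to $z$ and invoking Lemma \ref{lem:DI}.iv for the typicality of $d_{G^{gg'}_{ii'}}(z) = O(n)$, we get $g(\mc{H}_i,\oO') \ge (1 - O(\eps^{.8})) d_{G^{gg'}_{ii'}}(z)$, so the unused degree is at most $O(\eps^{.8}) d_{G^{gg'}_{ii'}}(z) < 5\eps^{.8}n$ whp.

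The main obstacle is that the uniform bound $\oO(\mc{H}_i[\lova{yx}]) = 1 \pm \eps^{.8}$ (rather than the sharper $1 \pm \eps_i$) has to suffice for the ``bad'' arcs $\lova{yx} \in \ova{G}^{\no,\lo}$ with $y \in B$, since the finer estimate in Lemma \ref{lem:bad}.iii is not available there. Fortunately the target error $5\eps^{.8}n$ is precisely calibrated so that this weaker bound is already enough, meaning no further case analysis of $J^{\bad}$ or $B$ is required beyond what Lemma \ref{lem:bad} already provides; the concentration step via Lemma \ref{lem:wEGJ} with $\ell=1$ is correspondingly lightweight, as all nontrivial input has been absorbed into Lemma \ref{lem:bad}.
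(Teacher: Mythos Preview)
Your argument is correct and in fact slightly cleaner than the paper's. The core computation---summing $\oO'(\mc{H}_i[\lova{xy}])$ over arcs of $\ova{G}^{gg'}_{ii'}$ incident to $z$ and invoking the uniform bound from Lemma~\ref{lem:bad}.iii (equivalently Lemma~\ref{lem:Hwdeg}.i)---is exactly what the paper does for the \emph{in}-degree contribution at $z$. The paper, however, treats the \emph{out}-degree separately: for $i'\ge 1$ it passes to a function $f_{i'}$ on $\mc{H}_{i'}$ (via Lemma~\ref{lem:Hwdeg}.ii rather than~.i), and for $i'=0$ it gives an ad~hoc counting argument. Your observation that out-arcs $\ova{zy}$ at $z$ are equally well vertices of $\mc{H}_i$ with $\oO'$-degree $1\pm 2\eps^{.8}$ lets you handle both directions and all $i'\ge 0$ in one stroke, so the case split becomes unnecessary. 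Two minor remarks: you could cite Lemma~\ref{lem:Hwdeg}.i directly for the bound on $\oO'(\mc{H}_i[\cdot])$ rather than rederiving it from Lemma~\ref{lem:bad} and $Q(\bm{e})$; and the hypothesis check for Lemma~\ref{lem:wEGJ} should note $g(\mc{H}_i,\oO') \ge (1-2\eps^{.8}) d_{G^{gg'}_{ii'}}(z) \ge \pmin n/2$, which comfortably dominates $C^{\bB}\cdot 4 = 4n^{c/2}$.
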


\begin{proof}
We fix $x \in V(G)$ and consider separately the 
contributions to the unused degree $u_x$ of $x$ 
from $N^\pm_{\ova{G}^{gg'}_{ii'}}(x)$. For indegrees,
let $f$ be the function on ${\cal H}_i$ 
defined by $f(\edge{w}{u}{x'}) =
1_{x'=x} 1_{u \in A^g_i} |N_<(u) \cap A^{g'}_{i'}|$.
We have $f({\cal H}_i,\oO') 
= \sum \{ \oO'({\cal H}_i[\lova{xy}]) 
 : y \in N^-_{\ova{G}^{gg'}_{ii'}}(x) \}
\ge (1-2\eps^{.8}) d^\pm_{\ova{G}^{gg'}_{ii'}}(x)$
by Lemma \ref{lem:Hwdeg}, so by Lemma \ref{lem:wEGJ} 
whp this contribution to $u_x$
is $< 2.1\eps^{.8} n$.

For outdegrees, first note that if $i'=0$
then for each $u \in A_0$ there is 
a unique $w \in W$ with $\edge{w}{u}{x}$,
for which we use $|N_>(u) \cap A^g_i|$
out-arcs at $x$. Thus we use exactly 
$F'[A_0,A^g_i] = n(p^g_{i0}-\pmax)$
out-arcs at $x$, so this contribution
is whp $<2\pmax n$.
Now for $i' \in [i-1]$, let $f_{i'}$
be the function on ${\cal H}_{i'}$ 
defined by $f_{i'}(\edge{w}{u}{x'})
= 1_{x'=x} 1_{u \in A^{g'}_{i'}} |N_>(u) \cap A^g_i|$.
By Lemma \ref{lem:Hwdeg} we have 
\[f_{i'}({\cal H}_{i'},\oO') = \sum_{u \in A^{g'}_{i'}} 
 |N_>(u) \cap A^g_i| \sum_{w \in W} \oO'(\edge{w}{u}{x})
 \ge (1-2\eps^{.8}) |F'[A^{g'}_{i'},A^g_i]|.\]
As $|F'[A^{g'}_{i'},A^g_i]| = n(p^{gg'}_{ii'}-\pmax)$,
this contribution is whp $<2.1\eps^{.8} n$.
\end{proof}
  
\subsection{Small stars}

Here we conclude the proof of Theorem \ref{main}
in Case S, where $P_{\ex}$ 
is a union of leaf stars in $T \sm T[A^*]$,
each of size $\le \LL = n^{1-c}$, with 
$|P_{\ex}| = p_{\ex} n = p_- n/2 \pm n^{1-c}$.
We start with some further properties of
the approximate decomposition needed in this case.

\begin{lemma} \label{lem:sprops} $ $
\begin{enumerate}
\item
For any $x \in V$ and $R \in \tbinom{V}{{\le}2}$
whp $\Ss := \sum_{w \in N_{J_{\ex}}(R)} 
 d_{P_{\ex}}(\phi_w^{-1}(x))
 = (1 \pm \eps) (p'_{\ex})^{|R|}|P_{\ex}|$.
\item
For any $y \in V$ and $w \in W$
whp $\sum_{x \in G_{\ex}(y)} 
 d_{P_{\ex}}(\phi_w^{-1}(x))
 = (1 \pm \eps) 2p_{\ex} |P_{\ex}|$.
\end{enumerate}
\end{lemma}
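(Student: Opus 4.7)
The plan is to exchange the order of summation in both parts. For part~(i), I would write
\[ \Ss = \sum_{a \in S_c} d_{P_{\ex}}(a) \cdot N_a^R, \]
where $S_c \sub V(F)$ is the set of centres of leaf stars in $P_{\ex}$ (so $\sum_{a \in S_c} d_{P_{\ex}}(a) = |P_{\ex}|$ and $d_{P_{\ex}}(a) \le \LL = n^{1-c}$), $W_R := \{w : R \sub N^-_{J_{\ex}}(w)\}$, and $N_a^R := |\{w \in W_R : \phi_w(a) = x\}|$, using injectivity of each $\phi_w$. The part~(ii) sum rearranges analogously as $\sum_{a \in S_c} d_{P_{\ex}}(a) \mb{1}[\phi_w(a) \in G_{\ex}(y)]$.

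For part~(i), I would first condition on the history at time $t_1$ (end of DIGRAPH), at which point $J_{\ex}$ and hence $W_R$ are determined; by Lemma~\ref{lem:DI}(ii),(iv) and a Chernoff bound, whp $|W_R| = (1 \pm \eta^{.9})(p'_{\ex})^{|R|} n$. I would then handle $S_c \cap A_i$ for each $i \in [i^*]$ by applying Lemma~\ref{lem:wEGJ} to $\mc{H}_i$ with test function $f(\edge{w}{u}{x'}) := d_{P_{\ex}}(u) \mb{1}[u \in S_c \cap A_i, x' = x, w \in W_R]$. The key calculation is to show, for each $a \in S_c \cap A_i$,
\[ \sum_{w \in W_R} \oO'(\edge{w}{a}{x}) = (1 \pm o(1))(p'_{\ex})^{|R|}, \]
which uses the weighted-degree estimate $\sum_w \oO'(\edge{w}{a}{x}) = 1 \pm 2\eps^{.8}$ of Lemma~\ref{lem:Hwdeg}(ii) together with the fact that $J_{\ex}$ and $J_a$ are disjoint parts of the partition in DIGRAPH.vii, so restricting to $W_R$ contributes an essentially independent factor $(p'_{\ex})^{|R|}$ (formally verified via the common-neighbourhood bounds of Lemma~\ref{lem:DI}(v)--(vi)). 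Contributions from $S_c \cap A_0$ are smaller since $|A_0| \le 6\eps n$ and are handled directly via the concentration of the MATCH steps in HIGH DEGREES and EMBED $A_0$ (Lemmas~\ref{lem:hi}--\ref{lem:rga}).

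For part~(ii), fixing $w$ and $y$, the indicator $\mb{1}[\phi_w(a) \in G_{\ex}(y)]$ has conditional mean close to $|G_{\ex}(y)|/n \approx 2p_{\ex}$ by typicality of $G_{\ex}$ (Lemma~\ref{lem:DI}(iv)) and the quasirandom estimates for $\phi_w$ from Section~\ref{sec:approx}. Applying Lemma~\ref{lem:wEGJ} to $\mc{H}_i$ with $f(\edge{w'}{u}{x'}) := d_{P_{\ex}}(u) \mb{1}[u \in S_c \cap A_i, x' \in G_{\ex}(y), w' = w]$ gives concentration around $2p_{\ex} \sum_{a \in S_c \cap A_i} d_{P_{\ex}}(a)$; summing over $i$ and adding the $A_0$ contribution via Lemma~\ref{lem:match} yields the full estimate $2p_{\ex}|P_{\ex}|$ within error $\eps$.

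The main obstacle will be the $W_R$-restricted weighted-degree estimate in part~(i): although $J_{\ex}$ and $J_a$ are disjoint by construction, the constraint $w \in W_R$ depends on two further DIGRAPH decisions (for the vertices of $R$), and one must decouple these from the $\oO'$ weights using Lemma~\ref{lem:DI}'s typicality bounds. Verifying the codegree hypothesis of Lemma~\ref{lem:wEGJ} also requires care, but uses the pointwise bound $\oO'(\edge{w}{u}{x'}) \le 2\pmax^{-4}|A_u|^{-1}$, the factor $d_{P_{\ex}}(a) \le \LL$, and the codegree estimates from Lemma~\ref{lem:Hwdeg}(v).
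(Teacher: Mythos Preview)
Your approach is essentially the same as the paper's: rewrite $\Ss$ as $\sum_a d_{P_{\ex}}(a) N^R_a$, handle $a \in A_0$ via MATCH concentration, and handle $a \in A_i$ via Lemma~\ref{lem:wEGJ} applied to ${\cal H}_i$ with the test function $f(\edge{w}{u}{x'}) = 1_{x'=x}1_{w \in W_R} d_{P_{\ex}}(u)$. The paper obtains the key estimate $\sum_{w \in W_R} \oO'(\edge{w}{u}{x}) = (1 \pm \eps_i)(p'_{\ex})^{|R|}$ by re-running the inductive proof of Lemma~\ref{lem:ux} with $W$ replaced by $W_R$ throughout (the time-$t_1$ estimate and the evolution Lemmas~\ref{lem:ux:ti:hyp}--\ref{lem:ux:ti:rga} all go through verbatim for any fixed $W' \sub W$), which is cleaner than your proposed decoupling via Lemma~\ref{lem:DI}.

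There is one genuine omission in your outline. For $a \in A_i$ with $i \ge 1$, the hypergraph matching ${\cal M}_i$ only defines $\phi_w(a)$ for $w$ with $a \in A_{i,w}$; for the remaining $w \in W_a$ (those with $a \in A^0_{i,w}$), $\phi_w(a)$ is defined in step~(iii) of APPROXIMATE DECOMPOSITION by a separate MATCH call in $B_a \sub V_a \times W_a$. Your test function $f({\cal M}_i)$ therefore undercounts $N^R_a$, and you must bound this residual contribution separately. The paper does so explicitly: using $|W_a| < 2.2\eps_i n$ from Lemma~\ref{lem:Tleave} and the MATCH estimate $\mb{P}^{t_a^-}(\phi_w(a)=x) = (1 \pm .1\xi')(\pmax^{|N_<(a)|}|W_a|)^{-1}$, one shows this contribution is whp $< 3.1\eps_i \Ss[A_i]$, which is absorbed into the final error. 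Without this step the argument is incomplete, since the $\eps_i$-fraction of uncovered pairs is not negligible at the target precision $\eps$.
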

 
\begin{proof}
We prove (i) and omit the similar proof of (ii).
We consider the contribution to $\Ss$
from each $a \in V(F)$
according to its location in $T$.

For each $a$ in $A_0$
we define $M_a = \{ \phi_w(a) w: w \in W \}
{=}$MATCH$(B_a,Z_a)$. By Lemma \ref{lem:match},
for each $b \in N_<(a)$ and $w \in W$ we have
$\mb{P}^{t^-_b}(\phi_w(b) \in N_G(x)) = (1 \pm .1\xi')p$,
and if $\phi_w(b) \in N_G(x)$ for all $b \in N_<(a)$ then 
$\mb{P}^{t^-_a}(\phi_w(a)=x) = (1 \pm .1\xi')(p^{|N_<(a)|}n)^{-1}$.
By Lemma \ref{freedman} 
whp the contribution to $\Ss$ from $A_0$ 
is $\Ss[A_0] := 
\sum_{a \in A_0} \sum_{w \in N_{J_{\ex}}(R)} 
1_{\phi_w(a)=x}  d_{P_{\ex}}(a)
= (1 \pm .3\xi') |N_{J_{\ex}}(R)|
\sum_{a \in A_0} d_{P_{\ex}}(a)/n
= (1 \pm \xi') (p'_{\ex})^{|R|}
\sum_{a \in A_0} d_{P_{\ex}}(a)$.

Now we consider the contribution 
from $A_i$ with $i \in [i^*]$.
By the proof of Lemma \ref{lem:ux} whp
$\sum_{w \in J_{\ex}(R)} \oO'(\edge{w}{u}{x}) 
= \oO^*_{t_i}( P_{J_{\ex}(R)}: u \to x ) 
= (1 \pm \eps_i) (p'_{\ex})^{|R|}$
for each $i \in [i^*]$, $u \in A_i$.
The function $f$ on ${\cal H}_i$ 
defined by $f(\edge{w}{u}{x'})
= 1_{x'=x} 1_{w \in N_{J_{\ex}}(R)} d_{P_{\ex}}(u)$
has
$$
f({\cal H}_i,\oO') 
= \sum_{u \in A_i}  d_{P_{\ex}}(u) 
 \sum_{w \in N_{J_{\ex}}(R)} \oO'(\edge{w}{u}{x})
= (1 \pm \eps_i) (p'_{\ex})^{|R|}
\sum_{u \in A_i}  d_{P_{\ex}}(u).
$$
By Lemma \ref{lem:wEGJ} whp
the contribution to $\Ss$ from the
hypergraph matching embedding $A_i$ 
is $\Ss[A_i] := f(\mc{M}_i)=
\sum_{u \in A_i}  d_{P_{\ex}}(u)
 |\{w \in N_{J_{\ex}}(R): \edge{w}{u}{x} \in \mc{H}_i, 
u \notin A^0_{i,w}\}|
= (1 \pm 2\eps_i) (p'_{\ex})^{|R|}
\sum_{u \in A_i}  d_{P_{\ex}}(u)$.

It remains to consider the contribution 
from defining $\phi_w(a)$ for $w \in W_a$
by $\{ \phi_w(a) w: w \in W_a\}
{=}$MATCH$(B_a,Z_a)$,
where $Z_a = \{ \phi_w(b)w: b \in N_<(a) \}$
and $B_a \sub V_a \times W_a$
consists of all $vw$ with 
$v \in N_{J'_i}(w) \sm \im \phi_w$
and each $\phi_w(b) v$ for $b \in N_<(a)$ 
an unused edge of $G'_i$.
Here $V_a \in \tbinom{V}{|W_a|}$
is uniformly random, so
$\mb{P}(x \in V_a) = |W_a|/n$.
By Lemma~\ref{lem:Tleave}.ii,
$.3\eps_i n < |W_a| < 2.2\eps_i n$.

Similarly to the above
analysis of $A_0$, whp there are
$(1 \pm .1\xi')\pmax^{|N_<(a)|} |N_{J_{\ex}}(R)|$
choices of $w \in N_{J_{\ex}}(R)$ with
$\phi_w(b) \in N_{G'_i}(x)$ for all $b \in N_<(a)$,
and for each such $w$ we have
$\mb{P}^{t^-_a}(\phi_w(a)=x) 
= (1 \pm .1\xi')(\pmax^{|N_<(a)|}|W_a|)^{-1}$.
Thus the contribution
from defining $\phi_w(a)$ 
for $a \in A_i$, $w \in W_a$
is whp $\sum_{a \in A_i}  d_{P_{\ex}}(a)
 |\{w \in W_a \cap N_{J_{\ex}}(R) \}|/n
< 3.1\eps_i  \Ss[A_i]$.

Summing all contributions gives the stated estimate.
\end{proof}

In the subroutine SMALL STARS 
we start by finding an orientation $D$
of the leftover graph $G'_{\ex}$ such that 
each $d^+_D(x)=|L_x|$, where $L_x$ 
is the set of all $uw$ where $u$ 
is a leaf of a star in $P_{\ex}$
with centre $\phi_w^{-1}(x)$.
By the case $R=\es$ of Lemma \ref{lem:sprops}.i
whp all $|L_x| = (1 \pm \eps) |P_{\ex}|$. 
To construct $D$, we start 
with a uniformly random orientation of $G'_{\ex}$,
and while not all $d^+_D(x)=|L_x|$,
choose uniformly random $x,y,z$ 
with $|L_x|>d^+_D(x)$, $|L_y|<d^+_D(y)$,
$z \in N^+_D(y) \cap N^-_D(x)$ 
and reverse $\ova{yz}$, $\ova{zx}$.

To analyse this process, we first note that by 
typicality of $G'_{\ex}$ (Lemma~\ref{lem:leftover})
and a Chernoff bound, 
whp each $d^+_D(x) = (1 \pm 1.1p_0^{.7}) p_{\ex} n
= |L_x| \pm 2p_0^{.7} n$ and every
$|N^+_D(y) \cap N^-_D(x)| \ge p_{\ex}^2 n/2$.
Thus each vertex $v$ plays the role of $x$
or $y$ at most $2p_0^{.7}n$ times. We let ${\cal B}$ 
be the bad event that we reverse $>.2p_0^{.6}n$
arcs at any vertex $v$. We will show that
whp ${\cal B}$ does not occur. 
At any step before ${\cal B}$ occurs
where we consider $x$ and $y$ as above,
the number of choices for $z$ is whp 
$>.49p_{\ex}^2 n - p_0^{.6}n > .48p_{\ex}^2 n$.
Thus at any step $v$ plays the role of $z$
with probability $< 1/.48p_{\ex}^2 n$,
so the number of such steps 
is $(\mu,1)$-dominated with 
$\mu < 2p_0^{.7}n^2/.48p_{\ex}^2 n 
< .1p_0^{.6}n$. By Lemma \ref{freedman} we deduce
that whp ${\cal B}$ does not occur,
so we can construct $D$ with all $d^+_D(x)=|L_x|$.

Now for each $x \in V(G)$ in arbitrary order,
we define $\phi_w(u)$ for all $uw \in L_x$ by  
$M_x = \{ \{uw,\phi_w(u)\} : uw \in L_x \}
{=}$MATCH$(F_x,\es)$, where
$F_x \sub L_x \times N^+_D(x)$
consists of all $\{uw,y\}$ with $uw \in L_x$, 
$y \in N^+_D(x) \cap N_{J_{\ex}}(w) \sm \im \phi_w$.

To analyse this process, we consider
$Z_x = \{ \{uw,y\} \in  
L_x \times (N^+_D(x) \cap J_{\ex}(w) \cap \im \phi_w)\}$
and let ${\cal B}_x$ be the bad event that 
$Z_x$ has any vertex of degree 
$>.1p_{\ex}^{.9}|L_x|$.
Recall that $p_{\ex} \ll p'_{\ex} \ll 1$ in Case S
and at the beginning of SMALL STARS we have
$\im \phi_w\cap N_{J_{\ex}}(w) = \es$.

\begin{lemma}
whp under the construction of $D$,
if ${\cal B}_x$ does not occur
then $F_x$ is $p_{\ex}^{.02}$-super-regular
of density $(1 \pm p_{\ex}^{.8}) p'_{\ex}$.
\end{lemma}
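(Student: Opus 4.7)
The plan is to exploit the fact that the $F_x$-neighbourhood $N(w) := N^+_D(x) \cap N_{J_{\ex}}(w) \sm \im \phi_w$ of any $uw \in L_x$ depends only on $w$ (not on $u$), so that $F_x$ has a block structure indexed by $w \in W$ with multiplicities $d_{P_{\ex}}(\phi_w^{-1}(x))$ on the $L_x$-side. I will verify the three aspects of super-regularity---density, one-sided degrees, and the regularity condition---separately, using Lemmas~\ref{lem:sprops} and~\ref{lem:leftover} together with a Chernoff bound for the uniformly random initial orientation, the bound $d_{Z_x}(\cdot) \le 0.1 p_{\ex}^{.9}|L_x|$ afforded by $\mc{B}_x$ not occurring, and the fact that the arc-reversal step in constructing $D$ perturbs $N^+_D(x)$ by at most $0.4 p_0^{.6} n$.

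For the one-sided degrees, I will use the already-established $|L_x| = (1 \pm \eps)|P_{\ex}|$ and note that $|N^+_D(x)| = |L_x|$ by construction of $D$. For each $y \in N^+_D(x)$, Lemma~\ref{lem:sprops}.i with $R = \{y\}$ gives $\sum_{w : y \in N_{J_{\ex}}(w)} d_{P_{\ex}}(\phi_w^{-1}(x)) = (1 \pm \eps) p'_{\ex}|P_{\ex}|$, and subtracting $d_{Z_x}(y) \le 0.1 p_{\ex}^{.9}|L_x|$ (negligible since $p_{\ex} \ll p'_{\ex}$) yields $d_{F_x}(y) = (1 \pm p_{\ex}^{.8}) p'_{\ex}|L_x|$. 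For each $uw \in L_x$, Lemma~\ref{lem:leftover} with $S = \{x\}$ gives $|N_{G'_{\ex}}(x) \cap N_{J_{\ex}}(w)| = (1 \pm p_0^{.9}) \cdot 2 p'_{\ex} p_{\ex} n$; a Chernoff bound on the uniformly random orientation at $x$ (with a union bound over the at most $n$ relevant $w$'s) halves this, the subsequent arc reversals perturb by at most $0.4 p_0^{.6} n \ll p_{\ex} n$, and subtracting $d_{Z_x}(uw) \le 0.1 p_{\ex}^{.9}|L_x|$ gives $|N(w)| = (1 \pm p_{\ex}^{.8}) p'_{\ex}|N^+_D(x)|$. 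The density of $F_x$ is then immediate from either degree estimate.

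For the regularity condition I will apply Lemma~\ref{lem:DLR} to $F_x$ from each side, which reduces to bounding codegrees. On the $L_x$-side, pairs $(uw),(u'w)$ sharing the same $w$ number at most $\sum_w d_{P_{\ex}}(\phi_w^{-1}(x))^2 \le \LL \cdot |L_x|$, which is $o(|L_x|^2)$ since $\LL = n^{1-c}$ while $|L_x| = \Theta(n)$, so these may be absorbed into the exceptional set of Lemma~\ref{lem:DLR}; for $w \ne w'$ the events $\ova{xw} \in J_{\ex}$ and $\ova{xw'} \in J_{\ex}$ are conditionally independent given the history at time $t_0$ by DIGRAPH.vii, so the proof of Lemma~\ref{lem:leftover} extends verbatim to give $|N_{G'_{\ex}}(x) \cap N_{J_{\ex}}(w) \cap N_{J_{\ex}}(w')| = (1 \pm o(1)) \cdot 2(p'_{\ex})^2 p_{\ex} n$, halved by the orientation to bound $|N(w) \cap N(w')|$. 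On the $V$-side, Lemma~\ref{lem:sprops}.i with $R = \{y,y'\}$ gives the analogous bound $(1 \pm \eps)(p'_{\ex})^2|L_x|$, with image-overlap corrections controlled by the $\mc{B}_x$ hypothesis. Since $\eps$ and all the correction terms are much smaller than $p_{\ex}^{.12}(p'_{\ex})^2$, the pair condition of Lemma~\ref{lem:DLR} is satisfied with parameter at most $p_{\ex}^{.12}$, yielding $p_{\ex}^{.02}$-regularity, which together with the one-sided degree estimates above gives $p_{\ex}^{.02}$-super-regularity.

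The hardest step will be the bookkeeping in the $L_x$-side one-sided degree estimate, where four independent error sources---typicality of $G'_{\ex}$ relative to $J_{\ex}$, Chernoff concentration for the random orientation, the perturbation from arc reversals, and the $\mc{B}_x$-bound on $Z_x$---must be combined uniformly over $w$ and kept well below the tolerance $p_{\ex}^{.02} p'_{\ex} |N^+_D(x)|$. Each is individually routine given the parameter hierarchy $\eps \ll p_0 \ll p_{\ex} \ll p'_{\ex} \ll 1$ in Case~S.
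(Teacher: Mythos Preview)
Your proposal is correct and follows essentially the same approach as the paper's proof: both check (co)degrees from each side via Lemma~\ref{lem:sprops}.i on the $N^+_D(x)$-side and Lemma~\ref{lem:leftover} (plus its evident extension to two $w$'s) with a Chernoff bound for the random orientation on the $L_x$-side, treat the pairs $\{uw,u'w\}$ sharing a common $w$ as the exceptional set for Lemma~\ref{lem:DLR}, and absorb the image-intersection corrections using the hypothesis that ${\cal B}_x$ fails. Your write-up simply makes explicit what the paper compresses into the phrase ``by Lemma~\ref{lem:leftover} and a Chernoff bound,'' in particular the $O(p_0^{.6} n)$ perturbation from the arc-reversal phase and the union bound over $w$.
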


\begin{proof}
Any $R \in \tbinom{N^+_D(x)}{{\le}2}$ has 
$|N_{F_x}(R)| = \sum_{w \in N_{J_{\ex}}(R)} 
d_{P_{\ex}}(\phi_w^{-1}(x))
\pm .1|R| p_{\ex}^{.9}|L_x| 
= (1 \pm .1p_{\ex}^{.8})(p'_{\ex})^{|R'|} |P_{\ex}|$ 
by Lemma \ref{lem:sprops}.
Any $R' \in \tbinom{L_x}{{\le}2}$ has $|N_{F_x}(R')| 
= |N^+_D(x) \cap \bigcap_{uw \in R'} J_{\ex}(w) |
 \pm .1|R'| p_{\ex}^{.9}|L_x|$,
which by Lemma \ref{lem:leftover} and a Chernoff
bound is whp 
$(1 \pm .1p_{\ex}^{.8})(p'_{\ex})^{|R'|} |P_{\ex}|$
unless $R'=\{uw,u'w\}$ for some $w$; there are 
$< \sum_{w \in W}  d_{P_{\ex}}(\phi_w^{-1}(x))^2
< n^{2-c}$ such $R'$. The lemma
now follows from Lemma \ref{lem:DLR}.
\end{proof}

By Lemma \ref{lem:match} we can choose
$M_x = \{ \{uw,\phi_w(u)\} : uw \in L_x \}
{=}$MATCH$(F_x,\es)$, and $\mb{P}(\phi_w(u)=y) = 
(1 \pm p_{\ex}^{.01}) ( p'_{\ex} |P_{\ex}|)^{-1}$
for all $\{uw,y\} \in F_x$.

It remains to show whp no ${\cal B}_x$ occurs.
We define a stopping time $\tau$ as the first $x$
for which ${\cal B}_x$ occurs and bound $\mb{P}(\tau=x)$.

First we bound $d_{Z_x}(uw)$ for $uw \in L_x$.
For any $y \in N^+_D(x)$, when processing 
any $x'$ before $x$ we defined
$\phi_w(u')$ for $d_{P_{\ex}}(\phi_w^{-1}(x'))$
leaves $u'$ of $\phi_w^{-1}(x')$,
each of which could be $y$ 
if $y \in N^+_D(x') \cap J_{\ex}(w)$,
with probability $<(.9p'_{\ex} |P_{\ex}|)^{-1}$.
Thus $d_{Z_x}(uw)$ is $(\mu,n^{1-c})$-dominated
with $\mu = \sum_{x'} 
|N^+_D(xx') \cap  J_{\ex}(w)|
d_{P_{\ex}}(\phi_w^{-1}(x'))
(.9p'_{\ex} |P_{\ex}|)^{-1}
< 1.2 p_{\ex}^2 n$, so by Lemma~\ref{freedman} whp
$d_{Z_x}(uw) < 3p_{\ex}|L_x|$.

Now we bound $d_{Z_x}(y)$ for $y \in N^+_D(x)$.
For any $uw \in L_x$ with $w \in J_{\ex}(y)$, 
when processing  any $x' \in N^-_D(y)$ before $x$,
we had $\phi_w(u)=y$ for some leaf $u$
with probability $<d_{P_{\ex}}(\phi_w^{-1}(x'))
(.9p'_{\ex} |P_{\ex}|)^{-1}$.
Thus $d_{Z_x}(y)$ is $(\mu,n^{1-c})$-dominated
with $\mu = (.9p'_{\ex} |P_{\ex}|)^{-1}
\sum_{uw \in L_x}  1_{w \in J_{\ex}(y)}
\sum_{x' \in N^-_D(y)} 
d_{P_{\ex}}(\phi_w^{-1}(x'))
< (.9p'_{\ex} |P_{\ex}|)^{-1}
\cdot |L_{\ex}|
\cdot (1 + \eps) p_{\ex} |P_{\ex}|$
by Lemma \ref{lem:sprops}.ii,
so whp $d_{Z_x}(w) < 3|L_x|p_{\ex}/p'_{\ex}$.

Thus whp no ${\cal B}_x$ occurs, as required.

\subsection{Paths}

Here we conclude the proof of Theorem \ref{main} in Case P, 
where $P_{\ex}$ is the vertex-disjoint union 
of two leaf edges in $T \sm T[A^*]$ and 
$p_+ n/101K$ bare $8K$-paths in $T \sm T[A^*]$.

The first phase of the PATHS subroutine
fixes parity, as follows.
We call $x \in V(G)$ \emph{odd} if the parity 
of $d_{G'_{\ex}}(x)$ differs from that
of the number of $w$ such that $x=\phi_w(a)$
where $a$ is the end of a bare path in $P_{\ex}$.
We let $X$ be the set of odd vertices and
$a_1 \ell_1$, $a_2 \ell_2$ be the leaf edges
in $P_{\ex}$, with leaves $\ell_1$, $\ell_2$.

First we define all $\phi_w(\ell_1)$ by
$M_1 = \{ \phi_w(\ell_1) w: w \in W \}
{=}$MATCH$(B_1,Z_1)$, where 
$Z_1 = \{ \phi_w(a_1)w \}_{w \in W}$
and $B_1 = \{ vw: v \in N_{J_{\ex}}(w),
 v\phi_w(\ell_1) \in G_{\free} \}$.
Lemma \ref{lem:match} applies,
as $Z_1$ is a matching and similarly
to the proof of Lemma \ref{lem:rga2} whp
$B_1$ is $p_0^{.5}$-super-regular with density
$(1 \pm p_0^{.5}) p'_{\ex} p_{\ex}$.
Similarly, Lemma \ref{lem:match} applies
to justify the definition of
$\phi_w(\ell_2)$ for $w \in W'$ by
$M'_2 = \{ \phi_w(\ell_2) w: w \in W' \}
{=}$MATCH$(B'_2,Z'_2)$
and $\phi_w(\ell_2)$ for $w \in W \sm W'$ 
by $M_2 = \{ \phi_w(\ell_2) w: w \in W \sm W' \}
{=}$MATCH$(B_2,Z_2)$. By construction,
there are no odd vertices after
the embeddings of $\ell_1$ and $\ell_2$.

Next for each $w \in W$ 
we need $8d(x,y)$-paths $P^{xy}_w$ in $P_{\ex}$
for each $[x,y] \in {\cal Y}_w$ centred in vertex-disjoint
bare $(8d(x,y)+2)$-paths in $P_{\ex}$. 
We greedily choose these paths within the bare $8K$-paths 
in $P_{\ex}$ that exist by definition of Case P.
By Lemma \ref{lem:INT}, the total number of vertices
required by these paths is
$ \sum \{ 8d(x,y) + 2: [x,y] \in {\cal Y}_w \}
 = 8|Y_w| + |{\cal Y}_w| 
 = (1-\eta)|P_{\ex}| \pm nd^{-.9}$.
At most $d^{-.9}|P_{\ex}|$ vertices of the bare $8K$-paths
cannot be used due to rounding errors, so as $d^{-1} \ll \eta$
the algorithm to choose all $P^{xy}_w$ can be completed. 

Now we extend each $\phi_w$ to an embedding
of $P_{\ex} \sm \bigcup_{xy} P^{xy}_w$
so that $\phi_w^{-1}(x)$, $\phi_w^{-1}(y^+)$
are the ends of $P^{xy}_w$,
according to a random greedy algorithm,
where in each step, in any order, 
we define some $\phi_w(a)=z$, 
uniformly at random with 
$z \in J_{\ex}(w) \sm \im \phi_w$
and $zz' \in G_{\free}$ whenever
$z'=\phi_w(b)$ with $b \in N_T(a)$.
Writing $E$ for the set of ends of paths 
in $P_{\ex}$, for any vertex $y$ we use
$|\{w: \phi^{-1}(y) \in E\}| < 1.1|E| < |P_{\ex}|/3K$
edges at $y$ due to it playing the role of an end.

Let $X_y$ be the number of additional edges used
at $y$ during the random greedy algorithm,
and let ${\cal B}$ be the bad event
that any $X_y > .1\eta^{.9} n$.
We claim whp ${\cal B}$ does not occur.
To see this, consider any step before ${\cal B}$ 
occurs, and suppose we are defining $\phi_w(a)$.
Let $R$ be the set of $b \in N_T(a)$
such that $\phi_w(b)$ has been defined
and note that $|R| \le 2$.
By Lemma \ref{lem:leftover} there are
$((1 \pm p_0^{.9})p_{\ex})^{|R|} p'_{\ex} n$ 
choices of $z \in J_{\ex}(w) \cap N_{G'_{\ex}}(R)$,
of which we forbid $< 2\eta p_{\ex} n$ in $\im \phi_w$
and $<\eta^{.9} n$ if ${\cal B}$ has not occurred.
As $\eta = \eta_+ \ll p_+$ 
(and $p_+ \le 13p_{\ex}$) we can choose $z$,
and any $z$ is chosen
with probability $<(.9p_{\ex}^2 p'_{\ex} n)^{-1}$.
Thus $X_y$ is $(\mu,2)$-dominated with 
$\mu = (.9p_{\ex}^2 p'_{\ex} n)^{-1}
\sum_{w \in J_{\ex}(y)} 
|P_{\ex} \sm \bigcup_{xy} P^{xy}_w|
< 1.2 \eta p_{\ex}^{-1} n$,
so by Lemma \ref{freedman} whp $<.1\eta^{.9} n$,
which proves the claim.

Thus the random greedy algorithm can be completed,
and the remaining graph $G_{\free}$
is an $\eta^{.9}$-perturbation of $G_{\ex}$,
i.e.\ $|G_{\ex}(x) \sd G_{\free}(x)| 
< \eta^{.9} n$ for any $x \in V$.
By construction, every $d_{G_{\free}}(x)$
is even.
The following lemma will complete the proof 
of Theorem \ref{main} in Case P.

\begin{lemma} \label{lem:path}
One can decompose $G_{\free}$
into $(G_w: w \in W)$ such that each $G_w$
is a vertex-disjoint union of $8d(x,y)$-paths
$\phi_w(P^{xy}_w)$ between $x$ and $y^+$
for $[x,y] \in {\cal Y}_w$,
internally disjoint from $\im \phi_w$.
\end{lemma}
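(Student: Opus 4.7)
The plan is to derive Lemma~\ref{lem:path} by invoking Theorem~\ref{decompK} --- a prescribed-path decomposition result for quasi-random graphs developed by the authors in \cite{factors} for the generalised Oberwolfach problem --- and reduce the proof to verifying its hypotheses for $G_{\free}$ together with the specified path system $\{\phi_w(P^{xy}_w): w \in W,\ [x,y] \in \mc{Y}_w\}$. Each required path has endpoints $x$ and $y^+$, length $8d(x,y)$, and internal vertices in $V(G) \sm \im \phi_w$.

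First I would verify that $G_{\free}$ is sufficiently quasi-random. By Lemma~\ref{lem:DI}.iv, $G_{\ex}$ is $(1.2\xi,s)$-typical of density $2p_{\ex} \pm \DD^{-.9}$; the preceding analysis shows that $G_{\free}$ is obtained from $G_{\ex}$ by an $\eta^{.9}$-perturbation (with the contribution from unused edges of $G \sm G_{\ex}$ controlled by Lemma~\ref{lem:Gleave} and the parity-fixing/random-greedy steps bounded as above). Since $\eta^{.9} \ll p_{\ex}$, a standard perturbation argument gives that $G_{\free}$ is $(\eta^{.8},s)$-typical of density $(1 \pm \eta^{.8})2p_{\ex}$.

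Second I would verify the global balance and parity conditions. By construction every $d_{G_{\free}}(x)$ is even --- this was the purpose of the parity-fixing phase using $\ell_1, \ell_2$. For each $w$ and vertex $x$, the endpoint demand (the number of $[y,y'] \in \mc{Y}_w$ with $x \in \{y, y'^+\}$) and the aggregate internal demand are well-distributed by Lemma~\ref{lem:INT}(iii)--(vi). Summing over $w$, the total edge demand equals $\sum_w 8|Y_w| = (1 \pm \DD^{-.8})(1-\eta)|W||P_{\ex}|$, which matches $|G_{\free}|$ on the nose vertex by vertex.

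The main obstacle is the internal-disjointness constraint: for each $w$ the interior vertices of the paths in $G_w$ must avoid $\im \phi_w$. Following the strategy of \cite{factors}, this is encoded by restricting the $w$-candidate internal vertex set to $\ov{X}_w \sm \im \phi_w$, which by Lemma~\ref{lem:INT}(i)--(ii) has quasirandom intersection properties, and by using the $3d$-separation of the endpoints $\{y, y'^+\}$ from Lemma~\ref{lem:INT}(vi). The typicality estimates of Lemma~\ref{lem:DI}(vi) for $J_{\ex}$, $J^0_{\ex}$ and $J^K_{\ex}$ ensure the per-$w$ constraints are jointly quasi-random. Coordinating these $w$-constraints simultaneously is the delicate part, but this is precisely the setting for which Theorem~\ref{decompK} was designed, so once the above hypotheses are verified the lemma follows.
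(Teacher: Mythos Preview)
Your high-level plan --- reduce to Theorem~\ref{decompK} --- is the same as the paper's, but you have misread what Theorem~\ref{decompK} actually says. It is \emph{not} a prescribed-path decomposition result for a quasirandom graph $G_{\free}$; it is a $\wK$-decomposition theorem for a two-coloured digraph $J$ on $V \cup W$, with explicit divisibility hypotheses such as $d^-_J(v,V)=d^+_J(v,V)=d^+_J(v,W)$ and $d^-_{J^K}(v,V)=d^+_{J^K}(v,W)$ for every $v \in V$, and $d^-_J(w)=8d^-_{J^K}(w)$ for every $w \in W$. The required paths in $G_{\free}$ arise only after one builds a suitable digraph $L$ (an $\eta^{.6}$-perturbation of $J_{\ex}$) whose arcs correspond to edges of $G_{\free}$ under the ``twisting'' convention, and then reads off each $G_w$ from the rim $8$-cycles of the wheels containing $w$. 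None of this translation layer appears in your proposal.

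The substantive gap is therefore the construction of $L$ and, especially, the sequence of local modifications needed to force the exact divisibility conditions: swapping colours/orientations along short gadgets to achieve $|L^0[V]|=|L^0[V,W]|$, then $d^\pm_{L^K}(v,V)=d^+_{L^K}(v,W)$, then $d_L(v,V)=2d^+_L(v,W)$, then $d^+_L(v,V)=d^+_L(v,W)$. Your parity and balance remarks (``matches $|G_{\free}|$ on the nose vertex by vertex'') are not a substitute: global edge counts and even degrees in $G_{\free}$ do not by themselves yield the per-vertex in/out/colour equalities that Theorem~\ref{decompK} requires. You also need to argue that all these modifications can be done as an $\eta^{.6}$-perturbation (so that extendability and regularity survive), and to verify the regularity hypothesis via a uniform weight on $3d$-separated wheels. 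Until you supply the digraph construction and the divisibility-fixing steps, Theorem~\ref{decompK} cannot be invoked.
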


The proof of Lemma \ref{lem:path} is similar to
the corresponding arguments in \cite{factors},
so we will be brief and give more details
only where there are significant differences.
We require the following result on wheel decompositions; 
see \cite{factors} for its derivation from \cite{K2}
and discussion of how it provides the required paths.
The statement requires a few definitions.
An $8$-wheel consists 
of a directed $8$-cycle (called the rim),
another vertex (called the hub),
and an arc from each rim vertex to the hub.
We obtain the special $8$-wheel $\wK$ 
by giving all arcs colour $0$ except 
that one rim edge $\ova{xy}$ 
and one spoke $\ova{yw}$ have colour $K$.

\begin{theo} \label{decompK}
Let $n^{-1} \ll \dD \ll \oO \ll 1$, $s=2^{50\cdot 8^3}$ and $d \ll n$.
Let $J = J^0 \cup J^K$ be a digraph with arcs coloured
$0$ or $K$, with $V(J)$ partitioned as $(V,W)$
where $\oO n \le |V|, |W| \le n$, such that
all arcs in $J[V,W]$ point towards $W$ and $J[W]=\es$.
Then $J$ has a $\wK$-decomposition such that
every hub lies in $W$ if the following hold:

{\em Divisibility:}
$d^-_J(w) = 8d^-_{J^K}(w)$ for all $w \in W$,
and for all $v \in V$ we have
$d_J^-(v,V)=d_J^+(v,V)=d_J^+(v,W)$
and $d^-_{J^K}(v,V)=d^+_{J^K}(v,W)$.

{\em Regularity:}
each $3d$-separated copy of $\wK$ in $J$ has a weight 
in $[\oO n^{-7}, \oO^{-1} n^{-7}]$ such that
for any arc $\ova{e}$ there is total weight $1 \pm \dD$ 
on wheels containing $\ova{e}$.

{\em Extendability:}
for all disjoint $A,B \sub V$ and $L \sub W$
each of size $\le s$,
for any $a, b, \ell \in \{0,K\}$ we have
$|N^+_{J^a}(A) \cap N^-_{J^b}(B) 
\cap N^-_{J^\ell}(L)| \ge \oO n$,
and furthermore, if $(A,B)$ is $3d$-separated then
$|N^+_{J^0}(A) \cap N^+_{J^K}(B) \cap W| \ge \oO n$.
\end{theo}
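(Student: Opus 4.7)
The plan is to deduce Theorem \ref{decompK} from the general decomposition result of Keevash \cite{K2}, by encoding the coloured oriented wheel decomposition as an $F$-decomposition of a suitable hypergraph. I would construct an auxiliary hypergraph $H$ whose vertex set records the incidence data of $J$: vertices representing the arcs of $J^0$ and $J^K$ (distinguished by colour, orientation and endpoint type) together with vertices for the elements of $V$ and $W$. For each $\wK$-copy in $J$ with hub in $W$, I would include an ``edge'' of $H$ collecting the $16$ arcs and $9$ vertices of that wheel; the template $F^*$ is the isomorphism type of a single such edge. Under this encoding, $\wK$-decompositions of $J$ with hub in $W$ correspond bijectively to $F^*$-decompositions of $H$, so it suffices to supply the general theorem with the correct divisibility, regularity and extendability data.

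The three hypotheses of Theorem \ref{decompK} map onto the three standard inputs of \cite{K2}. The Divisibility hypothesis translates to the statement that every vertex-type count in $H$ is divisible by the corresponding count in $F^*$: the condition $d_J^-(w)=8d_{J^K}^-(w)$ forces each $w\in W$ to be the hub of exactly $d_{J^K}^-(w)$ wheels, and the stated $V$-conditions force each $v\in V$ to appear as a rim vertex equally often in each of its oriented/coloured roles. The Regularity hypothesis supplies the quasi-randomness input directly: the weighting of $3d$-separated wheels is precisely the weighting on $F^*$-copies in $H$ that the nibble step of \cite{K2} requires, with $3d$-separation guaranteeing that the $9$ wheel-vertices are pairwise distinct and hence that the $F^*$-copy is non-degenerate. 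The Extendability hypothesis supplies the common-neighbourhood lower bounds $|N^+_{J^a}(A) \cap N^-_{J^b}(B) \cap N^-_{J^\ell}(L)| \ge \oO n$, which translate into the supersaturation and rooted-extension properties for sub-templates of $F^*$ that the absorber construction of \cite{K2} requires; the extra $3d$-separated clause $|N^+_{J^0}(A) \cap N^+_{J^K}(B) \cap W| \ge \oO n$ provides the corresponding input for rooted extensions whose root meets both colour classes.

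With these translations in place, the main theorem of \cite{K2} yields an $F^*$-decomposition of $H$, which we translate back to the required $\wK$-decomposition of $J$. The chief obstacle is the bookkeeping imposed by the two colour classes and by the distinguished roles of the marked arcs $\ova{xy}$ and $\ova{yw}$ in $\wK$: the template $F^*$ must record enough colour-and-orientation information that Regularity and Extendability can be verified for \emph{every} rooted sub-template (not merely for full wheels), and one must check that the weighted counts on each such sub-template agree up to $1\pm\dD$ after suitable renormalisation. This verification is precisely the one performed in \cite{factors} for the generalised Oberwolfach problem, and it can be adapted here essentially verbatim by substituting $\wK$ for the cycle-factor template used there; no new ingredients beyond that adaptation are needed.
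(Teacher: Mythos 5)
The paper does not actually prove Theorem~\ref{decompK}: it is quoted as an external result, with the sentence ``see \cite{factors} for its derivation from \cite{K2}'' serving as the entire justification. Your plan of deducing the theorem from \cite{K2}, and your closing appeal to \cite{factors} for the verification, therefore matches the route the paper intends, and at that level there is nothing to object to.

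However, the specific encoding you describe does not work as stated. You propose to build an auxiliary hypergraph $H$ whose \emph{vertices} are the arcs of $J$ together with the elements of $V$ and $W$, and whose \emph{edges} are the $25$-element sets consisting of the $16$ arcs and $9$ vertices of each candidate wheel, with the template $F^*$ being a single such edge. But then an ``$F^*$-decomposition of $H$'' is just a partition of $E(H)$ into singleton edges, which is vacuously possible for any hypergraph and imposes no constraint on $J$; in particular it does not force each arc of $J$ to lie in exactly one chosen wheel. What is really wanted (and what \cite{factors} sets up, following \cite{K2}) is either a perfect matching of the hypergraph whose vertex set is the arc set of $J$ alone (so each arc is covered exactly once, while vertices of $J$ may be reused), or, in the language of \cite{K2}, a decomposition of a suitable coloured adapted complex built over $J$; including the elements of $V\cup W$ as consumable vertices of $H$ is wrong because wheel copies share rim and hub vertices. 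Your translations of Divisibility, Regularity and Extendability are reasonable once the correct framework is in place, but the correspondence you assert between $\wK$-decompositions and $F^*$-decompositions of your $H$ is false, so the sketch as written has a genuine gap at its starting point and is being carried by the reference to \cite{factors} rather than by the encoding you give.
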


\begin{proof}[Proof of Lemma \ref{lem:path}]
Recall that we constructed $J_{\ex}$ in DIGRAPH,
such that for every $xy \in G_{\ex}$, we have exactly one of
$\ova{xy} \in J_{\ex}^0$,
$\ova{yx} \in J_{\ex}^0$,
$\ova{xy}^- \in J_{\ex}^K$,
$\ova{yx}^- \in J_{\ex}^K$,
and there are also $\ova{yw} \in J_{\ex}^0[V,W]$.
Add the arcs $J^K_{\ex}[V,W]=\{\ova{xw}: x \in Y_w\}$.
It suffices to find an $\eta^{.6}$-perturbation $L$ of $J_{\ex}$,
i.e.\ $L$ is obtained from $J_{\ex}$ by adding, deleting
or recolouring at most $\eta^{.6} n$ arcs at each vertex,
where $L[V]$ corresponds to $G_{\free}$ under twisting, 
and each $N^-_L(w) \subseteq V \sm \im \phi_w$, and
a set $E$ of edge-disjoint copies of $\wK$ in $L$,
such that Theorem \ref{decompK} applies to give 
a $\wK$-decomposition of $L' := L \sm \bigcup E$.
This will suffice, by taking each $G_w$ to consist of the
union of the $8$-paths that correspond under twisting to
the rim $8$-cycles of the copies of $\wK$ containing $w$.
Here an arc of $L$ corresponds to an edge $xy \in G_{\free}$
under twisting if it is $\ova{xy} \in L^0$
or $\ova{yx} \in L^0$ or $\ova{xy}^- \in L^K$ 
or $\ova{yx}^- \in L^K$
(which is a more flexible notion than in \cite{factors},
as it does not depend on the orientation of $L$.)

Whenever we make a series 
of $\gamma n^2$ modifications to $J_{\ex}$ of some type
which involves changing edges at some intermediate vertex $z$,
we always ensure that no vertex plays the role of $z$
more than $\gamma\eta^{-.1}n$ times.
There will always be more than, say, $2\eta^{.1}n$
valid choices of $z$, 
by Lemmas \ref{lem:INT} and \ref{lem:DI},
and thus we can avoid the set
of at most $\eta^{.1}n$ overused vertices.
This series of modifications
will add $\gamma \eta^{-.1}$ to the perturbation constant.

We start by deleting arcs  
corresponding to $G_{\ex}\sm G_{\free}$,
adding arcs $\ova{xy}$ for each $xy \in G_{\free}\sm G_{\ex}$,
replacing any $\ova{xy}$ of colour $K$ 
where $d(x,y)<3d$ with $\ova{xy}^+$ of colour $0$
and deleting arcs $\ova{yw}$ in $L[V,W]$ 
with $y \in N^-_{J_{\ex}}(w) \cap \im \phi_w$.
Next we delete or add arbitrary arcs $\ova{yw}$ with
$y \in V \sm (\im \phi_w \cup (Y_w)^+ \cup  N^-_{J_{\ex}}(w))$
until each $d^-_L(w)=8|Y_w|$, and so $|L[V,W]|=|L[V]|$.
We require $<\eta^{.9}n$ such arcs for each $w$,
by Lemma \ref{lem:INT} and the bounds on $X_y$
during the embedding of
 $P_{\ex} \sm \bigcup_{xy} P^{xy}_w$.

While $|L^0[V]|>|L^0[V,W]|$ we
replace some $\ova{xy} \in L^0[V]$ by $\ova{xy}^- \in L^K[V]$,
or while $|L^0[V]|<|L^0[V,W]|$ we replace 
some $\ova{xy} \in L^K[V]$ by $\ova{xy}^+ \in L^0[V]$,
continuing until $|L^0[V,W]|=|L^0[V]|$,
and hence $|L^K[V,W]|=|L^K[V]|$.

Next we balance degrees in $L^K$.
While there are $x,y$ in $V$ with
$d^-_{L^K}(x,V) > d^+_{L^K}(x,W)$ and
$d^-_{L^K}(y,V) < d^+_{L^K}(y,W)$,
we choose $z \in V$ such that
$\ova{zx} \in L^K$, $\ova{zy}^+ \in L^0$
and replace these arcs by 
$\ova{zx}^+ \in L^0$, $\ova{zy} \in L^K$.
While there are $x,y$ in $V$ with
$d^+_{L^K}(x,V) > d^+_{L^K}(x,W)$ and
$d^+_{L^K}(y,V) < d^+_{L^K}(y,W)$,
we choose $z \in V$ such that
$\ova{xz} \in L^K$, $\ova{yz}^+ \in L^0$
and replace these arcs by 
$\ova{yz} \in L^K$, $\ova{xz}^+ \in L^0$.
We continue until every $d^+_{L^K}(v,V) 
= d^-_{L^K}(v,V) = d^+_{L^K}(v,W)$.

Now we require some new modifications
which do not appear in \cite{factors}.
We start by noting that each $d_L(x,V)$ is even.
To see this, note that
as $L[V]$ corresponds to $G_{\free}$ under twisting we have
$d_{L}(x,V)=d_{G_{\free}}(x)+d^-_{L^K}(x^-,V)-d^-_{L^K}(x,V)
=d_{G_{\free}}(x)+d^+_{L^K}(x^-,W)-d^+_{L^K}(x,W)=d_{G_{\free}}(x)$
where the last equality follows from interval properties
(listed before the definition of INTERVALS).
While there are $x,y \in V$ with $d_L(x,V) < 2d^+_L(x,W)$ 
and $d_L(y,V) > 2d^+_L(y,W)$, we add $\ova{yw}$ to $L^0$ 
and remove $\ova{xw}$ from $L^0$
for some $w \in N^+_{L^0}(x,W)\sm N^+_{L}(y,W)$
with $y \notin \im \phi_w$.
We continue until every $d_L(v,V)=2d^+_L(v,W)$.

While there are $x,y$ in $V$ with
$d^+_{L^0}(x,V) > d^+_{L^0}(x,W)$ and
$d^+_{L^0}(y,V) < d^+_{L^0}(y,W)$,
we choose $z \in V$ such that
$\ova{xz} \in L^0$, $\ova{yz} \in L^0$
and replace these arcs by 
$\ova{zx} \in L^0$, $\ova{zy} \in L^0$.
Now every $d^+_L(v,V)=d^+_L(v,W)$.
Thus $L$ satisfies the required divisibility conditions,
and is an $\eta^{.6}$-perturbation of $J_{\ex}$,
and $L[V]$ corresponds to $G_{\free}$ under twisting.
It remains to satisfy the extendability and regularity
conditions of Theorem \ref{decompK}. A summary of the
argument is as follows (we omit the details as
they are very similar to those in \cite{factors}).
There are many wheels on each arc,
so we can greedily cover all $\ova{xy} \in L[V]$ with edge-disjoint
wheels, incurring an insignificant perturbation of $L$.
A stronger version of the extendability hypothesis 
with $J_{\ex}$ in place of $L$
holds by Lemmas \ref{lem:INT} and \ref{lem:DI},
and so it holds for the perturbation $L$.
By typicality, the regularity condition is satisfied 
by assigning the same weight $\hat{W}$ to every wheel,
choosing $\hat{W}$ so that any arc is in
$\approx \hat{W}^{-1}$ wheels.
\end{proof}

\subsection{Large stars}

Here we conclude the proof of Theorem \ref{main}
in Case L,
where all but at most $p_+ n$ vertices of $T$ 
belong to leaf stars of size $\geL = n^{1-c}$.
The argument is self-contained:
there is no approximate step,
and the entire embedding is achieved
by the subroutine LARGE STARS.

We start by letting
${\cal S}$ be the union of all maximal
leaf stars in $T$ that have size $\geL$.
We let $F = T \sm {\cal S}$;
by assumption $|V(F)| \le p_+ n$.
We let $S^+ = \{v \in V(T): d_T(v) \geL\}$,
so that $|S^+| < 2\DD$ and
$S \sub S^+ \sub V(F)$, where
$S$ is the set of star centres of ${\cal S}$.

We partition $W$ as $W_1 \cup W_2 \cup W_3$
with each $||W_i|-n/3|<1$. For each $v \in V(G)$, 
we independently choose at most one of 
$\mb{P}(v \in U^a_i) = d_{\cal S}(a)/3|{\cal S}|$ 
with $a \in S$, $i \in [3]$. 
By Chernoff bounds, whp each
$|U^a_i| = nd_{\cal S}(a)/3|{\cal S}| \pm n^{.9}$.
We let $U_i = \bigcup_a U^a_i$.
While $\sum_{i=1}^3 ||W_i|-|U_i||>0$, 
we relocate a vertex so as to decrease this sum,
thus relocating $<n^{.9}$ to or from any $U^i_a$,
so $<3\DD n^{.9} < n^{.99}$ in total.

Noting that $F$ is a tree,
we can fix an order $\prec$ on $V(F)$ such that
$N_<(u) = \{v \prec u: vu \in F\} = \{u^-\}$
has size $1$ for all $u \ne u_0 \in V(F)$.
We fix distinct $\phi_w(u_0)$, $w \in W$
with $\phi_w(u_0) \in U_i$ whenever $w \in W_i$.
We construct edge-disjoint copies $F_w$ of $F$ 
by considering $a \in F_w$ in $\prec$ order,
defining all $\phi_w(a)$ by
$M^a_i = \{ \phi_w(a)w: w \in W_i\}
{=}$MATCH$(B^a_i,Z^a_i)$, $i \in [3]$,
and updating 
\begin{align*}
& G_{\free} = \{\text{unused edges}\},
\quad Z = \{ vw: v \in V(F_w) \}, \text{ and } \\
& J = \{ \ova{xx}': x=\phi_w(a), 
x' \in Z(w) \cap U^a  \}_{w \in W, a \in S}.
\end{align*}
By construction $G \sm G_{\free}$ and $Z$ both 
have maximum degree $\le |V(F)| \le p_+ n$.

\begin{lemma}
Every edge is used at most once
and $J$ has no $2$-cycles.
\end{lemma}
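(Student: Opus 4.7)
My plan is to verify both assertions by following step (iv) of LARGE STARS, tracking when each edge of $G$ and each arc of $J$ is introduced. Three ingredients will drive the whole argument: (a) $B^a_i \sub G_{\free}$ at the moment of use, so previously used edges are never candidates; (b) the $MZMZ=0$ guarantee of Lemma~\ref{lem:match} inside each matching $M^a_i$; and (c) the $J$-based exclusions built into $A^{wb}_a$ when $a \in S^+$.

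For edge-disjointness, reuse across steps is immediate from (a), since $G_{\free}$ loses each edge the moment it is used. Within one matching $M^a_i$, a collision between copies $w \ne w'$ would force $\phi_w(a) = \phi_{w'}(a^-) =: x$ and $\phi_{w'}(a) = \phi_w(a^-) =: y$; then the $M$-edges $xw, yw' \in M^a_i$ together with $yw, xw' \in Z^a_i$ (both present via the clause $\phi_w(a^-)w \in Z^a_i$, which sits in $Z^a_i$ in both branches of the algorithm) form an $MZMZ$ four-cycle, contradicting Lemma~\ref{lem:match}. Cross-matching collisions between $M^a_i$ and $M^{a'}_{i'}$ are excluded by the block structure: for $w \in W_i$ we have $\phi_w(a) \in U_i$ when $a \in S^+$ and $\phi_w(a) \in U_{i-1}$ otherwise, so the pair $\{\phi_w(a),\phi_w(a^-)\}$ spans a block-pair determined by $i$ and the $S^+$-memberships of $a, a^-$, and pairs lying in incompatible block-pairs cannot coincide.

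For the 2-cycle claim, suppose $\ova{xy}, \ova{yx} \in J$ and fix star-centre witnesses $\alpha, \beta \in S$, copies $w, w'$ and partners $\alpha', \beta'$ giving $\phi_w(\alpha)=x$, $\phi_w(\alpha')=y$, $y \in U^\alpha$ and $\phi_{w'}(\beta)=y$, $\phi_{w'}(\beta')=x$, $x \in U^\beta$. Each arc enters $J$ at the step when the later of its two defining assignments occurs. When the two times differ, WLOG $\ova{xy}$ is already in $J$ at the later step; the partition structure forces the vertex being processed there into $S^+$, so either the exclusion $N^+_J(\im \phi_{w'} \cap U^\beta)$ applied while placing $\phi_{w'}(\beta)=y$ removes $y$ (since $x \in \im \phi_{w'} \cap U^\beta$ and $\ova{xy}\in J$), or the exclusion $N^-_J(\phi_{w'}(\beta))=N^-_J(y)$ applied while placing $\phi_{w'}(\beta')=x$ removes $x$ (since $\ova{xy}\in J$). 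The hardest case, and the main anticipated obstacle, is when both arcs appear at the same step $v$, so neither $J$-exclusion has yet fired. Here I would combine the partition structure, which forces $w, w' \in W_i$ and $v \in S^+$, with the composition of $Z^v_i$ (which contains $\phi_w(v^-)w$ for every $w \in W_i$ together with all pairs $(v'',w)$ with $v'' \in U^v \cap \im \phi_w$) to exhibit an $MZMZ$ four-cycle formed by the $M$-edges $yw=\phi_w(v)w$ and $xw'=\phi_{w'}(v)w'$ and two $Z$-edges selected according to whether $\alpha, \beta$ coincide with $v$, with $v^-$, or with a vertex whose image already sits in $U^v \cap \im \phi_w$; the delicate bookkeeping that rules out every configuration—leaning on the $S^+/S$ memberships, the $U_i$-block partition and the uniqueness of predecessors in $\prec$—is where the real work lies.
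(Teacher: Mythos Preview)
Your overall strategy matches the paper's: $G_{\free}$ together with the $MZMZ=0$ guarantee handles edge-disjointness, and the $J$-exclusions built into $A^{wb}_a$ together with $MZMZ=0$ handle $2$-cycles. Your edge-disjointness argument and the ``different times'' branch of the $2$-cycle argument are correct and parallel the paper closely; the block-structure remark for cross-matching collisions is unnecessary (the running update of $G_{\free}$ already covers it) but not wrong, and your observation that the partition forces the later step into $S^+$ is a detail the paper leaves implicit.

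Where you diverge is the ``same time'' branch, which you leave as a sketch and flag as the hard case. The paper in fact disposes of it in one sentence, treating only the configuration in which both arcs have the newly defined $x=\phi_w(a)$ and $x'=\phi_{w'}(a)$ as their star-centre sources: then $x'\in U^a\cap Z(w)$ and $x\in U^a\cap Z(w')$, so the $U^a\cap\im\phi_w$ clause of $Z^a_i$ puts $x'w,\,xw'\in Z^a_i$, and $xw\,x'w'$ is an $MZMZ$. Your sketch has a labelling slip (you write $y=\phi_w(v)$, which clashes with your setup $\phi_w(\alpha)=x$ when $v=\alpha$), and more seriously your plan to always exhibit an $MZMZ$ on the two edges $\phi_w(v)w$ and $\phi_{w'}(v)w'$ cannot succeed when both arcs are witnessed by a \emph{single} copy $w$: this occurs, for instance, when $v,\gamma\in S$ with $\gamma\prec v$, one arc arises from $v$ as new star centre, and the other from the same new vertex $\phi_w(v)$ landing in $U^\gamma$. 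In that configuration only one $M$-edge at step $v$ is in play, so no $4$-cycle is available. The paper's brief argument does not explicitly treat this configuration either, so your instinct that there are further sub-cases to check is not unfounded---but the case you should have written out cleanly is the one the paper does handle.
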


\begin{proof}
First note that as each
$B^a_i(w) \sub G_{\free}(\phi_w(a^-)) 
\sm Z(w)$ we embed each $\phi_w(a)$
to a vertex not yet used by $F_w$ so that 
$\phi_w(a^-)\phi_w(a)$ is an unused edge.
Furthermore, when $a \in S$, for each $\lova{xx}' \in J$,
by excluding $\phi_w(a) \in N^+_J(Z(w) \cap U^a)$ 
we do not add $\ova{xx}'$ to $J$ 
due to $x=\phi_w(a)$, $x' \in Z(w) \cap U^a$,
and by excluding $N^-_J(\phi_w(b)) \cap U^b$
where $x' \in U^b$ we do not add $\ova{xx}'$ to $J$ 
due to $x=\phi_w(b)$, 
$x' = \phi_w(a) \in Z(w) \cap U^b$.
As before, by including all $\phi_w(a^-)w$ in $Z^a_i$
we ensure that $M^a_i$ does not require
the same edge of $G_{\free}$ twice.
Furthermore, when $a \in S$, by including all
$vw$ with $v \in U^a \cap Z(w)$
we ensure that $M^a_i$ does not add
both arcs of any $2$-cycle to $J$:
we cannot add $\ova{xx}'$, $\lova{xx}'$
with $x'=\phi_{w'}(a) \in U^a \cap Z(w)$
and $x=\phi_w(a) \in U^a \cap Z(w')$
as $xwx'w'$ would be an $M^a_i Z^a_i M^a_i Z^a_i$.
The lemma follows.
\end{proof}

Next we note for all $i \in [3]$, $w \in W_i$
that $B^a_i(w) \sub U_{i'}$, 
where $i' = i-1_{a \notin S^+}$.
We record some simple consequences of this observation.
\begin{itemize}
\item
$Z(w) \cap U_{i+1} = \es$ for any $w \in W_i$.
\item $Z(x) \cap W_{i-1} 
= N^-_J(x) \cap U_{i-1}
= N^+_J(x) \cap U_{i+1} = \es$ for any $x \in U_i$.
\item
If $w \in W_i$  then $Z(w) \cap U_i$
only contains $\phi_w(a)$ with $a \in S^+$,
so has size $\le |S^+| \le 2\DD$.
\item
If $x \in U_i$, $b \in S$ then
$N^+_J(x) \cap U^b_i = Z(M^b_i(x)) \cap U^b_i$
has size $\le 2\DD$, as $M^b_i(x) \in W_i$.
\item
If $x \in U^b_i$ then
$Z(x) \cap W_i$ only contains 
$M^a_i(x)$ with $a \in S^+$, 
so $|Z(x) \cap W_i| \le 2\DD$.
\item
If $x \in U^b_i$ then
$N^-_J(x) \cap U_i = M^b_i(Z(x) \cap W_i)$
has size $\le 2\DD$.
\item
Each $Z^a_i$ has maximum degree $\le 2\DD$.
\end{itemize} 

By construction, $B^a_i$ is a balanced bipartite graph.
To justify the application of Lemma \ref{lem:match}
in choosing $M^a_i$ it remains to establish the following.

\begin{lemma}
$B^a_i$ is $p_+^{.15}$-super-regular.
\end{lemma}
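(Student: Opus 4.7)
The plan is to verify super-regularity by lower-bounding the degrees from both sides, upper-bounding codegrees from the $W_i$ side, and invoking Lemma \ref{lem:DLR} to upgrade the codegree bound to regularity; combined with the individual degree estimates this gives $p_+^{.15}$-super-regularity. Write $i' = i - 1_{a \notin S^+}$, so $B^a_i \subseteq W_i \times U_{i'}$ with $|W_i|, |U_{i'}| = n/3 \pm n^{.99}$. Since $(U^a_i)$ was built from independent random choices followed by a balancing step that moves only $< n^{.99}$ vertices, typicality of $G$ plus Chernoff bounds give whp
\[ |N_G(R) \cap U_{i'}| = ((1 \pm \xi^{.9}) p)^{|R|} |U_{i'}| \text{ for every } R \in \tbinom{V(G)}{\le 2}. \]
As $G \setminus G_{\free}$ has maximum degree $\le |V(F)| \le p_+ n$, the same estimates hold with $G_{\free}$ in place of $G$ up to an additive error $|R| p_+ n$.

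In the case $a \notin S^+$, we have $N_{B^a_i}(w) = U_{i-1} \cap N_{G_{\free}}(\phi_w(a^-)) \setminus \im \phi_w$, and since $|\im \phi_w| \le |V(F)| \le p_+ n$ the estimates above yield
\[ |N_{B^a_i}(w)| = p|U_{i-1}| \pm 3 p_+ n, \qquad |N_{B^a_i}(ww') \cap U_{i-1}| \le (p^2 + 3p_+) |U_{i-1}| \]
for every $w \in W_i$ and every pair $w \ne w'$. These verify the hypotheses of Lemma \ref{lem:DLR} with density $d = p \pm p_+^{.9}$ and $\eps = p_+^{.8}$, giving $p_+^{.8/6}$-regularity, which suffices for the stated $p_+^{.15}$. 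For the $U_{i'}$-side degree bound, I would write $|N_{B^a_i}(v) \cap W_i| = |\{w \in W_i : v \phi_w(a^-) \in G_{\free}, v \notin \im \phi_w\}|$ and control the main term by applying the conclusion of Lemma \ref{lem:match} to the matching $M^{a^-}_i$ chosen earlier, obtaining $p|W_i| \pm n^{.9}$; the corrections $v \phi_w(a^-) \in G \setminus G_{\free}$ and $v \in \im \phi_w$ each contribute at most $|V(F)| \le p_+ n$, so every $U_{i-1}$-degree is also $(1 \pm p_+^{.8}) p |W_i|$.

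For $a \in S^+$, the set $N_{B^a_i}(w) = \bigcup_{b \in S} A^{wb}_a$ is obtained from $U_i \cap N_{G_{\free}}(\phi_w(a^-)) \setminus \im \phi_w$ by further excluding $N^+_J(\im \phi_w \cap U^a)$ and $\bigcup_{b \in S}(N^-_J(\phi_w(b)) \cap U^b)$. The bullet-point bounds established before the lemma give $|\im \phi_w \cap U_i| \le |S^+| \le 2\DD$, and for any $x \in U_i$ both $|N^\pm_J(x) \cap U_i| \le 2\DD$, while $|S| \le 2\DD$, so both additional forbidden sets have size $O(\DD^2) = O(n^{2c}) = o(p_+ n)$. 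The argument then proceeds exactly as in the previous case, now with $U_i$ in place of $U_{i-1}$.

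The main obstacle I foresee is purely bookkeeping: ensuring that the various error terms (typicality error $\xi$, partition-randomness error $n^{-.1}$, $|V(F)| \le p_+ n$ losses to previously used edges or embedded vertices, and the $O(\DD^2)$ losses from $J$-neighbourhoods) all remain comfortably below $p_+^{.9} n$ and combine cleanly into the single regularity parameter $p_+^{.15}$. The $U_{i'}$-degree bound requires invoking Lemma \ref{lem:match} on an earlier matching, which in turn relied on the super-regularity of the earlier $B^{a^-}_i$, so the entire argument is naturally structured as an induction along $\prec$.
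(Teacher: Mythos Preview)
Your approach is essentially the paper's: compare $B^a_i$ to the ``clean'' graph with neighbourhoods $U_{i'}\cap G(\phi_w(a^-))$, control that graph via typicality and the bijection furnished by the earlier matching $M^{a^-}_i$, bound the corrections (from $\im\phi_w$, $G\setminus G_{\free}$, and the $J$-neighbourhoods) in maximum degree, and invoke Lemma~\ref{lem:DLR}. One simplification: for the $U_{i'}$-side degrees you do not need the distributional conclusion of Lemma~\ref{lem:match} at all --- since $M^{a^-}_i$ is a perfect matching, $|\{w\in W_i:\phi_w(a^-)\in N_G(v)\}|$ equals $|N_G(v)|$ restricted to the relevant $U$-part, by bijection.

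Two bookkeeping slips to fix. First, your $O(\DD^2)$ bound for $|N^+_J(\im\phi_w\cap U^a)\cap U_i|$ is off: the bullets only give $|N^+_J(x)\cap U^b_i|\le 2\DD$ for each $b\in S$, hence $|N^+_J(x)\cap U_i|\le 4\DD^2$, so combined with $|\im\phi_w\cap U_i|\le 2\DD$ one gets $O(\DD^3)$; the paper obtains $8\DD^3$, which is still negligible. Second, and more consequentially, your choice $\eps=p_+^{.8}$ in Lemma~\ref{lem:DLR} yields only $p_+^{.8/6}\approx p_+^{.133}$-regularity, which is \emph{weaker} than the claimed $p_+^{.15}$ (smaller exponent means larger regularity parameter). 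Since your actual additive errors are $O(p_+ n)$, you can take $\eps\le p_+^{.9}$ and recover exactly $p_+^{.15}$.
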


\begin{proof}
We first consider $\GG^a_i \sub U_{i'} \times W_i$ 
with $i' = i-1_{a \notin S^+}$
defined by $N_{\GG^a_i}(w) 
= U_{i'} \cap G(\phi_w(a^-))$.
For any $R \in \tbinom{U_{i'}}{{\le 2}}$
we have $N_{\GG^a_i}(R) = \{w \in W_i: 
   R \sub G(\phi_w(a^-)) \}
   = M^{a^-}_i( N_G(R) \cap U_{i'} )$,
so $|N_{\GG^a_i}(R)| = |N_G(R) \cap U_{i'}|
= ((1 \pm 1.1\xi)p)^{|R|}|W_i|$
whp by typicality and a Chernoff bound. 
Similarly, 
$N_{\GG^a_i}(R') = U_{i'} \cap 
 \bigcap_{w \in R'} G(\phi_w(a^-))$
for $R' \in \tbinom{W_i}{{\le 2}}$
whp has size $((1 \pm 1.1\xi)p)^{|R'|}|U_{i'}|$.
 
Now we will show that $\GG^a_i \sm B^a_i$
has maximum degree $\le 5p_+ n$. 
To see this, we first note that we have
a contribution $\le 4|V(F)| \le 4p_+ n$
to any degree in $\GG^a_i \sm B^a_i$
due to edges in $Z$ or $G \sm G_{\free}$
(including the $\le 1$ vertex
that is the image of $a \notin S^+$ for two $w,w'$).
There are no other contributions for $a \notin S^+$,
so we consider $a \in S^+$ and so $i'=i$.
First we estimate the contribution to
degrees of $w \in W_i$ and to degrees
of $x \in U_i$ due to $x \in N^-_J(\phi_w(b) \cap U^b_i)$
for $b \in S$,
which we claim are both $\le 4\DD^2$.
Indeed, for $w \in W_i$ the
contribution 
is $\le \sum_{b \in S} |N^-_J(\phi_w(b)) \cap U_i|
   \le 2|S|\DD \le 4\DD^2$.
For $x \in U_i$, 
we count $w \in W_i$  
if $\phi_w(b) = y \in N^+_J(x)$,
where $y \in U_i$ as $w \in W_i, b \in S$,
so this contribution is
$\le \sum_b |N^+_J(x) \cap U^b_i| \le 4\DD^2$. 

It remains to estimate the contribution to
degrees of $w \in W_i$ and to degrees
of $x \in U_i$ due to $x \in N^+_J(Z(w) \cap U^a)$,
which we claim are both $\le 8\DD^3$. 
To see this, first note that
we must have $w \in Z(y)$ 
for some $y \in N^-_J(x) \cap U^a$, and 
$x \in Z(w') \cap U^b_i$ for some $b$
with $\phi_{w'}(b) = y \in Z(w)$.
We note that $w' \in W_i$, as otherwise
$x \in Z(w')$ implies $w' \in W_{i+1}$
and $y=\phi_{w'}(b)$ implies $y \in U_{i+1}$,
which contradicts $y \in Z(w)$.
Thus we have $\le 2\DD$ choices 
for each of $w' \in Z(x) \cap W_i$,
then $y \in Z(w') \cap U_i$,
then $w \in Z(y) \cap W_i$,
which proves the claim.
The lemma now follows from Lemma \ref{lem:DLR}.
\end{proof}

Thus we can apply Lemma \ref{lem:match}, 
so each $M^a_i = \{ \phi_w(a)w: w \in W_i\}
{=}$MATCH$(B^a_i,Z^a_i)$ can be chosen and has 
$\mb{P}(vw \in M^a_i) = (1 \pm p_+^{.1})(pn)^{-1}$
for all $vw \in B^a_i$. 
In particular, we can complete step (iv), 
thus choosing edge-disjoint copies $F_w$ of $F$.

\begin{lemma} \label{lem:init}
For $x \in V$, $w \in W$, $a \in S$ 
whp $|U^a \cap V(F_w)| < 1.1p_+ |U^a|$ and
$|N^\pm_J(x) \cap U^a| < .1p_+^{.9} |U^a|$.
\end{lemma}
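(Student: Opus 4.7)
The plan is to bound both quantities using the sequential random matching process of step (iv), applying Lemma~\ref{lem:match} to control the distribution of each $\phi_w(u)$ and exploiting that $|U^a| \ge nd_{\cal S}(a)/(3|{\cal S}|) - n^{.99} \ge n^{1-c}/4$ is large enough for concentration.

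For the first inequality, fix $x$, $w$, and $a$, and write $Y := |U^a \cap V(F_w)| = \sum_{u \in V(F)} 1_{\phi_w(u) \in U^a}$, processing summands in $\prec$ order. At time $t_u^-$, when $M^u_i$ is about to be chosen (with $w \in W_i$), Lemma~\ref{lem:match} applied to the super-regular $B^u_i$ gives $\mathbb{P}^{t_u^-}(vw \in M^u_i) = (1 \pm p_+^{.1})(d(B^u_i)|W_i|)^{-1}$ for each $vw \in B^u_i$. Combined with typicality of $G$ and a Chernoff bound on the initial partition $(U^b_i)$ (which is independent of $G$), this yields $|U^a \cap B^u_i(w)| = (1 \pm p_+^{.1})(|U^a|/n)|B^u_i(w)|$, so $\mathbb{E}^{t_u^-}[1_{\phi_w(u) \in U^a}] = (1 \pm O(p_+^{.1}))|U^a|/n$. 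Summing over $u \in V(F)$ gives conditional mean at most $(1+o(1))p_+|U^a|$; since the summands are $\{0,1\}$-valued, their conditional variance is bounded by the conditional mean, so applying Freedman's inequality in its Bernstein-type form (which underlies Lemma~\ref{freedman}) with deviation $0.05 p_+|U^a|$ yields $Y < 1.1 p_+|U^a|$ with probability $1-\exp(-\Omega(p_+ n^{1-c}))$.

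For $|N^+_J(x) \cap U^a|$, observe that any $\ova{xx'} \in J$ with $x' \in U^a$ forces $x = \phi_w(a)$ for some $w$ (since the $U^b$ are pairwise disjoint, the label $b \in S$ in the definition of $J$ must equal $a$). As $a \in S^+$, the matching $M^a_i$ is a bijection between $W_i$ and a subset of $U_i$, so there is at most one such $w$, determined by the unique $i$ with $x \in U_i$. For this $w$ we have $N^+_J(x) \cap U^a \subseteq V(F_w) \cap U^a$, and by the first inequality $|V(F_w) \cap U^a| < 1.1 p_+|U^a|$; since $p_+ \ll 1$ we have $11 p_+^{.1} < 1$, giving $1.1 p_+ < .1 p_+^{.9}$. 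For $|N^-_J(x) \cap U^a|$, let $c \in S$ be the unique star centre with $x \in U^c$ (if none exists, $N^-_J(x)=\es$). Then $N^-_J(x) \cap U^a = \{\phi_w(c) : w \in W,\ x \in V(F_w),\ \phi_w(c) \in U^a\}$, whose size is at most $Z := |\{w : x \in V(F_w) \wedge \phi_w(c) \in U^a\}|$. Decomposing $1_{x \in V(F_w)} = \sum_{u \in V(F)} 1_{\phi_w(u)=x}$ (disjoint by injectivity) and applying Lemma~\ref{lem:match} shows that for fixed $u$ the events $\{\phi_w(u)=x\}$ and $\{\phi_w(c) \in U^a\}$ are approximately independent (via super-regularity of $B^u_i$ and $B^c_i$), so $\mathbb{E}Z \le (1+o(1))p_+|U^a|$. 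Concentration then follows by Freedman's inequality applied to the martingale obtained by revealing the matchings $M^u_i$ in $\prec$ order: each increment changes $Z$ by $O(1)$ and the conditional means are controlled as above, yielding $Z < .1 p_+^{.9}|U^a|$ whp. A union bound over the $O(n^3)$ triples $(x,w,a)$ concludes.

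The main obstacle is the $N^-_J$ concentration, since the event defining a $w$'s contribution to $Z$ depends on matchings revealed both before and after $t_c$; one must carefully structure the filtration and verify that the conditional expectations remain close to their target values even after conditioning on partial information, a property which hinges on the quasi-independence of distinct $w$ afforded by the super-regularity of each $B^u_i$.
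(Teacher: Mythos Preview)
Your argument for the first inequality and for $|N^+_J(x)\cap U^a|$ is correct; in fact, your observation that there is at most one $w$ with $\phi_w(a)=x$ (since $a\in S^+$ forces $w\in W_i$ for the $i$ with $x\in U_i$, and $M^a_i$ is a matching), so that $N^+_J(x)\cap U^a\subseteq V(F_w)\cap U^a$, is cleaner than the paper's case analysis over $U^a_{i-1},U^a_i,U^a_{i+1}$.

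The $N^-_J$ argument, however, has a genuine gap. You propose to run Freedman on the Doob martingale obtained by revealing the matchings $M^u_j$ in $\prec$ order and claim each increment is $O(1)$. This fails at time $t_c$: revealing $M^c_{i+1}$ determines $\phi_w(c)$ simultaneously for \emph{all} $w\in W_{i+1}$, so the increment in $\mb{E}[Z\mid\text{history}]$ can be as large as $|\{w\in W_{i+1}:x\in V(F_w)\text{ so far}\}|$, which is of order $p_+ n$, not $O(1)$. The ``quasi-independence of distinct $w$'' you invoke is precisely what is lost here: for $w,w'\in W_{i+1}$ the events $\{\phi_w(c)\in U^a\}$ and $\{\phi_{w'}(c)\in U^a\}$ are determined by the \emph{same} random matching $M^c_{i+1}$ and are genuinely correlated.

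The paper circumvents this by rewriting $N^-_J(x)\cap U^a_{i+1}$ (the only nontrivial part, since $N^-_J(x)\cap U_{i-1}=\es$ and $|N^-_J(x)\cap U_i|\le 2\Delta$) as $M^b_{i+1}(Z(x)\cap W_{i+1})\cap U^a_{i+1}$, where $x\in U^b$, and then applies the built-in estimate $|M[X',Y']|=|B[X',Y']|(d(B)n)^{-1}\pm n^{.8}$ from Lemma~\ref{lem:match} with $X'=Z(x)\cap W_{i+1}$ and $Y'=U^a_{i+1}$. This handles all the within-matching correlation in one stroke and gives the bound $|Z(x)||U^a_{i+1}|/.99p|W_{i+1}|+n^{.8}<.01p_+^{.9}|U^a|$. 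If you want to make your martingale approach work, you would need to split $Z(x)\cap W_{i+1}$ into the part determined before $t_c$ (to which the $|M[X',Y']|$ estimate applies directly) and the part added after $t_c$ (for which one can run Freedman over $u\succ c$, using that $(M^c_{i+1})^{-1}(U^a_{i+1})$ is now a fixed set of size $|U^a_{i+1}|$).
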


\begin{proof}
The first statement holds by Lemma \ref{freedman},
as $|U^a \cap V(F_w)|$ is $(\mu,1)$-dominated 
with \[ \mu = (1 \pm p_+^{.1})(pn)^{-1}
\sum_{u \in V(F)} |B^u_i(w) \cap U^a|
= (1 \pm 1.1p_+^{.1}) |V(F)| n^{-1} |U^a|.\]
Next recall for $x \in U_i$ 
that $N^-_J(x) \cap U_{i-1}
= N^+_J(x) \cap U_{i+1} = \es$,
$|N^-_J(x) \cap U_i| \le 2\DD$
and $|N^+_J(x) \cap U_i|
= \sum_b |N^+_J(x) \cap U^b_i| \le 4\DD^2$.

To bound $|N^+_J(x) \cap U^a_{i-1}|$, note that
for any $w \in W$ there are $<1.1p_+ |U^a|$ 
choices of $x' \in U^a \cap V(F_w)$,
for which we add $\ova{xx}'$ to $J$ 
if $M^a_i$ chooses $xw$.
Thus $|N^+_J(x) \cap U^a_{i-1}|$
is $(\mu,1)$-dominated with 
$\mu < n \cdot 1.1p_+ |U^a| \cdot 
(1 \pm p_+^{.1})(pn)^{-1}$,
so by Lemma \ref{freedman} whp $< .01p_+^{.9} |U^a|$. 

Finally, for $x \in U^b_i$ we have
$N^-_J(x) \cap U^a_{i+1} = M^b(Z(x)) \cap U^a_{i+1}$,
which by Lemma \ref{lem:match} whp has size 
$|M^b_{i+1}[Z(x) \cap W_{i+1},U^a_{i+1}]|
< |Z(x)||U^a_{i+1}|/.99p|W_{i+1}| + n^{.8}
< .01p_+^{.9} |U^a|$.
\end{proof}

We deduce $|N^\pm_J(x)| < .1p_+^{.9} n$,
so the underlying graph $\wt{J}$ of $J$  
has maximum degree $< .2p_+^{.9} n$.

In step (v) we orient $G_{\free}$ as
$D = \bigcup_{w \in W} D_w$,
where for each $xy \in G_{\free}$
with $x \in U^a$ and $y \in U^b$,
if $\ova{xy} \in J$ we have
$\ova{yx} \in D_w$ where $\phi_w(a)=y$,
if $\ova{yx} \in J$ we have
$\ova{xy} \in D_w$ where $\phi_w(b)=x$,
or otherwise we make one of these choices
independently with probability $1/2$.
We define $Z^+ \sub V \times W$ by 
$Z^+(w) = V(F_w) \cup V(D_w)$.

\begin{lemma}
whp $d^+_{D_w}(x)$ and $|Z^+(w) \cap U^a|$
are $(1 \pm p_+^{.8}) d_{\cal S}(a)$
for all $x=\phi_w(a)$, $w \in W$, $a \in S$.
\end{lemma}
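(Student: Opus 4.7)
The plan is to fix a triple $(w,a,x)$ with $x=\phi_w(a)$ and $a\in S$, prove $d^+_{D_w}(x)=(1\pm p_+^{.8})d_{\mc{S}}(a)$ whp, take a polynomial union bound, and then deduce the $|Z^+(w)\cap U^a|$ claim as an easy corollary. The core technical content lives entirely in the first assertion; the second is essentially a bookkeeping step using the first together with Lemma~\ref{lem:init}.

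First I would analyse which edges $xy\in G_{\free}$ can contribute to $d^+_{D_w}(x)$ using the orientation rule in step (v). Both the forced case ($\ova{yx}\in J$) and the 50/50 ``option II'' case produce $\ova{xy}\in D_{w'}$ where $\phi_{w'}(c)=x$ and $c$ is the $U$-label of $y$. Since the matchings in step (iv) make $(\phi_{\bullet}(c))_{w'\in W}$ distinct for every $c\in S^+$, and the partition forces these images to live in the layer $U_i$ dictated by $w'$, this uniquely determines $w'$; combined with $x=\phi_w(a)$ we get $w'=w$ and $c=a$. Hence $d^+_{D_w}(x)=\sum_{y\in G_{\free}(x)\cap U^a} X_y$, with $X_y=1_{\ova{yx}\in J}$ when either $\ova{xy}$ or $\ova{yx}$ lies in $J$, and $X_y=\text{Bernoulli}(1/2)$ independently otherwise.

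Next I would compute $\mathbb{E}\, d^+_{D_w}(x)$ conditional on the history up to step (v). The $J$-corrections are bounded by Lemma~\ref{lem:init}: $|N^{\pm}_J(x)\cap U^a|<0.1p_+^{.9}|U^a|$, giving $\mathbb{E}\,d^+_{D_w}(x)=\tfrac12|G_{\free}(x)\cap U^a|\pm 0.2 p_+^{.9}|U^a|$. To evaluate $|G_{\free}(x)\cap U^a|$: typicality gives $|G(x)\cap U^a|=(1\pm\xi)p|U^a|$; the construction and rebalancing in step (i) give $|U^a|=nd_{\mc{S}}(a)/|\mc{S}|\pm O(n^{.99})$ whp; and $|\mc{S}|=p(n-1)/2\pm O(p_+n)$, so $p|U^a|=2d_{\mc{S}}(a)(1\pm O(p_+))$. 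The subtle term is $|G(x)\setminus G_{\free}(x)\cap U^a|$, the used edges at $x$ landing in $U^a$. Using the near-uniformity $\mathbb{P}(vw'\in M^{u'}_i)=(1\pm p_+^{.1})(pn)^{-1}$ of Lemma~\ref{lem:match}, the probability that a fixed pair $(w',u'v')$ with $u'v'\in F$ realises $(\phi_{w'}(u'),\phi_{w'}(v'))=(x,y)$ is $O((pn)^{-2})$; summing over $w'\in W$ and the $O(p_+n)$ edges of $F$ gives $\mathbb{P}(xy\text{ used})=O(p_+/(pn))$, hence in expectation $O(p_+|U^a|/p)=O(p_+^{.1}d_{\mc{S}}(a))$, which concentrates whp by Freedman's inequality.

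Combining, $\mathbb{E}\,d^+_{D_w}(x)=d_{\mc{S}}(a)(1\pm O(p_+^{.1}))$. The $B_y$ are independent Bernoulli$(1/2)$'s with bounded effect $1$, so Bernstein's inequality concentrates $d^+_{D_w}(x)$ to within $(1\pm p_+^{.8})d_{\mc{S}}(a)$ whp, and a union bound over the $O(n^{1+c})$ triples $(w,a,x)$ handles all of them simultaneously. For $|Z^+(w)\cap U^a|$: every arc of $D_w$ has tail $\phi_w(a')\in\im\phi_w$ for some $a'\in S$ and head in $U^{a'}$, so $V(D_w)\cap U^a$ consists of the $d^+_{D_w}(x)$ leaves in $N^+_{D_w}(x)$ together with at most $|S^+|=O(n^c)$ star-centre images of copy $w$ that happen to lie in $U^a$. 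Together with $|V(F_w)\cap U^a|<1.1p_+|U^a|=O(p_+^{.8}d_{\mc{S}}(a))$ from Lemma~\ref{lem:init}, this gives $|Z^+(w)\cap U^a|=d^+_{D_w}(x)\pm O(p_+^{.8}d_{\mc{S}}(a))=(1\pm p_+^{.8})d_{\mc{S}}(a)$. The main obstacle is the tight control of used edges at $x$ landing in $U^a$ in Step~2, since a naive bound $|G(x)\setminus G_{\free}(x)|\le pn$ is nowhere near sufficient; the fix depends on exploiting the near-uniformity of the matching $M^{u'}_i$ to beat the typicality error rather than the (too-loose) degree sum.
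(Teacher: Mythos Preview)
Your approach is the same as the paper's: estimate $|G_{\free}(x)\cap U^a|$, exclude the $<0.2p_+^{.9}|U^a|$ vertices $v$ with $xv\in\wt J$ via Lemma~\ref{lem:init}, apply Chernoff to the remaining independent Bernoulli$(1/2)$ choices, and deduce the $|Z^+(w)\cap U^a|$ claim from the first part together with Lemma~\ref{lem:init}. The paper compresses the used-edges-in-$U^a$ bound into its ``$\pm 1.1p_+|U^a|$'' without further comment; you are right that this is where the work is.

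There is an arithmetic slip in your Step~2. Summing $|W|\cdot|E(F)|\cdot O((pn)^{-2})=n\cdot O(p_+n)\cdot O((pn)^{-2})$ gives $\mb{P}(xy\text{ used})=O(p_+/p^2)$, not $O(p_+/(pn))$. A sharper bound is available: for each oriented $F$-edge $(u',v')$ the equation $\phi_{w'}(u')=x$ determines $w'$ uniquely, so one sums $2|E(F)|\cdot O((pn)^{-1})=O(p_+/p)$. Either way, summing over $y\in G(x)\cap U^a$ gives expectation $O(p_+|U^a|)=O((p_+/p)\,d_{\cal S}(a))$, which is $o(p_+^{.8}d_{\cal S}(a))$ since $p_+\ll p$ forces $p_+^{.2}\ll p$. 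So your conclusion survives; only the displayed intermediate value is off.

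For concentration, invoking Freedman is correct but needs the martingale spelled out: reveal the matchings $M^{u'}$ in $\prec$-order and let $Y_{u'}$ count newly-used $y\in G(x)\cap U^a$ at step $u'$. Then $|Y_{u'}|\le 2$ (one contribution from $\phi_{w'_1}(u')$ where $\phi_{w'_1}((u')^-)=x$, one from $\phi_{w'_2}((u')^-)$ where $\phi_{w'_2}(u')=x$), and the sum of conditional expectations is $O(p_+|U^a|)$ by applying the Lemma~\ref{lem:match} edge-probability at each step. A bound on marginal probabilities of dependent indicators alone does not feed into Freedman.
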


\begin{proof}
First note by typicality and Chernoff bounds
that whp there are $(1+2\xi)nd_{\mc{S}}(a)p/|\mc{S}|
\pm 1.1p_+|U^a|=(1 \pm p_+^{.85}) 2d_{\cal S}(a)$
choices of $v \in U^a \cap G_{\free}(x)$
after step (iv).
Excluding $< .2p_+^{.9} |U^a|$ 
choices with $xv \in \wt{J}$,
for all other $v$ independently
$\mb{P}(\ova{xv} \in D_w) = 1/2$.
The lemma follows by a Chernoff bound
and Lemma \ref{lem:init}.
\end{proof}

To analyse  step (vi), we first observe that 
initially the sets $N^+_{D_w}(\phi_w(a))$ 
are disjoint over $a \in S$ and disjoint from $V(F_w)$,
and this is preserved by each move;
moreover, each move decreases $\Ss$ by $2$,
and if (vi) does not abort we have every $D_w \cup F_w = T$.
So it suffices to show that (vi) does not abort.
We start with an estimate
for the number of moves for any $uwu'w'$ that
are \emph{original}, meaning that they are present
at the end of step (v) before any arcs are moved.

\begin{lemma}
Any $u = \phi_w(a)$, $u' = \phi_{w'}(a')$ 
whp have $> 9000^{-1} p^3 n^2 d_{\cal S}(a')$
original $uwu'w'$-moves.
\end{lemma}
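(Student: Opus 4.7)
The plan is to count original $uwu'w'$-moves directly as triples $(x,v,z)$ satisfying the four required arc conditions together with the various image and partition constraints at the end of step (v). The heuristic count factorises as $\approx (pn/2)$ choices of $x$ (in-neighbours of $u$ in $D$), then $\approx (p^2 n/4)$ choices of $v$ (common $D$-neighbours of $x$ on the in-side and of $u'$ on the out-side), then $\approx d_{\mc{S}}(a')$ choices of $z$ from $N^+_{D_{w'}}(u')$; the product $\approx p^3 n^2 d_{\mc{S}}(a')/8$ comfortably exceeds $9000^{-1} p^3 n^2 d_{\mc{S}}(a')$. A preliminary observation is that the partition-class requirements impose no genuine restriction on vertex choices: they uniquely determine $w^x, w^v, w^{u'}$ from $x, v, u'$ via the matchings $M^b, M^c, M^d$, where $b,c,d$ are the unique star-centre classes of $u, x, v$; since each $M^a$ saturates $V$ (after rebalancing, $|U_i|=|W_i|$ and $M^a_i$ is a perfect matching), no vertex of $V$ is excluded by these tags.

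To count $x$, I would first note that $|G_\free(u)| \ge (1-\xi)pn - 2|F| \ge 0.99\,pn$, since each $a' \in V(F)$ contributes at most $d_F(a')$ used edges at $u$ (via the unique $w^*$ with $\phi_{w^*}(a')=u$), and summing gives $\le 2|F| \le 2p_+ n$. By Lemma~\ref{lem:init}, fewer than $0.1\,p_+^{.9} n$ edges at $u$ in $G_\free$ are $J$-constrained; each of the remaining edges is oriented $\ova{xu}$ by an independent fair coin, so a Chernoff bound gives whp $|N^-_D(u)| \ge 0.45\,pn$. Subtracting $|\im \phi_w| \le p_+ n$ leaves $\ge 0.4\,pn$ candidates $x$ with $x \notin \im \phi_w$.

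For each such $x$, the candidate set for $v$ is $N^-_D(x) \cap N^+_D(u')$ minus small exclusions. By the typicality of $G$ with $s\ge 2$, $|N_G(x) \cap N_G(u')| = (1 \pm 2\xi)p^2 n$; subtracting the $\le 4p_+ n$ vertices hit by used edges at $x$ or $u'$ (using $p_+ \ll p^2$, which follows from the parameter hierarchy) leaves $\ge 0.9\,p^2 n$ common $G_\free$-neighbours. At most $0.2\,p_+^{.9} n$ of these are in $J$-neighbourhoods of $x$ or $u'$; for all others the two required orientations $\ova{vx}$ and $\ova{u'v}$ are determined by two independent fair coins, so conditioning on the handful of $J$-forced orientations and applying a Chernoff/Freedman bound gives whp $\ge p^2 n/10$ valid $v$. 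Excluding $|\im \phi_{w'}|, |\im \phi_{w^x}| \le p_+ n$, and the $\le p_+ n$ values of $v$ for which $u' \in \im \phi_{w^v}$ (one per pre-image $a'' \in V(F)$ of $u'$ under the embedding $w^v$), leaves $\ge p^2 n/12$ valid $v$. Finally, by the degree estimate $|N^+_{D_{w'}}(u')| = (1 \pm p_+^{.8}) d_{\mc{S}}(a')$ established just before step (vi), minus the $\le p_+ n \ll \LL \le d_{\mc{S}}(a')$ vertices in $\im \phi_{w^{u'}}$, at least $0.99\,d_{\mc{S}}(a')$ valid $z$ remain.

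Multiplying gives $\ge 0.4\,pn \cdot p^2 n/12 \cdot 0.99\,d_{\mc{S}}(a') > 9000^{-1} p^3 n^2 d_{\mc{S}}(a')$ original moves, as required. The main obstacle is the joint orientation count for $v$, where two specific orientations must hold simultaneously; the approach is to condition on the (negligibly many) $J$-forced orientations and then apply Lemma~\ref{freedman} to the remaining independent coin flips, whose contributions are individually $O(1)$ and whose expected total is $\sim p^2 n/4$, so that variance-based concentration yields the claimed count with sub-exponential failure probability. A union bound over the polynomially many pairs $(u,w,u',w')$ then yields the whp conclusion uniformly.
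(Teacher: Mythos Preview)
Your overall strategy matches the paper's: count $x$, then $v$, then $z$ sequentially, using typicality for the pre-orientation counts and Chernoff-type concentration for the independent orientation coins in step~(v). The paper additionally restricts $x\in U_i$, $v\in U_{i-1}$, $z\in U_{i-1}$ (where $w'\in W_i$) and excludes the larger sets $Z^+(\cdot)$ rather than $\im\phi_{\cdot}$; this forces extra coin-flip arguments but is not needed for the lemma as stated, so your more direct route is legitimate and in fact a bit cleaner.

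There is, however, a genuine error in your $z$-count. You write that the exclusion $|\im\phi_{w^{u'}}|\le p_+ n$ is negligible because ``$p_+ n\ll\LL\le d_{\mc S}(a')$''. This is false: $p_+$ is a constant in the hierarchy (independent of $n$), while $\LL=n^{1-c}$, so $p_+ n\gg\LL$. As written, your subtraction could wipe out the entire set $N^+_{D_{w'}}(u')$.

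The fix is short but essential: observe that $N^+_{D_{w'}}(u')\subseteq U^{a'}$ (before any moves, an arc $\ova{u'z}\in D_{w'}$ forces $z\in U^e$ with $\phi_{w'}(e)=u'$, and injectivity of $\phi_{w'}$ gives $e=a'$). Hence you are really subtracting $|\im\phi_{w^{u'}}\cap U^{a'}|$, which by Lemma~\ref{lem:init} is $<1.1\,p_+|U^{a'}|$. Since $|U^{a'}|=n\,d_{\mc S}(a')/|\mc S|\pm n^{.99}$ and $|\mc S|\ge (p/2-p_+)n$, we have $|U^{a'}|\le 3d_{\mc S}(a')/p$, so the exclusion is at most $4(p_+/p)\,d_{\mc S}(a')\ll d_{\mc S}(a')$. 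With this correction your product still comfortably exceeds $9000^{-1}p^3 n^2 d_{\mc S}(a')$.
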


\begin{proof}
We estimate the number of moves by sequentially
choosing $x$, $v$ then $z$.
Suppose $u \in U^b$ and $w' \in W_i$, 
so $u'=\phi_{w'}(a') \in U_i$.
Suppose $u' \in U^{b'}$.
We claim there are whp $>.08pn$ choices 
of $x \in U_i \cap N^-_D(u) \sm Z^+(w)$.
To see this, note that there are
$(1 \pm p_+^{.8}) pn/3$ choices 
of $x \in U_i \cap G_{\free}(u)$.
Excluding $< p_+^{.9} n$  
with $xu$ or $x\phi_w(b)$ in $\wt{J}$,
for all  others independently
$\mb{P}(x \in N^-_D(u) \sm Z^+(w)) \ge 1/4$,
so the claim holds by a Chernoff bound.
Consider any such $x$, say with $x \in U^c_i$,
and let $w^x=M^b(x) \in W_i$.

We claim there are whp $> .01p^2 n$ choices of
 $v \in U_{i-1} \cap N^-_D(x) \cap N^+_D(u') 
 \sm (Z^+(w') \cup Z^+(w^x) \cup M^c(Z^+(u')))$.
To see this, note that there are 
$(1 \pm p_+^{.8}) p^2 n/3$ choices 
of $v \in U_{i-1} \cap G_{\free}(x) \cap G_{\free}(u')$.
For any such $v$, say in $U^{d}_{i-1}$,
we have $v' := \phi_{w'}(d) \in U_i$, 
so $\ova{vv}' \notin J$, and so 
$\mb{P}(\lova{vv}' \in D) \le 1/2$.
Similarly, $v^x := \phi_{w^x}(d)$ has
$\mb{P}(\lova{vv}^x \in D) \le 1/2$ independently.
Also $v \in M^c(Z^+(u') \sm Z(u')) 
 \Lra w^v := M^c(v) \in Z^+(u') \sm Z(u')
 \Lra \ova{y_vu}' \in D$, where 
$y_v = \phi_{w^v}(b') \in U_{i-1}$ as $w^v \in W_{i-1}$.
Since each $v$ corresponds to a unique $y_v$,
the number of $y_v$ with $\lova{y_vu}' \in J$ is
$d^+_J(u') \le .1p_+^{.9}n$.
Excluding such $v$, for all others we have
$\lova{y_vu}' \notin J$ and so
$\mb{P}(v \in M^c(Z^+(u') \sm Z(u'))) \le 1/2$ 
independently. Excluding a further $< p_+^{.9} n$  
choices of $v$ in 
$Z(w') \cup Z(w^x) \cup M^c(Z(u'))$
or with $vx$ or $vu'$ in $\wt{J}$,
any other $v$ contributes independently
with probability $\ge 2^{-5}$,
so the claim follows by a Chernoff bound.

Fix any such $v$, say with $v \in U^d$,
and let $w^{u'} = M^d(u') \in W_{i-1}$.
Similarly to the above, there are
whp $> .16d_{\cal S}(a')$ choices of
$z \in U_{i-1} \cap N^+_{D_{w'}}(u') \sm Z^+(w^{u'})$,
as there are $(1 \pm p_+^{.8}) d_{\cal S}(a')/3$
choices  in $U^{a'}_{i-1} \cap G_{\free}(u')$
and letting $z' = \phi_{w^{u'}}(a') \in U_i$,
excluding at most $.1p_+^{.9}|U^{a'}|$
vertices $z$ such that $\lova{zz}' \in J$,
each other $z$ has $\mb{P}(\lova{zz}' \in D) \le 1/2$.
The lemma follows.
\end{proof}
 
After $t$ moves, we let ${\cal B}_t$
denote the bad event that any vertex $y$ 
is incident to $>p_+^{.7} n$ moved arcs
or to $>p_+^{.7}|U^q|$ arcs $\ova{yy}'$
with $y' \in U^q$ for some $q$.
We let $\tau$ be the smallest $t$ such that
${\cal B}_t$ occurs, or $\infty$ if there is no such $t$.
At any step $t<\tau$ requiring a move for some 
$uwu'w'$ with $\phi_{w'}(a')=u'$, as ${\cal B}_t$ does not hold
there are $>10^{-4} p^3 n^2 d_{\cal S}(a')$ moves.
To complete the proof it therefore suffices
to show whp $\tau=\infty$.
We fix $t$ and bound $\mb{P}(\tau=t)$ as follows. 

We start by showing that whp 
$< p_+^{.7} n$ arcs are moved at any $y$.
To see this, note first that the number of times 
$y$ plays the role of $u$ or $u'$ in a move is 
$\sum_{w \in W} |d^+_{D_w}(y) - d_{\cal S}(\phi_w^{-1}(y))| 
 < \sum_a p_+^{.8} d_{\cal S}(a) < p_+^{.8} n$.
Now fix $uwu'w'$ with $\phi_{w'}(a')=u'$.
Then $y$ plays the role of $x$ or $v$
in $< n d_{\cal S}(a')$ moves,
so with probability $< 10^4 p^{-3} n^{-1}$.
The number of moves where $y$ plays $x$ or $v$
is therefore $(\mu,1)$-dominated, where
$\mu < p_+^{.8} n^2 \cdot 10^4 p^{-3} n^{-1}
 < .1p_+^{.7} n$, so by Lemma \ref{freedman}
 whp $< .2p_+^{.7} n$.
Furthermore,
$y \in U^{a'}$ plays the role of $z$ 
in $<n^2$ moves, 
so with probability 
$< 10^4 p^{-3} d_{\cal S}(a')^{-1}$.
The number of such moves
is therefore $(\mu,1)$-dominated, where
$\mu < 10^4 p^{-3} d_{\cal S}(a')^{-1}
 \sum_w |d^+_{D_w}(\phi_w(a')) - d_{\cal S}(a')| 
 < .1p_+^{.7} n$, so by 
 Lemma \ref{freedman} whp $< .2p_+^{.7} n$.
The claim follows.

Now, given $uwu'w'$, any arc $\ova{yy}'$ with $y' \in U^q$ 
plays the role of $\lova{vu}'$ or $\ova{xu}$
in $<n d_{\cal S}(q)$ moves,
so with probability $< 10^4 p^{-3} n^{-1}$,
and the role of $\ova{vx}$ in $<d_{\cal S}(q)$ moves,
so with probability $< 10^4 p^{-3} n^{-2}$,
and the role of $\lova{zu}'$ in $< n^2$ moves,
so with probability $10^4 p^{-3} d_{\cal S}(q)^{-1}$.
Thus for any $q \in S$ and $y = \phi_w(q)$,
the number of moved $\ova{yy}'$ with $y' \in U^q$
is $(\mu,1)$-dominated, where
$\mu <
  10^4 p^{-3} n^{-1} \cdot p_+^{.8} n |U^q|
  + 10^4 p^{-3} n^{-2} \cdot p_+^{.8} n^2 |U^q|
+ 10^4 p^{-3} d_{\cal S}(q)^{-1} \cdot 
|d^+_{D_w}(y) - d_{\cal S}(q)| |U^q|
 < .1p_+^{.7} |U^q|$,
so by Lemma \ref{freedman} is whp $< p_+^{.7} |U^q|$.
This completes the proof.

\section{Concluding remarks}

In this paper we have developed a variety of embedding
techniques that are sufficiently flexible to resolve
a generalised form of Ringel's Conjecture 
that applies to quasirandom graphs,
and which promise to have more general applications
to packings of a family of trees, 
as would be required for a solution
of Gy\'arf\'as' Conjecture.


\begin{thebibliography}{99}


\bibitem{AAGH}
A. Adamaszek,  P. Allen,  C. Grosu  and  J. Hladk\'y,
Almost all trees are almost graceful,
arXiv:1608.01577 (2016).

\bibitem{ABCT}
P. Allen, J. B\"ottcher, D. Clemens and A. Taraz,
Perfectly packing graphs with bounded degeneracy
and many leaves,
arXiv:1906.11558 (2019).

\bibitem{BGS}
J. Bar\'at, A. Gy\'arf\'as and G.N. S\'ark\"ozy,
Rainbow matchings in bipartite multigraphs,
{\em Periodica Math. Hungar.} 74:108--111 (2017).

\bibitem{BLM}
S. Boucheron, G. Lugosi and P. Massart,
{\em Concentration inequalities: 
a nonasymptotic theory of independence},
Oxford University Press (2016).

\bibitem{ABHP}
P. Allen, J. B\"ottcher, J. Hladk\'y and D. Piguet, 
Packing degenerate graphs, arXiv:1711.04869 (2017).

\bibitem{BHPT} J. B\"ottcher, J. Hladk\'y, D. Piguet and A. Taraz, 
An approximate version of the tree packing conjecture,
{\em Israel J. Math.} 211:391--446 (2016).


\bibitem{CKLOT}
B. Csaba, D. K\"uhn, A. Lo, D. Osthus and A. Treglown,
Proof of the 1-factorization and Hamilton decomposition conjectures,
{\em Mem. American Math. Soc.} 244, monograph 1154 (2016).


\bibitem{EGJ} S. Ehard, S. Glock and F. Joos,
Pseudorandom hypergraph matchings.
arXiv:1907.09946 (2019).

\bibitem{FLM} A.  Ferber,  C.  Lee and  F.  Mousset,
Packing  spanning  graphs  from  separable  families,
{\em Israel J. Math.} 219:959--982 (2017).

\bibitem{FS} A. Ferber and W. Samotij,
Packing trees of unbounded degrees in random graphs,
{\em J. Lond. Math. Soc.} (2016).

\bibitem{freedman} D. A. Freedman, 
On tail probabilities for martingales,
{\em Ann. Probab.} 3:100--118 (1975).

\bibitem{GKLO} S. Glock, D. K\"uhn, A. Lo and D. Osthus,
The existence of designs via iterative absorption,
arXiv:1611.06827.

\bibitem{GKLO2} S. Glock, D. K\"uhn, A. Lo and D. Osthus,
Hypergraph $F$-designs for arbitrary $F$,
arXiv:1706.01800.

\bibitem{GL} A. Gy\'arf\'as and J. Lehel,
Packing trees of different order into $K_n$,
in {\em Combinatorics} (Proc. Fifth Hungarian Colloq., Keszthely, 1976), 
Volume 1, pages 463--469 (1978).

\bibitem{JKKO} F. Joos, J. Kim, D. K\"uhn and D. Osthus,
Optimal packings of bounded degree trees,
to appear in {\em J. European  Math. Soc.} 

\bibitem{KaLP} J. Kahn, 
A linear programming perspective on the
Frankl-R\"odl-Pippenger Theorem,
{\em Random Struct. Alg.} 8:149--157 (1996).

\bibitem{Kexist} P. Keevash, 
The existence of designs, arXiv:1401.3665 (2014).

\bibitem{K2} P. Keevash, The existence of designs II, 
arXiv:1802.05900 (2018).

\bibitem{K3} P. Keevash, 
Coloured and directed designs, arXiv:1807.05770 (2018). 

\bibitem{factors} P. Keevash and K. Staden,
The generalised Oberwolfach problem,
preprint (2020).

\bibitem{KKOT} J. Kim, D. K\"uhn, D. Osthus and M. Tyomkyn,
A blow-up lemma for approximate decompositions,
{\em Trans. Amer. Math. Soc.} 371:4655--4742 (2019).

\bibitem{kirkman}T. P. Kirkman,
On a problem in combinations, 
{\em Cambridge and Dublin Math. J.} 2:191--204 (1847).

\bibitem{KR} Y. Kohayakawa and V. R\"odl,
Algorithmic aspects of regularity,
Latin American Theoretical Informatics 
(LATIN 2000, Punta del Este), 
{\em Lecture Notes in Computer Science} 1776:1--17, 
Springer, Berlin, 2000.

\bibitem{MRS} S. Messuti, V. R\"odl and M. Schacht,
Packing minor-closed families of graphs into complete graphs,
{\em J. Combin. Theory B} 119:245--265 (2016).

\bibitem{Mont} R. Montgomery,
Spanning trees in random graphs,
to appear in {\em Adv. Math.}

\bibitem{MPS1} R. Montgomery, A. Pokrovskiy and B. Sudakov,
Decompositions into spanning rainbow structures,
{\em Proc. London Math. Soc.} 119:899--959 (2019).

\bibitem{MPS2} R. Montgomery, A. Pokrovskiy and B. Sudakov,
Embedding rainbow trees with applications to graph labelling and decomposition, to appear in
{\em J. European Math. Soc.}

\bibitem{MPS3} R. Montgomery, A. Pokrovskiy and B. Sudakov,
A proof of Ringel's conjecture,
arXiv:2001.02665 (2020).

\bibitem{RCW} D. K. Ray-Chaudhuri and R. M. Wilson,
Solution of Kirkman's schoolgirl problem, 
{\em Combinatorics} (Proc. Sympos. Pure Math., Vol. XIX, 
Univ. California, Los Angeles, Calif., 1968), 
Amer. Math. Soc., Providence,R.I., 187--203 (1971).

\bibitem{ringel} G. Ringel,
Theory of graphs and its applications, in
{\em Proc. Symp. Smolenice} (1963).

\bibitem{R} V. R\"odl, On a packing and covering problem,
{\em Europ. J. Combin.} 6:69--78 (1985).

\bibitem{wilson1}
R. M. Wilson, 
An existence theory for pairwise balanced designs I. 
Composition theorems and morphisms,
{\em J.Combin. Theory A} 13:220--245 (1972).

\bibitem{wilson2} R. M. Wilson, 
An existence theory for pairwise balanced designs II. 
The structure of PBD-closed sets and the existence conjectures,
{\em J. Combin. Theory A} 13:246--273 (1972).

\bibitem{wilson3} R. M. Wilson,  
An existence theory for pairwise balanced designs III. 
Proof of the existence conjectures,
{\em J.Combin. Theory A} 18:71--79 (1975).

\bibitem{wilson4} R. M. Wilson,
Decompositions of complete graphs into subgraphs 
isomorphic to a given graph, in
{\em Proc. 5th British Combin. Conf.} (1975).

\end{thebibliography}
\end{document}